\documentclass{amsart}

\usepackage{amssymb}
\usepackage{mathrsfs}
\usepackage{stmaryrd}
\usepackage{multirow}
\usepackage{hyperref}
\usepackage[shortlabels]{enumitem}

\usepackage{tikz}
\usepackage{tikz-cd}
\usepackage[all,cmtip]{xy}
\usepackage{todonotes}
\usepackage{graphicx}

\usepackage{subcaption}

\newtheorem{theorem}[equation]{Theorem}
\newtheorem{lemma}[equation]{Lemma}
\newtheorem{proposition}[equation]{Proposition}
\newtheorem{cor}[equation]{Corollary}

\theoremstyle{definition}
\newtheorem{definition}[equation]{Definition}
\newtheorem{remark}[equation]{Remark}

\newtheorem{example}[equation]{Example}
\newtheorem{notation}[equation]{Notation}

\numberwithin{equation}{section}

\makeatletter
\newcommand*\bigcdot{\mathpalette\bigcdot@{.6}}
\newcommand*\bigcdot@[2]{\mathbin{\vcenter{\hbox{\scalebox{#2}{$\m@th#1\bullet$}}}}}
\makeatother

\DeclareMathOperator{\conv}{conv}
\DeclareMathOperator{\sign}{sign} 
\DeclareMathOperator{\lcm}{lcm} 
\DeclareMathOperator{\Der}{Der} 
\DeclareMathOperator{\Hom}{Hom} 
\DeclareMathOperator{\Spec}{Spec} 
 
\DeclareMathOperator{\link}{lk} 
\DeclareMathOperator{\id}{Id} 
\DeclareMathOperator{\coker}{coker} 
\DeclareMathOperator{\Ext}{Ext} 
\DeclareMathOperator{\Cone}{Cone} 
\DeclareMathOperator{\Flag}{Flag} 
\DeclareMathOperator{\Image}{im} 

\renewcommand\AA{\mathbb{A}}

\newcommand{\KK}{\Bbbk}
\newcommand\NN{\mathbb{N}}
\newcommand\RR{\mathbb{R}}
\newcommand\PP{\mathbb{P}}
\newcommand\ZZ{\mathbb{Z}}
\newcommand\cQ{\mathcal{Q}}
\newcommand\old{\mathrm{old}}
\newcommand\new{\mathrm{new}}

\newcommand\mfm{\mathfrak{m}}
\newcommand\wt{\mathbf{wt}}
\newcommand\cG{\mathcal{G}}
\newcommand\cA{\mathcal{A}}
\newcommand\cK{\mathcal{K}}
\newcommand\cR{\mathcal{R}}
\newcommand\cV{\mathcal{V}}
\newcommand\cE{\mathcal{E}}
\newcommand\ba{\mathbf{a}}
\newcommand\bb{\mathbf{b}}
\newcommand\bc{\mathbf{c}}
\newcommand\be{\mathbf{e}}

\newcommand\bu{\mathbf{u}}
\newcommand\bv{\mathbf{v}}
\newcommand\petal{\mathcal{P}}
\newcommand\twig{\mathcal{Q}}
\newcommand\std{\mathrm{std}}
\newcommand\Split{\mathfrak{S}}

\newcommand\git{ / \! \! / }
\newcommand{\p}[2]{\, ^{{\scalebox{0.4}[0.4]{(#1)}}} \! p_{#2}}
\renewcommand{\setminus}{\smallsetminus}

\title{Higher Cotangent Cohomology for Stanley-Reisner Rings}

\author{Nathan Ilten}
\address{Department of Mathematics,
	Simon Fraser University,
	8888 University Drive,
	Burnaby BC V5A 1S6,
	Canada}
\email{nilten@sfu.ca}

\author{Francesco Meazzini}
\address{Dipartimento di Matematica Guido Castelnuovo, Sapienza Universit\`a di Roma, Piazzale Aldo Moro 5, 00185 Roma, Italy}
\email{francesco.meazzini@uniroma1.it}

\author{Andrea Petracci}
\address{Dipartimento di Matematica, Universit\`a di Bologna, Piazza di Porta San Donato 5, 40126 Bologna, Italy}
\email{a.petracci@unibo.it}

\begin{document}
	\begin{abstract}
		Inspired by work of Altmann and Christophersen, we study the graded pieces of the cotangent cohomology $T^i_{S_{\cK}}$, $i\geq 3$ of the Stanley-Reisner ring $S_\cK$ associated to a simplicial complex $\cK$. We prove a localization formula allowing one to reduce to the case of negative weights. Our results give a complete description of $T^3$ and $T^4$ in terms of the topology of $\cK$ whenever $\cK$ is a flag complex. As an application, we give a sufficient criterion for the vanishing of $T^3$ for simplicial spheres, classify two-spheres that have vanishing $T^3$, and show that the boundary complex of the dual associahedron has vanishing $T^3$. Our results make use of the arborescent resolutions considered by Hancharuk, Laurent-Gengoux, and Strobl. We give an alternative and self-contained treatment of these resolutions that may be of independent interest.
	\end{abstract}
	
	\maketitle
	\tableofcontents
	
	\section{Introduction}
	\subsection{Background} Let $S$ be a finitely generated (commutative) algebra over a field $\KK$ of characteristic zero. The cotangent cohomology modules $T^i_{S/\KK}$ for $i\in\ZZ_{\geq 0}$ introduced in \cite{andre} are finitely generated $S$-modules that encode aspects of the deformation theory of $\Spec S$: $T^0_{S/\KK}$ is just the set of $\KK$-linear derivations from $S$ to itself, $T^1_{S/\KK}$ is the space of isomorphism classes of first order deformations of $\Spec S$ over $\KK$, $T^2_{S/\KK}$ contains the obstructions to lifting infinitesimal deformations of $\Spec S$, and the higher cotangent cohomology $T^i_{S/\KK}$ for $i>2$ is relevant for deformations of complete intersections in $\Spec S$, see e.g.~\cite{behnkechristophersen,iltenchristophersen}. Likewise, the higher cotangent cohomology modules $T^i_{S/\KK}$ are important in derived deformation theory, see e.g.~\cite[\S6]{pantev_vezzosi}.
	For an example geometric application in our setting, see Corollary \ref{cor:codim2} in \S\ref{sec:grass}.
	
	While the modules $T^i_{S/\KK}$ for $i=0,1,2$ have relatively concrete descriptions, they can still be challenging to explicitly compute in practice. For the higher cotangent cohomology modules the situation is even more challenging: in characteristic zero they can be described via Harrison cohomology \cite[Proposition~4.5.13]{loday}, but this is not particularly amenable to explicit computations. It is thus an interesting problem to give more explicit descriptions of the $T^i_{S/\KK}$ for special classes of rings $S$.
	
	Along these lines, Altmann and Christophersen \cite{ac1} studied the cotangent cohomology modules $T^1_{S/\KK}$ and $T^2_{S/\KK}$ when $S$ is a Stanley-Reisner ring, that is, a quotient of a polynomial ring by a square-free\footnote{i.e.~radical}  monomial ideal. Recall that to any simplicial complex $\cK$ on vertex set  $\cV$, there is an associated Stanley-Reisner ideal $I_\cK\subseteq \KK[x_v \ | \ v\in\cV]$, see \S\ref{sec:SR}. The ideal $I_\cK$ is precisely the square-free monomial ideal generated by products of variables whose corresponding vertices do not belong to a common face, and all square-free monomial ideals arise in this way.
	Hence for a fixed finite set $\cV$, this establishes a one-to-one correspondence between simplicial complexes with vertex set contained in $\cV$ and square-free monomial ideals in the polynomial ring $\KK [x_v \ | \ v \in \cV]$.
	
	Denote by $S_\cK$ the Stanley-Reisner ring \[S_\cK:=\KK[x_v\ | \ v\in\cV]/I_\cK.\]
	We set
	\[
	T^i(\cK) := T^i_{S_\cK/\KK}
	\]
	for every $i \in \ZZ_{\geq 0}$.
	The ring $S_\cK$ has a natural $\ZZ^{\cV}$-grading; this descends to a $\ZZ^{\cV}$-grading on the cotangent cohomology modules as well. For any $\bc\in\ZZ^{\cV}$, we denote by $T^i_{\bc}(\cK)$ the weight\footnote{We use the term \emph{weight} here instead of degree to differentiate from the (co)homological degrees that will occur elsewhere in the paper.} $\bc$ piece of $T^i(\cK)$. The main contribution of \cite{ac1} is a description for $i=1,2$ of $T^i_{\bc}(\cK)$ in terms of the $(i-1)$st relative cohomology groups of some subsets determined by $\bc$ of the cone over the topological realization of $\cK$ (cf.~\cite[Theorem 13]{ac1}.

	Inspired by the results of Altmann and Christophersen, the goal of this paper is to provide general arguments that also allow us to study the higher cotangent cohomology modules $T^i_{S/\KK}$, $i>2$ when $S$ is a Stanley-Reisner ring.
	
	\subsection{General results}\label{sec:overview}
	To summarize our results, we introduce a little more notation.
	
	\begin{notation} \label{notation}
		For every $\ba \in \ZZ_{\geq 0}^\cV$, we denote by $a \subseteq \cV$ the \emph{support} of $\ba$, i.e.\ the subset of $\cV$ consisting of the indexes $v \in \cV$ such that the $v$-th entry $\ba_v$ of $\ba$ is non-zero.
		Similarly, $b$ denotes the support of $\bb \in \ZZ_{\geq 0}^\cV$.
		Every weight vector $\bc\in \ZZ^\cV$ can be uniquely written as $\bc=\ba-\bb$ where $\ba,\bb\in \ZZ_{\geq 0}^\cV$ are such that $a \cap b = \emptyset$;
		in all statements we will always assume this.
	\end{notation}

	If $\cK$ is a simplicial complex, recall that the \emph{link} of a face $f\in \cK$ is
	\[
	\link(f,\cK)=\{g\in \cK\ |\ f\cap g=\emptyset\ \textrm{and}\ f\cup g\in \cK\}.
	\]
	Our first result is the following localization formula, generalizing \cite[Proposition 11]{ac1} to the cases $i>2$:
	
	\begin{theorem}[Localization formula, Theorem~\ref{thm:localizationformula}]\label{thm:localization}
		Let $\cK$ be a simplicial complex on vertex set $\cV$.
		Let $i \in \ZZ_{\geq 1}$. Let $\bc=\ba-\bb\in \ZZ^\cV$ as in Notation~\ref{notation}.
		Then:
		\begin{enumerate}
			\item if $a \notin\cK$, then $T^i_\bc(\cK)=0$;
			\item if $a \in \cK$, then $T^i_\bc(\cK) \cong T^i_{-\bb}(\link(a,\cK))$.
		\end{enumerate}
	\end{theorem}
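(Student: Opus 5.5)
We plan to prove both parts through localization. The key tool is the compatibility of cotangent cohomology with localization, together with the $\ZZ^\cV$-grading. Fix $\bc=\ba-\bb$ with $a\cap b=\emptyset$, and let $x^a:=\prod_{v\in a}x_v$. The $\ZZ^\cV$-graded $S_\cK$-module $T^i(\cK)$ satisfies $T^i(\cK)_{x^a}\cong T^i_{(S_\cK)_{x^a}/\KK}$, because cotangent cohomology of a finitely generated algebra commutes with flat localization. It suffices to show that multiplication by $x_v$ for $v\in a$ induces an isomorphism $T^i_{\bc}(\cK)\to T^i_{\bc+\be_v}(\cK)$. Granting this, $T^i_\bc(\cK)$ is identified with the weight-$\bc$ piece of $T^i(\cK)_{x^a}$, more precisely with the degree-zero-in-$a$ part after shifting.

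We propose to establish this shift-invariance ("weights in $a$ can be increased freely once they are positive") directly from a graded complex computing $T^i$. Take a $\ZZ^\cV$-graded free resolution of $S_\cK$ over $P=\KK[x_v]$, or better, the cotangent complex $\mathbb{L}$ built from a multigraded resolution whose generators have square-free weights. Then $T^i=H^i(\Hom_{S}(\mathbb{L},S))$. Each term is a sum of copies of $S_\cK$ shifted by square-free weights $-\bu$ ($\bu\in\{0,1\}^\cV$). The weight-$\bc$ piece of $S_\cK(\bu)$ is $(S_\cK)_{\bc+\bu}$. For $v\in a$, with $\bc_v\ge 1$ and $\bu_v\in\{0,1\}$, the entry $(\bc+\bu)_v\ge1$. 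Since $S_\cK$ is square-free, its monomial basis shows that multiplication by $x_v$ maps $(S_\cK)_{\bd}\to(S_\cK)_{\bd+\be_v}$ bijectively whenever $\bd_v\ge1$. Hence multiplication by $x_v$ is an isomorphism of complexes in these weights, and therefore on cohomology. Iterating over $v\in a$ yields $T^i_\bc(\cK)\cong T^i(\cK)_{x^a}$ in the appropriate weight.

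Now compute the localization. We have $(S_\cK)_{x^a}=0$ if $a\notin\cK$, since then $x^a\in I_\cK$; this gives part (1). If $a\in\cK$, then
\[(S_\cK)_{x^a}\cong S_{\link(a,\cK)}\otimes_\KK \KK[x_v^{\pm1}\,|\,v\in a]\otimes_\KK \KK[x_w\,|\,w\notin a,\ w\notin \link(a,\cK)\text{-vertices},\ \ldots]\]
up to the bookkeeping below. Precisely, the faces containing $a$ are $a\cup g$ with $g\in\link(a,\cK)$, so $(S_\cK)_{x^a}\cong S_{\link(a,\cK)}[x_v^{\pm1}: v\in a]$, where $S_{\link}$ is taken on the vertex set $\cV\setminus a$. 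Vertices not in $\link(a,\cK)$ have their variables killed, which matches the convention that $\link(a,\cK)$ lives on $\cV\setminus a$ with non-vertices. Cotangent cohomology satisfies a base-change/Künneth formula for tensoring with the smooth algebra $L=\KK[x_v^{\pm1}]$, namely $T^i_{A\otimes L}\cong T^i_A\otimes L$ for $i\ge1$, because $L$ is smooth with $T^i_L=0$ for $i\ge1$. The $i\ge1$ hypothesis is used exactly here: for $i=0$ the derivations of $L$ contribute. Taking the weight $\bc$ component, the factor $L$ contributes the one-dimensional piece in weight $\ba$, and $T^i_{A}$ contributes weight $-\bb$ with $b\subseteq\cV\setminus a$. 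This gives $T^i_{-\bb}(\link(a,\cK))$.

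We expect the main obstacle to be the shift-invariance step. To carry it out, the complex computing $T^i$ must be built from multigraded pieces with square-free shifts. The Harrison/cotangent complex of $S_\cK$ over $\KK$ involves tensor powers of $S_\cK$, which are not square-free. We would first try a resolvent or Tate-type model over $P$ using the square-free (e.g. Taylor) resolution. If that is awkward, we would instead verify the shift-invariance by a more conceptual route: $T^i(\cK)$ is a finitely generated $\ZZ^\cV$-graded module whose localization-compatibility in each coordinate follows from the same statement for $S_\cK$ and $\Omega$-type modules.
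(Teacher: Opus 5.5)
Your route is correct and differs from the paper's. However, the step you single out as the main obstacle is not one, and the premise you state for it is false. The terms of $\Der^\bullet(P,S_\cK)$ for a Koszul--Tate resolution $P$ do not have square-free shifts. The Tate-type model you suggest, the Koszul--Tate--Taylor resolution of \S\ref{sec:taylor}, has generators such as $[\{f_1\}\{f_2\}]$ of weight $\wt(f_1)+\wt(f_2)$; this is why $T^4_{-(2,1,1)}\neq 0$ in Example~\ref{ex:211}. A module resolution of $S_\cK$ over $R$ would not help either, since it does not compute $T^i$ for $i\geq 3$. But your argument only uses $\bu_v\geq 0$. As in the proof of Theorem~\ref{thm.computingTviahomology}, $\Der^i(P,S_\cK)_\bc\cong\bigoplus_{\bu\in\ZZ^\cV_{\geq 0}}\Hom_\KK(W_{i,\bu},(S_\cK)_{\bu+\bc})$ for $i\geq 1$, and $\Der^0(P,S_\cK)_\bc\cong\bigoplus_j(S_\cK)_{\bc+\be_j}$. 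So for $v\in a$, every weight $\mathbf{d}$ that occurs has $\mathbf{d}_v\geq\bc_v\geq 1$. Multiplication by $x_v$ is then bijective termwise and commutes with $\phi\mapsto\pm\phi\circ\partial$. Hence it is an isomorphism of complexes $\Der^\bullet(P,S_\cK)_\bc\to\Der^\bullet(P,S_\cK)_{\bc+\be_v}$ in all degrees, which is the shift-invariance you need. The remaining steps are fine:
\begin{enumerate}
\item $T^i$ commutes with localization, since each $P_i^{\mathrm{new}}$ is finite free over $R$.
\item $(S_\cK)_{x^a}=0$ if $a\notin\cK$.
\item $(S_\cK)_{x^a}\cong S_{\link(a,\cK)}\otimes_\KK\KK[x_v^{\pm1}\mid v\in a]$ if $a\in\cK$. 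Use your second description here; the extra polynomial factor in your first display is wrong, since those $x_w$ become $0$.
\item The K\"unneth formula for the smooth factor, valid for $i\geq 1$, isolates $T^i_{-\bb}(\link(a,\cK))$ in weight $\bc$.
\end{enumerate}

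The paper instead stays inside its explicit model. It builds Koszul--Tate--Taylor resolutions of $S_\cK$ and $S_{\link(a,\cK)}$ from the same generating set $\cG$, via $\lambda$ and via $\lambda'$ (which sets $x_v=1$ for $v\in a$). It then shows $\nabla_\bc(\cK)=\{\bu\mid\pi(\bu)\in\nabla_{\pi(\bc)}(\link(a,\cK))\}$, with $\pi\colon\ZZ^\cV\to\ZZ^{\cV\setminus a}$ the projection, and deduces an isomorphism of finite complexes $W^\bc_\bullet(\cK)\cong W^{\pi(\bc)}_\bullet(\link(a,\cK))$. Part (1) is just $\nabla_\bc=\emptyset$. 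The combinatorial input is the same as yours: once $\bu+\bc\geq 0$, its support already contains $a$.

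What each approach buys:
\begin{itemize}
\item The paper's version needs no cotangent-complex, localization or K\"unneth facts. It also gives an isomorphism of the complexes $W_\bullet$ that drive all later computations.
\item Your version explains the formula as restriction to the chart $x^a\neq 0$ and treats every $i\geq 1$ uniformly; the paper defers $i=1$ to \cite{ac1}.
\item Your version also carries over to any monomial ideal whenever $\ba-\ba^{\min}\in\ZZ^\cV_{\geq 0}$, because multiplication by $x_v$ stays bijective in the relevant weights. This recovers the remark following Theorem~\ref{thm:localizationformula}.
\end{itemize}
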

	
	In \cite[Lemma 2]{ac1}, Altmann and Christophersen show that $T^i_{\ba-\bb}(\cK)$ for $i=1,2$ vanishes unless $\bb$ only has $0,1$-entries. This is not true for higher cotangent cohomology (see Example \ref{ex:211}), but we are able to show the following bound:
	
	\begin{proposition}[See Theorem \ref{thm:bound}]\label{prop:bound}
		Let $\cK$ be a simplicial complex on vertex set $\cV$.
		Let $i \in \ZZ_{\geq 2}$, $\ba, \bb \in \ZZ_{\geq 0}^\cV$, $\bc=\ba-\bb\in \ZZ^\cV$.
		If $\bb\notin \{0,\ldots,i-1\}^\cV$, then 
		$T^i_{\bc}=0$.
	\end{proposition}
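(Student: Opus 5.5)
Using the localization formula (Theorem~\ref{thm:localization}) we reduce to the case $\ba=0$. If $a\notin\cK$ there is nothing to prove. Otherwise $T^i_{\ba-\bb}(\cK)\cong T^i_{-\bb}(\link(a,\cK))$, and the support $b$ is disjoint from $a$, so $\bb$ is unchanged by the reduction. It therefore suffices to show that $T^i_{-\bb}(\cK)=0$ whenever some entry $\bb_v\geq i$.

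To compute $T^i$ we use a $\ZZ^\cV$-graded free resolution of $S_\cK$ by free algebras over $P=\KK[x_v\mid v\in\cV]$, for instance a graded Tate model (or the arborescent resolution mentioned in the abstract). Take a minimal $\ZZ^\cV$-graded semifree dg-algebra resolution $R=P[\xi_\alpha]\to S_\cK$ over $P$, and let $\theta_1,\theta_2,\dots$ denote its generators in homological degrees $1,2,\dots$. Then $T^i$ is the $i$-th cohomology of $\Der_P(R,S_\cK)$. This complex is $\prod_\alpha S_\cK\cdot\partial/\partial\xi_\alpha$, where a generator of homological degree $k$ contributes in cohomological degree $k$ (degree $0$ corresponds to $\Der(P,S_\cK)$, i.e. the variables $x_v$).

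Each generator $\xi$ has a multidegree $\deg\xi\in\ZZ_{\ge0}^\cV$, and $\partial/\partial\xi$ has weight $-\deg\xi$. So the weight $-\bb$ part of the degree-$k$ term is $\bigoplus_{\deg\xi=\bb'}(S_\cK)_{\bb'-\bb}$, and it requires $\bb'\geq \bb$ coordinatewise.

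The key claim, and the main obstacle, is this: every generator of homological degree $k\geq1$ has multidegree with all entries at most $k$. Degree-one generators map to square-free monomials, so their entries are at most $1$. Inductively, a degree-$k$ generator kills a cycle that is a polynomial in lower generators, such as a product $\xi_{\alpha_1}\cdots\xi_{\alpha_r}$ with degrees summing to $k$ (times monomials). Since the multidegrees add, each entry is at most $\sum k_j = k$. One must check that minimality allows only such cycles, with no extra polynomial factors $x_v$ raising entries beyond $k$. This holds because the Koszul/Taylor-type homology of a square-free monomial ideal lives in multidegrees bounded by homological degree in each coordinate. This is exactly where the explicit arborescent resolution would be used, since its generators are indexed by trees on square-free monomials and their degrees can be read off.

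Granting the claim, the cochain module in cohomological degree $k\le i+1$ at weight $-\bb$ vanishes whenever some $\bb_v>k$. So if $\bb_v\geq i+2$, the degrees $i-1$, $i$ and $i+1$ are all zero. For the sharper bound $\bb_v\geq i$ we need a finer count, and here characteristic zero is used. Generators whose degree has an entry equal to their homological degree $k$ in coordinate $v$ arise only from $\xi^{(k)}$-type products, namely divided powers or repeated products of degree-one generators all involving $x_v$. In characteristic $0$ these can be chosen to be symmetric products and are absent from a minimal model (Harrison versus Hochschild). This cuts the bound to entry at most $k-1$ for $k\ge2$, which gives vanishing for $\bb_v\geq i$ when $i\geq2$. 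I expect this refinement of the multidegree bound to be the hardest step.
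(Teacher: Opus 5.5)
Your overall plan matches the paper's. You reduce to $\ba=0$ by localization. You then show that the degree-$i$ term of the derivation complex already vanishes in weight $-\bb$, because no generator of homological degree $i$ has weight $\ge\bb$ coordinatewise. The gap is that you never establish the bound this needs: every generator of homological degree $k\ge 2$ has all weight coordinates at most $k-1$.

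Your ``key claim'' only gives entries at most $k$. Even that leaves the monomial coefficients of the cycles in a \emph{minimal} model uncontrolled; the sentence about Taylor-type homology living in bounded multidegrees is an assertion, not an argument. The step from $k$ to $k-1$ via characteristic zero, divided powers and Harrison versus Hochschild cohomology is not justified, and it is not what drives the bound. The off-by-one comes from your own count. A generator of homological degree $k$ kills a cycle of homological degree $k-1$, so the products $\xi_{\alpha_1}\cdots\xi_{\alpha_r}$ occurring in it have degrees summing to $k-1$, not $k$.

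The detour through degrees $i-1$ and $i+1$ is also unnecessary. If the degree-$i$ cochains vanish in weight $-\bb$, so does $T^i_{-\bb}$. With your bound of $k$ you would already get vanishing for $\bb_v\ge i+1$, not $i+2$.

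The paper closes the gap by giving up minimality and reading weights off the explicit Koszul--Tate--Taylor resolution built from the minimal generators of $I_\cK$. A generator of $P_k^\new$ is represented by a string. Its homological degree is the number of $f$'s plus the number of left brackets. Its weight is the sum, over its primitive substrings $\{f_1\cdots f_m\}$, of $\wt(\lcm(f_1,\ldots,f_m))\in\{0,1\}^\cV$.

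Each primitive substring contains at least one $f$, and a non-primitive string contains at least one bracket. So every coordinate of the weight is at most the number of $f$'s, which is at most $k-1$. Primitive generators have coordinates at most $1\le k-1$ since $k\ge 2$. Hence if some $\bb_v\ge i$, then $W_i^{-\bb}=0$, and Theorem~\ref{thm.computingTviahomology} gives $T^i_{-\bb}(\cK)=0$. This step is purely combinatorial. Working with a minimal model only makes it harder, since its generators and their multidegrees are not explicit.
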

	
	For any simplicial complex $\cK$, 
	Theorem~\ref{thm:localization} and Proposition~\ref{prop:bound} allow us to reduce
	the computation of $T^i_{S_\cK/\KK}$ to the computation of $T^i_{-\bb}(\cK')$ as $\cK'$ ranges over the links of $\cK$ and $\bb$ ranges over finitely many possibilities. Such $T^i_{-\bb}(\cK')$ can be computed as the cohomology of an explicit complex of finite dimensional vector spaces (see \S\ref{sec:cot}), but the terms in this complex are often too large for computations to be practical.
	
	One might hope to realize $T^i_\bc(\cK)$ for $i>2$ as the $(i-1)$st relative cohomology of some subsets of the cone over the topological realization of $\cK$, generalizing the results of \cite{ac1} for $i=1,2$. However, this cannot be possible, as there are simplicial complexes $\cK$ such that $T^i_{S_\cK/\KK}\neq 0$ for $i$ arbitrarily large (see Example \ref{ex:rnc3}).
	Instead, we will have to settle for describing $T^i_{\ba-\bb}(\cK)$ for special choices of $\bb$. By the localization formula, we can and will always reduce to the case $\ba=0$.

	For $\bb\in\ZZ_{\geq 0}^\cV$ with non-zero entries $j_1,\ldots,j_k$ listed in non-increasing order, we say $\bb$ has \emph{type} $(j_1,j_2,\ldots,j_k)$.
	We define $|\bb|$ to be the sum $j_1+\ldots+j_k$.
	Given any subset $b\subseteq\cV$,
	set
	\[
	\cK_b=\left\{f\in \cK \ \Big|\ f\cap \big(b \cup \cV(\bigcap_{v\in b}\link (v,\cK))\big)=\emptyset\right\}.
	\]
	Here, $\cV(\bigcap_{v\in b}\link (v,\cK))$ denotes the set of vertices of $\bigcap_{v\in b}\link (v,\cK)$.
	The set $\cK_b$ is a subcomplex of $\cK$. When considering cohomology groups of complexes such as $\cK_b$, we will implicitly identify $\cK_b$ with its topological realization, see \S\ref{sec:basic}.
	
	Our first result along these lines is the following:
	\begin{theorem}[See \S\ref{sec:b1}]\label{thm:b1}
		Assume that every element of $\cV$ belongs to $\cK$ and consider $\bb\in\ZZ_{\geq 0}^\cV$ of type $(1)$. Then for $i\geq 1$,
		\[
		T^i_{-\bb}(\cK)\cong \widetilde H^{i-1}(\cK_b,\KK),
		\]
		the $(i-1)$st reduced cohomology of $\cK_b$.
	\end{theorem}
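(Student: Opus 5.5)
The plan is to reduce to $b=\{v\}$ for a single vertex $v\in\cV$, so that $\bb=\be_v$. I will compute $T^i_{-\be_v}(\cK)$ by writing $S_\cK$ as an algebra over a smaller Stanley--Reisner ring in which $x_v$ plays a special role. Let $\cK'=\{f\in\cK\mid v\notin f\}$ be the deletion of $v$, and let $L=\link(v,\cK)$. Then
\[
S_\cK\cong S_{\cK'}[x_v]/\bigl(x_v\cdot J\bigr),
\]
where $J\subseteq S_{\cK'}$ is the ideal of $S_{\cK'}$ generated by the monomials $x_g$, $g\in\cK'$, with $g\cup\{v\}\notin\cK$; equivalently $J$ is the kernel of $S_{\cK'}\to S_L$. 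The starting point is the Jacobi--Zariski (transitivity) long exact sequence for $\KK\to S_{\cK'}\to S_\cK$, restricted to weight $-\be_v$.

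The first step is to dispose of the terms coming from $S_{\cK'}$. The cotangent cohomology of $S_{\cK'}$ with values in $S_\cK$ is concentrated in weights whose $v$-coordinate is $\ge 0$, since $x_v$ does not occur in $S_{\cK'}$ and $S_\cK$ has no negative $x_v$-degrees. So $T^i(S_{\cK'}/\KK;S_\cK)_{-\be_v}=0$ for all $i$. The long exact sequence then gives
\[
T^i_{-\be_v}(\cK)\cong T^i(S_\cK/S_{\cK'};S_\cK)_{-\be_v}.
\]
The relative cotangent complex of $S_\cK$ over $S_{\cK'}$ is that of the one-variable relation $x_v J=0$. I will resolve it using the obvious presentation: one generator $x_v$, and relations $x_v x_g$ indexed by the minimal generators of $J$. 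Higher terms are governed by the syzygies of $J$ over $S_{\cK'}$, which is the job of the arborescent-type resolution the paper develops.

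The second step is to check which parts survive in weight $-\be_v$. A cochain of weight $-\be_v$ must send $x_v$, or a relation $x_vx_g$, or a higher syzygy of multidegree $\be_v+\mathbf{m}$, to an element of $S_\cK$ of weight $\mathbf m-\be_v$. Such an element is nonzero only if $\mathbf m$ has $v$-coordinate $0$; the target is then the monomial $x^{\mathbf m}$ viewed in $S_\cK$. I expect the resulting complex to be the $\KK$-dual, in appropriate degrees, of a Koszul/Taylor-type complex computing $\mathrm{Tor}^{S_{\cK'}}_\bullet(S_L,\KK)$-like data, twisted by the condition that $x^{\mathbf m}\neq 0$ in $S_\cK$, that is, $\mathbf m$ is supported on a face of $\cK'$.

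The third step is to identify the cohomology of this complex with $\widetilde H^{i-1}(\cK_b,\KK)$. Faces $g$ that meet $\{v\}\cup\cV(L)$ should contribute acyclic pieces: if $w\in\cV(L)$, multiplication by $x_w$ gives a contracting homotopy, much as in the proof of the localization formula (Theorem~\ref{thm:localization}). What remains is then indexed by faces of $\cK$ avoiding $v$ and $\cV(L)$, which is exactly $\cK_b$, with the simplicial coboundary. The shift by one comes from the relative degree of the generator $x_v$, and the hypothesis that every vertex lies in $\cK$ ensures $v\in\cK$, so the presentation above is valid.

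I expect this last identification to be the main obstacle: it needs an explicit enough model of the higher syzygies of $J$ to exhibit both the homotopy killing the faces that meet $\cV(L)$ and the match of differentials with the reduced simplicial cochain differential. To handle it I would first filter the complex by the support of $\mathbf m$ and run a Hochster-formula style argument on each support. As a sanity check, for $i=1,2$ the result should reproduce the Altmann--Christophersen description.
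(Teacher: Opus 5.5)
\textbf{Verdict: genuine gap.} Your reduction is correct, but the step that produces the topology is missing, and it is the entire content of the theorem.

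\textbf{What your first two steps actually prove.} The first step is correct. We have $S_\cK\cong S_{\cK'}[x_v]/(x_vJ)$, and the cotangent cohomology of $S_{\cK'}$ with values in $S_\cK$ vanishes in weight $-\be_v$. So Jacobi--Zariski gives $T^i_{-\be_v}(\cK)\cong T^i(S_\cK/S_{\cK'};S_\cK)_{-\be_v}$.

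The second step has a slip: a cochain of weight $-\be_v$ sends a generator of weight $\be_v+\mathbf m$ into $(S_\cK)_{\mathbf m}$, not into weight $\mathbf m-\be_v$. Your claim that only generators of $v$-weight one contribute also needs an argument. One works: restricting a minimal relative model to weights supported on a face of $\cK$ shows it has no generators of such weights, so for $v$-weight at least two the target $x^{\mathbf w-\be_v}$ vanishes. The $v$-weight-one strand of that model is then a free resolution of $S_L=S_{\cK'}/J$ over $S_{\cK'}$, shifted by $\be_v$. So your first two steps amount to
\[
T^i_{-\be_v}(\cK)\cong \Ext^i_{S_{\cK'}}(S_L,S_{\cK'})_{\mathbf 0}.
\]
This is a clean reformulation, but it contains no topology yet. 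For the identification with $\widetilde H^{i-1}(\cK_b,\KK)$ your proposal offers only an expectation.

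\textbf{Why the tools you name do not reach the third step.}
\begin{itemize}
\item The base $S_{\cK'}$ is not regular, so resolutions of $S_L$ over it are in general infinite and not of Taylor type.
\item The paper's arborescent construction starts from the finite Taylor resolution over a polynomial ring, so it does not produce such resolutions.
\item Hochster's formula concerns $\mathrm{Tor}$ over a polynomial ring, not $\Ext$ over $S_{\cK'}$.
\item Multiplication by $x_w$ raises the weight by $\be_w$. It is therefore not even an endomorphism of a complex living in the single weight $-\be_v$, let alone a contracting homotopy.
\item The localization formula is proved by identifying the complexes $W^\bc_\bullet$ directly, not by any homotopy of this kind.
\end{itemize}

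\textbf{What the paper does instead.} It stays over the polynomial ring, where the link vertices are removed topologically rather than algebraically.
\begin{itemize}
\item For $\bb$ of type $(1)$, only primitive Taylor generators $\{g_0\cdots g_j\}$ lie in $W^{-\bb}_\bullet$. For a non-primitive generator $y$, the monomial $x^{\wt(y)-\bb}$ is divisible by $x^{\wt(p)}\in I_\cK$ for one of its primitive pieces $p$.
\item These generators are matched with the tuples in $Y^{(j)}$, where $Y=\{f\in\cK: f\cup b\notin\cK\}$. This identifies $\Hom(W^{-\bb}_\bullet,\KK)$ in positive degrees with the Altmann--Christophersen complex $K^\bullet(Y)[-1]$.
\item Lemmas 7 and 8 of Altmann--Christophersen (and their Theorem 9 for $i=1$) compare this complex, indexed by tuples of faces, with simplicial cohomology. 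This gives $H^{i-1}(\langle Y\rangle,\KK)$.
\item Finally, $\langle Y\rangle$, the complement of $\Delta(b)*\link(b,\cK)$ in $\cK$, deformation retracts onto $\cK_b$.
\end{itemize}
The comparison between a complex indexed by tuples of faces and simplicial cohomology is the missing idea in your proposal.
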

	
	\subsection{Results for flag complexes}
	For our further results, we will restrict to the case that $I_\cK$ is generated by quadrics. This is equivalent to $\cK$ being a \emph{flag complex}, see \S\ref{sec:flag}.
	In this situation, we have a significant improvement on Proposition \ref{prop:bound}.
	\begin{proposition}\label{prop:bound2}
		Let $\cK$ be a flag complex. Then $T^i_{\ba-\bb}(\cK)=0$ if $i\geq 1$ and $|\bb|> i+1$, or if $i\geq 3$ and $\bb\notin \{0,\ldots,i-2\}^\cV$.
	\end{proposition}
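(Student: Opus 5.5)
By the localization formula (Theorem~\ref{thm:localization}) we may assume $\ba=0$, replacing $\cK$ by a link of $\cK$. This is harmless: links of flag complexes are again flag, and $T^i_{\ba-\bb}(\cK)$ either vanishes or equals $T^i_{-\bb}(\link(a,\cK))$. So the task is to bound the $\bb$ for which $T^i_{-\bb}(\cK)$ can be nonzero when $I_\cK$ is generated by quadrics.

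The plan is to compute $T^i$ from a free (or cofibrant) resolution whose generators are controlled by the quadratic generators, namely the arborescent resolution promised in the abstract. Recall that $T^i_{S/\KK}$ is the cohomology in degree $i$ of $\Hom_S(L_{S/\KK},S)$, where $L_{S/\KK}$ is the cotangent complex. For a monomial quotient one can build a multigraded resolution (a Tate-type or arborescent model) in which the generators of homological degree $j$ correspond to certain trees, or iterated syzygies, of the quadratic monomials $x_vx_w$ with $\{v,w\}\notin\cK$. For each such generator I will bound the multidegree: a generator in homological degree $j\geq 1$ should have multidegree $\bu$ with $|\bu|\leq j+1$, and, for $j\geq 2$, each entry satisfying $\bu_v\leq j-1$. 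In the arborescent resolution the degree-$j$ generators are built from $j$ edges of nonfaces glued along a tree on $j+1$ vertices, so the total degree is $j+1$. A vertex $v$ of the tree occurs in as many edges as its valence, and the valence in a tree with $j$ edges is at most $j$. This gives $\bu_v\leq j$; to get the sharper bound $\bu_v\leq j-1$ I would need to use the precise arborescent construction.

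The cochain complex computing $T^i_{-\bb}$ has terms $\Hom$ from the degree-$i$ generators, roughly $\bigoplus_{\text{gens } e \text{ of deg } i} S_{\deg e-\bb}$. This is nonzero only if $\deg e-\bb\geq 0$ in the positions where $S$ has nonzero pieces, which forces $\bb\leq \deg e$ componentwise, with $\ba=0$ after localization. In particular it forces $|\bb|\leq|\deg e|\leq i+1$ and $\bb_v\leq\max_e \deg(e)_v$. The generator degrees here are $i$ (homological $i$, i.e.\ $T^i$), and the $-1$ shift comes from the cotangent complex, whose degree-$i$ part of $\Hom$ uses the syzygy generators of homological degree $i$ in the model. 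That yields the vanishing for $|\bb|>i+1$. For the entry bound with $i\geq 3$, I will argue that a degree-$i$ generator with a vertex $v$ of multiplicity $i-1$ or more must be a star tree centered at $v$. For such a star tree I would show that the corresponding cochains can be killed by a homotopy: either they are acyclic in weight $-\bb$, or the star component splits off as the Koszul-type complex of the monomials $x_vx_{w_1},\dots,x_vx_{w_k}$, which is a complete intersection after factoring out $x_v$ and so contributes nothing in $T^{\ge 3}$.

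The main obstacle is this last step, getting $\bb_v\leq i-2$ rather than the $\bb_v\leq i-1$ that naive degree counting gives (cf.\ Proposition~\ref{prop:bound}). I would first try to filter the complex by the $v$-multidegree and show that the top $v$-graded piece is a Koszul complex on distinct variables $x_{w}$, which is exact in the relevant weight.
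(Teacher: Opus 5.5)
Both halves of your proposal have a genuine gap.

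\textbf{The bound $|\bb|>i+1$.} The reduction to $\ba=0$ is fine. So is the observation that $\Der^i(P,S)_{-\bb}=0$ once no degree-$i$ generator has weight $\bu\geq\bb$. The problem is the weight bound you feed into this, which is false for the resolution you name. Generators of the arborescent (Koszul--Tate--Taylor) resolution are not trees on $j+1$ vertices. For example, if $\cK$ is a $4$-cycle then $I_\cK=(x_1x_3,x_2x_4)$, and $\{f_1f_2\}$ has homological degree $2$ and weight $(1,1,1,1)$. In general a degree-$j$ generator can have total weight $2j$. So in that model $W_i^{-\bb}\neq 0$ for many $\bb$ with $|\bb|>i+1$, and no vanishing follows from counting. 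The statement ``degree-$j$ generators have total weight $j+1$'' holds only for a \emph{minimal} model, and only because $S_\cK$ is Koszul (a quadratic monomial ideal is its own quadratic Gr\"obner basis). Koszulity forces the deviations to be concentrated on the diagonal, and that is exactly the statement about generator weights. This is the missing ingredient, supplied in the paper by Theorem~\ref{thm:koszul} and Corollary~\ref{cor:koszul}; a tree heuristic cannot replace it.

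\textbf{The entry bound $\bb\in\{0,\ldots,i-2\}^\cV$.} You leave the key step open, and the sketched mechanism is not an argument. That $(x_{w_1},\dots,x_{w_k})$ is a complete intersection says nothing about the weight $-\bb$ piece of $T^i(\cK)$. Nothing shows that the ``star'' generators span a subcomplex that splits off. What is needed is an explicit contraction in the Koszul--Tate--Taylor resolution, in two steps.
\begin{enumerate}
\item \emph{Shape of the generators.} A primitive generator has $x_v$-exponent at most $1$. A bracket $[s_1\cdots s_k]$ with $k\geq 2$ has homological degree $1+\sum\deg s_m$, while its $x_v$-exponent is the sum of those of the $s_m$. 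By induction, a degree-$i$ generator has $x_v$-exponent at most $i-1$; this is the bound you could not get. Moreover, if $\bb_v=i-1$ with $i\geq 3$, every generator of $W_i^{-\bb}$ has the form $y=[\{f_1\}\cdots\{f_{i-1}\}]$ with $f_m=x_vx_{w_m}$ and the $w_m$ distinct.
\item \emph{The contraction.} Choose $j$ with $w_j\notin b$. This is possible unless $\wt(y)=\bb$, in which case any $j$ works. Set $y'=[\{f_1f_j\}\{f_2\}\cdots\{f_{i-1}\}]$. Then $\wt(y')=\wt(y)+e_{w_j}$. Hence $\wt(y')-\bb$ either has the same support as $\wt(y)-\bb$ or equals $e_{w_j}$, so $y'\in W_{i+1}^{-\bb}$. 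In $\partial(y')$, every term other than $\pm y$ does one of three things: it lies in $P^{\old}$ and is discarded, it contains $f_j$ twice and vanishes, or it has $x_v$-exponent $i-2<\bb_v$ and so falls outside $\nabla_{-\bb}$.
\end{enumerate}
Thus $\partial^{-\bb}(y')=\pm y$, so $W_i^{-\bb}=\partial^{-\bb}(W_{i+1}^{-\bb})$, and $T^i_{-\bb}(\cK)=0$ by Theorem~\ref{thm.computingTviahomology}.
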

	We also show that $T^i_{\ba-\bb}(\cK)=0$ if $|\bb|= i+1$ for $i\leq 4$  and conjecture that this holds for all $i$ (see Remark \ref{rem:koszul}, and Propositions \ref{prop:t3bound} and \ref{prop:t4bound}).

	We likewise obtain:
	\begin{theorem}[See \S\ref{sec:b2}]\label{thm:b2}
		Consider $\bb\in\ZZ_{\geq 0}^\cV$ of type $(1,1)$. Suppose that $\cK$ is a flag complex. If $b\in\cK$, then $T^i_{-\bb}(\cK)=0$ for $i\geq 1$. If $b\notin\cK$, then for $i\geq 2$,
		\[
		T^i_{-\bb}(\cK)\cong \widetilde H^{i-2}(\cK_b,\KK),
		\]
		the $(i-2)$nd reduced cohomology\footnote{Recall that $\widetilde{H}^{-1}(\cK_b,\KK)$ vanishes unless $\cK_b=\emptyset$, in which case it is $\KK$.} of $\cK_b$.
	\end{theorem}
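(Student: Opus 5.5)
Since $\cK$ is flag, $I_\cK$ is generated by the quadrics $x_ux_v$ with $\{u,v\}\notin\cK$. I will compute $T^i_{-\bb}(\cK)$ from the explicit complex of \S\ref{sec:cot}, built from a (multigraded) resolution of $S_\cK$ over the polynomial ring. For quadratic monomial ideals one can use the arborescent resolution. Write $b=\{u,v\}$ and $\bb=\be_u+\be_v$. Only terms of the resolution whose multidegree is compatible with weight $-\bb$ contribute. Concretely, these are generators involving the monomial $x_ux_v$, or monomials divisible by $x_u$ or $x_v$, and they pair with elements of $S_\cK$ of weight (multidegree of generator) $-\bb$.

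\textbf{Case $b\in\cK$.} Then $x_ux_v\notin I_\cK$. The plan is to show the relevant weight-$(-\bb)$ complex is acyclic by exhibiting a contracting homotopy. I would first use the localization philosophy of Theorem~\ref{thm:localization} to reduce to the star of $b$. Then I would argue that multiplication by $x_ux_v$ (which is a nonzerodivisor on the relevant graded pieces) identifies the complex with a cone of an isomorphism. An alternative route is to compare with the type $(1)$ computation of Theorem~\ref{thm:b1} via the long exact sequence associated to $\bb=\be_u+\be_v$ versus $\be_u$. Because $v\in\link(u,\cK)$, the corresponding subcomplexes coincide, and the connecting maps become isomorphisms, forcing vanishing.

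\textbf{Case $b\notin\cK$.} Now $q=x_ux_v$ is one of the quadratic generators. The weight-$(-\bb)$ part of the cotangent complex should decompose as follows.
\begin{enumerate}
\item A contribution from $q$ itself: the generator $q$ maps to the constant $1\in (S_\cK)_0$. This gives a copy of $\KK$ in cohomological degree $2$, corresponding to $\widetilde H^{-1}$ when $\cK_b=\emptyset$.
\item Contributions from syzygies and higher arborescent generators involving $q$ together with other quadrics $x_ux_w$ or $x_vx_w$. These pair with monomials $x_w\cdots$ in $S_\cK$ of the complementary weight.
\end{enumerate}
I would organize these higher terms by the set of auxiliary vertices $w$ used. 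By flagness, the surviving terms correspond to faces $f$ of $\cK$ avoiding $b$ and avoiding the common link $\bigcap_{v\in b}\link(v,\cK)$; vertices in that common link give cancelling pairs. The result should be an identification of the complex, shifted by $2$, with the augmented simplicial cochain complex of $\cK_b$. That yields $T^i_{-\bb}\cong\widetilde H^{i-2}(\cK_b,\KK)$ for $i\ge 2$, in parallel with the proof of Theorem~\ref{thm:b1}, where the shift was $1$.

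The main obstacle is this last identification. One must check that the arborescent differentials restricted to weight $-\bb$ match the simplicial coboundary with correct signs. One must also show that the contributions from vertices in the common link, and from faces meeting $b$, form an acyclic subcomplex. I would first try a filtration by the number of vertices of $\bigcap_{v\in b}\link(v,\cK)$ involved, showing each graded piece is a cone. Failing that, I would induct on the vertices of the common link via a deletion long exact sequence.
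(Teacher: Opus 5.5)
\textbf{There is a genuine gap.} The proposal never says what happens to the non-primitive arborescent generators in weight $-\bb$, and both cases depend on exactly that. The missing idea is the splitting reduction.

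\emph{The splitting reduction.} For $\bb$ of type $(1,1)$, with $b=\{u,v\}$, every generator of $W^{-\bb}_\bullet$ is of one of two kinds.
\begin{enumerate}
\item Primitive, $\{f_1\cdots f_k\}$, where each edge $f_j$ of $G(I_\cK)$ meets $b$ and the endpoints outside $b$ form a face.
\item Quasiprimitive, $[p_1p_2]$, where every edge of $p_1$ contains $u$ but not $v$, and every edge of $p_2$ contains $v$ but not $u$.
\end{enumerate}
Hence $W^{-\bb}_\bullet$ is the mapping cone of the map from non-primitive to primitive generators that sends $[p_1p_2]$ to the primitive part $\pm\{p_1p_2\}$ of its differential. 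This map is injective: $p_1$ and $p_2$ are forced to be the edges through $u$ and through $v$, so each primitive generator has at most one splitting. By Lemma~\ref{lemma:homological} and Proposition~\ref{prop:nonsplitting}, the homology is therefore that of the cokernel $\widetilde W^{-\bb}_\bullet$. This cokernel is spanned by the primitive generators with no splitting, which are exactly those containing $q=x_ux_v$.

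\emph{Case $b\in\cK$.} No generator contains $q$. Every edge of $G(y)$ meets exactly one of $u,v$, and grouping the edges by that endpoint gives a splitting. So the map is an isomorphism and the cone is acyclic for all $i\ge 1$. Your phrase ``cone of an isomorphism'' has the right shape, but the isomorphism is this splitting bijection. Multiplication by $x_ux_v$ plays no role; it is usually a zero divisor in $S_\cK$, killing every $x_w$ with $\{u,w\}\notin\cK$. The long exact sequence comparing weights $-\be_u-\be_v$ and $-\be_u$ is neither constructed nor supplied by anything in this setup.

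\emph{Case $b\notin\cK$.} Your description of what cancels is inaccurate in three ways.
\begin{itemize}
\item Vertices of $\link(u,\cK)\cap\link(v,\cK)$ never occur in any generator of weight $-\bb$, since every edge involved must meet $b$. So they give no cancelling pairs. What actually cancels is each primitive generator not containing $q$ against its unique splitting.
\item The survivors $\{f_2\cdots f_j\,q\}$ are indexed by faces of $\cK_b'$, not of $\cK_b$. If $\{u,w\},\{v,w\}\notin\cK$, then $\{x_ux_w\,x_vx_w\,q\}$ is a surviving generator in homological degree $3$. It corresponds to an edge of $\cK_b'$ that collapses to the single vertex $w$ of $\cK_b$. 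So the chain-level identification with the augmented (co)chain complex of $\cK_b$ that you propose is false. What holds is $\widetilde W^{-\bb}_\bullet\cong\widetilde C_\bullet(\cK_b')[-2]$, together with the fact that $\phi_b\colon\cK_b'\to\cK_b$ is a homotopy equivalence because preimages of simplices are simplices (Lemma~\ref{lemma:homotopic}).
\item The generator $\{q\}$ sits in homological degree $1$. It therefore contributes to $T^1$, matching $\widetilde H^{-1}$ under the shift by $2$, not to cohomological degree $2$ as you state.
\end{itemize}
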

	
	To describe our results for $\bb$ of type $(1,1,1)$, we need a different simplicial complex $\widehat\cK_b$. Recall that the \emph{join} $\cK_1*\cK_2$ of simplicial complexes $\cK_1$ and $\cK_2$ on disjoint vertex sets is the complex whose faces are of the form $f_1\cup f_2$, where $f_1\in \cK_1$ and $f_2\in \cK_2$. Let $\Delta(\cE_b)$ be the full simplex whose vertices correspond to $2$-element subsets of $b$ not contained in $\cK$.
	We define $\widehat\cK_b$ to be the subcomplex of $\Delta(\cE_b)*\cK_b$ arising as the union of 
	\[
	\Delta(\cE_b)^{(0)}*\cK_b
	\]
	with
	\[
	\Delta(f)*(\link(\{w_f\}, \cK)\cap \cK_b)
	\]
	as $f$ ranges over all edges of $\Delta(\cE_b)$. Here, $\Delta(\cE_b)^{(0)}$ is the $0$-skeleton of $\Delta(\cE_b)$, $\Delta(f)$ is the simplicial complex consisting of $f$ and all its subsets, and $w_f$ is the element of $b$ contained in both vertices of $f$.
	
	\begin{theorem}[See \S\ref{sec:b3}]\label{thm:b3}
		Consider $\bb\in \ZZ_{\geq 0}^\cV$ of type $(1,1,1)$. Suppose that $\cK$ is a flag complex.
		If more than one edge of $\cK$ is a subset of $b$, then $T^i_{-\bb}(\cK)=0$ for $i=3,4$. If instead at most one edge of $\cK$ is a subset of $b$, then for $i=3,4$
		\[
		T^i_{-\bb}(\cK)\cong H^{i-2}(\widehat\cK_b,\KK).
		\]
	\end{theorem}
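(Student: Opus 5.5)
Write $b=\{u,v,w\}$ with $\bb=e_u+e_v+e_w$. Since $\cK$ is flag, $S_\cK=P/I_\cK$ with $P=\KK[x_v\mid v\in\cV]$ and $I_\cK$ generated by the quadrics $x_px_q$ for non-edges $\{p,q\}$. I plan to compute $T^i_{-\bb}(\cK)$ from the complex of \S\ref{sec:cot}, built from the arborescent resolution of $S_\cK$ over $P$, restricted to weight $-\bb$.

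\emph{Step 1: weight analysis.} Because $\bb$ is squarefree of total degree $3$, a cochain of weight $-\bb$ assigns to each generator of the resolution a monomial of $S_\cK$ whose weight equals the generator's weight minus $\bb$. For $i=3,4$, only generators of multidegree at most $4$ or $5$ can contribute. Their weights are sums of edges $\{p,q\}\notin\cK$, and the result must be nonnegative after subtracting $\bb$. This forces each contributing generator to "cover" $b$, meaning its support contains $b$. The support can also include at most one or two further vertices, and these then appear in the target monomial, which must be nonzero in $S_\cK$.

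\emph{Step 2: identify the cochain spaces.} I will organize the surviving generators by which non-edges inside $b$ they use; these correspond to vertices of $\Delta(\cE_b)$, together with the extra vertices $x\notin b$. A generator that uses two non-edges $\{u,v\},\{v,w\}$ sharing $w_f=v$ gives a term indexed by an edge $f$ of $\Delta(\cE_b)$. The extra vertex must then be compatible with $v$, which is what produces the factor $\link(\{w_f\},\cK)\cap\cK_b$. The vertices $x\notin b$ appearing alone contribute via the star/link conditions, which single out $\cK_b$, exactly as in Theorems~\ref{thm:b1} and \ref{thm:b2}. The goal is to show that the weight-$(-\bb)$ complex, in degrees around $3,4$, is quasi-isomorphic (after shifting by $2$) to the simplicial cochain complex of $\widehat\cK_b$. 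The filtration by $\Delta(\cE_b)^{(0)}*\cK_b$ and the edge pieces should match the filtration of the resolution by the number of quadrics used. I will use a Mayer--Vietoris/spectral sequence argument in which the $(1,1)$ case, Theorem~\ref{thm:b2}, is applied to each pair in $\cE_b$ to identify the graded pieces.

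\emph{Step 3: degenerate case.} If two edges of $\cK$ lie in $b$, then $|\cE_b|\le 1$, and the relevant generators covering $b$ cannot exist in the needed degrees. I expect to show directly that the complex is acyclic in degrees $3,4$. One route is a contracting homotopy given by multiplication by the variable at the common vertex of the two edges, which is allowed since that monomial is nonzero. A localization-style argument in the spirit of Theorem~\ref{thm:localization} is a possible alternative.

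\emph{Main obstacle.} The hard part is Step 2: writing down the arborescent resolution explicitly up to homological degree about $4$ in the relevant multidegrees, and checking that the differentials match the simplicial coboundaries of $\widehat\cK_b$, including signs. This restriction to low degree is also why the statement is limited to $i=3,4$. I would first check small cases, such as $b$ an independent triple with $\cK_b$ a point, to calibrate the shift and the signs before doing the general identification.
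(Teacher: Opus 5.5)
\textbf{Verdict: genuine gap.} Step 2 is missing the reduction that makes any comparison with simplicial chains possible, and Step 3 rests on false claims.

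\textbf{Step 2.} The complex $W_\bullet^{-\bb}$ contains more than primitive generators. It also contains quasiprimitive ones, such as $[\{f_1\}\{f_2\}]$, $[\{f_1f_2\}\{f_3\}]$, $[\{f_1\}\{f_2\}\{f_3\}]$, and in degree $5$ generators like $[[\{f_1\}\{f_2\}]\{f_3\}]$. None of these has a simplicial counterpart, so no amount of explicit bookkeeping will make the differentials match coboundaries of $\widehat\cK_b$ at chain level. The paper proceeds as follows.
\begin{enumerate}
\item It writes $W_\bullet^{-\bb}$ as the mapping cone of the non-primitive part mapping to the primitive part.
\item It uses Lemma~\ref{lemma:homological} to replace $W_\bullet^{-\bb}$ by the non-splitting complex $\widetilde W_\bullet^{-\bb}$. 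For $i=4$ this requires checking that two splittings of a primitive $\{f_1f_2f_3\}$ differ by a boundary, e.g.\ of $[[\{f_1\}\{f_2\}]\{f_3\}]$ (Proposition~\ref{prop:applicable}). This check is what restricts the theorem to $i=3,4$.
\item Remark~\ref{rem:splittings} then shows which primitive $y$ survive: those with $G(y)$ restricted to $b$ connected, and, when it has exactly two edges $e_1,e_2$, the vertices of $G(y)$ outside $b$ do not form a face of $\link(w_{\{e_1,e_2\}},\cK)$. So the link pieces of $\widehat\cK_b$ are the simplices that get killed, not ones that contribute.
\item This gives $\widetilde W_\bullet^{-\bb}\cong C_\bullet(\Delta(\cE_b)*\cK_b',\widehat\cK_b')[-1]$. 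The shift by $2$ comes from contractibility of the join and the long exact sequence.
\end{enumerate}
You must work with $\cK_b'$, because a generator records which quadric $x_zx_v$ ($v\in b$) each outside vertex $z$ uses; only afterwards do you pass to $\widehat\cK_b$ via $\phi_b$.

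Your substitute, a Mayer--Vietoris or spectral sequence argument feeding in Theorem~\ref{thm:b2} for each pair in $\cE_b$, is not coherent as stated. That theorem computes complexes of the different weights $-(e_p+e_q)$. You give no filtration of $W_\bullet^{-\bb}$ whose graded pieces are those complexes. And the pieces $\link(w_f,\cK)\cap\cK_b$ arise from no type $(1,1)$ computation.

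Your Step~1 bounds are also off: $\{x_ux_p\,x_vx_q\,x_wx_r\}$ can lie in $W_3^{-\bb}$ with total weight $6$.

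\textbf{Step 3.} Generators covering $b$ do exist when two edges of $\cK$ lie in $b$. If $\{u,w\},\{v,w\}\in\cK$, $\{u,v\}\notin\cK$ and $\{w,z\}\notin\cK$, then $\{x_ux_v\,x_wx_z\}\in W_2^{-\bb}$, and there are analogues in every higher degree. Multiplication by $x_w$ sends weight $-\bb$ to $-\bb+e_w$, so it is not a contracting homotopy of the weight-$(-\bb)$ complex. The actual reason for vanishing is different. When at most one non-edge lies in $b$, every primitive $y\in W_\bullet^{-\bb}$ has $G(y)$ restricted to $b$ disconnected. Hence $y$ admits a splitting by Remark~\ref{rem:splittings}, and $\widetilde W_\bullet^{-\bb}=0$. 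This again relies on the non-splitting reduction your proposal lacks.
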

	The above results combine to give a complete topological description of $T^3_{\bc}(\cK)$ when $\cK$ is a flag complex.
	In fact, similar statements also apply when $\cK$ is not a flag complex as long as one chooses the weight $\bc$ carefully, see Remarks \ref{rem:nonflagb2} and \ref{rem:nonflagb3}.
	
	For describing $T^4(\cK)$ for a flag complex $\cK$, it turns out that the interesting remaining case of $\bb$ to deal with are types $(1,1,1,1)$ and $(2,1,1)$, see Proposition \ref{prop:t4bound}. We deal with these in \S\ref{sec:b211} and \S\ref{sec:b1111}, also obtaining a complete topological description of $T^4_{\bc}(\cK)$ when $\cK$ is a flag complex.
	
	We illustrate these results with a simple example.
	\begin{example}\label{ex:hex}
		We consider the simplicial complex $\cK$ pictured in Figure \ref{fig:hex1}
		on the vertex set $\cV=\{1,2,\ldots,6\}$. This is a flag complex. The corresponding Stanley-Reisner ideal is generated by $x_{i-1}x_{i+1}$ for $i=1,\ldots 6$ and $x_ix_{i+3}$ for $i=1,2,3$, with indices taken modulo $6$. 
		
		The proper links of $\cK$ are either the empty set or a disjoint union of two points, corresponding to the rings $\KK$ and $\KK[x_1,x_2]/(x_1x_2)$. These both have vanishing $T^i$ for $i\geq 2$ (the latter ring is a complete intersection ring, see also Example~\ref{ex.onegenerator}). By Theorem \ref{thm:localization}, we obtain that $T_\bc^i(\cK)\neq 0$ for some $i\geq 2$ only if $\ba=0$. We now consider possibilities for $\bb$.
		
		For $\bb$ of type $(1)$, up to symmetry $\cK_b$ looks as depicted in Figure \ref{fig:hex2}.
		This is contractible, so by Theorem \ref{thm:b1}, $T^i_{-\bb}(\cK)=0$ for $i\geq 2$.
		
		For $\bb$ of type $(1,1)$ and $b\notin\cK$, up to symmetry $\cK_b$ looks as depicted in Figure \ref{fig:hex2} or Figure \ref{fig:hex3}. By Theorem \ref{thm:b2}, 
		$T_{-\bb}^i(\cK)=0$ for $i\geq 2$ in the former case and $i\geq 3$ in the latter case, and in the latter case $T_{-\bb}^2(\cK)\cong \KK$.
		By the bounds on possible $\bb$ (Proposition \ref{prop:bound2} and the sentence immediately following it), it follows that $T^2(\cK)$ is three dimensional, supported in degrees $(-1,0,0,-1,0,0)$. $(0,-1,0,0,0,0,-1,0)$ and $(0,0,-1,0,0,-1)$. This is also described in \cite[Example 17]{ac1}.
		
		For $\bb$ of type $(1,1,1)$ and at most one edge of $\cK$ contained in $b$, up to symmetry $\cK_b$ looks as depicted in Figure \ref{fig:hex4} or Figure \ref{fig:hex5}. In the former case $\Delta(\cE_b)$ is a $1$-simplex and
		the complex $\widehat \cK_b$ is depicted in Figure \ref{fig:tkb1}. This is clearly contractible, hence $T^3_{-\bb}(\cK)=T^4_{-\bb}=0$ in this case.
		In the latter case $\Delta(\cE_b)$ is a $2$-simplex and
		the complex $\widehat \cK_b$ is depicted in Figure \ref{fig:tkb2}. This deformation retracts to the boundary of $\Delta(\cE_b)$, hence $T^3_{-\bb}(\cK)\cong \KK$ and $T^4_{-\bb}=0$.

		By the above, Proposition \ref{prop:bound2}, and the sentence immediately following it, we see that $T^3(\cK)\cong \KK^2$, supported in degrees $-(1,0,1,0,1,0)$ and $-(0,1,0,1,0,1)$. We will conclude computing $T^4(\cK)$ in Examples \ref{ex:hex211} and \ref{ex:hex1111}.
	\end{example}

	\begin{figure}
		\begin{subfigure}{.3\textwidth}
			\begin{tikzpicture}
				\draw[fill] (0,0) circle [radius=0.05] node[below left] {$1$};
				\draw[fill] (2,0) circle [radius=0.05] node[below right] {$4$};
				\draw[fill] (.5,1) circle [radius=0.05] node[above] {$6$};
				\draw[fill] (1.5,1) circle [radius=0.05] node[above] {$5$};
				\draw[fill] (.5,-1) circle [radius=0.05] node[below] {$2$};
				\draw[fill] (1.5,-1) circle [radius=0.05] node[below] {$3$};
				\draw (0,0) -- (.5,-1) -- (1.5,-1) -- (2,0) -- (1.5,1) -- (.5,1) -- (0,0);
			\end{tikzpicture}
			\caption{The complex $\cK$}\label{fig:hex1}
		\end{subfigure}
		\begin{subfigure}{.3\textwidth}
			\begin{tikzpicture}
				\draw[fill,lightgray] (0,0) circle [radius=0.05] node[below left] {$1$};
				\draw[fill,lightgray] (2,0) circle [radius=0.05] node[below right] {$4$};
				\draw[fill,lightgray] (.5,1) circle [radius=0.05] node[above] {$6$};
				\draw[fill,lightgray] (1.5,1) circle [radius=0.05] node[above] {$5$};
				\draw[fill,lightgray] (.5,-1) circle [radius=0.05] node[below] {$2$};
				\draw[fill,lightgray] (1.5,-1) circle [radius=0.05] node[below] {$3$};
				\draw[lightgray] (0,0) -- (.5,-1) -- (1.5,-1) -- (2,0) -- (1.5,1) -- (.5,1) -- (0,0);
				\draw[fill] (2,0) circle [radius=0.05] node[below right] {$4$};
				\draw[fill] (1.5,1) circle [radius=0.05] node[above] {$5$};
				\draw[fill] (1.5,-1) circle [radius=0.05] node[below] {$3$};
				\draw (1.5,-1) -- (2,0) -- (1.5,1) ;
			\end{tikzpicture}
			\caption{$\cK_{\{1\}}=\cK_{\{2,6\}}$}\label{fig:hex2}
		\end{subfigure}
		\begin{subfigure}{.3\textwidth}
			\begin{tikzpicture}
				\draw[fill,lightgray] (0,0) circle [radius=0.05] node[below left] {$1$};
				\draw[fill,lightgray] (2,0) circle [radius=0.05] node[below right] {$4$};
				\draw[fill,lightgray] (.5,1) circle [radius=0.05] node[above] {$6$};
				\draw[fill,lightgray] (1.5,1) circle [radius=0.05] node[above] {$5$};
				\draw[fill,lightgray] (.5,-1) circle [radius=0.05] node[below] {$2$};
				\draw[fill,lightgray] (1.5,-1) circle [radius=0.05] node[below] {$3$};
				\draw[lightgray] (0,0) -- (.5,-1) -- (1.5,-1) -- (2,0) -- (1.5,1) -- (.5,1) -- (0,0);
				\draw[fill] (.5,1) circle [radius=0.05] node[above] {$6$};
				\draw[fill] (1.5,1) circle [radius=0.05] node[above] {$5$};
				\draw[fill] (.5,-1) circle [radius=0.05] node[below] {$2$};
				\draw[fill] (1.5,-1) circle [radius=0.05] node[below] {$3$};
				\draw (.5,-1) -- (1.5,-1);
				\draw (1.5,1) -- (.5,1);
			\end{tikzpicture}
			\caption{$\cK_{\{1,4\}}$}\label{fig:hex3}
		\end{subfigure}
		\begin{subfigure}{.3\textwidth}
			\begin{tikzpicture}
				\draw[fill,lightgray] (0,0) circle [radius=0.05] node[below left] {$1$};
				\draw[fill,lightgray] (2,0) circle [radius=0.05] node[below right] {$4$};
				\draw[fill,lightgray] (.5,1) circle [radius=0.05] node[above] {$6$};
				\draw[fill,lightgray] (1.5,1) circle [radius=0.05] node[above] {$5$};
				\draw[fill,lightgray] (.5,-1) circle [radius=0.05] node[below] {$2$};
				\draw[fill,lightgray] (1.5,-1) circle [radius=0.05] node[below] {$3$};
				\draw[lightgray] (0,0) -- (.5,-1) -- (1.5,-1) -- (2,0) -- (1.5,1) -- (.5,1) -- (0,0);
				\draw[fill] (.5,1) circle [radius=0.05] node[above] {$6$};
				\draw[fill] (1.5,1) circle [radius=0.05] node[above] {$5$};
				\draw[fill] (1.5,-1) circle [radius=0.05] node[below] {$3$};
				\draw (1.5,1) -- (.5,1);
			\end{tikzpicture}
			\caption{$\cK_{\{1,2,4\}}$}\label{fig:hex4}
		\end{subfigure}
		\begin{subfigure}{.3\textwidth}
			\begin{tikzpicture}
				\draw[fill,lightgray] (0,0) circle [radius=0.05] node[below left] {$1$};
				\draw[fill,lightgray] (2,0) circle [radius=0.05] node[below right] {$4$};
				\draw[fill,lightgray] (.5,1) circle [radius=0.05] node[above] {$6$};
				\draw[fill,lightgray] (1.5,1) circle [radius=0.05] node[above] {$5$};
				\draw[fill,lightgray] (.5,-1) circle [radius=0.05] node[below] {$2$};
				\draw[fill,lightgray] (1.5,-1) circle [radius=0.05] node[below] {$3$};
				\draw[lightgray] (0,0) -- (.5,-1) -- (1.5,-1) -- (2,0) -- (1.5,1) -- (.5,1) -- (0,0);
				\draw[fill] (.5,1) circle [radius=0.05] node[above] {$6$};
				\draw[fill] (2,0) circle [radius=0.05] node[below right] {$4$};
				\draw[fill] (.5,-1) circle [radius=0.05] node[below] {$2$};
			\end{tikzpicture}
			\caption{$\cK_{\{1,3,5\}}$}\label{fig:hex5}
		\end{subfigure}\\
		\begin{subfigure}{.3\textwidth}
			\begin{tikzpicture}
				\draw[fill,lightgray] (-1,0) -- (0,-1) -- (1,0) -- (0,2) -- (-1,0);
				\draw[fill] (-1,0) circle [radius=0.05] node[ left] {$\{1,4\}$};
				\draw[fill] (1,0) circle [radius=0.05] node[ right] {$\{2,4\}$};
				\draw[fill] (0,1) circle [radius=0.05] node[above right] {$5$};
				\draw[fill] (0,2) circle [radius=0.05] node[above right] {$6$};
				\draw[fill] (0,-1) circle [radius=0.05] node[below right] {$3$};
				\draw (-1,0) -- (0,2) -- (1,0) -- (0,1) -- (-1,0) -- (1,0) -- (0,-1) -- (-1,0);
				\draw (0,1) -- (0,2);
			\end{tikzpicture}
			\caption{$\widehat{\cK}_{\{1,2,4\}}$ }\label{fig:tkb1}
		\end{subfigure}\qquad\qquad
		\begin{subfigure}{.3\textwidth}
			\begin{tikzpicture}
				\pgfsetfillopacity{0.3}
				\draw[fill,red] (0,2) -- (-2,-2) -- (2,0) -- (-1,-1) -- (0,2);
				\draw[fill,blue] (-1,-1) -- (-.2,3) -- (2,0) -- (0,2) -- (-1,-1);
				\draw[fill,green] (-1,-1) -- (3,-.2) -- (0,2) -- (2,0) -- (-1,-1);
				\pgfsetfillopacity{1}
				\draw[fill] (-1,-1) circle [radius=0.05] node[ above right=.4cm] {$\hspace{-.1cm}\{1,3\}$};
				\draw[fill] (2,0) circle [radius=0.05] node[ left=.35cm] {$\{3,5\}$};
				\draw[fill] (0,2) circle [radius=0.05] node[below=.5cm] {$\ \ \ \{1,5\}$};
				\draw[fill] (-.2,3) circle [radius=0.05] node[above right] {$6$};
				\draw[fill] (3,-.2) circle [radius=0.05] node[below right] {$4$};
				\draw[fill] (-2,-2) circle [radius=0.05] node[below right] {$2$};
				\draw (-1,-1) -- (2,0) -- (0,2) -- (-1,-1);
				\draw (0,2) -- (3,-.2) -- (2,0) -- (-2,-2) -- (-1,-1) -- (-.2,3) -- (0,2);
				\draw (0,2) -- (-2,-2) -- (-1,-1) -- (3,-.2) -- (2,0) -- (-.2,3) -- (0,2);
			\end{tikzpicture}

			\caption{$\widehat{\cK}_{\{1,3,5\}}$}\label{fig:tkb2}
		\end{subfigure}
		\caption{Simplicial complexes from Example \ref{ex:hex}}\label{fig:hex}
	\end{figure}
	\subsection{Applications}\label{sec:app}
	For applications in deformation theory, we are especially interested in situations where cotangent cohomology vanishes. Christophersen and the first author
	give concise sufficient criteria for $T^2$-vanishing when $\cK$ is a flag complex that is topologically a sphere, see \cite[\S5]{ic1} .
	Using our results, we obtain similar criteria for $T^3$-vanishing:
	
	\begin{cor}[See \S\ref{sec:vanishing}]\label{cor:sphere}
		Let $\cK$ be a flag complex whose topological realization is homeomorphic to a sphere. Suppose that $\cK$ satisfies the following properties:
		\begin{enumerate}
			\item for every vertex $v$ of $\cK$, $T^3(\link(v,\cK))=0$; and
			\item for $v,w$ vertices of $\cK$ with $\{v,w\}\notin \cK$,
			either
			$\link(v,\cK)\cap \link(w,\cK)$ is contractible, or $\link(v,\cK)\cap \link(w,\cK)=\emptyset$ and $\dim \cK\neq 2$; and
			\item for $u,v,w$ vertices of $\cK$ with $\{u,v\}\notin \cK$ and $\{u,w\}\notin\cK$, 
			\begin{enumerate}
				\item \label{hyp:a} either $\link(u,\cK)\cap \link(v,\cK)\cap \link(w,\cK)$ is contractible, or $\link(u,\cK)\cap \link(v,\cK)\cap \link(w,\cK)=\emptyset$ and $\dim \cK\neq 1$; and 
				\item \label{hyp:b} there exists a vertex $u'\in \link(u,\cK)$ such that either $\{u',v\}$ or $\{u',w\}$ is not in $\cK$.
			\end{enumerate}
		\end{enumerate}
		Then $T^3(\cK)=0$.
	\end{cor}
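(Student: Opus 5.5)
Decompose $T^3(\cK)=\bigoplus_{\bc}T^3_{\bc}(\cK)$ and show each graded piece vanishes, writing $\bc=\ba-\bb$ as in Notation~\ref{notation}.

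\emph{Reduction to $\ba=0$.} If $\ba\neq 0$, Theorem~\ref{thm:localization} gives either $T^3_\bc(\cK)=0$ (when $a\notin\cK$) or $T^3_\bc(\cK)\cong T^3_{-\bb}(\link(a,\cK))$. Links in a flag complex are flag, and $\link(a,\cK)=\link(a\setminus\{v\},\link(v,\cK))$ for any $v\in a$. So applying Theorem~\ref{thm:localization} to $\link(v,\cK)$ gives
\[
T^3_{-\bb}(\link(a,\cK))\cong T^3_{(\ba-\be_v)-\bb}(\link(v,\cK))
\]
(up to restricting the weight to the relevant vertex set). This vanishes by hypothesis (1). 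Hence only $\ba=0$ remains.

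\emph{Reduction to three types.} For $\ba=0$, Proposition~\ref{prop:bound2} forces $|\bb|\le 4$ and $\bb\in\{0,1\}^\cV$. The sentence after it, via Proposition~\ref{prop:t3bound}, excludes $|\bb|=4$. Weight $0$ should also vanish; I expect this to be either covered by those bounds or a standard fact. So only types $(1)$, $(1,1)$ and $(1,1,1)$ need treatment.

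\emph{Type $(1)$, $b=\{v\}$.} Theorem~\ref{thm:b1} gives $T^3_{-\bb}\cong\widetilde H^2(\cK_b)$. Here $\cK_b$ is $\cK$ with the closed star of $v$ removed, i.e.\ the faces avoiding $\{v\}\cup\cV(\link(v,\cK))$. For a sphere $\cK$, this is a deformation retract of the complement of the open star. That complement is a ball, so its reduced cohomology vanishes, and $T^3_{-\bb}=0$.

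\emph{Type $(1,1)$, $b=\{v,w\}$.} If $b\in\cK$, Theorem~\ref{thm:b2} gives vanishing. Otherwise $T^3_{-\bb}\cong\widetilde H^1(\cK_b)$, where $\cK_b$ is $\cK$ minus the two stars and the vertices of $L:=\link(v)\cap\link(w)$. By Alexander duality on the sphere $S^d$, $d=\dim\cK$, one has $\widetilde H^1(\cK_b)\cong\widetilde H_{d-2}$ of the complement. Up to homotopy, that complement is the union of the two closed stars together with the simplices on vertices of $L$. Since the stars are cones, the union is homotopy equivalent to a suspension-like space built over $L$, roughly $\Sigma L$ (a wedge with $S^0$-type corrections when $L=\emptyset$). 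When $L$ is contractible, everything vanishes. When $L=\emptyset$, the union is two disjoint balls, giving $\widetilde H_{d-2}\neq0$ only if $d-2=0$, i.e.\ $d=2$. Hypothesis (2) excludes this case.

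\emph{Type $(1,1,1)$.} If more than one edge of $\cK$ lies in $b$, Theorem~\ref{thm:b3} gives vanishing. Otherwise we may label $b=\{u,v,w\}$ so that $\{u,v\},\{u,w\}\notin\cK$, which is exactly the setting of hypothesis (3). Then $T^3_{-\bb}\cong H^1(\widehat\cK_b)$, and we must show $H^1(\widehat\cK_b)=0$. The plan is a Mayer–Vietoris argument, or a nerve argument, over the pieces $\Delta(\cE_b)^{(0)}*\cK_b$ and $\Delta(f)*(\link(w_f)\cap\cK_b)$.

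Each piece $\{e\}*\cK_b$ is a cone, hence contractible. The pieces for distinct $e,e'\in\cE_b$ are glued along the cones $\Delta(f)*(\link(w_f)\cap\cK_b)$ over $\link(w_f)\cap\cK_b$. So $H^1(\widehat\cK_b)$ is controlled by two things:
\begin{itemize}[leftmargin=*]
\item[(i)] $\widetilde H^0$ of $\cK_b$ and of $\link(w_f)\cap\cK_b$, which measures connectivity;
\item[(ii)] the combinatorial cycle formed by $\cE_b$ when $|\cE_b|=3$.
\end{itemize}
Hypothesis (3b) should supply the connectivity needed for (i): the vertex $u'\in\link(u)$, lying outside $\link(v)$ or $\link(w)$, lands in the relevant $\link(u)\cap\cK_b$ and makes the gluing nonempty. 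Hypothesis (3a), via the same Alexander-duality argument as in type $(1,1)$ but now with the triple intersection of links, should give $\widetilde H^0(\cK_b)=\widetilde H^1(\cK_b)=0$ and kill the cycle in (ii). The failure case is $\dim\cK=1$ with an empty triple intersection, which is the hexagon of Example~\ref{ex:hex}.

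The main obstacle is this last step: identifying $\link(w_f)\cap\cK_b$ topologically inside a sphere and verifying that (3a) and (3b) are exactly what is needed to kill $H^1(\widehat\cK_b)$. I would handle it by first computing the homotopy type of $\cK_b$ via Alexander duality, as the complement of a union of three stars whose pairwise intersections are governed by the links. Then I would compute $H^1(\widehat\cK_b)$ by a spectral sequence over the cover indexed by $\cE_b$.
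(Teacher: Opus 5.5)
The overall structure matches the paper's route through Corollary~\ref{cor:vanishing}: localization plus hypothesis (1) to reach $\ba=0$, then Propositions~\ref{prop:bound2} and~\ref{prop:t3bound} to reduce to types $(1)$, $(1,1)$, $(1,1,1)$. Your types $(1)$ and $(1,1)$ are also handled as in the paper. Phrasing the sphere input through Alexander duality (the complement of $\cK_b$ retracts onto the induced subcomplex $(\cK\cap\Delta(b))*L$) is a cleaner version of the paper's ``complement of a contractible set'' argument.

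Two small repairs there:
\begin{itemize}
\item In type $(1)$ it is the complement of the \emph{closed} star that retracts onto $\cK_b$. That complement is only guaranteed to be acyclic, not a ball, but acyclic suffices.
\item Weight $0$ is trivial: every generator of $P_i$ with $i\geq 1$ has weight $\bu$ with $x^\bu\in I_\cK$, so $W^{0}_i=0$.
\end{itemize}

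The genuine gap is type $(1,1,1)$. You leave it as a plan, and the plan's key expectation is wrong. Take $L$ to be the triple intersection of links. Hypothesis (3a) does \emph{not} give $\widetilde H^1(\cK_b)=0$, because it allows $L=\emptyset$ when $\dim\cK=2$. In that case $\cK\cap\Delta(b)$ is three points or an edge and a point, and duality gives $\widetilde H^1(\cK_b)\cong\widetilde H_0(\cK\cap\Delta(b))\neq 0$. This happens under all the hypotheses: for $\cA_6$ with $b=\{14,25,36\}$ (Remark~\ref{rem:assoct4}), $\cK_b$ is two triangles joined by three edges, yet $T^3(\cA_6)=0$. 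So a Mayer--Vietoris argument that relies on $\widetilde H^1(\cK_b)=0$ cannot close.

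The missing ingredient is Lemma~\ref{lemma:b3seq}, or equivalently Corollary~\ref{cor:b3simplyconnected}.
\begin{itemize}
\item The relative chains of the pair $(\widehat\cK_b,\Delta(\cE_b)^{(0)}*\cK_b)$ are $\bigoplus_f\widetilde C_\bullet(\link(w_f,\cK)\cap\cK_b)[-2]$, summed over the edges $f$ of $\Delta(\cE_b)$.
\item Since $\#\cE_b\geq 2$, the join $\Delta(\cE_b)^{(0)}*\cK_b$ is simply connected as soon as $\cK_b$ is nonempty and connected. So $\widetilde H^1(\cK_b)$ enters only after a suspension, i.e.\ in the $T^4$ part of the sequence.
\item This yields $T^3_{-\bb}(\cK)\cong\bigoplus_f\widetilde H^{-1}(\link(w_f,\cK)\cap\cK_b,\KK)$.
\end{itemize}
You therefore need exactly two inputs.
\begin{itemize}
\item $\widetilde H^0(\cK_b)=0$. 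This follows from (3a) via $\widetilde H^0(\cK_b)\cong\widetilde H_{d-1}((\cK\cap\Delta(b))*L)$, where $d=\dim\cK$. This vanishes if $L$ is contractible, and for $L=\emptyset$ it vanishes unless $d=1$.
\item $\link(w_f,\cK)\cap\cK_b\neq\emptyset$ for every $f$. This follows from (3b) applied with $w_f$ in the role of $u$. The vertex $u'$ lies outside $b$ and is not a vertex of $L$, so $\{u'\}\in\link(w_f,\cK)\cap\cK_b$; this also gives $\cK_b\neq\emptyset$.
\end{itemize}
When $\#\cE_b=3$, every element of $b$ occurs as some $w_f$, so you need (3b) for all three choices of $u$. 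The hypothesis provides this.
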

	
	In \cite[Corollary 2.5]{ishida}  there is a classification of all triangulations of the $2$-sphere with vanishing $T^2$. We obtain here classifications of triangulations of the $2$-sphere with vanishing $T^3$.
	\begin{cor}\label{cor:spheret3}
		Let $\cK$ be a simplicial complex that is homeomorphic to the $2$-sphere.
		Then $T^3(\cK)=0$ if and only if $\cK$ is one of the $8$ triangulations of the $2$-sphere with at most $9$ vertices and  each vertex having valency at most $5$, excluding the triangulation with $8$ vertices that is not a flag complex. See Figure \ref{fig:triangulations} in \S\ref{sec:spheres}.
		Moreover, for those triangulations $\cK$ with $T^3(\cK)=0$, $T^4(\cK)=0$ if and only if $\cK$ has at most $8$ vertices. 
	\end{cor}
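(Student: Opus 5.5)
The proof splits into necessity and sufficiency.

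\emph{Necessity.} I would first show that $T^3(\cK)=0$ forces $\cK$ to be flag with every vertex of valency at most $5$.

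For flagness, suppose $\cK$ is not flag. Then it contains an empty triangle $\{u,v,w\}$ that does not bound a face. Since $\cK$ is a $2$-sphere, this cycle separates the sphere. I would then look for a weight, for instance $\bb$ of type $(1,1,1)$ supported near the missing triangle, or localize via Theorem~\ref{thm:localization} to the link of a vertex (a cycle), and show that $T^3$ is nonzero there. For non-flag complexes the remarks referred to as Remarks~\ref{rem:nonflagb2} and \ref{rem:nonflagb3} should let me compute these weights anyway.

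For the valency bound, the link of a vertex $v$ of valency $n$ is an $n$-cycle $C_n$. By Theorem~\ref{thm:localization}, $T^3_{\ba-\bb}(\cK)\cong T^3_{-\bb}(C_n)$ whenever $a=\{v\}$. So it suffices to check that $T^3(C_n)\neq 0$ for $n\ge 6$. Example~\ref{ex:hex} gives exactly this for $n=6$, via $\widehat\cK_b$ retracting onto the boundary of a $2$-simplex. For general $n\ge 6$, I would take $b$ to be three pairwise non-adjacent vertices of $C_n$; then $\widehat\cK_b$ is again homotopic to a circle, so Theorem~\ref{thm:b3} gives nonvanishing.

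Next, a flag $2$-sphere with all valencies at most $5$ has at most $12$ vertices. Here I would use the Euler count $\sum_v(6-\deg v)=12$, which bounds the number of vertices by $12$ since every term is at least $1$. The icosahedron, and the finitely many spheres with $10$--$12$ vertices and maximum degree $5$, must be excluded by exhibiting an explicit nonzero piece of $T^3$. The natural candidates are $\bb$ of type $(1,1)$ or $(1,1,1)$ with $\ba=0$, using Theorems~\ref{thm:b2} and \ref{thm:b3}. I expect this to be the main obstacle. It requires enumerating these triangulations, for example via plantri-style lists, and finding for each one a pair or triple $b$ with $\widetilde H^{1}(\cK_b)\ne0$ or $H^1(\widehat\cK_b)\ne0$.

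\emph{Sufficiency.} For the listed flag triangulations with at most $9$ vertices, I would apply Corollary~\ref{cor:sphere}. Condition (1) holds because each link is $C_4$ or $C_5$, and these have $T^3=0$: they are the join of two complete intersections, and the pentagon is Gorenstein codimension $3$, which I would check directly with Theorems~\ref{thm:b1}--\ref{thm:b3}. Conditions (2) and (3) are finite checks on each of the $8$ triangulations: intersections of two or three links are paths, points, or empty, and they are contractible or empty as required. If a hypothesis fails in some configuration, I would instead compute the relevant $\cK_b$ and $\widehat\cK_b$ directly, using Propositions~\ref{prop:bound2} and \ref{prop:t3bound} to limit the weights that need checking.

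\emph{$T^4$ statement.} For $T^4$, I would likewise use the topological descriptions for types $(1)$, $(1,1)$ and $(1,1,1)$ from Theorems~\ref{thm:b1}--\ref{thm:b3}, together with the $(2,1,1)$ and $(1,1,1,1)$ descriptions and Proposition~\ref{prop:t4bound}. Concretely, I would verify vanishing for the triangulations with at most $8$ vertices and exhibit a nonzero class for those with $9$ vertices, most likely in type $(1,1,1,1)$ from four pairwise non-adjacent vertices.
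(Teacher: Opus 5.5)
\paragraph{A false claim in the necessity step.} Your necessity argument starts from a false claim: $T^3(\cK)=0$ does \emph{not} force $\cK$ to be flag. The statement excludes only the non-flag $8$-vertex triangulation (h) of Figure~\ref{fig:triangulations}, and several of the remaining eight are not flag:
\begin{itemize}
\item The tetrahedron boundary (a) has $I_\cK=(x_1x_2x_3x_4)$, a hypersurface.
\item The bipyramid (b) has $I_\cK=(x_px_q,\,x_1x_2x_3)$ with $p,q$ the poles, a complete intersection.
\item (d) and (f) have vertices of valency $3$, whose links are empty triangles.
\end{itemize}
All four have $T^3=0$. Only (c), (e), (g), (i) are flag, so your reading would produce four triangulations, not eight.

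\paragraph{Consequence for sufficiency.} Corollary~\ref{cor:sphere} and the results it rests on (Theorems~\ref{thm:b2}, \ref{thm:b3}, Proposition~\ref{prop:t3bound}) require flagness, so they say nothing about (d) and (f). Even for the flag octahedron, hypothesis (2) of Corollary~\ref{cor:sphere} fails: antipodal vertices have the same $4$-cycle as link, and an empty intersection is not allowed when $\dim\cK=2$.

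\paragraph{The missing idea.} Each of (a)--(g) becomes (i)$\,=\cA_6$ after a sequence of edge subdivisions. So $S_{\cA_6}$ deforms to each of them \cite[Proposition 6.1]{ic2}, and semicontinuity transfers $T^3(\cA_6)=0$ (Corollary~\ref{cor:assoc}) to all of them, flag or not. After your (correct) valency reduction, necessity then only requires checking (h), (j), (k). In each, two vertices have disjoint links. For $\bb$ of type $(1,1)$ supported on them, $\cK_b\simeq S^2$ minus two points, so $T^3_{-\bb}(\cK)\cong\KK$ by Theorem~\ref{thm:b2}. For the non-flag (h), use Remark~\ref{rem:nonflagb2}.

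\paragraph{The $T^4$ part.} It has the same defect plus one more.
\begin{itemize}
\item Proposition~\ref{prop:t4bound} and Theorems~\ref{thm:211}, \ref{thm:1111} are flag-only, so they cannot give $T^4=0$ for (d) and (f). The paper proves $T^4=0$ only for (g) (Example~\ref{ex:G}) and transfers it to (a)--(f) by the same subdivision and semicontinuity argument.
\item For (i)$\,=\cA_6$, your proposed witness does not exist. Two vertices of $\cA_6$ are non-adjacent exactly when their diagonals cross. Pairwise crossing diagonals have disjoint endpoints, so at most three diagonals of a hexagon cross pairwise, and there are no four pairwise non-adjacent vertices.
\end{itemize}
The paper instead takes $\bb$ of type $(1,1,1)$ with $b=\{14,25,36\}$. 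Then $\widehat\cK_b$ has $9,30,41,18$ faces in dimensions $0$ through $3$. It is connected, and $H^1(\widehat\cK_b,\KK)\cong T^3_{-\bb}(\cA_6)=0$ by Theorem~\ref{thm:b3} and Corollary~\ref{cor:assoc}. The Euler characteristic then forces $H^2(\widehat\cK_b,\KK)\neq 0$, i.e.\ $T^4_{-\bb}(\cA_6)\neq0$ (Remark~\ref{rem:assoct4}).
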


	For $n\geq4$, let $\cA_n$ denote the simplicial complex whose set of vertices are the diagonals of a regular $n$-gon, and whose faces consist of collections of non-crossing diagonals, see \S\ref{sec:assoc}. This is the boundary of the polytope dual to the $(n-4)$-dimensional associahedron. Christophersen and the first author \cite{ic1} show that $T^2(\cA_n)=0$. We may extend this:
	\begin{cor}\label{cor:assoc}
		For $n\geq 4$, $T^3(\cA_n)=0$.
	\end{cor}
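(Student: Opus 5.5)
Since $\cA_n$ is a flag complex whose realization is a sphere of dimension $n-4$, the natural route is to verify the hypotheses of Corollary~\ref{cor:sphere} and argue by induction on $n$. The small cases come first. For $n=4$, $\cA_4$ is two points, which gives a complete intersection, so $T^3=0$. For $n=5$, $\cA_5$ is the pentagon, and $T^3$ vanishes there, for example by Corollary~\ref{cor:spheret3}-type reasoning in dimension one or by the same criterion applied directly. Dimension $2$ is $n=6$; there $\cA_6$ is a $2$-sphere with $9$ vertices and all valencies at most $5$, so Corollary~\ref{cor:spheret3} applies directly.

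For the inductive step, hypothesis (1) is handled through links. The link of a diagonal $d$ in $\cA_n$ is the join $\cA_{k}*\cA_{n-k+2}$ of the dual associahedra of the two polygons into which $d$ cuts the $n$-gon, where degenerate factors (triangles) are empty. Flag complexes are preserved under joins, and $S$ of a join is a tensor product. I would therefore use the standard fact that cotangent cohomology of a tensor product of graded rings satisfies a Künneth-type formula: $T^i(A\otimes B)$ is assembled from $T^i(A)\otimes B$ and $A\otimes T^i(B)$. This reduces vanishing of $T^3$ of the link to vanishing of $T^3(\cA_m)$ for $m<n$, which holds by induction.

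Hypothesis (2) concerns two crossing diagonals $v,w$. The common link consists of collections of diagonals crossing neither $v$ nor $w$. Together, $v$ and $w$ cut the polygon into four regions, and diagonals avoiding both lie in those regions or are among the quadrilateral sides, i.e.\ the chords joining consecutive endpoints of $v,w$. If at least one such chord is a genuine diagonal, it is a cone point of $\link(v)\cap\link(w)$, since it is compatible with everything in the intersection; the intersection is then contractible. Otherwise all four endpoints are consecutive, which forces $n=4$. Some care is needed when the chords are polygon edges, but any nonempty cone suffices, and emptiness only happens for $n=4$.

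Hypothesis (3a) is the same argument with three diagonals. Here $u$ crosses $v$ and $w$, and I would look for a chord between endpoints that crosses none of $u,v,w$ and is a diagonal. If $v,w$ cross each other, we have three pairwise crossing diagonals; their six endpoints give a hexagon-like configuration whose "sides" are cone points when they are diagonals. If $v,w$ do not cross, a suitable chord between the endpoint regions works similarly. Emptiness should only happen when $\dim\cA_n$ is small, and those cases are checked by hand. Hypothesis (3b) requires a diagonal $u'$ compatible with $u$ but crossing $v$ or $w$. I would take $u'$ sharing an endpoint with $u$ and rotated slightly toward an endpoint of $v$ lying on one side of $u$, and check that it crosses $v$ for $n\ge 5$.

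The main obstacle I expect is (3a): the case analysis of configurations of three diagonals, and ensuring a cone-point chord exists in every configuration except the low-dimensional ones allowed by the hypothesis. Justifying the join formula for $T^3$ of links also needs to be done carefully if it is not stated earlier in the paper.
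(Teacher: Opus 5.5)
Your inductive step is the paper's own argument. It verifies the hypotheses of Corollary~\ref{cor:sphere}, treats links as joins $\cA_{n_1}*\cA_{n_2}$ via the tensor-product formula for $T^i$ (the paper cites \cite[Proposition~2.3]{ic1}), uses a side of the convex hull of the diagonals as a cone point for (2) and (3a), and uses a diagonal through an endpoint of $u$ for (3b).

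The gap is in how you start the induction. Your $n=6$ case cites Corollary~\ref{cor:spheret3}, but the paper proves that corollary \emph{from} the present statement. Triangulation (i) in Figure~\ref{fig:triangulations} is $\cA_6$, and its $T^3$-vanishing is deduced from Corollary~\ref{cor:assoc}; cases (a)--(g) then follow by edge subdivision and semicontinuity. So this step is circular.

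The $n=5$ case also fails if you apply the criterion directly. Since $\dim\cA_5=1$, hypothesis (3a) forbids an empty triple intersection. Yet on the $5$-cycle with vertices $1,\dots,5$ in order, take $u=1$, $v=3$, $w=4$. Then $\link(u,\cK)\cap\link(v,\cK)\cap\link(w,\cK)=\{2,5\}\cap\{2,4\}\cap\{3,5\}=\emptyset$. The pentagon must be handled separately, as in Example~\ref{ex:pentagon} (equivalently, Proposition~\ref{prop:S1}).

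The repair is to run your inductive step already at $n=6$, as the paper does, so the only base cases are $n=4,5$. This needs exactly what your phrase ``those cases are checked by hand'' leaves out:
\begin{enumerate}
\item In (3a), the endpoints of $u,v,w$ span a pentagon or hexagon $Q$ (five endpoints if $v,w$ share one).
\item If $Q\neq\cR_n$, some side of $Q$ is a diagonal of $\cR_n$ and is a cone point of the triple intersection.
\item If $Q=\cR_n$, the intersection is empty. This does occur at $n=6$, for example with $u,v,w=14,25,36$. It is \emph{permitted} by (3a), because $\dim\cA_n=n-4\ge 2\ne 1$ for $n\ge 6$.
\end{enumerate}
In (2), by contrast, your quadrilateral argument shows the intersection is nonempty for all $n\ge 5$. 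This matters because (2) forbids emptiness precisely when $\dim\cK=2$, i.e.\ at $n=6$. With these observations the induction closes without appealing to Corollary~\ref{cor:spheret3}.
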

	
	\noindent In Corollary \ref{cor:codim2} in \S\ref{sec:grass} we illustrate one geometric consequence of the above vanishing.
	
	We note that since $S_{\cA_4}$ is a complete intersection, $T^i(\cA_4)=0$ for all $i\geq 2$. Using our results, we may compute that  $T^4(\cA_5)=0$, see Example \ref{ex:pentagon}. On the other hand, $T^4(\cA_n)\neq 0$ for $n\geq 6$, see Remark \ref{rem:assoct4}.

	\subsection{Method}
	The cotangent cohomology modules $T^i_{S/\KK}$ may be defined as follows. Let $P_\bullet$ be a Koszul-Tate resolution of $S$, that is, a semifree commutative differential (non-negatively) graded algebra (cdga) that is quasi-isomorphic to $S$, see \S\ref{sec:kt}. Then $T^i_{S/\KK}$ is the $i$th cohomology of the complex $\Der^\bullet(P_\bullet,S)$ of graded derivations. 
	
	The first challenge is finding an explicit Koszul-Tate resolution $P_\bullet$ of $S$. Viewing $S$ as a quotient of a polynomial ring $R$, Hancharuk, Laurent-Gengoux, and Strobl \cite{arborescent} show how to construct such $P_\bullet$ by starting with a free resolution of $S$ as an $R$-module only making finitely many choices along the way. They call the resulting resolution an \emph{arborescent resolution} of $S$. In \S\ref{sec:inductive}, we give an alternative inductive construction of arborescent resolutions that avoids some of the technicalities of \cite{arborescent}.
	
	When $S$ is the quotient of a polynomial ring $R$ by a monomial ideal $I$, $S$ has a canonical free resolution as an $R$-module, the so-called Taylor resolution (see \S\ref{sec:taylor}). In fact, the Taylor resolution has the structure of a cdga, but is not semifree. The results of \cite[\S4.2]{arborescent} imply that there is a canonical arborescent resolution of $S$ in this case, see \S\ref{sec:dga} for a self-contained treatment.
	We make this resolution as explicit as we can in \S\ref{sec:differential}. In \S\ref{sec:cot}, we use this resolution to define explicit complexes $W_\bullet^\bc$ of finite-dimensional vector spaces whose $i$th homology is dual to the weight $\bc$ graded piece of $T^i_{S/\KK}$.
	
	Some results, such as the localization formula in the Stanley-Reisner case (Theorem \ref{thm:localization}), follow directly from the above discussion.
	For our topological interpretations of $T^i_\bc(\cK)$, more work is needed. Under appropriate assumptions on $\bc$ and homological degree, we show in \S\ref{sec:simplified} that one may replace the complex $W_\bullet^\bc$ by a simpler complex we call the non-splitting complex. We then show in \S\ref{sec:topological} that these resulting complexes are homotopic to the relevant complexes of (reduced) simplicial chains arising in the topological setting.

	\subsection*{Notation and conventions}
	Throughout, $\KK$ will always denote a field of characteristic zero.
	All rings are commutative with identity unless otherwise stated.
	Any weight vector $\bc \in \ZZ^\cV$ will always be written as $\bc = \ba - \bb$ where $\ba, \bb \in \ZZ_{\geq 0}^\cV$ are as in Notation~\ref{notation}, i.e.\ they have disjoint supports.

	\subsection*{Acknowledgements}
	We are grateful to Klaus Altmann, Adam Boocher, Jan Christophersen, Enrico Fatighenti and Frank Moore for helpful conversations.
	
	All three authors were partially supported by an NSERC Alliance Catalyst grant.
	NI was partially supported by an NSERC Discovery grant.
	FM and AP are members of INdAM-GNSAGA.
	AP acknowledges funding from the European Union - NextGenerationEU under the National Recovery and Resilience Plan (PNRR) - Mission 4 Education and research - Component 2 From research to business - Investment 1.1, Prin 2022, ``Geometry of algebraic structures: moduli, invariants, deformations'', DD N.~104, 2/2/2022, proposal code 2022BTA242 - CUP J53D23003720006.

	\section{Arborescent resolutions}\label{sec:ar}
	\subsection{Koszul-Tate resolutions and cotangent cohomology}\label{sec:kt}
	Let $S$ be a finitely-generated $\KK$-algebra. A \emph{Koszul-Tate resolution} of $S$ is a semifree\footnote{This means it is free as a commutative graded algebra after forgetting the differential.} commutative differential graded algebra (cdga)
	$P=P_\bullet$ with $P_i=0$ for $i<0$, along with a quasi-isomorphism $P \to S$. Here we use a homological grading for $P$: the differential $\partial$ of $P_\bullet$ decreases the homological degree by one.
	We consider $S$ as a cdga concentrated in homological degree $0$. Such a Koszul-Tate resolution may be constructed inductively as in \cite{tate} but we note that at each step (of which there are potentially infinitely many), there are choices that must be made.
	
	The cotangent cohomology $T^i_{S/\KK}$ may be defined in terms of $P_\bullet$.
	For $i\in\ZZ$, let $\Hom_\KK^i(P,S)$ consist of those $\KK$-linear maps $\phi$ from $P_\bullet$ to $S$ satisfying $\phi(P_j)=0$ for $j\neq i$.
	We let $\Der_\KK^i(P,S)$ consist of those $\phi\in\Hom_\KK^i(P,S)$ satisfying the graded Leibniz rule
	\[\phi(yz)=\phi(y)z+(-1)^{ij}y\phi(z)\ \qquad\forall y\in P_j,z \in P.\]
	The vector spaces $\Der_\KK^i(P,S)$ form a cochain complex $\Der_\KK^\bullet(P,S)$ with differential $\partial^*$ given by
	\[
	\phi\mapsto (-1)^{\deg \phi+1}\phi\circ \partial.
	\]
	The $i$th cotangent cohomology of $S$ over $\KK$ is the $i$th cohomology of this complex:
	\begin{equation}\label{eq.cotangentcohomology}
		T^i_{S/\KK}=H^i(\Der_\KK^\bullet(P,S)),
	\end{equation}
	see e.g.~\cite[Lemma~7.4 and Theorem~7.11]{MaMe20}.
	
	\begin{example}\label{ex.onegenerator}
		Given $S=\KK[x_1,\ldots,x_n]/(f)$, one can construct a Koszul-Tate resolution as the cdga concentrated in homological degrees $0$ and $1$:
		\[ P_\bullet\colon \quad \cdots\to 0\to \KK[x_1,\ldots,x_n]\xrightarrow{f\cdot } \KK[x_1,\ldots,x_n]\to 0\to \cdots \]
		and the morphism $P_\bullet\to S$ is non-trivial
		only in degree zero, where it is given by the projection $\KK[x_1,\ldots,x_n]\to S$. We obtain
		\[ \Der_\KK^i(P,S)=\begin{cases}
			0 & \text{ for } i\neq 0,1\\
			S & \text{ for } i= 1\\
			\Der_\KK(\KK[x_1,\ldots,x_n],S) & \text{ for } i= 0
		\end{cases} \]
		and the differential $\partial^* \colon \Der_\KK^0(P,S)\to S=\Der_\KK^1(P,S)$ is given by
		\[  \phi \mapsto \sum_{j=1}^n\frac{\partial f}{\partial x_j}\phi(x_j). \]
		Hence by~\eqref{eq.cotangentcohomology} we recover that $T^1_{S/\KK}\cong\frac{\KK[x_1,\ldots,x_n]}{(f,\partial_{x_1}f,\ldots,\partial_{x_n}f)}$ and $T^i_{S/\KK}=0$ for every $i\geq2$.
	\end{example}
	
	\begin{remark}\label{rmk.doublegrading}
		By definition, $P$ carries a homological grading by $\ZZ$. 
		In the setting of the remainder of this paper, $S$ will typically also carry a grading by $\ZZ^n$ for some $n\geq 1$.
		We can and will choose $P$ so that each $P_i$ is $\ZZ^n$-graded, the differential $\partial$ is homogeneous of degree $0$ with respect to this grading, and so is the quasi-isomorphism $P\to S$.
		With $P$ chosen in this fashion, each $\Der_\KK^i(P,S)$ decomposes as a direct sum of homogeneous pieces with respect to the $\ZZ^n$-grading. This induces
		a similar decomposition of 
		$T^i_{S/\KK}$.
		
		To avoid ambiguity, we will always refer to degrees in the (co)homological $\ZZ$-grading of $P_\bullet$, $\Der^\bullet(P,S)$, or $T^\bullet_{S/\KK}$ as \emph{(co)homological} degrees. 
		The degrees associated to the $\ZZ^n$-grading inherited from $S$ will be called \emph{weights}.
	\end{remark}
	
	\subsection{Inductive construction of arborescent resolutions}\label{sec:inductive}
	Here we give an alternate and self-contained description of arborescent resolutions \cite{arborescent}.\footnote{The terminology ``arborescent'' comes from \cite{arborescent}, in which rooted trees play an important role in constructing the resolution. We avoid the use of such trees here, but keep the name.} Let $R$ be a polynomial ring over $\KK$, and $S$ a quotient of $R$.
	Let $F_\bullet$ be a free resolution of $S$ as an $R$-module with differential $d$.\footnote{We continue using homological conventions, so 
		$d$ decreases degree by $1$.} We may assume that $F_0=R$; moreover, by Hilbert's syzygy theorem, we may assume that $F_\bullet$ has only finitely many non-zero terms. Our goal is to use $F_\bullet$ to construct a Koszul-Tate resolution $P_\bullet\to S$.
	
	We give an inductive description of $P_\bullet=\bigoplus_{i\geq 0} P_i$, first as a commutative graded algebra (without differential). 
	Let $P(0)=R$, viewed as a commutative graded algebra concentrated in degree $0$. Set $P_0^\old=0$ and $P_0^\new=0$.
	We now proceed inductively on $i \geq 0$, assuming we have defined $P(i)$ and $P_j^\old$ and $P_j^\new$ for all $0 \leq j\leq i$.
	We set
	\[
	P_{i+1}^\new:=F_{i+1}\oplus P^{\old}_i
	\]
	and we define
	$P(i+1)$ to be the (graded) symmetric algebra over $R$ of the graded free $R$-module
	\[
	\bigoplus_{1 \leq j \leq i+1} P_j^\new,
	\]
	where the elements of $P_j^\new$ have homological degree $j$.
	Set $P_{i+1}^{\old}=P(i)_{i+1}$.
	
	We have inclusions $P(i)\subseteq P(i+1)$, and \[P(i)_j=P(i+1)_j=P_j^\new\oplus P_j^\old\] for any $j\leq i$.
	Finally, we set 
	\[
	P_\bullet=\bigcup_{i\geq 0} P(i)=\bigoplus_{i\geq 0} \left(P_i^\new \oplus P_i^\old\right),
	\]
	with algebra structure induced by those of the $P(i)$.\footnote{In other words, $P_\bullet$ is the colimit of the $P(i)$ in the category of commutative graded algebras.}
	
	For instance, the homogeneous summands of low degree of $P_\bullet$ are:
	\[
	\begin{aligned}
		P_0 &= R, \\
		P_1 &= F_1, \\
		P_2 &= \underbrace{F_2}_{P_2^\new} \oplus \underbrace{\wedge^2_R F_1}_{P_2^\old}, \\[.2cm]
		P_3 &= F_3 \oplus \underbrace{\wedge^2_R F_1}_{P_2^\old} \oplus
		\underbrace{\wedge^3_R F_1 \oplus (F_1 \otimes_R F_2)}_{P^\old_3}, \\[.2cm]
		P_4 &= F_4 \oplus \underbrace{\wedge^3_R F_1 \oplus  (F_1 \otimes_R F_2)}_{P_3^\old} 
		\oplus
		\underbrace{
			(F_1 \otimes_R \wedge^2_R F_1) \oplus
			(\wedge^2_R F_1 \otimes_R F_2) \oplus
			\wedge^4_R F_1 \oplus \mathrm{S}^2_R F_2}_{P_4^\old}.
	\end{aligned}
	\]
	
	We now wish to endow $P_\bullet$ with a differential $\partial$, which we will also inductively define. With respect to the direct sum decomposition
	\[
	P_j=F_j\oplus P_{j-1}^\old \oplus P_j^\old,
	\]
	the differential $\partial_j$ we construct will have the form
	\begin{equation}\label{eqn:diffform}
		\begin{array}{l}
			\hspace{1.1cm}\begin{array}{c c c}
				F_j& P_{j-1}^\old & P_j^\old
				\vspace{.3cm}
			\end{array}\\
			\partial_j=
			\left(\begin{array}{c c c}
				d_j & -\rho_{j} & \alpha_j\\ \\
				0 & -\gamma_{j-1} & \beta _j\\ \\
				0 & \id & \gamma_j
			\end{array}\right)
			\begin{array}{c}
				F_{j-1}\\ \\ P_{j-2}^\old \\ \\ P_{j-1}^\old
			\end{array}
		\end{array}
	\end{equation}
	where $d$ is the differential of $F_\bullet$ and $\id$ is the identity map. 
	The differential must satisfy the graded Leibniz rule:
	\begin{equation}\label{eqn:gl}
		\partial_j(yz)=\partial_k(y)z+(-1)^ky\partial_{k-j}(z)\qquad \forall y\in P_k,\ z\in P_{k-j}.
	\end{equation}
	Likewise, it must square to zero:
	\begin{equation}\label{eqn:closed}
		\partial_{j-1} \partial_j=0.
	\end{equation}

	Since $P_0^\old=P_1^\old=0$, we begin with $\partial_0=0$, $\partial_1=d_1$.
	\begin{theorem}[cf.~{\cite[Theorem 2.28]{arborescent}}]\label{thm:arborescent}
		Let $i\geq 1$.	Suppose that for $j\leq i$, we have maps $\partial_j$ of the form \eqref{eqn:diffform} satisfying \eqref{eqn:gl} and \eqref{eqn:closed}.
		\begin{enumerate}	
			\item \label{item:rho}
			There exists $\rho_{i+1}$ satisfying
			\[
			d_i\rho_{i+1}=\rho_{i}\gamma_i+\alpha_i.
			\]
			\item After choosing such $\rho_{i+1}$, there is a unique map $\partial_{i+1}$ of the form \eqref{eqn:diffform} satisfying \eqref{eqn:gl} and \eqref{eqn:closed} for $j=i+1$.
			\item The differential $\partial$ for $P_\bullet$ obtained via induction yields a Koszul-Tate resolution of $S$, that is,
			\[
			H_k(P_\bullet)\cong\begin{cases} 
				S & k=0,\\
				0 & \textrm{else}.
			\end{cases}
			\]
		\end{enumerate}
		We call any resolution $(P_\bullet,\partial)$ as above an \emph{arborescent resolution} of $S$.
	\end{theorem}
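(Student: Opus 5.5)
The plan is to build $\partial_{i+1}$ generator by generator. By construction $P(i+1)$ is free as a graded commutative $R$-algebra on $\bigoplus_{j\le i+1}P^\new_j$. So a derivation of homological degree $-1$ on $P(i+1)$ that extends $\partial$ on $P(i)$ is the same as a choice of its values on the new generators $P^\new_{i+1}=F_{i+1}\oplus P_i^\old$. For $y\in P_i^\old$, write $\sigma y$ for the copy of $y$ inside $P^\new_{i+1}$. The block form \eqref{eqn:diffform} forces
\[
\partial(x)=d_{i+1}(x)\ (x\in F_{i+1}),\qquad \partial(\sigma y)=-\rho_{i+1}(y)-\sigma(\gamma_i y)+y .
\]
Here $\gamma_i$ is the $P_{i-1}^\old$-component of $\partial_i$ restricted to $P_i^\old$. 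On $P_{i+1}^\old=P(i)_{i+1}$, which consists of products of lower-degree generators, $\partial$ is then determined by \eqref{eqn:gl}, and this defines $\alpha_{i+1},\beta_{i+1},\gamma_{i+1}$. This gives uniqueness in (2) at once, once $\rho_{i+1}$ is fixed. For existence in (2) it remains to check \eqref{eqn:closed} for $j=i+1$. Since $\partial^2$ is itself a derivation ($\partial$ is odd), it suffices to check $\partial^2=0$ on the generators $F_{i+1}$ and $\sigma P_i^\old$. On $F_{i+1}$ this is just $d_id_{i+1}=0$, since $\partial$ restricted to $F_i$ is $d_i$.

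For $\sigma y$, I will expand $\partial^2(\sigma y)$ using the block form of $\partial_i$ on $P_i=F_i\oplus P_{i-1}^\old\oplus P_i^\old$. I then compare components in $F_{i-1}$, $P_{i-2}^\old$ and $P_{i-1}^\old$.
\begin{itemize}
\item The $P_{i-1}^\old$-component is $\gamma_i y-\gamma_i y=0$.
\item The $P_{i-2}^\old$-component is $\beta_i y+\gamma_{i-1}\gamma_i y$. It vanishes by the $P_{i-2}^\old$-row of the identity $\partial_{i-1}\partial_i=0$ applied to the column $P_i^\old$; the row $(0,-\gamma_{i-2},\beta_{i-1})$ together with the $\id$ entry yields exactly this relation.
\item The $F_{i-1}$-component is $\alpha_i y+\rho_i\gamma_i y-d_i\rho_{i+1}y$. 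This vanishes precisely when $\rho_{i+1}$ satisfies the equation in (1).
\end{itemize}
So (2) reduces to (1) together with bookkeeping in the block matrices.

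For (1), I first show that $d_{i-1}(\rho_i\gamma_i+\alpha_i)=0$. To do this I read off the $F_{i-2}$-row of $\partial_{i-1}\partial_i=0$ on the $P_i^\old$ column, which is $d_{i-1}\alpha_i-\rho_{i-1}\beta_i+\alpha_{i-1}\gamma_i=0$. Into this I substitute $\beta_i=-\gamma_{i-1}\gamma_i$ from the previous step and the inductive relation $d_{i-1}\rho_i=\rho_{i-1}\gamma_{i-1}+\alpha_{i-1}$. For $i\ge2$, exactness of $F_\bullet$ at $F_{i-1}$ then puts the image of $\rho_i\gamma_i+\alpha_i$ inside $\Image d_i$. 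Since $P_i^\old$ is a free $R$-module (a sum of tensor and symmetric products of free modules), the map lifts through $d_i$, giving $\rho_{i+1}$. For $i=1$ we have $P_1^\old=0$, so $\rho_2=0$ works. Keeping the signs in these row identities straight is the step I expect to be the most error-prone. It is nonetheless routine once the convention $\partial(\sigma y)=y-\cdots$ is fixed.

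For (3), I will use Gaussian elimination on the underlying complex of $R$-modules. In the decomposition $P_j=F_j\oplus P_{j-1}^\old\oplus P_j^\old$, the component of $\partial_j$ from the summand $P_{j-1}^\old$ of $P_j$ to the summand $P_{j-1}^\old$ of $P_{j-1}$ is the identity. Cancelling all these isomorphic pairs produces a homotopy equivalent complex on $\bigoplus_j F_j$. The perturbed differential is $d_j$ plus correction terms, and each correction passes through a zero entry, because the $F_j$ column of \eqref{eqn:diffform} is $(d_j,0,0)^T$. Concretely, I will filter by $\bigoplus_j P_{j-1}^\old\oplus P_j^\old$: these summands form a subcomplex of $P_\bullet$, and it is contractible by the identity entries. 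The quotient complex is then $(F_\bullet,d)$. Hence $H_k(P_\bullet)\cong H_k(F_\bullet)$, which is $S$ for $k=0$ and $0$ otherwise. The isomorphism in degree $0$ is the projection $R\to S$, and it is an algebra map.

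The main obstacle is to make the contractibility statement precise. I plan to check directly from the matrix that $\bigoplus_j(P_{j-1}^\old\oplus P_j^\old)$ is $\partial$-stable. I will then write its contracting homotopy as $P_{j-1}^\old\subseteq P_{j-1}\to P_j$, $y\mapsto\sigma y$, corrected triangularly in $\gamma$. This is a finite recursion in each degree, so it terminates.
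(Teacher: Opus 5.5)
Your treatment of (1) and (2) is essentially the paper's argument. You expand $\partial_{i-1}\partial_i$ on the $P_i^{\old}$ column and combine its $F_{i-2}$-entry with $\beta_i=-\gamma_{i-1}\gamma_i$ and $d_{i-1}\rho_i=\rho_{i-1}\gamma_{i-1}+\alpha_{i-1}$ to get $d_{i-1}(\rho_i\gamma_i+\alpha_i)=0$. You then lift through $d_i$, check $\partial^2=0$ on $F_{i+1}$ and $\sigma P_i^{\old}$, and use the derivation property on products. There is one bookkeeping slip. The relation $\beta_i+\gamma_{i-1}\gamma_i=0$ comes from the row $(0,\id,\gamma_{i-1})$ of $\partial_{i-1}$, which is its $P_{i-2}^{\old}$-row. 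The row $(0,-\gamma_{i-2},\beta_{i-1})$ you cite is the $P_{i-3}^{\old}$-row, and it yields $\gamma_{i-2}\beta_i=\beta_{i-1}\gamma_i$ instead.

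The genuine problem is in (3): your concrete filtration is upside down. The summands $\bigoplus_j(P_{j-1}^{\old}\oplus P_j^{\old})$ do not form a subcomplex. The first row of \eqref{eqn:diffform} has entries $-\rho_j$ and $\alpha_j$, which give both summands components in $F_{j-1}$. Already in degree $2$ we have $P_0^{\old}\oplus P_1^{\old}=0$. Yet for $y,z\in F_1$ the element $yz\in P_2^{\old}$ has $\partial(yz)=d_1(y)z-d_1(z)y\in F_1$, which is nonzero in general. So the $\partial$-stability check you plan would fail, and you cannot conclude that the quotient is $(F_\bullet,d)$.

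The fact you yourself point out, that the $F_j$-column is $(d_j,0,0)^T$, says the reverse: $F_\bullet$ is the subcomplex. The quotient $P_\bullet/F_\bullet$ has terms $P_{j-1}^{\old}\oplus P_j^{\old}$ with differential
\[
\bar\partial_j=\begin{pmatrix}-\gamma_{j-1}&\beta_j\\ \id&\gamma_j\end{pmatrix}.
\]
The map $h(y,z)=(z,0)$, i.e.\ $z\mapsto\sigma z$, satisfies $\bar\partial h+h\bar\partial=\id$ on the nose. This works because the entry $-\gamma_j$ of $\partial_{j+1}$ cancels the entry $\gamma_j$ of $\partial_j$, so no triangular correction is needed. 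The long exact sequence of $0\to F_\bullet\to P_\bullet\to P_\bullet/F_\bullet\to 0$ then gives $H_k(P_\bullet)\cong H_k(F_\bullet)$.

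Alternatively, use the same $h$ on $P_\bullet$ itself: $\sigma$ on $P_j^{\old}$, and zero on $F_j$ and $\sigma P_{j-1}^{\old}$. It satisfies $\partial h+h\partial=\id-\iota\pi_F$, with $\pi_F$ as in Remark~\ref{rem:piF}, so $F_\bullet\hookrightarrow P_\bullet$ is a homotopy equivalence.

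With this repair your route differs from the paper's. The paper row-reduces $\partial_i$ using its third row, the choice of $\rho_{i+1}$, and $\beta_i=-\gamma_{i-1}\gamma_i$. It then identifies $\ker\partial_i$ with $\partial_{i+1}(F_{i+1}\oplus P_i^{\old})$ by exactness of $F_\bullet$. Your corrected contraction uses only the shape of the matrices, and it yields an explicit homotopy equivalence rather than just a homology computation.
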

	
	\begin{proof}
		Multiplying out, we see that \eqref{eqn:closed} is equivalent to
		\begin{align}
			d_{j-1}d_j&=0\label{eqn:n1}\\
			d_{j-1}\rho_j&=\rho_{j-1}\gamma_{j-1}+\alpha_{j-1}\label{eqn:n2}\\
			d_{j-1}\alpha_j+\alpha_{j-1}\gamma_j&=\rho_{j-1}\beta_{j}\label{eqn:n3}\\
			\beta_{j-1}&=-\gamma_{j-2}\gamma_{j-1}\label{eqn:n4}\\
			\beta_j&=-\gamma_{j-1}\gamma_j\label{eqn:n5}\\
			\gamma_{j-2}\beta_j&=\beta_{j-1}\gamma_j\label{eqn:n6}.
		\end{align}
		Equation \eqref{eqn:n1} is always fulfilled since $F_\bullet$ is a complex. We see that \eqref{eqn:n6} follows from \eqref{eqn:n4} and \eqref{eqn:n5}, so we may disregard it. Using \eqref{eqn:n2} and \eqref{eqn:n5}, we also see that \eqref{eqn:n3} is equivalent to
		\[
		d_{j-1}(\rho_j\gamma_j+\alpha_j)=0.
		\]
		
		We now prove the first claim.
		If $i=1$, then $P_i^\old=0$, so there is nothing to prove. Assume $i>1$.
		By the above discussion of \eqref{eqn:closed} for $j=i$, we see that $\rho_i\gamma_i+\alpha_i$ factors through the kernel of $d_{i-1}$, and by the exactness of $F_\bullet$ at everywhere except the $0$th position, we see that $\rho_i\gamma_i+\alpha_i$ factors through the image of $d_i$. Since $P_i^\old$ is a free $R$-module, there exists $\rho_{i+1} \colon P_i^\old\to F_i$ such that $d_i\rho_{i+1}=\rho_{i}\gamma_i+\alpha_i.$
		
		For the second claim, the graded Leibniz rule \eqref{eqn:gl} for $j=i+1$ uniquely determines what $\alpha_{j+1}$, $\beta_{j+1}$, and $\gamma_{j+1}$ must be. The fact that $\alpha_{j+1},\beta_{j+1},\gamma_{j+1}$ are indeed well-defined follows inductively from  \eqref{eqn:gl} for $j\leq i$.
		To satisfy \eqref{eqn:closed} for $j=i+1$, we may restrict to the summands $F_{i+1}$, $P_i^\old$, and $P_{i+1}^\old$ of $P_{i+1}$.
		For $F_{i+1}$, we just have \eqref{eqn:n1}, which as noted above, is always satisfied. For $P_i^\old$, we have \eqref{eqn:n2} and \eqref{eqn:n4}. The first follows by choice of $\rho_{i+1}$, the second follows from \eqref{eqn:n5} for $j=i$. Finally, it is straightforward to see directly that the graded Leibniz rule applied twice implies \eqref{eqn:closed} for $j=i+1$ after restricting to $P_{i+1}^\old$. 
		
		For the third claim, we first notice that $H_0(P_\bullet)=\coker \partial_1=\coker d_1 = S$. We now show that the homology of $P_\bullet$ vanishes at all other positions. For $i<0$ this is clear. For $i>0$, after applying row operations to \eqref{eqn:diffform} using the third row, and applying \eqref{eqn:n2} and \eqref{eqn:n4} for $j=i+1$, we obtain that 
		\[
		\ker \partial_i=\ker\left(\begin{array}{c c c}
			d_i & 0 & d_i\rho_{i+1}\\
			0 & \id & \gamma_i
		\end{array}\right).
		\]
		In other words, $(x,y,z)\in F_i\oplus P_{i-1}^\old\oplus P_i^\old$ is in the kernel if and only if $y=-\gamma_i(z)$ and $d_i(x+\rho_{i+1}(z))=0$. But by the form of $\partial_{i+1}$ and the exactness of $F_\bullet$, this is spanned by the images of $F_{i+1}$ and $P_i^\old$ under $\partial_{i+1}$.
	\end{proof}
	\begin{remark}
		Assuming that the free resolution $F_\bullet$ has finite length, there are only finitely many $\rho_i$ that need to be chosen as in Theorem \ref{thm:arborescent} to determine the differential $\partial$ of $P_\bullet$. The information in these maps $\rho_i$ is related to the so-called ``arborescent operations'' of \cite[\S 2.2]{arborescent}.
	\end{remark}
	
	\begin{remark}\label{rem:piF}
		Consider the map $\pi_F \colon P_\bullet\to F_\bullet$
		whose $i$th component with domain $P_i=F_i\oplus P_{i-1}^\old \oplus P_i^\old$ restricts to the identity on $F_i$, $0$ on $P_{i-1}^\old$, and $\rho_{i+1}$ on $P_i^\old$. Then $\pi_F$ is a map of complexes, that is, $d\circ \pi_F=\pi_F\circ\partial$. Indeed, the condition that $\pi_F$ commutes with the differential amounts to the condition on $\rho_{i+1}$ from Theorem \ref{thm:arborescent}\eqref{item:rho}.
	\end{remark}

	\subsection{Arborescent resolutions for cdgas}\label{sec:dga}
	Suppose that the free resolution $F_\bullet$ in fact has the additional structure of a cdga. In this case, there is a canonical choice for the differential $\partial$ of $P_\bullet$, and the map $\pi_F$ of Remark \ref{rem:piF} is a map of cdgas. 
	
	Indeed, we will define maps $\rho_{i+1} \colon P_{i}^\old\to F_{i}$ as follows. For any $y\in P_j^\new$, let $\mathrm{pr}(y)$ denote its image under the projection $\mathrm{pr} \colon F_j\oplus P_{j-1}^\old\to F_{j}$  to the first factor.
	We then set
	\[
	\rho_{i+1}(y_1\cdots y_k)=\mathrm{pr}(y_1)\cdots \mathrm{pr}(y_k)
	\]
	for any $y_j\in P_{\delta_j}^\new$, $j=1,\ldots,k$ satisfying $\delta_1+\ldots+\delta_k=i$. Here, the product on the left hand side is in $P_\bullet$, and the product on the right hand side is in $F_\bullet$.
	This extends linearly to give an $R$-linear map $\rho_{i+1} \colon P_{i}^\old\to F_{i}$.
	
	\begin{proposition}[cf.~{\cite[\S 4.2]{arborescent}}]\label{prop:dga}
		The maps $\rho_{i+1}$ defined above satisfy the condition of Theorem \ref{thm:arborescent}\eqref{item:rho}, and thus determine a differential $\partial$ of $P_\bullet$.
		Moreover, the map $\pi_F$ of Remark \ref{rem:piF} is multiplicative, hence a map of cdgas.
	\end{proposition}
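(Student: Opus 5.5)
The plan is to reinterpret the maps $\rho_{i+1}$ as the components of a single algebra homomorphism. Then the condition of Theorem~\ref{thm:arborescent}\eqref{item:rho} becomes a chain-map condition, which we check by induction on the homological degree using the Leibniz rules in $P_\bullet$ and in $F_\bullet$.

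\emph{Step 1: the algebra map.} As a graded algebra, $P_\bullet$ is the free graded-commutative $R$-algebra on $\bigoplus_j P_j^\new$. Hence there is a unique homomorphism of graded $R$-algebras $\pi\colon P_\bullet\to F_\bullet$ that restricts to $\mathrm{pr}$ on each $P_j^\new=F_j\oplus P_{j-1}^\old$. This uses that $F_\bullet$ is graded-commutative, including that odd elements square to zero, which is automatic in characteristic zero. Every element of $P_i^\old$ is a sum of products $y_1\cdots y_k$ of generators $y_j\in P^\new_{\delta_j}$ with $k\ge 2$. Consequently $\pi|_{P_i^\old}$ is exactly the map $\rho_{i+1}$ defined above, and the existence of $\pi$ shows that $\rho_{i+1}$ is well defined. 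Moreover, on $P_i=F_i\oplus P_{i-1}^\old\oplus P_i^\old$ the map $\pi$ acts by $(\id,0,\rho_{i+1})$. So $\pi$ coincides with $\pi_F$ of Remark~\ref{rem:piF}, and multiplicativity of $\pi_F$ holds by construction.

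\emph{Step 2: the compatibility condition is a chain-map condition.} Suppose $\partial_j$ has been constructed for $j\le i$. Reading off \eqref{eqn:diffform}, $\partial_i$ restricted to $P_i^\old$ has components $(\alpha_i,\beta_i,\gamma_i)$, and $\pi$ on $P_{i-1}$ is $(\id,0,\rho_i)$. Therefore
\[
\pi\,\partial_i|_{P_i^\old}=\alpha_i+\rho_i\gamma_i,\qquad d_i\,\pi|_{P_i^\old}=d_i\rho_{i+1}.
\]
The required identity $d_i\rho_{i+1}=\rho_i\gamma_i+\alpha_i$ is thus equivalent to $d\pi=\pi\partial$ on $P_i^\old$.

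\emph{Step 3: induction.} We prove by induction on $i$ that $d\pi=\pi\partial$ holds on $P_j$ for all $j\le i$; this simultaneously validates the construction of $\partial_{i+1}$.

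On generators the identity holds automatically once $\partial$ is defined there. On $F_j$ both sides equal $d_j$. On $P_{j-1}^\old\subseteq P_j^\new$, the map $\partial$ has components $(-\rho_j,-\gamma_{j-1},\id)$, so
\[
\pi\partial=-\rho_j+0+\rho_j=0=d\,\pi.
\]
On a product $yz\in P_i^\old$ with $y,z$ of positive degree less than $i$, we use the Leibniz rule in $F$, multiplicativity of $\pi$, the inductive hypothesis, and the Leibniz rule \eqref{eqn:gl} in $P$:
\[
d(\pi(y)\pi(z))=\pi(\partial y)\pi(z)\pm\pi(y)\pi(\partial z)=\pi(\partial(yz)).
\]
Here $\partial y$ and $\partial z$ lie in degrees less than $i$, where $\partial$ is already defined. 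By Step 2 this gives Theorem~\ref{thm:arborescent}\eqref{item:rho} in degree $i$, so $\partial_{i+1}$ exists. The chain-map identity on $P_{i+1}$ then follows by the same two computations.

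The main obstacle is bookkeeping. One must be careful that $\alpha_i,\gamma_i$ are defined via the Leibniz rule from lower-degree data, so the induction is not circular, and that the signs in \eqref{eqn:gl} match those of the cdga structure on $F$. The substantive content is only Steps 2 and 3.
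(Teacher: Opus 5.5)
Your proposal is correct and takes the same route as the paper. Like the paper, you use Remark~\ref{rem:piF} to turn the condition of Theorem~\ref{thm:arborescent}\eqref{item:rho} into the chain-map identity $d\circ\pi_F=\pi_F\circ\partial$, prove that identity by induction (directly on the summands $F_i$ and $P_{i-1}^\old$, and via multiplicativity of $\pi_F$ on products), and your version is slightly more careful: you justify well-definedness of $\rho_{i+1}$ through the universal property of the free graded-commutative algebra, and you correctly apply the Leibniz rule for $d$ where the paper's displayed computation treats $d$ as if it were multiplicative.
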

	\begin{proof}
		It follows immediately from the definitions of $\rho_{i+1}$ and $\pi_F$ that $\pi_F$ is multiplicative. For the first claim of the proposition, we need to show that the $\rho_{i+1}$ satisfy the condition of Theorem \ref{thm:arborescent}\eqref{item:rho}. But as claimed in Remark \ref{rem:piF}, this is the same as the equality $d_{i-1} \circ (\pi_F)_i=(\pi_F)_{i-1}\circ \partial_i$. We show this equality by induction on $i$. For elements $y\in P_i$ belonging to $F_i$ or $P_{i-1}^\old$, this equality is clear. For $y\in P_i^\old$, it suffices to consider products $y=y_1y_2$ with $y_1,y_2$ of lower homological degree.
		Then
		\begin{align*}
			(d \circ\pi_F)(y_1y_2)&=d(\pi_F(y_1)\pi_F(y_2))\\
			&=d(\pi_F(y_1))d(\pi_F(y_2))\\
			&=\pi_F(d(y_1))\pi_F(d(y_2))\\
			&=(\pi_F\circ d)(y_1y_2)
		\end{align*}
		with the first equality following from the multiplicativity of $\pi_F$, the second from the multiplicativity of $d$, the third by the induction hypothesis, and the fourth again by multiplicativity of $\pi_F$ and $d$.
	\end{proof}
	
	\subsection{Monomial ideals and the Taylor resolution}\label{sec:taylor}
	In the remainder of this paper, we will always be considering the following situation: $R=\KK[x_1,\ldots,x_n]$ will be a polynomial ring over $\KK$, $I\subsetneq R$ will be a proper ideal generated by monomials, and $S=R/I$ will be the quotient.
	The ring $R$ has a natural $\ZZ^n$-grading. Since $I$ is a monomial ideal, $R/I$ also has a $\ZZ^n$-grading. 
	In $R$ we will use multi-index notation, e.g. for any $\bc = (\bc_1,\dots,\bc_n) \in\ZZ^n$, we set
	\[
	x^\bc=\prod_{i=1}^n x_i^{\bc_i}.
	\]

	The \emph{Taylor resolution} $F_\bullet$ is a $\ZZ^n$-graded free resolution of $S$, as we will now recall (cf.~\cite[\S2]{bps98}).
	For input data, we fix $\cG$ to be any set and $\lambda \colon \cG\to I$ to be a map whose image consists of monomials generating $I$ as an ideal. Typically, we will take $\cG$ to the minimal monomial generators of $I$, and $\lambda$ simply to be the inclusion.
	
	We now take $F_0=S$, and let $F_1$ to be the free $\ZZ^n$-graded $R$-module with basis $e_f$ indexed by the elements $f \in \cG$. Here, the weight $\wt(e_f)$ is simply the weight $\wt(f) \in \ZZ^n$ of $\lambda(f)$, and the differential $d_1$ sends $e_f$ to $\lambda(f)$.
	
	For $i>1$, we set $F_i=\bigwedge^i_R F_1$, but with a modified $\ZZ^n$-grading.
	Denote
	\[
	e_{f_1}\wedge e_{f_2}\wedge \cdots \wedge e_{f_i}\in F_i
	\]
	by 
	\[
	\{f_1 f_2\ldots f_i\}
	\]
	for any $f_1,\ldots,f_i\in \cG$.
	Then we set
	\[
	\wt(\{f_1\ldots f_i\})=\wt(\lcm(\lambda(f_1),\ldots,\lambda(f_m))).
	\]

	The differential $d_i \colon F_i\to F_{i-1}$ is defined via
	\begin{equation}\label{eq.differentialfree}
		d_i(\{f_1\ldots f_i\})=\sum_{j=1}^i (-1)^j x^{\wt (\{f_1\ldots f_i\})-\wt(\{f_1\ldots \widehat{f_j}\ldots f_i\})}
		\{f_1\ldots \widehat{f_j}\ldots f_i\}.
	\end{equation}
	Here, $\widehat{f_j}$ indicates we are removing the $f_j$ term.
	The differential $d$ is similar to the differential for the exterior algebra on $F_1$, but its terms are multiplied by monomials to make $d$ homogeneous with respect to the $\ZZ^n$-grading.
	
	As described above, $(F_\bullet,d)$ is a $\ZZ^n$-graded free resolution of $S$ as an $R$-module. Moreover, it has a canonical cdga  structure \cite[Proof of Corollary 3.6]{bps98}:
	the product of $\{f_1\ldots f_i\}\in F_i$ and $\{f_1'\ldots f_j'\}\in F_j$ is defined to be
	\[
	x^{\wt(\{f_1\ldots f_i\})+\wt(\{f_1'\ldots f_j'\})-\wt(\{f_1\ldots f_i  f_1'\ldots f_j'\})}
	\{f_1\ldots f_i f_1'\ldots f_j'\}\in F_{i+j}.
	\]
	Again, this is essentially the product on the exterior algebra on $F_1$, but modified so as to preserve the $\ZZ^n$-grading.
	
	We now find ourselves in the setting of \S\ref{sec:dga}: the Taylor resolution $F_\bullet$ of $S$ is itself a cdga, so from it we obtain a \emph{canonical} Koszul-Tate resolution $P_\bullet \to S$ (Proposition \ref{prop:dga}). See also \cite[\S4.2.3]{arborescent}.
	We will call $P_\bullet \to S$ a \emph{Koszul-Tate-Taylor resolution} of $S$.
	
	\subsection{Explicit differential in monomial case}\label{sec:differential}
	We now wish to explicitly describe $R$-module generators and the differential for a Koszul-Tate-Taylor resolution $P_\bullet$ of $S$.
	As in \S\ref{sec:taylor}, we have $R$-module generators for $F_i$ of the form $\{f_1\ldots f_i\}$. Given $p\in P_i$ and $q\in P_j$, we denote their product in $P_{i+j}^\old\subseteq P_{i+j}$ simply by $pq$. On the other hand, given $y\in P_i^\old$, we may view it as an element of $P_{i+1}^\new\subseteq P_{i+1}$; we denote this element by $[y]$.
	
	Hence, we obtain $R$-module generators of $P_\bullet$ from strings  involving elements $f\in \cG$, and matched pairs of $\{ \ \}$ and $[ \ ]$.
	Note that the (graded) commutativity of $P_\bullet$ induces relations among these generators, e.g. $[\{f_1\}\{f_2\}]=-[\{f_2\}\{f_1\}]$.
	
	\begin{example}[Generators of low homological degree]
		We record all possible forms for generators of $P_i^\new $ for $i\leq 5$, up to sign.
		\[
		\begin{array}{l l l }
			& \textrm{hom.\ deg.} & \textrm{weight} \\
			\hline
			\{f\}& \qquad 1& \wt(f)  \\
			\{f_1f_2\} & \qquad  2 & \wt(\lcm(f_1,f_2))\\
			\{f_1f_2f_3\} & \qquad  3 & \wt(\lcm(f_1,f_2,f_3)) \\
			{[} \{f_1\} \{f_2\} {]} & \qquad  3 & \wt(f_1)+\wt(f_2)\\
			\{f_1f_2f_3f_4\} & \qquad  4 & \wt(\lcm(f_1,f_2,f_3,f_4)) \\
			{[}\{f_1f_2\}\{f_3\}{]} & \qquad  4 & \wt(\lcm(f_1,f_2))+\wt(f_3)\\
			{[}\{f_1\}\{f_2\}\{f_3\}{]} & \qquad  4 & \wt(f_1)+\wt(f_2)+\wt(f_3)\\
			\{f_1f_2f_3f_4f_5\} & \qquad  5 & \wt(\lcm(f_1,f_2,f_3,f_4, f_5)) \\
			{[} \{f_1 f_2 f_3\} \{f_4\} {]} & \qquad  5 & \wt(\lcm(f_1,f_2,f_3))+\wt(f_4)\\
			{[} \{f_1 f_2\} \{f_3 f_4\} {]} & \qquad  5 & \wt(\lcm(f_1,f_2))+\wt(\lcm(f_3f_4))\\
			{[} \{f_1 f_2\} \{f_3 \} \{ f_4\} {]} & \qquad  5 & \wt(\lcm(f_1f_2))+\wt(f_3)+\wt(f_4)\\
			{[} \{f_1 \} \{ f_2 \} \{f_3 \} \{ f_4\} {]} & \qquad  5 & \wt(f_1)+\wt(f_2)+\wt(f_3)+\wt(f_4) \\
			{[}[\{f_1\}\{f_2\}]\{f_3\}{]} & \qquad 5 & \wt(f_1)+\wt(f_2)+\wt(f_3)
		\end{array}
		\]
	\end{example}
	\begin{remark}
		We will occasionally need to distinguish between a \emph{string} $s$ involving elements $f\in \cG$ and matched pairs of $\{ \ \}$ and $[ \ ]$, and the \emph{generator} $\overline s\in P_\bullet$ that it represents. We will thus always use the term ``string'' to refer to an actual string, and ``generator'' to refer to an element of $P_\bullet$ represented by a string. Given a string $s$, we denote the corresponding generator by $\overline s$.
		When actually writing down a string we will include quotation marks, so e.g.\ $``[\{f_1\}\{f_2\}]"$ denotes a string, whereas $[\{f_1\}\{f_2\}]$ denotes the generator it represents.
		Following this notation, we for instance have (for $f_1,f_2,f_3\in \cG$ distinct)
		\[
		\{f_1f_2f_3\}=\{f_2f_3f_1\}\qquad \mathrm{but}\ ``\{f_1f_2f_3\}"\neq ``\{f_2f_3f_1\}".
		\]
	\end{remark}
	
	\begin{definition}
		Strings of the form $``\{f_1\cdots f_i\}"$ we will call \emph{primitive},\footnote{Note that the string $``\{\}"$ representing $1\in P_0=R$ is primitive.} and those strings beginning with $[$ and ending with $]$ but containing no other $[$ we will call \emph{quasiprimitive}. We will call a generator primitive (respectively quasiprimitive) if it can be represented by a primitive (respectively quasiprimitive) string.
	\end{definition}
	
	Notice that all generators of homological degree at most $4$ are primitive or quasiprimitive; in homological degree $5$ this is no longer the case, e.g.\ $[[\{f_1\}\{f_2\}]\{f_3\}]$.
	
	In order to describe the differential $\partial$, we need a little more notation.
	
	\begin{definition}
		For any string $s$ as above, we define the following sequences:
		\begin{itemize}
			\item $\twig(s)$ is the sequence of all quasiprimitive strings contained in $s$;
			\item $\petal(s)$ is the sequence in $s$ of all $f\in\cG$ and all left brackets $[$;
		\end{itemize}
		For $q$ in $\twig(s)$ or $\petal(s)$, we define its \emph{sign} in $s$ to be
		\[
		\sign(q;s) :=\begin{cases}
			(-1)^m & \text{if } q\in \petal(s)\ \textrm{and}\ q=[\\
			(-1)^{m+1} & \textrm{else},
		\end{cases}
		\]
		where $m$ is the number of preceding elements in  $\petal(s)$.
	\end{definition}

	\begin{remark}\label{rmk.homdegvslengthpetals}
		Notice that the homological degree of $\overline s$ is the length of $\petal(s)$.
	\end{remark}
	
	\begin{example}\label{example.signs}
		If $s=``[[\{f_1\}\{f_2\}]\{f_3\}]"$ then $\twig(s)$ consists of $``[\{f_1\}\{f_2\}]"$ with sign $1$ and $\petal(s)$ consists of $[\,,[\,,f_1,f_2,f_3$ with signs $1$, $-1$, $-1$, $1$, $-1$.
		
		If $s=``[[\{f_1f_2\}\{f_3\}][\{f_4\}\{f_5\}]]"$,
		then
		$\twig(s)$ consists of $``[\{f_1f_2\}\{f_3\}]", ``[\{f_4\}\{f_5\}]"$ with signs $1$, $1$, while $\petal(s)$ consists of $[\,,[\,,f_1,f_2,f_3,[\,,f_4,f_5$ with signs $1$, $-1$, $-1$, $1$, $-1$, $-1$, $-1$, $1$.
	\end{example}
	
	\begin{definition}
		For a string $s$, and an element $q$ in $\petal(s)$, we let 
		$s\setminus q$ be the string obtained from $s$ by removing $q$ (and the matching bracket $]$ if $q$ is a bracket).
		
		Likewise, for $q\in \twig(s)$, we let $s \git q$ be the string obtained from $s$ by replacing $q$ with the unique primitive string $p$ with $\petal(p)$ equal to $\petal(q)$ after omitting the first left bracket in $\petal(q)$.
		
		For any string $p \setminus q$ or $p \git q$ we obtain this way that makes no sense (for instance, we apply $[ \ ]$ to an element of $P_i^\new$), we treat the corresponding generator as zero.

	\end{definition}

	\begin{theorem}\label{thm.stringsdifferential}
		Let $s$ be any string as above. Then
		\begin{align*}
			\partial(\overline s)=&\sum_{q\in \twig(s)}\sign(q;s)x^{\wt(\overline s)-\wt(\overline{s \git q})}(\overline{s\git q})
			+\sum_{q\in \petal(s)}\sign(q;s)x^{\wt (\overline s)-\wt (\overline{s \setminus q})}(\overline{s\setminus q}).
		\end{align*}
	\end{theorem}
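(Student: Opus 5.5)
The proof will be by induction on the bracket structure of $s$, using the uniqueness in Theorem~\ref{thm:arborescent}(2) together with Proposition~\ref{prop:dga}. The differential $\partial$ is determined by three pieces of data:
\begin{itemize}
\item its restriction to $F_j$, which is $d_j$ and is given by \eqref{eq.differentialfree};
\item the graded Leibniz rule \eqref{eqn:gl} on products $P_j^\old$;
\item on $[y]\in P_{j-1}^\old\subseteq P_j^\new$, the formula $\partial[y]=-\rho_j(y)-\gamma_{j-1}(y)+y$, read off from the middle column of \eqref{eqn:diffform}.
\end{itemize}
It therefore suffices to check that the right-hand side of the claimed formula, call it $D(s)$, obeys these three rules. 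Well-definedness on generators, meaning independence of the chosen string up to the commutativity sign, will follow once we know $D$ agrees with $\partial$.

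\textbf{Step 1 (primitive strings).} For $s=``\{f_1\ldots f_i\}"$ we have $\twig(s)=\emptyset$ and $\petal(s)=(f_1,\dots,f_i)$, with $\sign(f_j;s)=(-1)^{j}$. The removed strings are exactly the faces in \eqref{eq.differentialfree}, so $D(s)=d_i(\overline s)$.

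\textbf{Step 2 (products).} Let $s=s_1s_2$ be a concatenation of strings with $\overline{s_1}\in P_k$. Then $\twig(s)$ and $\petal(s)$ are the disjoint unions of the corresponding sequences for $s_1$ and $s_2$. For $q$ coming from $s_2$, the number $m$ of preceding petals increases by $|\petal(s_1)|=k$, which is the homological degree by Remark~\ref{rmk.homdegvslengthpetals}. Hence the signs pick up exactly $(-1)^k$. The monomial coefficients match because weights are additive on products in $P_\bullet$. This gives $D(s_1s_2)=D(s_1)\overline{s_2}+(-1)^k\overline{s_1}D(s_2)$.

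\textbf{Step 3 (brackets).} Let $s=``[t]"$ with $t$ a product string. The leading bracket is the first petal, with $m=0$ and sign $+1$, and $s\setminus[\,=t$. Its weight difference is zero, since $\wt[y]=\wt y$. So this term contributes $+\overline t$, which is the $\id$ entry.

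The remaining petals of $s$ are the petals of $t$ shifted by one, so their signs are negated. Twigs of $s$ strictly inside $t$ are likewise twigs of $t$ with negated signs. By Step 2, these terms give $[\,\cdot\,]$ applied termwise to $-D(t)$. After discarding terms that "make no sense", this is exactly $-\gamma_{j-1}(t)$, where $\gamma$ is the $P^\old\to P^\old$ part of $\partial$ on the product $t$.

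It remains to treat the outer brackets. If $s$ itself is quasiprimitive, then $s\git s$ is the primitive string $\{f_1\cdots f_i\}$ formed from the petals of $t$, and it carries sign $+1$. The required identity is then $-\rho_j(\overline t)=x^{\wt(\overline t)-\wt(\ldots)}\{\ldots\}$ coming from the twig term. Here $\rho_j(\overline t)$ is computed in Proposition~\ref{prop:dga} as the Taylor product $\mathrm{pr}(y_1)\cdots\mathrm{pr}(y_k)$, which equals $x^{\sum\wt-\wt(\lcm)}\{f_1\cdots f_i\}$.

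\textbf{Main obstacle.} The expected obstacle is reconciling the sign. The twig term appears with $+$, while \eqref{eqn:diffform} has $-\rho_j$. Resolving this requires tracking the other twigs and petal terms inside $t$ that also land in $F_{j-1}$. It also requires checking whether the convention for $[\,\cdot\,]$ or for the $\gamma$ and $\rho$ blocks absorbs a sign, possibly comparing $\sign([;s)=(-1)^m$ against the petal convention $(-1)^{m+1}$.

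I would settle this by first verifying the low-degree case $[\{f_1\}\{f_2\}]$ by hand. The general case should then follow by the same bookkeeping. When $s$ has nested brackets, the $\rho$ term vanishes on factors in $P^\old$ after projection, which is consistent with nested brackets not being twigs of the outer level.
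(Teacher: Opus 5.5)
\textbf{Assessment.} Your decomposition is the paper's own proof. Primitive strings reduce to the Taylor differential. Products follow from the Leibniz rule, because petals of the second factor gain $|\petal(s_1)|$ preceding petals. For $s=``[t]"$, you match the outer bracket with the $\id$ entry, the inner terms with $-\gamma_{j-1}$, and the outer twig with $-\rho_j$ in the middle column of \eqref{eqn:diffform}. However, the proposal does not close at the step you call the main obstacle. That obstacle comes from a miscomputed sign, not from anything in the construction.

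\textbf{The sign of the outer twig.} When $s$ is quasiprimitive, the twig $q=s$ has no petal of $s$ \emph{preceding} it, because its opening bracket is part of $q$. So $m=0$ and $\sign(s;s)=(-1)^{0+1}=-1$, not $+1$. Compare Example~\ref{example.signs}: there the inner twig $``[\{f_1\}\{f_2\}]"$ gets sign $+1$ precisely because $m=1$ counts only the outer bracket. Hence the outer twig term of $D(s)$ is $-x^{\wt(\overline s)-\wt(\overline{s\git s})}\,\overline{s\git s}$. This equals $-\rho_j(\overline t)$ by Proposition~\ref{prop:dga} and the Taylor product. When $s$ is not quasiprimitive, both sides have zero $F_{j-1}$-component, since $\mathrm{pr}$ kills bracketed factors and $s\notin\twig(s)$.

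\textbf{Your suggested remedies would not help.} No other term of $D(s)$ lands in $F_{j-1}$. Every other term is either $\overline t\in P_{j-1}^\old$ or still enclosed in the outer bracket, so it lies in the bracketed summand $P_{j-2}^\old\subseteq P_{j-1}^\new$. Nor does any convention need to absorb a sign.

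With $\sign(s;s)=-1$ inserted, your argument is complete and coincides with the paper's. As written, though, it asserts a false sign and leaves the $F_{j-1}$-component unverified.
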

	\begin{proof} We consider three cases separately: $s$ a primitive string, $s$ a non-primitive string starting with a $[$, and finally $s$ a non-primitive string not starting with $[$. Throughout, we will use $x^\star$ as an abbreviation for  the correct monomial to keep everything $\ZZ^n$-homogeneous.
		For any string $s'$ beginning and ending with $[$ and $]$ respectively, we let $\langle s' \rangle$ be the string obtained by omitting the first $[$ and last $]$.
		
		Firstly, for a primitive string $s$, by looking at the first column in the matrix in \eqref{eqn:diffform} the differential reduces to the Taylor differential, which agrees with the stated formula, see \eqref{eq.differentialfree}.
		
		Secondly, consider a non-primitive string of the form $s=``[s_1 \ldots s_k]"$ representing a generator $\overline{s}$ in $P_{m-1}^\old\subseteq P_m^{\new} \subseteq P_m$ where the $s_1,\ldots,s_k$ are themselves strings. By \eqref{eqn:diffform} the map to $F_{m-1}$ is given by $-\rho_{m}$, which means this part of the image of $\overline s$ is exactly
		$-x^\star(\overline{s\git s})$ if $s$ is quasiprimitive and $0$ otherwise.
		The map to $P_{m-1}^\old$ gives us exactly $\overline{s_1\ldots s_k}$.
		
		To understand the map $-\gamma_{m-1}\colon P^{\old}_{m-1}\to P^{\old}_{m-2}$, we need to take the negative of the image of $\overline{s_1\ldots s_k}$ under the differential, and replace each generator $y$ occurring there by $[y]$. Applying the Leibniz rule we have
		\[
		\partial (\overline{s_1\ldots s_k})=\sum_{i=1}^k \epsilon_i \overline{s_1}\ldots \partial (\overline{s_i})\ldots \overline{s_k}
		\]
		where $\epsilon_i=(-1)^{\sum\limits_{j<i} \deg \overline{s_j}}$ and $\deg$ denotes the homological degree.
		By induction this is
		\[
		\sum\limits_{i=1}^k \epsilon_i\cdot \overline{s_1}\ldots \left(\sum_{q\in \twig(s_i)}\sign(q;s_i)x^\star(\overline{s_i\git q})
		+\sum_{q\in \petal(s_i)}\sign(q;s_i)x^\star(\overline{s_i\setminus q})\right)
		\ldots \overline{s_k}.
		\]
		Recall that $\deg(\overline{s_i})=\vert\petal(s_i)\vert$ by Remark~\ref{rmk.homdegvslengthpetals}; therefore $\sign(q;s)=-\epsilon_i\sign(q;s_i)$. Hence rearranging the terms we obtain
		\[ -\sum_{q\in \twig(s),q\neq s} \sign(q;s)x^\star(\overline{\langle s\git q\rangle})
		-\sum_{q\in {\petal(\langle s\rangle )}}\sign(q;s)x^\star (\overline{\langle s\setminus q\rangle})
		\]
		Adding back $[\; ]$ and taking the opposite sign, we end up with the desired formula for 
		$s=[s_1 \ldots s_k]$.
		
		Finally, consider a string  of the form $s=s_1\ldots s_k$ with $\overline s\in P_m^\old\subseteq P_m$. Similar to above we have
		\[ \begin{aligned}
			\partial(\overline{s_1\ldots s_k}) &=\sum_{i=1}^k \epsilon_i \overline{s_1}\ldots \partial(\overline{s_i})\ldots \overline{s_k}\\
			&=\sum_{q\in \twig(s)} \sign(q;s)x^\star(\overline{s \git q})
			+\sum_{q\in \petal(s)}\sign(q;s)x^\star (\overline{s\setminus q})
		\end{aligned} \]
		where the only difference with respect to the case above is the relation
		\[\sign(q;s)=\epsilon_i \sign(q;s_i).\qedhere\]
	\end{proof}

	\begin{example}
		We compute the differentials of the generators represented by the strings of Example~\ref{example.signs}.
		For $y=[[\{f_1\}\{f_2\}]\{f_3\}]$, we obtain
		\begin{align*}
			\partial(y)={x^{\wt(\gcd(f_1,f_2))}}[\{f_1f_2\}\{f_3\}]+[\{f_1\}\{f_2\}]\{f_3\}-[\{f_1\}\{f_2\}\{f_3\}].
		\end{align*}
		Indeed, the remaining three terms (e.g.~$[[\{f_2\}]\{f_3\}]$) are all treated as zero.
		
		For $y=[[\{f_1f_2\}\{f_3\}][\{f_4\}\{f_5\}]]$, we obtain
		\begin{align*}
			\partial(y) &={x^{\wt(\gcd(\lcm(f_1,f_2),f_3))}}[\{f_1f_2f_3\}[\{f_4\}\{f_5\}]] 
			+ {x^{\wt(\gcd(f_4,f_5))}} [[\{f_1f_2\}\{f_3\}]\{f_4f_5\}]\\
			&+ [\{f_1f_2\}\{f_3\}][\{f_4\}\{f_5\}]
			- [\{f_1f_2\}\{f_3\}[\{f_4\}\{f_5\}]]\\
			&-{x^{\wt(f_1)-\wt(\gcd(f_1,f_2))}} [[\{f_2\}\{f_3\}][\{f_4\}\{f_5\}]] \\
			&+{x^{\wt(f_2)-\wt(\gcd(f_1,f_2))}} [[\{f_1\}\{f_3\}][\{f_4\}\{f_5\}]]
			-[[\{f_1f_2\}\{f_3\}]\{f_4\}\{f_5\}]
		\end{align*}
		where the remaining three terms have been treated as zero.
	\end{example}

	\subsection{Computing graded pieces of cotangent cohomology}\label{sec:cot}
	Continuing with $S=\KK[x_1,\ldots,x_n]/I$ for a monomial ideal $I$, let $P=P_{\bullet}\to S$ be a Koszul-Tate resolution respecting the $\ZZ^n$-grading of $S$, for instance, a Koszul-Tate-Taylor resolution.
	As noted in Remark \ref{rmk.doublegrading}, the derivation complex $\Der_{\KK}^\bullet(P,S)$ is also $\ZZ^n$-graded, and so the $T^i_{S/\KK}$ are as well.
	For any weight $\bc\in \ZZ^n$, we will describe an explicit complex of finite dimensional $\KK$-vector spaces whose homology is dual to the weight $\bc$ piece of  $T^i_{S/\KK}$.
	
	To that end, for each $i\geq 0$, fix a $\ZZ^n$-homogeneous basis for $P_i^\new$ as an $R$-module. For any $\bu\in \ZZ_{\geq 0}^n$, let $W_{i,\bu}\subseteq P_i^\new$ be the $\KK$-vector space generated by homogeneous basis elements of weight $\bu$. Each $W_{i,\bu}$ is finite dimensional. Notice that 
	\[
	P_i^\new=\bigoplus_{\bu\in \ZZ_{\geq 0}^n} W_{i,\bu}\otimes_\KK R. 
	\]
	Typically, we will take the homogeneous basis of $P_i^\new$ to consist of generators as described in \S\ref{sec:differential}.
	The differential $\partial$ of $P_\bullet$ induces a $\KK$-linear map 
	\[ \overline\partial_i \colon \bigoplus_{\bu\in\ZZ_{\geq 0}^n} W_{i,\bu} \to \bigoplus_{ \bu\in \ZZ_{\geq 0}^n} W_{i-1,\bu}
	\] defined via the composition
	\[ \bigoplus_{\bu\in\ZZ_{\geq 0}} W_{i,\bu} \xrightarrow{\iota} P_i\xrightarrow{\partial_i} P_{i-1} \xrightarrow{\pi_R} P_{i-1}^\new \xrightarrow{\pi_{\KK}} \bigoplus_{\bu\in\ZZ_{\geq 0}^n} W_{i-1,\bu} \]
	where $\iota$ denotes the inclusion, $\pi_R$ the natural projection of $R$-modules coming from the decomposition $P_{i-1}=P_{i-1}^\new\oplus P_{i-1}^\old$, and $\pi_\KK$ is obtained by tensoring with the map $R=\KK[x_1,\ldots,x_n]\to \KK$ defined by $x_j\mapsto1$ for every $j\in\{1,\ldots,n\}$.

	\begin{definition}
		Following Notation~\ref{notation}, for every weight $\bc=\ba-\bb\in \ZZ^n$, we define
		\[ \nabla_\bc=\{\bu\in\ZZ_{\geq 0}^n\ |\ \bu+\bc\in\ZZ^n_{\geq 0}\ \textrm{and}\ x^{\bu+\bc}\notin I\}. \]
		\noindent For $i\geq 1$, we then define
		\[
		W_i^\bc=\bigoplus_{\bu\in\nabla_\bc} W_{i,\bu}.
		\]
		For the case $i=0$, we set
		\[
		W_0^\bc=
		\begin{cases}
			\KK & \text{if } |\bb|= 1 \ \textrm{and}\ x^\ba\notin I\\
			0 & \text{if } |\bb|\neq 1\ \textrm{or}\ x^\ba\in I.
		\end{cases}
		\]
		For $i\geq 0$, we let
		\begin{align*}
			\partial^\bc_{i+1} \colon W_{i+1}^\bc&\to W_{i}^\bc
		\end{align*}
		be the $\KK$-linear map obtained by applying $\overline\partial$ and then projecting onto the summands indexed by $\bu\in \nabla_\bc$ (or simply onto $\KK$ or $0$ in the case $i=0$).
	\end{definition}

	\begin{theorem}\label{thm.computingTviahomology}
		Let $\bc\in\ZZ^n$. 
		\begin{enumerate}
			\item For $i\geq 0$, the composition $\partial^\bc_i \circ \partial^\bc_{i+1}$ is equal to zero, so $W_\bullet^\bc$ is a complex.
			\item For $i\geq 1$, there is an isomorphism			\[
			\Hom_\KK(H_i(W_\bullet^\bc),\KK)\to (T^i_{S/\KK})_\bc.
			\]
		\end{enumerate}
	\end{theorem}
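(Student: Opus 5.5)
The plan is to identify the weight-$\bc$ piece of the derivation complex $\Der^\bullet_\KK(P,S)$ with the $\KK$-dual of $W^\bc_\bullet$, at least in cohomological degrees $\geq 1$. Both claims then follow by dualizing, since $\KK$-duality of finite dimensional spaces is exact.

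\emph{Step 1: a degree-$i$ derivation is determined by $P_i^\new$.} As a graded algebra, $P$ is free (graded symmetric) over $R$ on $\bigoplus_j P_j^\new$. Let $\phi\in\Der^i_\KK(P,S)$ with $i\geq 1$. Then $\phi$ vanishes on $P_j$ for $j\neq i$. On $P_i^\old$, every element is a sum of $R$-multiples of products $yz$ with $\deg y,\deg z\in[1,i-1]$, so the Leibniz rule gives $\phi(yz)=0$. Since $\phi$ is also $R$-linear on $P_i$ (Leibniz with $y\in P_0=R$, where $\phi(y)=0$), we get
\[
\Der^i_\KK(P,S)\cong \Hom_R(P_i^\new,S)\cong \prod_{\bu}\Hom_\KK(W_{i,\bu},S).
\]
A homogeneous derivation of weight $\bc$ sends a basis element of weight $\bu$ into $S_{\bu+\bc}$. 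This space is $\KK\cdot x^{\bu+\bc}$ if $\bu\in\nabla_\bc$ and $0$ otherwise. Hence
\[
\Der^i_\KK(P,S)_\bc\cong\bigoplus_{\bu\in\nabla_\bc}W_{i,\bu}^{*}=(W_i^\bc)^*.
\]
Finiteness holds because only finitely many $\bu$ are relevant: the generators are finite in each homological degree since $F_\bullet$ is finite.

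\emph{Step 2: the differentials match.} Take $\phi\in(W_{i-1}^\bc)^*$, viewed as a derivation, and a basis element $s\in W_{i,\bu}$. Write $\partial s=\sum_t m_t\,t+(\text{old part})$ with $t$ new generators and $m_t$ monomials. By Step 1 (applied in degree $i-1$), $\phi$ kills the old part. Each term gives $\phi(m_t t)=m_t\,\phi(t)\in S_{\bu+\bc}$. Since $\bu\in\nabla_\bc$, the monomial $x^{\bu+\bc}$ is nonzero in $S$, so $m_t\,\phi(t)$ equals $\phi(t)$ read off as a scalar times $x^{\bu+\bc}$, which is exactly what setting $x_j\mapsto 1$ records. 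Terms with $t$ of weight outside $\nabla_\bc$ contribute zero on both sides. Thus, up to the harmless global sign $(-1)^{\deg\phi+1}$, the map $\partial^*\colon\Der^{i-1}_\bc\to\Der^i_\bc$ is the transpose of $\partial_i^\bc$ for $i\geq 2$. Part (1) for $i\geq 2$ then follows from $\partial^*\circ\partial^*=0$. Part (2) for $i\geq 2$ follows because the cohomology of the dual complex is the dual of the homology.

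\emph{Step 3: the bottom of the complex, which I expect to be the main obstacle.} Here $\Der^0_\KK(P,S)_\bc=\Der_\KK(R,S)_\bc$ is spanned by the $x^{\bc+e_v}\partial_{x_v}$ with $\bc+e_v\geq0$ and $x^{\bc+e_v}\notin I$, and this is not $(W^\bc_0)^*$ in general. What must be shown is that the image of $\Der^0_\bc\to\Der^1_\bc$ equals the image of $(\partial_1^\bc)^*$, so that $H^1$ is computed correctly, and that $\partial_1^\bc\circ\partial_2^\bc=0$.

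I would split into cases on $\bb$.
\begin{itemize}
\item If $|\bb|\geq 2$, then no $x_v\partial_{x_v}$-type derivation exists in weight $\bc$, so $\Der^0_\bc=0=W_0^\bc$.
\item If $\bb=e_v$, then the only derivation is $x^\ba\partial_{x_v}$, which is nonzero exactly when $x^\ba\notin I$. On a generator $\{f\}$ it returns $\bu_v\,x^{\bu+\bc}$, where $\bu_v=1$ by squarefreeness, or at least a nonzero scalar multiple in general in characteristic zero. This matches the transpose of $W_1^\bc\to W_0^\bc=\KK$, up to rescaling the basis vector.
\item If $\bb=0$, the derivations $x^\ba x_v\partial_{x_v}$ send $e_f$ to $\bu_v x^{\bu+\ba}$. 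I would show their images are already coboundaries of nothing new: they span the functionals $\bu\mapsto \bu_v$ restricted to $\nabla_\bc$. One then needs to check, presumably via the Euler-type relation $\partial^*(x^\ba x_v\partial_v)$, that these lie in the image of $\partial^*$ from a space we are allowed to ignore, or are zero in the relevant weights.
\end{itemize}
This case analysis, especially $\bb=0$, is the part I would scrutinize most carefully, possibly adjusting the identification of $W_0^\bc$ if the naive matching fails.
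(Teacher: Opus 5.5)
Your Steps 1 and 2 follow the paper's argument: you identify $\Der^i_\KK(P,S)_\bc$ with $\Hom_\KK(W_i^\bc,\KK)$ for $i\geq 1$, and you check that $\partial^*$ is, up to the sign $(-1)^{i+1}$, the transpose of $\partial^\bc$. You are also more careful than the paper about why derivations kill $P_i^\old$.

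The gap is Step 3 in the case $\bb=0$, which you leave open. What must be shown is that the image of $\Der^0_\KK(P,S)_\bc\to\Der^1_\KK(P,S)_\bc$ is zero. Neither an Euler-type relation nor a modification of $W_0^\bc$ is the right mechanism.

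The missing observation is that for $\bb=0$ no degree-one generator has weight in $\nabla_\bc$. For every $f\in\cG$ one has $x^{\wt(f)+\bc}=x^{\ba}\lambda(f)\in I$. Hence $W_1^\bc=0$ and $\Der^1_\KK(P,S)_\bc=0$. Concretely, $x^{\ba}x_v\partial_{x_v}$ sends $\lambda(f)$ to $\wt(f)_v\,x^{\ba}\lambda(f)$, which is $0$ in $S$. The functionals ``$\bu\mapsto\bu_v$ on $\nabla_\bc$'' you wrote down therefore live on the zero space.

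So $\Der^0_\KK(P,S)_\bc$ can be nonzero (e.g.\ $x_v\partial_{x_v}$ with $x_v\notin I$, when $\ba=0$), but it maps to zero. This gives $(T^1_{S/\KK})_\bc=0=\Hom_\KK(H_1(W_\bullet^\bc),\KK)$. Also $\partial_1^\bc\circ\partial_2^\bc=0$ holds trivially, since it lands in $W_0^\bc=0$. In fact every new generator of positive homological degree has a weight $\bu$ with $x^{\bu}\in I$, so $W_i^\bc=0$ for all $i\geq 1$ when $\bb=0$.

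Your suspicion was justified. The paper's own case split (``$|\bb|>1$ or $x^\ba\in I$'' versus ``$\bb=e_k$ and $x^\ba\notin I$'') also skips $\bb=0$. It then asserts $\Der^0_\KK(P,S)_\bc\cong\Hom_\KK(W_0^\bc,\KK)$, which is false there. The observation above is what makes the theorem hold anyway.

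A smaller point concerns the case $\bb=e_v$. For square-free $I$ your match is exact. For a general monomial ideal, however, the dual of $\Der^0_\KK(P,S)_\bc\to\Der^1_\KK(P,S)_\bc$ is $\{f\}\mapsto -\wt(f)_v$. That is not a single rescaling of the map $\{f\}\mapsto 1$ that defines $\partial_1^\bc$. The two versions still give $H_1$ of the same dimension, because $\{f_1f_2\}\in W_2^\bc$ forces $\{f_1\},\{f_2\}\in W_1^\bc$ with $\wt(f_1)_v=\wt(f_2)_v$. Otherwise some $\lambda(f_j)$ would divide $x^{\ba}\lcm(\lambda(f_1),\lambda(f_2))/x_v$, putting that monomial in $I$. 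This step needs to be said rather than described as rescaling a basis vector.
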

	\begin{proof} For every $i\geq 1$ we have
		\[ \begin{aligned}
			\Der^i_\KK(P,S)_\bc&=\Hom_R(P_i^\new,S)_\bc=\bigoplus_{\bu \in \ZZ^n} \Hom_R(W_{i,\bu}\otimes_\KK R,S)_\bc\\
			&=\bigoplus_{\bu \in \ZZ^n} \Hom_\KK(W_{i,\bu},S)_{\bc}
			= \bigoplus_{\bu \in \ZZ^n} \Hom_\KK(W_{i,\bu}, S_{\bu +\bc})\\
			&=\bigoplus_{\bu \in \nabla_{\bc}} \Hom_\KK(W_{i,\bu},S_{\bu +\bc})
			\cong\Hom_\KK\left(\bigoplus_{\bu \in \nabla_{\bc}} W_{i,\bu},\KK\right)=\Hom_\KK(W_i^\bc,\KK).
		\end{aligned} \]
		Moreover, under the above identifications, the differential
		\[ \begin{aligned}
			\Der^i(P,S)_\bc &\to \Der^{i+1}(P,S)_\bc\\
			\phi &\mapsto (-1)^{i+1}\phi \circ \partial_{i+1}
		\end{aligned} \]
		of the weight $\bc$ part of the derivation complex first becomes
		\[ \begin{aligned}
			\bigoplus_{\bu \in \ZZ^n} \Hom_\KK(W_{i,\bu},S)_{\bc} &\to \bigoplus_{\bu \in \ZZ^n} \Hom_\KK(W_{{i+1},\bu},S)_{\bc}\\
			\phi &\mapsto (-1)^{i+1}\phi \circ \bar\partial_{i+1}
		\end{aligned} \]
		and finally
		\[ \begin{aligned}
			\Hom_\KK(W_i^\bc,\KK) &\to \Hom_\KK(W_{i+1}^\bc,\KK)\\
			\phi &\mapsto (-1)^{i+1}\phi \circ \partial_{i+1}^{\bc}.
		\end{aligned} \]
		
		We now deal with the case $i=0$. We have
		\[\begin{aligned}
			\Der^0_{\KK}(P,S)=\Der_{\KK}(R,S)=\bigoplus_{j=1}^n S\frac{\partial}{\partial x_j}
		\end{aligned}
		\]
		hence
		\[
		\begin{aligned}
			\Der^0_\KK(P,S)_\bc \cong \bigoplus_{j=1}^n S_{\bc + \be_j}. 
		\end{aligned}
		\]
		If $|\bb|>1$  or $x^\ba\in I$, then  $S_{\bc + \be_j}=0$ for all $j$ and $\Der^0_\KK(P,S)_\bc$ vanishes. If instead  $\bb=e_k$  and $x^\ba\notin I$, then  
		\[
		\begin{aligned}
			\Der^0_\KK(P,S)_\bc \cong S_{\bc + \be_k}\cong \KK. 
		\end{aligned}
		\]
		Thus, we also have that $\Der_\KK^0(P,S)_\bc$ agrees with $\Hom_\KK(W_0^\bc,\KK)$. 
		It is again straightforward to verify that under the above identification, the differential 
		\[ \begin{aligned}
			\Der^0(P,S)_\bc &\to \Der^{1}(P,S)_\bc
		\end{aligned} \]
		of the weight $\bc$ part of the derivation complex is dual to $-\partial_0^\bc$.
		
		We conclude that $\partial^\bc$ is a differential, and the homology of $(W_\bullet^\bc,\partial^\bc)$ is dual to the cohomology of $\Der_{\KK}^\bullet(P,S)_\bc$. The result then follows by \eqref{eq.cotangentcohomology}.
	\end{proof}

	\begin{remark}
		By Theorem~\ref{thm.stringsdifferential}, choosing a homogeneous basis of $P_i^\new$ consisting of elements represented by strings as in \S\ref{sec:differential}, the differential $\partial^\bc$ takes a very explicit form. For any string $s$ of homological degree at least two with $\wt(\overline s)\in \nabla_\bc$ we obtain
		\begin{align*}
			\partial^\bc(\overline s)=&\sum_{\substack{q:\ q\in \twig(s)\ \textrm{and}\\ \wt(\overline{s \git q})\in \nabla_\bc}}\sign(q;s)(\overline{s \git q})
			+\sum_{\substack{q :\ q\in \petal(s)\ \textrm{and}\\ \wt(\overline{s\setminus q})\in \nabla_\bc}}\sign(q;s)(\overline{s\setminus q}).
		\end{align*}
		On the other hand $\partial^\bc$ takes any string $s$ of homological degree one with $\wt(\overline s)\in \nabla_\bc$ to either $0$  or $1$.
	\end{remark}

	We give an easy example where Theorem~\ref{thm.computingTviahomology} allows us to compute the cotangent cohomology.
	
	\begin{example}[$T^2$ and $T^3$ vanishing for two generators]\label{ex:2gens}
		Let $I=\langle f_1,f_2\rangle$ be generated by two monomials $f_1,f_2\in R$, fix the Koszul-Tate-Taylor resolution $P_\bullet\to S=R/I$ coming from this choice of generators. For any $\bc\in\ZZ^n$, the only possible generator in $W_2^\bc$ is $\{f_1f_2\}$, the only possible generator in $W_3^\bc$ is $[\{f_1\}\{f_2\}]$, and the only possible generators in $W_4^\bc$ are $[\{f_1f_2\}\{f_j\}]$, for $j\in\{1,2\}$. 
		
		The condition $T^2_{S/\KK}\neq 0$ would imply that
		\[ W_3^\bc=0\to W_2^\bc=\KK\{f_1f_2\}\to W_1^\bc=0. \]
		This means in particular that
		\[ \begin{cases}
			x^{\wt(f_1)+\wt(f_2)+\bc}\in I\\
			\wt(f_1)+\bc\notin \ZZ^n_{\geq0}\\
			\wt(f_2)+\bc\notin \ZZ^n_{\geq0},
		\end{cases} \]
		which is impossible. Hence we conclude that $T^2_{S/\KK}=0$.
		
		Similarly, the condition $T^3_{S/\KK}\neq 0$ would imply that
		\[ W_4^\bc=0\to W_3^\bc=\KK[\{f_1\}\{f_2\}]\to W_2^\bc=0 \]
		which in turn implies
		\[ \begin{cases}
			x^{\wt(\lcm(f_1,f_2))+\wt(f_j)+\bc}\in I & \text{for } j=1,2\\
			\wt(\lcm(f_1,f_2))+\bc\notin \ZZ^n_{\geq0}
		\end{cases} \]
		Again, this is impossible and we conclude that $T^3_{S/\KK}=0$.
		
		In general, we need not have $T_{S/\KK}^4=0$ in this setting. Consider for example
		\[I=\langle f_1,f_2\rangle=\langle x_1^2,x_1x_2\rangle \subseteq \KK[x_1,x_2].\]
		The generators $y$ of homological degree $3,4,5$ (up to sign) and their weights are found in Table \ref{table:2gens}.
		In Figure \ref{fig:2gens}, we plot the weights $\bc\in\ZZ^2$ for which these generators belong to $W_\bullet^\bc$. Black and gray dots correspond to generators in homological degrees $3$, and $5$, respectively. The red and blue hollow circles correspond to the two generators in homological degree $4$. Note that the diagram extends further upwards.
		
		We obtain that for $\bc=(-2,-2)$, the only one of these generators belonging to $W_\bullet^\bc$ is $[\{f_1f_2\}\{f_1\}]$, so in degrees $5,4,3$ the complex looks like $0\to \KK \to 0$. Hence, $(T^4_{S/\KK})_{(-2,-2)}\cong \KK$.
		
		For $\bc=(-3,-1)$, the only ones of these generators belonging to $W_\bullet^\bc$ are $[\{f_1f_2\}\{f_1\}]$, $[\{f_1f_2\}\{f_1\}]$, and $[\{f_1\}\{f_2\}]$. The complex in degrees $5,4,3$ looks like $0\to \KK^2\to \KK$, with the second map a surjection. Hence,  $(T^4_S)_{(-3,-1)}\cong \KK$.
		
		It is straightforward to check that these are the only degrees with non-zero $T^4$. In all other degrees $\bc$, either $W_4^\bc=0$, or $W_4^\bc\cong \KK$ with either an injection into $W_3^\bc$ or a surjection from $W_4^\bc$ (see Figure \ref{fig:2gens}).
	\end{example}
	\begin{table}
		\[
		\begin{array}{c c c c}
			\textrm{Degree} &y & \textrm{Weight} & \overline\partial(y)\\
			\hline
			3 & [\{f_1\}\{f_2\}] & (3,1) & \{f_1f_2\}\\
			4 & [\{f_1f_2\}\{f_1\}] & (3,2) & [\{f_2\}\{f_1\}]=-[\{f_1\}\{f_2\}]\\
			4 & [\{f_1f_2\}\{f_2\}] & (4,1) & -[\{f_1\}\{f_2\}]\\
			5 & [\{f_1f_2\}\{f_1\}\{f_2\}] & (5,2) & [\{f_1f_2\}\{f_2\}]-[\{f_1f_2\}\{f_1\}]\\
			5 & [[\{f_1\}\{f_2\}]\{f_1\}] & (5,1) & [\{f_1f_2\}\{f_1\}]\\
			5 & [[\{f_1\}\{f_2\}]\{f_2\}] & (4,2) &[\{f_1f_2\}\{f_2\}]\\
			5 & [\{f_1f_2\}\{f_1f_2\}] & (4,2) &2[\{f_1f_2\}\{f_2\}]-2[\{f_1f_2\}\{f_1\}]\\
		\end{array}
		\]
		\caption{Generators for Example \ref{ex:2gens}}\label{table:2gens}
	\end{table}
	
	\begin{figure}
		\begin{tikzpicture}
			\draw[fill] (-3,-1) circle [radius=0.05] ;
			\draw[fill] (-2,-1) circle [radius=0.05] ;
			\draw[fill] (-3,0) circle [radius=0.05] ;
			\draw[fill] (-3,1) circle [radius=0.05] ;
			\draw[fill] (-3,2) circle [radius=0.05] ;
			\draw[fill] (-3,3) circle [radius=0.05] ;
			\draw[red] (-3,-2) circle [radius=0.1]  ;
			\draw (-3,-1.2) node[below] {(-3,-1)}  ;
			\draw (-2,-2.2) node[below] {(-2,-2)}  ;
			\draw[red] (-2,-2) circle [radius=0.1] ;
			\draw[red] (-3,-1) circle [radius=0.1] ;
			\draw[red] (-3,-0) circle [radius=0.1] ;
			\draw[red] (-3,1) circle [radius=0.1] ;
			\draw[red] (-3,2) circle [radius=0.1] ;
			\draw[red] (-3,3) circle [radius=0.1] ;
			\draw[blue] (-4,-1) circle [radius=0.13] ;
			\draw[blue] (-3,-1) circle [radius=0.13] ;
			\draw[blue] (-4,0) circle [radius=0.13] ;
			\draw[blue] (-4,1) circle [radius=0.13] ;
			\draw[blue] (-4,2) circle [radius=0.13] ;
			\draw[blue] (-4,3) circle [radius=0.13] ;
			\draw[fill,gray] (-5,-2) circle [radius=0.05] ;
			\draw[fill,gray] (-4,-2) circle [radius=0.05] ;
			\draw[fill,gray] (-5,-1) circle [radius=0.05] ;
			\draw[fill,gray] (-5,0) circle [radius=0.05] ;
			\draw[fill,gray] (-5,1) circle [radius=0.05] ;
			\draw[fill,gray] (-5,2) circle [radius=0.05] ;
			\draw[fill,gray] (-5,3) circle [radius=0.05] ;
			\draw[fill,gray] (-5,-1) circle [radius=0.05] ;
			\draw[fill,gray] (-4,-1) circle [radius=0.05] ;
			\draw[fill,gray] (-5,0) circle [radius=0.05] ;
			\draw[fill,gray] (-5,1) circle [radius=0.05] ;
			\draw[fill,gray] (-5,2) circle [radius=0.05] ;
			\draw[fill,gray] (-5,3) circle [radius=0.05] ;
			\draw[fill,gray] (-4,-2) circle [radius=0.05] ;
			\draw[fill,gray] (-3,-2) circle [radius=0.05] ;
			\draw[fill,gray] (-4,-1) circle [radius=0.05] ;
			\draw[fill,gray] (-4,0) circle [radius=0.05] ;
			\draw[fill,gray] (-4,1) circle [radius=0.05] ;
			\draw[fill,gray] (-4,2) circle [radius=0.05] ;
			\draw[fill,gray] (-4,3) circle [radius=0.05] ;
		\end{tikzpicture}
		\caption{Weights $\bc$ with $W_i^\bc\neq 0$ in Example \ref{ex:2gens}}\label{fig:2gens}
	\end{figure}
	\begin{remark}
		Suppose that $I=\langle f_1,f_2\rangle$ is generated by two monomials $f_1=g_1\cdot h,f_2=g_2\cdot h$ with $g_1,g_2,h$ pairwise relatively prime monomials.
		Then the analysis of Example \ref{ex:2gens} can be extended to show that $T^4_{S/\KK}=0$. In particular, $T^4$ vanishes whenever $I$ is a square-free monomial ideal with two generators. We leave the details to the reader. 
	\end{remark}

	\section{Simplicial complexes and Stanley-Reisner rings}\label{sec:SR}
	\subsection{Basic notions}\label{sec:basic}
	Let $\cV$ be a finite non-empty set. 
	An abstract simplicial complex on the vertex set $\cV$ is a collection $\cK$ of subsets of $\cV$ such that if $f\in \cK$, then any subset of $f$ is also in $\cK$. Any simplicial complex on $\cV$ is always a subcomplex of the complex $\Delta(\cV)$, the set of all subsets of $\cV$.
	
	The \emph{faces} of $\cK$ are simply the elements of $\cK$. The \emph{dimension} of a face is one less than its cardinality. Zero-dimensional faces of $\cK$ are called \emph{vertices}. One-dimensional faces of $\cK$ are called \emph{edges}.
	We let $\cV(\cK)$ consist of all those $v\in \cV$ such that $\{v\}$ is a vertex of $\cK$.
	
	To any simplicial complex $\cK$, we may construct an associated Stanley-Reisner ideal $I_\cK$ and ring $S_\cK$, cf.~\cite[Chapter II]{stanley}. Let $R=\KK[ x_v\ |\ v\in \cV]$. The Stanley-Reisner ideal of $\cK$ is the monomial ideal
	\[
	I_\cK=\left\langle \prod_{v\in f} x_v\ \middle|\ f\subseteq \cV,\ f\notin \cK\right\rangle.
	\]
	Note that to generate $I_\cK$, it suffices to consider those monomials corresponding to minimal non-faces $f$ of $\cK$.
	The Stanley-Reisner ring $S_\cK$ is the quotient $R/I_\cK$. Since $I_\cK$ is a monomial ideal, it is homogeneous with respect to the natural $\ZZ^\cV$-grading on $R$, and hence $S_\cK$ is also $\ZZ^\cV$-graded.
	
	\begin{remark}\label{rmk.vertexset}
		In general, we do not insist that $\cV(\cK)=\cV$. However, given a simplicial complex $\cK$ on vertex set $\cV$ with $\cV(\cK) \subsetneqq \cV$, we may also consider $\cK$ as a simplicial complex on the vertex set $\cV(\cK)$. Up to canonical isomorphism, the resulting Stanley-Reisner rings are isomorphic.
	\end{remark}
	
	The \emph{topological realization} of a simplicial complex $\cK$ is the union in $\RR^\cV$ of the simplices
	\[
	\conv \{e_v\ |\ v\in f\}\subseteq \RR^\cV
	\]
	where $\conv$ denotes convex hull, $e_v$ is the basis vector of $\RR^\cV$ corresponding to $v\in\cV$, and $f$ ranges over all faces of $\cK$. In this paper, when considering topological chain complexes, homology, and the like, we will abuse notation by identifying $\cK$ with its topological realization.
	
	We recall from \S\ref{sec:overview} than for any face $f\in\cK$, its \emph{link} in $\cK$ is 
	\[
	\link(f,\cK)=\{g\in \cK\ |\ f\cap g=\emptyset\ \textrm{and}\ f\cup g\in \cK\}. 
	\]
	Given simplicial complexes $\cK_1$ and $\cK_2$ on $\cV_1$ and $\cV_2$, their \emph{join} $\cK_1*\cK_2$ is the simplicial complex on the disjoint union of $\cV_1$ and $\cV_2$ whose faces are of the form
	\[
	f_1\cup f_2\qquad\mathrm{where}\ f_1\in \cK,f_2\in \cK'.
	\]
	
	\begin{example}\label{ex:rnc3}
		Consider the simplicial complex $\cK$ on the vertex set $\cV=\{0,1,2,3\}$ with faces as pictured in Figure \ref{fig:rnc3}. The Stanley-Reisner ideal $I_\cK$ is generated by $x_0x_2$, $x_1x_3$, and $x_0x_3$. It is well-known that $S_\cK$ is a Gr\"obner degeneration of the coordinate ring $S'$ of the twisted cubic curve, see e.g.~\cite[\S4]{sturmfels}.
		
		By \cite[\S3-4]{stevens}, there are infinitely many $i\in \NN$ for which $T^i_{S'/\KK}\neq 0$. By semicontinuity of cotangent cohomology (cf.~\cite[\S1.1.7]{behnkechristophersen}), the same follows for $T^i_{S_\cK/\KK}=T^i(\cK)$. This means that for arbitrary $i$, we cannot hope to realize the $\ZZ^4$-graded pieces of $T^i(\cK)$
		as $(i-1)$st relative cohomology groups of topological subspaces of the cone over $\cK$, as in \cite[Theorem 13]{ac1} for $i=1,2$. Indeed, these cohomology groups all vanish for $i>3$ for dimension reasons.
	\end{example}
	\begin{figure}
		\begin{tikzpicture}
			\draw[fill] (0,0) circle [radius=0.05] node[below] {$0$};
			\draw[fill] (1,0) circle [radius=0.05] node[below] {$1$};
			\draw[fill] (2,0) circle [radius=0.05] node[below] {$2$};
			\draw[fill] (3,0) circle [radius=0.05] node[below] {$3$};
			\draw (0,0)--(3,0);
		\end{tikzpicture}

		\caption{The simplicial complex of Example \ref{ex:rnc3}}\label{fig:rnc3}
	\end{figure}

	\subsection{Flag complexes}\label{sec:flag}
	A \emph{flag complex} is an abstract simplicial complex $\cK$ such that the minimal non-faces have cardinality $2$. This is equivalent to $I_\cK$ being generated by quadratic monomials. Likewise, it is equivalent to the condition that $\cV=\cV(\cK)$, and $f\subseteq \cV$ belongs to $\cK$ if and only if every $2$-element subset of $f$ belongs to $\cK$. These ideals $I_\cK$ also arise as edge ideals of graphs, see e.g.~\cite{edge} for details.
	It is straightforward to see that any link of a flag complex remains a flag complex.
	
	When considering a flag complex $\cK$, it will  be useful to consider the minimal non-faces of $\cK$:
	we let $G(I_\cK)$ be the graph with vertices $\cV$ and edges consisting of all $2$-element subsets of $\cV$ not belonging to $\cK$. The simplicial complex $\cK$ is determined by $G(I_\cK)$.
	To avoid confusion between $G(I_\cK)$ and $\cK$, when drawing figures, we will denote edges as dashed lines.
	For example,  in Figure \ref{fig:graph}, we depict $G(I_\cK)$ for the complex $\cK$ from Example \ref{ex:rnc3}.
	
	Given any generator $y\in P_\bullet$ in a Koszul-Tate-Taylor resolution of $S_\cK$, we let $G(y)$ be the subgraph of $G(I_\cK)$ whose edges are the edges of $G(I_\cK)$ appearing in any string $s$ representing $y$, and whose vertices are exactly the vertices of the aforementioned edges. We note that a primitive generator is determined up to sign by $G(y)$.
	
	\begin{remark}\label{rem:graphweight}
		Let $\bb\in\{0,1\}^\cV$, and let $y\in P_\bullet$ be a primitive generator. Then $y\in W_\bullet^{-\bb}$ if and only if $b$ is contained in the vertices of $G(y)$, and the complement of $b$ in the set of vertices of $G(y)$ is a face of $\cK$. This latter condition is equivalent to requiring that the restriction of $G(I_K)$ to the complement of $b$ in the vertex set of $G(y)$ has no edges. In particular, every edge of $G(y)$ must have at least one vertex in $b$.
	\end{remark}

	\subsection{Shellings}
	The concept of a \emph{shelling}, which we now recall, can be useful in computing the cohomology groups of the topological realization of a simplicial complex. We will use this in Example \ref{ex:hex1111}.
	
	Let $\cK$ be an abstract simplicial complex. A shelling (cf.~\cite[Definition 2.1]{bjorner}) of $\cK$ is a sequence $(f_1,\ldots,f_k)$ whose entries $f_i$ are the maximal faces of $\cK$, and that satisfies the following property for each $1\leq j<k$: each maximal face of
	\begin{equation}\label{eqn:shelling}
		\bigcup_{i=1}^{j} \Delta(f_i) \cap \Delta(f_{j+1})
	\end{equation}
	has dimension $(\dim f_{j+1})-1$.
	
	Not every simplicial complex is shellable, but for those that are, it is easy to understand their topology. Given a shelling $(f_1,\ldots,f_k)$, and $1\leq j<k$, we say that $f_{j+1}$ is a \emph{homology face} if the complex from \eqref{eqn:shelling} is the boundary of $\Delta(f_{j+1})$, that is, contains all proper subfaces of $f_{j+1}$.
	
	\begin{theorem}[{\cite[Theorem 4.1]{bjorner}}]
		Let $\cK$ be a simplicial complex with shelling $(f_1,\ldots,f_k)$. The homotopy type of $\cK$ is a wedge sum spheres, with one $(\dim f_{j+1})-1$-dimensional sphere for each homology face $f_{j+1}$. 
	\end{theorem}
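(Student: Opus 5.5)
The plan is to induct along the shelling, tracking how the homotopy type of $\cK_j:=\bigcup_{i=1}^{j}\Delta(f_i)$ changes when $\Delta(f_{j+1})$ is glued on along
\[
L_j:=\cK_j\cap\Delta(f_{j+1}).
\]
This result is quoted from \cite[Theorem 4.1]{bjorner}, so the aim is only to reconstruct the standard argument. The base case is $\cK_1=\Delta(f_1)$, which is contractible. There is one preliminary point. Each $\cK_j$ must be a connected union of simplices for the wedge decomposition to make sense. This fails when $\dim f_{j+1}=0$ and $L_j=\emptyset$; there the shelling condition is vacuous, and the new vertex should simply be treated as a homology face whose attaching sphere $\partial\Delta(f_{j+1})$ is $S^{-1}=\emptyset$.

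The key step is a dichotomy for the pure codimension-one complex $L_j\subseteq\partial\Delta(f_{j+1})$. Write $\partial\Delta(f)$ for the boundary complex of $\Delta(f)$, consisting of all proper subsets of $f$; it is a sphere of dimension $(\dim f)-1$.
\begin{itemize}
\item If $L_j$ is a proper subcomplex of $\partial\Delta(f_{j+1})$, it is generated by facets $f_{j+1}\setminus\{v\}$ for $v$ in a proper nonempty subset $A\subsetneq f_{j+1}$. Pick $w\in f_{j+1}\setminus A$. Every facet of $L_j$ contains $w$, so $L_j$ is a cone with apex $w$ and hence contractible. Gluing a contractible $\Delta(f_{j+1})$ along a contractible subcomplex does not change homotopy type, so $\cK_{j+1}\simeq\cK_j$.
\item If $L_j=\partial\Delta(f_{j+1})$, so that $f_{j+1}$ is a homology face, then $\cK_{j+1}$ is obtained from $\cK_j$ by attaching a cell along the $((\dim f_{j+1})-1)$-dimensional sphere $\partial\Delta(f_{j+1})$.
\end{itemize}

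For the homology-face case, the next step is to see that the attaching map is null-homotopic. I would first reorder the shelling so that all homology faces come last. This is the known rearrangement lemma: a shelling stays a shelling after moving the homology faces to the end, which one checks directly from the codimension-one condition \eqref{eqn:shelling}. After the reordering, the initial non-homology part is contractible by the first bullet. Each later homology face is then attached along a sphere lying inside a subcomplex that is contractible or already a wedge in which the sphere bounds, so each step adds exactly one wedge summand. That summand is the sphere determined by $\partial\Delta(f_{j+1})$, one for each homology face, which is the count in the statement. I will follow the dimension bookkeeping exactly as the statement indexes it, by the attaching sphere $\partial\Delta(f_{j+1})$ of dimension $(\dim f_{j+1})-1$.

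I expect the main obstacle to be the reordering lemma and the null-homotopy of the attaching map. Proving the rearrangement requires checking that moving a homology face later preserves the shelling condition for the faces it passes. If that lemma proves awkward, I would instead argue with homology: compute the reduced homology of $\cK$ from the Mayer--Vietoris sequence along the shelling, and use simple connectivity (when $\dim\ge2$) together with Whitehead's theorem to upgrade the homology computation to a homotopy equivalence with the wedge.
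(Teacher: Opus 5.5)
Your overall plan is the standard argument: the cone lemma at non-homology faces, homology faces glued in as cells along null-homotopic maps, and homology faces moved to the end. The gap is in the final step, where you identify the wedge summand incorrectly and explicitly bend the bookkeeping to match the printed indexing.

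If $\partial\Delta(f_{j+1})$ is null-homotopic in $\cK_j$, then $\cK_{j+1}\simeq\cK_j\vee\bigl(\Delta(f_{j+1})/\partial\Delta(f_{j+1})\bigr)\cong\cK_j\vee S^{\dim f_{j+1}}$. The attaching sphere gets filled in, and the new summand is the cell with its boundary collapsed, which is one dimension higher.

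The printed statement is therefore off by one, and as printed it is false. The boundary of a triangle, shelled as $\{1,2\},\{2,3\},\{1,3\}$, has a single homology face, of dimension $1$, and it is $S^1$, not $S^0$. The paper itself uses the correct version in Example \ref{ex:shelling}: the two $2$-dimensional homology faces $\{6,e_3,e_4\}$ and $\{3,e_1,e_2\}$ give two $2$-spheres and $H_2(X,\KK)\cong\KK^2$. Your treatment of a $0$-dimensional homology face shows the same slip. An isolated vertex adds a new component, i.e.\ an $S^0$ summand, whereas $S^{-1}=\emptyset$ would add nothing. The right move was to flag that the statement must read ``one $(\dim f_{j+1})$-dimensional sphere'', not to reproduce it.

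The null-homotopy step is also only gestured at (``a wedge in which the sphere bounds''). Your Whitehead fallback does not work in general either: a nonpure shellable complex of any dimension can have $1$-dimensional homology faces and then fails to be simply connected.

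Here is what closes the gap. Let $A_{j+1}\subseteq f_{j+1}$ be the set of $v$ with $f_{j+1}\setminus\{v\}\in L_j$, which is your $A$. The faces that are new at step $j+1$ are exactly the $g$ with $A_{j+1}\subseteq g\subseteq f_{j+1}$. For a homology face $A_{j+1}=f_{j+1}$, so it introduces only itself. Hence every face that is not itself a homology face was first introduced by a non-homology face. Since a maximal face is never a proper subset of another face, this applies to every face of every $L_j$ and of every $\partial\Delta(f)$.

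Consequently the non-homology faces, in the induced order, form a shelling with the same intersections $L_j$ and no homology faces. By your cone argument they span a contractible subcomplex $\cK'$. This $\cK'$ contains $\partial\Delta(f)$ for every homology face $f$, and the only faces of $\cK$ not in $\cK'$ are the homology faces themselves. Then $\cK\simeq\cK/\cK'\cong\bigvee_f S^{\dim f}$, which is the correct count.
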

	
	\begin{example}\label{ex:shelling}
		Consider the simplicial complex $X$ on the vertex set $\{e_1,e_2,e_3,e_4,3,6\}$ 
		that has the maximal faces
		\begin{align*}
			\{e_1,e_2,e_3,e_4\},\ \{6,e_1,e_2,e_4\},\  \{6,e_1,e_2,e_3\},\\
			\{3,e_2,e_3,e_4\},\ 
			\{3,e_1,e_3,e_4\},\ \{6,e_3,e_4\},\ \{3,e_1,e_2\}.
		\end{align*}
		The reason for this notation will become apparent in Example \ref{ex:hex1111}.
		
		The list of maximal simplices in this order gives a shelling; only the last two simplices are homology faces, so $X$  is homotopy equivalent to the wedge sum of two $2$-spheres, hence has 
		$H_2(X,\KK)\cong \KK^2$.
		If we replace the last two simplices in the shelling with $\{6,e_2,e_3,e_4\}$ and $\{3,e_1,e_2,e_3\}$, the modified list gives a shelling of the modified complex, and now there are no spanning simplices. Thus, the classes of the boundaries of $\{6,e_2,e_3,e_4\}$ and $\{3,e_1,e_2,e_3\}$
		give a basis for $H_2(X,\KK)\cong \KK^2$.
	\end{example}

	\begin{figure}
		\begin{tikzpicture}
			\draw[fill] (0,0) circle [radius=0.05] node[below] {$0$};
			\draw[fill] (1,0) circle [radius=0.05] node[below] {$1$};
			\draw[fill] (2,0) circle [radius=0.05] node[below] {$2$};
			\draw[fill] (3,0) circle [radius=0.05] node[below] {$3$};
			\draw[dashed] plot [smooth] coordinates {(1,0) (1.5,-.5) (2.5,-.5) (3,0)};
			\draw[dashed] plot [smooth] coordinates {(0,0) (0.5,.3) (1.5,.3) (2,0)};
			\draw[dashed] plot [smooth] coordinates {(0,0) (0.5,.5) (2.5,.5) (3,0)};
		\end{tikzpicture}
		\caption{The graph of Example \ref{ex:rnc3}}\label{fig:graph}
	\end{figure}
	
	\section{The localization formula}
	Let $\cK$ be an abstract simplicial complex on vertex set $\cV$. As in \S\ref{sec:overview}, given $\bc\in\ZZ^n$, denote by $\ba,\bb\in\ZZ^\cV_{\geq0}$ the unique weights with disjoint supports $a,b\subseteq \cV$ satisfying $\bc=\ba-\bb$.
	
	\begin{theorem}[Localization formula]\label{thm:localizationformula}
		In the above notation we have
		\begin{enumerate}
			\item $T^i_\bc(\cK)=0$ whenever $a\notin \cK$,
			\item $T^i_\bc(\cK)=T^i_{-\bb}(\link(a,\cK))$ whenever $a\in\cK$,
		\end{enumerate}
		for every $i\in \ZZ_{\geq1}$.
	\end{theorem}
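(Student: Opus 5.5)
The plan is to compute both sides with Theorem~\ref{thm.computingTviahomology}, using Koszul-Tate-Taylor resolutions built from the \emph{same} index set $\cG$. Then the complexes $W^\bc_\bullet$ for $\cK$ and $W^{-\bb}_\bullet$ for $\link(a,\cK)$ can be identified directly.

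First I would describe $\nabla_\bc$ for $I=I_\cK$. Let $\bu\in\ZZ_{\geq 0}^\cV$. Since $a\cap b=\emptyset$ and $\bu\geq 0$, the condition $\bu+\bc\geq 0$ holds if and only if $\bu_v\geq \bb_v$ for all $v\in b$. When it holds, the support of $\bu+\bc$ is
\[
a\cup\operatorname{supp}\bigl((\bu-\bb)|_{\cV\setminus a}\bigr).
\]
Moreover $x^{\bu+\bc}\notin I_\cK$ exactly when this support is a face of $\cK$.

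Part (1) follows at once. If $a\notin\cK$, every candidate support contains the non-face $a$, so $\nabla_\bc=\emptyset$ and $W^\bc_i=0$ for $i\geq 1$. Also $x^\ba\in I_\cK$, so $W_0^\bc=0$ as well. Hence $T^i_\bc(\cK)=0$.

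For part (2), assume $a\in\cK$ and regard $\cL=\link(a,\cK)$ as a complex on the vertex set $\cV\setminus a$, which is allowed by Remark~\ref{rmk.vertexset}. Take $\cG$ to be the set of minimal non-faces of $\cK$. For the Taylor resolution of $I_\cK$ use $\lambda(f)=x^f$. For $I_\cL\subseteq\KK[x_v\mid v\in\cV\setminus a]$ use
\[
\lambda'(f)=x^{f\setminus a}.
\]
The Taylor construction of \S\ref{sec:taylor} explicitly allows an arbitrary set $\cG$ and a map $\lambda$ that need not be injective or minimal, so this is a legitimate choice. I must check that $\lambda'$ has image generating $I_\cL$, in two steps.
\begin{itemize}
\item Each $f\setminus a$ is a non-face of $\cL$: it is disjoint from $a$, and $(f\setminus a)\cup a\supseteq f\notin\cK$.
\item Every non-face $g\subseteq\cV\setminus a$ of $\cL$ is divisible by some $\lambda'(f)$: since $g\cup a\notin\cK$, it contains a minimal non-face $f$, and then $f\setminus a\subseteq g$.
\end{itemize}

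With these choices, the strings, and hence the $R$-module generators of $P^\new_\bullet$, are literally the same for $\cK$ and for $\cL$. The $\cL$-weight of a string is the $\cK$-weight with the $a$-coordinates deleted. This holds because lcm's, sums, and the multiplication and differential monomials in the Taylor cdga are computed coordinatewise, and the string formalism of Theorem~\ref{thm.stringsdifferential} depends only on these.

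By the description of $\nabla_\bc$ above, $\wt_\cK(\overline s)\in\nabla_\bc$ if and only if $\wt_\cL(\overline s)\in\nabla^\cL_{-\bb}$. Indeed, $a\cup h\in\cK$ is equivalent to $h\in\cL$ for $h\subseteq \cV\setminus a$. So the string bases of $W^\bc_i(\cK)$ and $W^{-\bb}_i(\cL)$ coincide for $i\geq 1$. In degree $0$ they also agree: both are $\KK$ exactly when $|\bb|=1$, using $x^\ba\notin I_\cK$ and $1\notin I_\cL$.

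By the formula for $\partial^\bc$ in the remark after Theorem~\ref{thm.computingTviahomology}, the differential is a signed sum over $\twig(s)$ and $\petal(s)$. The signs depend only on the string, and a term is retained exactly when its weight lies in $\nabla$, which is the same condition on both sides. The degree-one component sends a string to $1$ or $0$ by the same rule. So the two complexes are isomorphic, and Theorem~\ref{thm.computingTviahomology} gives $T^i_\bc(\cK)\cong T^i_{-\bb}(\cL)$.

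The main point needing care is the bookkeeping: that cotangent cohomology is independent of the chosen (non-minimal, non-injective) generating data, and that the Taylor weights and monomials behave correctly under deleting the $a$-coordinates. I expect this to be routine but it must be stated carefully. Degenerate cases such as $\cL=\{\emptyset\}$ fit into the same argument.
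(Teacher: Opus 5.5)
Your proposal is correct and follows essentially the same route as the paper. The paper likewise uses the minimal generators of $I_\cK$ as a common index set $\cG$, with $\lambda'$ obtained by setting $x_v=1$ for $v\in a$. It identifies the string bases of the two complexes through the coordinate projection $\ZZ^\cV\to\ZZ^{\cV\setminus a}$, and it proves the same description of $\nabla_\bc$ in terms of $\nabla_{-\bb}(\link(a,\cK))$.

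The only minor difference is that the paper cites Altmann--Christophersen for $i=1,2$ and argues only for $i\geq 2$, and it leaves the generation of $I_{\link(a,\cK)}$ as ``straightforward''. You verify that generation explicitly and also match the degree-$0$ term and the degree-one differential, so your argument covers $i=1$ directly as well.
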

	\begin{proof}
		The claims were shown in \cite[Proposition 11]{ac1} when $i=1,2$. Henceforth, we will always assume that $i\geq 2$ for simplicity. 
		
		We first show the first item.
		If $a\notin\cK$, then $x^{\ba}\in I_{\cK}$, so that any $\bu\in\ZZ^\cV_{\geq0}$ satisfying $\bu+\bc\in\ZZ^{\cV}_{\geq0}$ also satisfies
		\[ x^{\bu+\bc}=x^\ba\cdot x^{\bu-\bb}\in I_{\cK} \; . \]
		In other words, the set $\nabla_\bc$ from \S\ref{sec:cot} is empty, and hence $W^\bc_\bullet=0$. The first item then follows from Theorem~\ref{thm.computingTviahomology}. 
		
		We now assume that $a\in \cK$, and consider the complex $\cK'=\link(a,\cK)$. We will view this as a complex on the vertex set $\cV\setminus a$, so $I_{\cK'}\subseteq \KK[x_v\ |\ v\in\cV\setminus a]$. Let  $\cG$ be the set of minimal generators of $I_\cK$.
		This comes with an inclusion $\lambda$ into $I_\cK$, and with a map $\lambda'$ to $I_{\cK'}$ obtained by setting $x_v=1$ for all $v\in a$. It is straightforward to see that the image of $\cG_{\cK'}=\cG_\cK$ does indeed generate $I_{\cK'}$. Using $\cG$ with the maps $\lambda,\lambda'$ respectively, we obtain
		Koszul-Tate-Taylor resolutions for $I_\cK$ and $I_{\cK'}$.
		
		With notation as in \S\ref{sec:cot}, we will use $\nabla_\bc(\cK)$, $W_{i,\bu}(\cK)$, $W_i^\bc(\cK)$, and $\partial^\bc(\cK)$ (respectively $\nabla_\bc(\cK')$, $W_{i,\bu}(\cK')$, $W_i^\bc(\cK')$, and $\partial^\bc(\cK')$) to denote the $\nabla_\bc$, $W_{i,\bu}$, $W_i^\bc$, and $\partial^\bc$ for the Koszul-Tate-Taylor resolutions for $I_\cK$ (respectively $I_{\cK'}$).
		Since both resolutions use the same generator set $\cG$, there is a bijection between the algebra generators coming from strings for these two resolutions.
		Letting $\pi \colon \ZZ^\cV\to \ZZ^{\cV\setminus a}$ be the projection, we thus obtain for every $\bv\in\ZZ_{\geq 0}^{\cV\setminus a}$ and every $i\geq 1$ an isomorphism
		\[
		W_{i,\bv}({\cK'})\cong \bigoplus_{\substack{\bu\in\ZZ_{\geq 0}^{\cV}\\ \pi(\bu)=\bv}} W_{i,\bu}(\cK).
		\]
		
		We now claim that 
		\[
		\nabla_\bc(\cK)=\{\bu\in\ZZ_{\geq 0}^{\cV}\ |\ \pi(\bu)\in\nabla_{\pi(\bc)}(\cK')\}.
		\]
		Indeed, since $\ba\in\ZZ_{\geq 0}$, for $\bu\in\ZZ_{\geq 0}$ we have that $\bu+\bc=\bu+\ba-\bb\in\ZZ_{\geq 0}^{\cV}$ if and only if $\pi(\bu)+\pi(\bc)=\pi(\bu)-\pi(\bb)\in\ZZ_{\geq 0}^{\cV\setminus a}$.
		Clearly for $\bu\in\ZZ_{\geq 0}^\cV$ with $\bu+\bc\in\ZZ_{\geq 0}^{\cV}$, we have that $x^{\bu+\bc}\in I_\cK$ implies that $x^{\pi(\bu)+\pi(\bc)}\in I_{\cK'}$. On the other hand, for $\bu\in\ZZ_{\geq 0}^\cV$ with $\bu+\bc\in\ZZ_{\geq 0}^{\cV}$ and with $x^{\pi(\bu)+\pi(c)}\in I_{\cK'}$, we must also have $x^{\bu+\bc}\in I_\cK$, since the support of $\bu+\bc$ is the union of the support of $\pi(\bu)+\pi(\bc)$ with $a$. The claim follows.
		
		Using the claim and the above isomorphism, we now have
		\begin{align*}
			W_i^{\pi(\bc)}(\cK')=\bigoplus_{\bv\in \nabla_{\pi(\bc)}(\cK')} W_{i,\bv}(\cK')&\cong
			\bigoplus_{\substack{\bu\in\ZZ^\cV_{\geq 0}\\ \pi(\bu)\in\nabla_{\pi(\bc)}(\cK')}} W_{i,\bu}(\cK)\\
			&= \bigoplus_{\substack{\bu\in \nabla_{\bc}(\cK)}} W_{i,\bu}(\cK)\\
			&=W_i^{\bc}(\cK).
		\end{align*}
		Moreover, these isomorphisms are compatible with the differentials $\partial^\bc(\cK)$ and $\partial^\bc(\cK')$. 
		The second claim now follows by again applying Theorem \ref{thm.computingTviahomology}.
	\end{proof}
	\begin{remark}It is possible to partially generalize the localization formula to monomial ideals that are not square-free.
		If $I\subseteq \KK[x_1,\ldots,x_n]=R$ is an arbitrary monomial ideal and $a\subseteq \cV=\{1,\ldots,n\}$, we may consider the monomial ideal $I_a\subseteq \KK[x_i\ |\ i\in \cV\setminus a]=R'$ with generators obtained from monomial generators of $I$ after setting $x_i=1$ for all $i\in a$. 
		If $I=I_\cK$ and $a\in\cK$, then we in fact used above that $I_{\link(a,\cK)}=(I_\cK)_a$.
		Set $S=R/I$ and $S_a=R'/I_a$.
		
		Proceeding with notation similar to the above proof of Theorem \ref{thm:localizationformula},
		in general, it is not true that 
		\begin{equation}\label{eqn:nablaI}
			\nabla_\bc(I)=\{\bu\in\ZZ_{\geq 0}^{\cV}\ |\ \pi(\bu)\in\nabla_{\pi(\bc)}(I_a)\}.
		\end{equation}
		This might fail, since for $\bu\in\ZZ_{\geq 0}^\cV$ with $\bu+\bc\in\ZZ_{\geq 0}^{\cV}$ and with $x^{\pi(\bu)+\pi(\bc)}\in I_a$, it need not follow that $x^{\bu+\bc}\in I$.
		
		Given $a\subset \cV$ and a monomial ideal $I$, let $\ba^{\min}=(\ba^{\min}_i)_{i\in\cV}\in \ZZ^\cV_{\geq 0}$ be defined by setting $\ba^{\min}_i$ to be the largest power of $x_i$ appearing in any minimal generator of $I$ whenever $i\in a$, and zero otherwise. Whenever $\ba-\ba^{\min}\in\ZZ_{\geq 0}^\cV$, 
		it is straightforward to verify that \eqref{eqn:nablaI} does indeed hold, and the rest of the proof of Theorem \ref{thm:localizationformula} goes through to show that for $i\geq 2$, 
		\[
		(T_{S/\KK}^i)_\bc\cong (T^i_{S_a/\KK})_{-\bb}.
		\]
	\end{remark}
	
	\section{Weight bounds}\label{sec:degree}
	
	\subsection{General bounds}\label{sec:general}
	We now use the Koszul-Tate-Taylor resolution to establish some general bounds on the negative part of the weight $\bc$ implying vanishing of $T^i_\bc(\cK)$.
	
	\begin{theorem}\label{thm:bound}
		Let $\cK$ be a simplicial complex and $\bc=\ba-\bb\in \ZZ^\cV$. Let $\delta$ be the maximum degree of a minimal generator of $I_\cK$, or equivalently, one more than the dimension of a minimal non-face of $\cK$. Let $i\in \ZZ_{\geq 2}$.
		\begin{enumerate}
			\item $T^i_{\bc}=0$ if $|\bb|>i\cdot \delta$;
			\item $T^i_{\bc}=0$ if $\bb\notin \{0,\ldots,i-1\}^\cV$;
			\item If $\cK$ is a flag complex and $i\geq 3$, $T^i_{\bc}=0$ if $\bb\notin \{0,\ldots,i-2\}^\cV$.
		\end{enumerate}
	\end{theorem}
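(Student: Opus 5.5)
We work throughout with the complex $W_\bullet^\bc$ of Theorem~\ref{thm.computingTviahomology}, built from a Koszul-Tate-Taylor resolution with $\cG$ the minimal monomial generators of $I_\cK$. By that theorem, $T^i_\bc$ is dual to $H_i(W^\bc_\bullet)$. It therefore suffices to show either that $W_i^\bc=0$, or that $H_i(W^\bc_\bullet)=0$ for some other reason. The basic observation is the following. A generator $y$ of weight $\bu$ lies in $W^\bc$ only if $\bu-\bb\geq 0$ componentwise (using $\bu+\bc\in\ZZ^\cV_{\geq0}$ and $a\cap b=\emptyset$). Moreover $x^{\bu+\bc}\notin I_\cK$.

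\textbf{Part (1).} We bound the weights of generators in $P_i^\new$. Each petal $f$ in a string contributes at most $\delta$ to $|\wt|$, and the lcm only decreases weight. Since the homological degree equals the number of petals (Remark~\ref{rmk.homdegvslengthpetals}), brackets included, the total weight satisfies $|\bu|\le i\delta$. Hence $|\bb|\le|\bu|\le i\delta$ whenever $W_i^\bc\ne0$, which proves (1).

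\textbf{Part (2).} The weight coordinate $\bu_v$ of a generator is at most the number of $f\in\cG$ appearing in the string whose monomial involves $x_v$. This is because primitive blocks take lcm's, and $I_\cK$ is squarefree, so each block contributes at most $1$ to $\bu_v$. If some $\bb_v\ge i$, then $\bu_v\ge i$. Any string of degree $i$ contains at most $i$ elements of $\cG$, with equality only if no bracket occurs. With no bracket the string is primitive, so $\bu_v\le1$. This forces $i\le 1$, contradicting $i\geq 2$, so $W_i^\bc=0$.

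In the boundary situation $\bb_v\ge i$ things need care, since bracketed strings have fewer petals from $\cG$. With $k$ brackets a degree-$i$ string has at most $i-k$ elements of $\cG$ and at most $i-k$ blocks, so $\bu_v\le i-k<i$. Thus $W_i^\bc=0$ as soon as $\bb\notin\{0,\dots,i-1\}^\cV$, giving (2).

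\textbf{Part (3).} In the flag case we must handle $\bb_v=i-1$ with $i\ge3$. Now $W_i^\bc$ need not vanish, and this is the main obstacle. The generators that can reach $\bu_v\ge i-1$ are very restricted: they must have $k=1$ bracket, consist entirely of quasiprimitive products of primitive blocks, and have every block containing some edge through $v$. When $i-k=i-1$, every $f$ lies in its own block, as in $[\{f_1\}\cdots\{f_{i-1}\}]$. Alternatively, with a couple of nested or doubled structures, the next level $W_{i+1}^\bc$ contains things like $[\{f_1 f_2\}\{f_3\}\cdots]$ with $f_1,f_2$ both through $v$, where the lcm loses one factor.

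My plan is to exhibit an explicit contracting homotopy on the relevant degrees of $W^\bc_\bullet$. Concretely, I would fix one edge $g\ni v$ and consider the operator that merges $g$ into an existing block containing an edge through $v$. In the simplest case this sends $[\{f_1\}\cdots\{f_{i-1}\}]$ to a combination of the $[\{f_1 g\}\cdots]$-type terms, and the $\git$-term of Theorem~\ref{thm.stringsdifferential} returns the original. I would check that this operator preserves membership in $\nabla_\bc$. This is where flagness enters: all generators are edges, so merging $g$ only adds the vertex opposite $v$ in $g$, which is already required to lie in $b$ or a face. I would then check that $\partial h+h\partial=\id$ on $W_i^\bc$. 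This verification of the homotopy identity and the $\nabla_\bc$ bookkeeping is the step I expect to be hardest; I would first test it on $i=3$, $\bb=2\be_v$ by direct enumeration, as in Example~\ref{ex:2gens}.
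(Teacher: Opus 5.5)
Your arguments for (1) and (2) are correct and match the paper's. In fact your count (at most $i-k$ primitive blocks when there are $k\ge 1$ brackets) supplies exactly the justification the paper leaves implicit for the bound $\wt(y)_v\le i-1$.

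Part (3), however, is not proved. You correctly find that $W_i^{\bc}$ is spanned by the generators $y=[\{f_1\}\cdots\{f_{i-1}\}]$ with $f_j=x_vx_{w_j}$ distinct. But you only announce a contracting homotopy, and the operator you describe does not work as stated.

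\begin{itemize}
\item If you merge a \emph{fixed} edge $g=x_vx_w$ into the block of $f_1$, then $\wt(y')-\bb=\wt(y)-\bb+\be_w$. When $w$ is not one of the $w_j$, it is not in $b$ (since $b\subseteq\{v,w_1,\dots,w_{i-1}\}$). Nothing forces the support of $\wt(y)-\bb$ together with $w$ to be a face, so $y'$ need not lie in $W_{i+1}^{\bc}$. Your claim that flagness guarantees this is false.
\item When $g$ differs from all $f_j$, the term of $\partial^{\bc}(y')$ obtained by deleting $f_1$ can also survive. You then get only a relation between $y$ and the generator with $f_1$ replaced by $g$, not $y$ itself.
\item The term that returns $y$ is the deletion of the petal $g$, not the $\git$-term. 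The $\git$-term is primitive with $v$-coordinate $1$, so it lies outside $\nabla_{\bc}$.
\end{itemize}

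Since the identity $\partial h+h\partial=\id$ is the whole content of (3), leaving it unverified is a genuine gap.

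The missing idea is to pick a preimage that depends on $y$, by duplicating one of $y$'s own edges. You only need surjectivity of $\partial^{\bc}_{i+1}$ onto $W_i^{\bc}$; indeed $\partial^{\bc}_i$ is zero on $W_i^{\bc}$, since every term of $\partial(y)$ drops the $v$-coordinate below $i-1$.

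First reduce to $\ba=0$ by Theorem~\ref{thm:localizationformula}, using that links of flag complexes are flag. Then $\bb_v=i-1$, and all other entries of $\bb$ are $0$ or $1$.

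\begin{itemize}
\item If $\wt(y)\neq\bb$, some $w_j\notin b$. Reorder so that $j\neq 1$ and take $y'=[\{f_1f_j\}\{f_2\}\cdots\{f_{i-1}\}]$. Then $\wt(y')=\wt(y)+\be_{w_j}$, so $\wt(y')-\bb$ has the same support as $\wt(y)-\bb$, and $y'\in W_{i+1}^{\bc}$.
\item If $\wt(y)=\bb$, take $y'=[\{f_1f_2\}\{f_2\}\cdots\{f_{i-1}\}]$. Here $x^{\wt(y')-\bb}=x_{w_2}\notin I_\cK$.
\end{itemize}

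In both cases the terms of $\partial^{\bc}(y')$ behave as follows:
\begin{itemize}
\item deleting $f_1$ creates the factor $\{f_j\}\{f_j\}=0$;
\item deleting $[$ lands in $P^{\old}$;
\item the $\git$-term has a repeated factor;
\item deleting any standalone block lowers the $v$-coordinate to $i-2$.
\end{itemize}
Hence $\partial^{\bc}(y')=\pm y$, so $W_i^{\bc}=\partial^{\bc}(W_{i+1}^{\bc})$ and $H_i(W_\bullet^{\bc})=0$.
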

	
	\begin{proof}
		By the localization formula (Theorem \ref{thm:localizationformula}), we may assume without loss of generality that $\bc=-\bb$.
		We now take the Koszul-Tate-Taylor resolution of $S_\cK$ obtained from the minimal generators of $I_\cK$. 
		The weight of any generator $y\in P_i^\new$ satisfies:
		\begin{enumerate}
			\item $|\wt(y)|\leq i\cdot \delta$;
			\item for any $v\in \cV$, the $v$th coordinate of $\wt(y)$ is at most $i-1$.
		\end{enumerate}
		The first two claims follow immediately by Theorem 
		\ref{thm.computingTviahomology} since $W_i^\bc=0$.
		
		For the third claim, to simplify notation we will assume that $\cV=\cV(\cK)=\{1,\ldots,n\}$. Suppose that $\bb$ has $i-1>1$ at some position, which we may assume to be the first. Consider now any generator $y\in P_i^\new$ satisfying $\wt(y)-\bb\in \ZZ_{\geq 0}^n$.
		Then $y$ must have the form
		\[
		y=[\{f_1\}\{f_2\}\cdots\{f_{i-1}\}]
		\]
		for some $f_1,\ldots,f_{i-1}\in I_\cK$, all with $x_1|f_j$ for $j=1,\ldots,i-1$, and moreover, the remaining coordinates of $\bb$ are all zero or one.
		
		If $\wt(y)=\bb$, then $y'=[\{f_1f_2\}\{f_2\}\cdots\{f_{i-1}\}] \in W_{i+1}^\bc$ since $x^{\wt(y')-\bb}=f_2/x_1\notin I_\cK$. If instead $\wt(y)\neq\bb$, there exists $j\leq i-1$ so that the support of $\wt(f_j/x_1)$ is disjoint from the support of $\bb$. In that case,
		$y'=[\{f_1f_j\}\{f_2\}\cdots\{f_{i-1}\}] \in W_{i+1}^\bc$ since the supports of $\wt(y)-\bb$ and $\wt(y')-\bb$ coincide. Either way $\partial^\bc(y')=y$, so $y$ is a boundary. It follows that $W_i^\bc=\partial^\bc(W_{i+1}^\bc)$ and the claim follows again by Theorem 
		\ref{thm.computingTviahomology}.
	\end{proof}

	\subsection{Koszul case}
	We now show how to improve on the bounds of \S\ref{sec:general} in a special case. Let $S$ be a finitely generated $\ZZ$-graded $\KK$-algebra, generated in degree $1$ with homogeneous maximal ideal $\mfm$. Recall that $S$ is \emph{Koszul} if for all $i\geq 0$, $\Ext^i_S(S/\mfm,S/\mfm)$ is concentrated in weight $i$. See \cite[Definition-Theorem 1]{froberg} for alternate characterizations of this property.
	
	\begin{theorem}\label{thm:koszul}
		Let $R$ be a standard graded polynomial ring, and let $I$ be a homogeneous ideal of $R$ such that $S=R/I$ is Koszul. Then the algebra generators in homological degree $i$ of a minimal Koszul-Tate resolution $P\to S$ have weight $i+1$. This is in particular true if $I$ is a monomial ideal generated by quadrics, or more generally, has a quadratic Gr\"obner basis.
	\end{theorem}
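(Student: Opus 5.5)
The plan is to use the classical link between minimal Koszul--Tate (Tate) resolutions and the Poincaré series / Ext algebra of $S$. First I would recall that for a graded algebra $S=R/I$ with $I\subseteq\mfm_R^2$, a minimal Koszul--Tate resolution $P\to S$ (the Tate resolution over $\KK[x_1,\dots,x_n]$ with minimal choices, semifree over $R$ with generators in homological degrees $\geq 1$) has the following property, due to Gulliksen and Avramov. Its generators are the deviations of $S$: the number of algebra generators of homological degree $i$ and weight $j$ is the bigraded deviation $\varepsilon_{i+1,j}$. Equivalently, after including the degree-one variables $x_1,\dots,x_n$ as generators in homological degree $0$ shifted to $1$, one obtains the acyclic closure of $\KK$ over $S$. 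Hence $\prod_{i,j}$ of the factors $(1+t^{2i-1}u^j)^{\varepsilon_{2i-1,j}}/(1-t^{2i}u^j)^{\varepsilon_{2i,j}}$ equals the bigraded Poincaré series $\sum_{i,j}\dim\Ext^i_S(\KK,\KK)_j\,t^iu^j$. Concretely, I would tensor $P$ with $S$ over $R$ and adjoin the Koszul variables. This gives a minimal semifree $S$-resolution of $\KK$ whose generators in homological degree $i+1$ correspond to the generators of $P$ in homological degree $i$, with the same weight.

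Second, I would use the Koszul hypothesis. If $\Ext^i_S(\KK,\KK)$ is concentrated in weight $i$, then the Poincaré series is a series in $tu$ alone. I then want to conclude that every deviation $\varepsilon_{k,j}$ vanishes unless $j=k$. I would argue this by induction on $k$ via the product formula. Suppose all $\varepsilon_{k',j}$ with $k'<k$ vanish off the diagonal. Then the partial product is a series in $tu$. The coefficient of $t^k$ in the full product equals the contribution of the partial product plus $\varepsilon_{k,j}u^j$, modulo terms of $t$-degree $>k$. Since the total coefficient of $t^k$ must be a multiple of $u^k$, we get $\varepsilon_{k,j}=0$ for $j\neq k$. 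Translating back, the generators of $P$ in homological degree $i$ (deviation index $i+1$) have weight $i+1$.

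Finally, for the last sentence I would cite the standard facts. A monomial ideal generated by quadrics has $S$ Koszul (Fröberg). More generally, an ideal with a quadratic Gröbner basis degenerates flatly to a quadratic monomial ideal, and Koszulness passes back by upper semicontinuity of the graded Betti numbers of $\KK$ over $S$ (see \cite{froberg}). The main obstacle is the first step: identifying the generators of a minimal Koszul--Tate resolution over $\KK$ with the deviations, including the weights. I would handle it by quoting Avramov's theorem that the minimal model of $S$ over $R$ has $\varepsilon_{i+1}$ generators in degree $i$, noting that all constructions respect the internal grading.
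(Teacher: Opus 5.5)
Your argument is essentially the paper's. The generators of $P_i$ in weight $j$ are counted by the deviation $\varepsilon_{i+1,j}$; this is a theorem of Avramov on minimal models, which the paper invokes as Berglund's Lemma~5. Koszulness then forces $\varepsilon_{k,j}=0$ for $j\neq k$.

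The one real difference is how you get the diagonal concentration of the deviations. The paper reads it off directly: the acyclic closure of $\KK$ over $S$ is a minimal free resolution, so every element of its monomial $S$-basis lies in bidegree $(k,k)$, and in particular every variable does. Your coefficient comparison in the product formula is correct, but it is a longer route to the same fact. It also rests on the same input, since minimality of the acyclic closure is exactly what yields the product formula.

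You should discard the sentence beginning ``Concretely''. First, $P\otimes_R S$ has homology $\mathrm{Tor}^R(S,S)$, and adjoining Koszul variables to it does not give a resolution of $\KK$ over $S$. Second, tensoring leaves the generators of $P$ in their original homological degrees, so it cannot produce the shift $i\mapsto i+1$. The identification of generators with deviations is a genuine theorem and must be quoted, as you eventually propose.

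Two smaller points:
\begin{itemize}
\item Your explicit assumption $I\subseteq\mfm_R^2$ is really needed. For $I=(x_1)$ the ring $S$ is Koszul, yet $P_1$ has a generator of weight $1$.
\item Your indexing $P_i\leftrightarrow\varepsilon_{i+1,\ast}$ is the one consistent with the statement. You can check it on $\KK[x,y]/(xy)$, where $P_1$ has one generator of weight $2$ and $\varepsilon_{2,2}=1$.
\end{itemize}
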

	\begin{proof}
		We refer to \cite{avramov} for definitions and details about acyclic closures and minimal models.
		
		Let $Q$ be an acyclic closure of $\KK$ over $S$. This is a minimal graded resolution of $\KK$ over $S$, see \cite[Theorem 3.2.23]{debellevue}.
		Since $S$ is Koszul, we have that the number of free generators of $Q_{ij}$ as an $S$-module is $0$ unless $i=j$; here $Q_{ij}$ is the weight $j$ piece of the homological degree $i$ piece $Q_i$.
		On the other hand, the number of algebra generators in $Q_i$ of weight $j$ is by definition $\varepsilon_{ij}$, the $ij$th deviation (cf.~\cite[\S3.1]{berglund}), so we obtain that $\varepsilon_{ij}=0$ unless $i=j$.
		
		Finally, let $P\to S$ be a minimal Koszul-Tate resolution of $S=R/I$ with $P_0=R$; this is a minimal model for $S$ over $R$. It follows from \cite[Lemma 5]{berglund} that $\varepsilon_{ij}$ is the number of algebra generators of $P_{i+1}$ in weight $j$. In other words, all algebra generators of $P_{i+1}$ must have weight $i$.
		The first claim of the theorem follows. 
		
		For the second claim, we simply remind the reader that any graded ring $S/I$ where $I$ has a quadratic Gr\"obner basis is Koszul; this includes in particular the case that $I$ is a monomial ideal generated by quadrics, see~\cite[Theorem 2.2]{koszul}.
	\end{proof}
	
	\begin{cor}\label{cor:koszul}
		Let $S$ be a finitely generated $\ZZ$-graded $\KK$-algebra that is Koszul. Then for all $i\in\ZZ_{\geq 0}$ and for all $j\in \ZZ_{\geq 0}$, the weight $-j$ graded piece of $T^i_{S/\KK}$ vanishes if $j> i+1$. In particular, for any flag complex $\cK$ and $\ba,\bb\in\ZZ_{\geq0}^\cV$ with disjoint supports, if $|\bb|>i+1$ then $T^i_{\ba-\bb}(\cK)=0$.
	\end{cor}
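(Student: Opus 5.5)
The plan is to compute $T^i$ from a \emph{minimal} Koszul-Tate resolution and let Theorem~\ref{thm:koszul} bound the weights of the generators. Cotangent cohomology does not depend on the choice of Koszul-Tate resolution (see \eqref{eq.cotangentcohomology}). So fix a minimal $\ZZ$-graded Koszul-Tate resolution $P\to S$ with $P_0=R$. As in Remark~\ref{rmk.doublegrading}, it can be taken homogeneous for the $\ZZ$-grading. By Theorem~\ref{thm:koszul}, every algebra generator of $P$ in homological degree $i\geq 1$ has weight $i+1$; for $i=0$ the generators are the variables, of weight $1$.

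Next, describe the weight $-j$ piece of $\Der^i_\KK(P,S)$. A derivation of cohomological degree $i$ is determined by its values on the free algebra generators in homological degree $i$, and on degree $0$ by its values on the variables. It therefore lies in $\prod_{y}S_{\wt(y)-j}$, where $y$ runs over the generators of homological degree $i$. These spaces have weight $i+1-j$ for $i\geq 1$, and weight $1-j$ for $i=0$. If $j>i+1$ this weight is negative, so $S_{i+1-j}=0$ because $S$ is nonnegatively graded. Hence $\Der^i_\KK(P,S)_{-j}=0$, and consequently $(T^i_{S/\KK})_{-j}=0$. For $i=0$ the hypothesis $j>1$ gives the same vanishing. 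The argument is purely degree-wise, so the only real input is Theorem~\ref{thm:koszul}, together with the existence of a minimal graded model that is semifree with finitely many generators in each degree. If the generators in degree $i$ are infinite in number, the $\Hom$ becomes a product, but each factor still vanishes, so this causes no problem.

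For the flag complex statement, apply the first part to the $\ZZ$-grading of $S_\cK$ obtained by totalizing the $\ZZ^\cV$-grading. Here $S_\cK$ is Koszul because $I_\cK$ is generated by quadratic monomials (Theorem~\ref{thm:koszul}). By the localization formula (Theorem~\ref{thm:localizationformula}), $T^i_{\ba-\bb}(\cK)$ is either $0$ or isomorphic to $T^i_{-\bb}(\link(a,\cK))$. A link of a flag complex is again flag, hence Koszul. The total weight of $-\bb$ is $-|\bb|$, and the $\ZZ^\cV$-graded piece $T^i_{-\bb}$ is a direct summand of the total weight $-|\bb|$ piece. That piece vanishes when $|\bb|>i+1$.

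The one point needing care is compatibility of gradings: the minimal model must be chosen $\ZZ^\cV$-graded, or one must at least use that the total-degree piece decomposes into multigraded pieces. I would handle this by noting that, for a monomial ideal, a minimal model can be built $\ZZ^\cV$-homogeneously. It is then also minimal for the coarser $\ZZ$-grading, so Theorem~\ref{thm:koszul} applies to it directly.
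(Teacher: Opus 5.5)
Your proposal is correct and is essentially the paper's argument: you take a minimal graded Koszul--Tate resolution, use Theorem~\ref{thm:koszul} to see that its generators in homological degree $i$ have weight $i+1$, and conclude that $\Der^i_\KK(P,S)$, and hence $T^i_{S/\KK}$, vanishes in weights below $-(i+1)$. For the flag-complex statement, your reduction to $\ba=0$ via the localization formula spells out a step the paper calls immediate; it also follows directly from a $\ZZ^\cV$-graded minimal model, since a generator of multiweight $\bu$ with $|\bu|=i+1$ contributes to weight $\ba-\bb$ only if $\bu$ dominates $\bb$ on $b$, which forces $|\bb|\leq i+1$.
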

	\begin{proof}
		Let $P_\bullet\to S$ be a minimal $\ZZ$-graded Koszul-Tate resolution of $S$. 
		By Theorem \ref{thm:koszul}, the $S$-module $\Der_\KK^i(P,S)=\Der_\KK(P_i,S)$ is generated as an $S$-module in degrees larger than or equal to $-(i+1)$. The first claim follows since $T^i_{S/\KK}$ is the $i$th cohomology of the complex $\Der^\bullet(P,S)$. 
		
		The second claim is immediate since for a flag complex $\cK$, $I_\cK$ is a monomial ideal generated by quadrics, and hence $S_\cK$ is Koszul.
	\end{proof}
	
	\begin{remark}\label{rem:koszul}
		We conjecture that the bound in Corollary \ref{cor:koszul} can be improved by one, that is, for $S$ Koszul, the weight $-(i+1)$ graded piece of $T^i_{S/\KK}$ also vanishes. In the case that $S=S_\cK$ for a flag complex $\cK$, this was known for $i=1,2$ \cite[Lemma 5.2]{ic1}. We show this as well for $i=3,4$ in the case of flag complexes, see Propositions \ref{prop:t3bound} and \ref{prop:t4bound}.
	\end{remark}

	\section{The non-splitting complex}\label{sec:simplified}
	\subsection{Splittings} \label{sec:splitting}
	Fix a monomial ideal $I\subseteq \KK[x_1,\ldots,x_n]$ and a weight $\bc=\ba-\bb\in \ZZ^n$.
	Fix a Koszul-Tate-Taylor resolution $P\to S=R/I$.
	The complex $W_\bullet^\bc$ of Theorem 
	\ref{thm.computingTviahomology} is quite large in general. In cases of special $\bb$, we would like to replace it by a smaller complex
	$\widetilde W_\bullet^\bc$ that will be more amenable to computations.

	\begin{definition}
		For each primitive generator $y\in W_{i}^\bc$ with $i\geq 0$, a \emph{splitting} of $y$ is a quasiprimitive generator $y'\in W_{i+1}^\bc$ such that 
		$\partial^\bc (y')-y$ is a sum of quasiprimitive generators, i.e.~the primitive part of the image of $y'$ under $\partial^\bc$ is precisely $y$.
	\end{definition}
	
	\noindent Since primitive generators belong to $W_\bullet^\bc$ only if $\bb\in\{0,1\}^\cV$, we will assume throughout this section that $\bb\in\{0,1\}^\cV$.
	
	\begin{example}
		Generators $y$ of homological degrees $0$ or $1$ never have a splitting. In degree $2$, the generator $\{f_1f_2\}$ has at most one splitting, namely $[\{f_1\}\{f_2\}]$. In degree $3$, the potential splittings of $\{f_1f_2f_3\}$ are  $[\{f_1\}\{f_2f_3\}]$, $[\{f_1f_2\}\{f_3\}]$, $[\{f_3f_1\}\{f_2\}]$, and $[\{f_1\}\{f_2\}\{f_3\}]$.
	\end{example}
	
	\begin{remark}\label{rem:splittings}
		In the case $i>1$, $\bc=-\bb$, and $\cK$ a flag complex, it is easy to characterize those primitive generators $y\in W_i^\bc$ admitting a splitting. Indeed, let $G(y)$ be the subgraph of $G(I_\cK)$ whose edges are the elements of $\petal(s)$ for any string $s$ representing $y$, and with no isolated vertices (cf. \S\ref{sec:flag}).
		Then $y$ has a splitting if and only if the edges of $G(y)$ can be partitioned into two non-empty disjoint sets $E_1,E_2$ such that the union of the set of vertices of $G(y)$ not belonging to $b$ with the set of vertices belonging to both $E_1$ and $E_2$ is a face of $\cK$. In particular, $y$ has a splitting if
		\begin{enumerate}
			\item $G(y)$ is disconnected; or
			\item $\vert\bb\vert\geq i+1$.
		\end{enumerate}
	\end{remark}
	
	\begin{remark}\label{rem:nonflag}
		The previous remark can be slightly generalized beyond the case of flag complexes. Suppose that $i>1$, $\bc=-\bb$, $\cV=\cV(\cK)$, and the support of the weight of any non-quadratic minimal generator of $I_\cK$ is disjoint from $b$ (or equivalently, any minimal non-face of $\cK$ of dimension at least $2$ is disjoint from $b$). Then strings representing primitive generators $y\in W_i^\bc$ only involve quadratic generators of $I_\cK$. It thus still makes sense to consider the graph $G(y)$, and the criteria for the existence of a splitting are identical to in Remark \ref{rem:splittings}.
	\end{remark}

	For $i\geq 0$, let $\widetilde W_i^\bc\subseteq W_i^\bc$ be the subspace generated by primitive generators that do \emph{not} admit splittings. 
	There is a natural projection $\widetilde\pi \colon W_\bullet^\bc\to \widetilde W_\bullet^\bc$. We set 
	\[
	\widetilde\partial_i^\bc=\widetilde\pi_{i-1}\circ\partial_i^\bc \colon \widetilde W_i^\bc\to \widetilde W_{i-1}^\bc.
	\]
	
	We will show:
	\begin{lemma}\label{lemma:iscomplex}
		The map $\widetilde \partial^\bc$ is a differential, and $\widetilde \pi$ is a map of complexes.
	\end{lemma}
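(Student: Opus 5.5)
Both claims reduce to showing that the kernel $N_\bullet=\ker\widetilde\pi$ is a subcomplex of $W_\bullet^\bc$, i.e.\ $\partial^\bc(N_i)\subseteq N_{i-1}$. Indeed, if $N$ is a subcomplex then $\widetilde W_\bullet^\bc\cong W_\bullet^\bc/N_\bullet$ carries the induced differential. Since $\widetilde\pi$ is the coordinate projection onto the span of the primitive non-splitting generators, this induced differential is exactly $\widetilde\pi\circ\partial^\bc$ restricted to $\widetilde W^\bc$, that is, $\widetilde\partial^\bc$. Hence $\widetilde\partial^\bc\circ\widetilde\partial^\bc=0$ follows from $\partial^\bc\circ\partial^\bc=0$ (Theorem~\ref{thm.computingTviahomology}), and $\widetilde\pi$ is a chain map. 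Now $N_i$ is spanned by two kinds of basis generators: the non-primitive generators, and the primitive generators admitting a splitting. I check that $\partial^\bc$ of each lies in $N$.

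\textbf{Non-primitive generators.} By the formula of Theorem~\ref{thm.stringsdifferential} (in the form of the Remark after Theorem~\ref{thm.computingTviahomology}), a string $s$ that is not primitive contains a bracket. Deleting a petal $q\neq[$ keeps the brackets. Deleting a petal $q=[$, or applying $s\git q$, removes exactly one bracket. So $\partial^\bc(\overline s)$ can contain a primitive term only if $s$ has exactly one bracket pair, i.e.\ $s$ is quasiprimitive. In that case the primitive terms are $\overline{s\git s}$, and possibly $\overline{s\setminus[}$ when the bracket is the outermost symbol; but $\overline{s\setminus[}$ is a product, which is zero in $P^\new$ and so is not a basis element of $W$. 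Thus the only primitive term is $y:=\overline{s\git s}$, and it occurs with coefficient $\pm1$ if $\wt(y)\in\nabla_\bc$ and does not occur otherwise. When it occurs, $\pm\overline s$ is by definition a splitting of $y$: up to sign, the primitive part of $\partial^\bc(\overline s)$ is exactly $y$ (the sign of a splitting is immaterial, as $y'$ may be replaced by $-y'$). So $y\in N$, and $\partial^\bc$ of a non-primitive generator lies in $N$.

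\textbf{Primitive generators with a splitting.} Let $y$ be primitive with splitting $y'$. Then $\partial^\bc(y')=y+z$ with $z$ a sum of quasiprimitive generators, so $z\in N$. Applying $\partial^\bc$ gives $0=\partial^\bc(y)+\partial^\bc(z)$, hence $\partial^\bc(y)=-\partial^\bc(z)$. By the previous step, $\partial^\bc(z)\in N$. Therefore $\partial^\bc(y)\in N$.

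\textbf{Expected obstacle.} The main point requiring care is the first step: checking precisely which terms of the string formula can produce primitive generators, including the degenerate cases where a deletion produces a string treated as zero, the low degrees $i\le 1$ with $W_0^\bc$ handled separately, and the verification that the surviving coefficient is $\pm1$ so that the quasiprimitive generator genuinely qualifies as a splitting.
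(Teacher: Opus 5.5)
Your proof is correct and is essentially the paper's argument, unpacked. The paper writes $W_\bullet^\bc$ as the mapping cone of the chain map $f\colon A_\bullet\to B_\bullet$ from the non-primitive part to the primitive part, and identifies $\widetilde W_\bullet^\bc$ with $\coker f$ carrying its induced differential; under this identification $\ker\widetilde\pi$ is exactly your $N$, your first step is the identification of $\Image f$ with the span of primitive generators admitting a splitting, and your second step is the fact that $f$ is a chain map, so that $\Image f$ is a subcomplex of $B_\bullet$.
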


	We will call $\widetilde W_\bullet^\bc$ the \emph{non-splitting complex}. We will also show the following:
	\begin{proposition}\label{prop:nonsplitting}
		Fix $i\geq 1$. 
		Suppose that $W_i^\bc$ and $W_{i-1}^\bc$ are generated by primitive and quasiprimitive generators, and that $\bb\in\{0,1\}^\cV$. Assume further that:
		\begin{enumerate}
			\item 	Any primitive generator $y\in W_{i-2}^\bc$ has at most one splitting;
			\item For any primitive generator $y\in W_{i-1}^\bc$, the difference of any two splittings of $y$ belongs to $\partial_{i+1}^\bc(W_{i+1}^\bc)$.
		\end{enumerate}
		Then the map $H_i(W_\bullet^\bc)\to H_i(\widetilde W_\bullet^\bc)$ induced by $\widetilde \pi$ is an isomorphism.
	\end{proposition}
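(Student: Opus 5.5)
We prove surjectivity and injectivity of $H_i(\widetilde\pi)$ separately, working entirely in homological degrees $i-1,i,i+1$, and we use Lemma \ref{lemma:iscomplex} freely. Write $W_j^\bc=Q_j\oplus N_j\oplus D_j$, where $Q_j$ is spanned by quasiprimitive generators, $N_j=\widetilde W_j^\bc$ by non-splitting primitive generators, and $D_j$ by primitive generators that do admit a splitting. By hypothesis this decomposition is exhaustive for $j=i,i-1$.

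The key structural observation is the following: the differential sends primitive generators to primitive generators, since the Taylor part has no $\twig$ terms and deleting a petal from a primitive string leaves it primitive. Quasiprimitive generators map to a primitive part plus quasiprimitive terms. For a splitting $y'$ of $y\in D_j$, we have $\partial^\bc y'=y+(\text{quasiprimitive})$.

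\textbf{Surjectivity.} Let $z\in N_i$ be a cycle of $\widetilde W^\bc$, so $\widetilde\pi(\partial^\bc z)=0$. Then $\partial^\bc z=\sum_k\lambda_k y_k$ with $y_k\in D_{i-1}$ primitive. Choose splittings $y_k'$ and set $\hat z=z-\sum\lambda_k y_k'$. Now $\partial^\bc\hat z$ is a sum of quasiprimitive generators in $W_{i-1}^\bc$, and it is a boundary.

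We need to correct $\hat z$ to a genuine cycle without changing $\widetilde\pi$. Here hypothesis (1) enters. Applying $\partial^\bc$ again gives $0=\partial^\bc\partial^\bc\hat z$. The primitive part of $\partial^\bc$ of a quasiprimitive $q=[\cdots]\in W_{i-1}^\bc$ is $\pm\,\overline{s\git q}$, and $\overline{s\git q}$ is a primitive generator of $W_{i-2}^\bc$ for which $q$ is a splitting. Uniqueness of splittings in degree $i-2$ means distinct quasiprimitive generators have distinct such images, up to sign. Hence the vanishing of the primitive part of $\partial^\bc(\partial^\bc\hat z)$ forces every quasiprimitive coefficient in $\partial^\bc\hat z$ to vanish, with some care needed for quasiprimitives with zero primitive image. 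So $\hat z$ is a cycle with $\widetilde\pi(\hat z)=z$.

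\textbf{Injectivity.} Let $z\in\ker\partial^\bc\subseteq W_i^\bc$ with $\widetilde\pi(z)=\widetilde\partial^\bc(w)$ for some $w\in N_{i+1}$. Replacing $z$ by $z-\partial^\bc w$, we may assume $\widetilde\pi(z)=0$, so $z=q+\sum\mu_k y_k$ with $q\in Q_i$ and $y_k\in D_i$. Replacing further by $z-\partial^\bc(\sum\mu_k y_k')$, with $y_k'$ splittings in degree $i+1$, we may assume $z\in Q_i$ is a quasiprimitive cycle.

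By the same injectivity argument as above, now applied with uniqueness of splittings one degree up, such a cycle should vanish. The difficulty is that in degree $i-1$ we only know uniqueness of splittings up to boundaries (hypothesis (2)), not uniqueness outright. So the plan is instead to show $z$ is a boundary. Group the quasiprimitive terms of $z$ according to the primitive generator $p\in W_{i-1}^\bc$ that they split. The primitive part of $\partial^\bc z$ being $0$ means that, for each $p$, the coefficients of the splittings of $p$ sum to zero. Each group is therefore a combination of differences of splittings of $p$, which by hypothesis (2) lie in $\partial^\bc(W_{i+1}^\bc)$. Quasiprimitives with zero primitive image must be handled separately, possibly via hypothesis (1).

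\textbf{Main obstacle.} The main obstacle is the bookkeeping in this last step. We must check that quasiprimitives in $W_i^\bc$ whose $\git$-image leaves $\nabla_\bc$ (and so have zero primitive part) are still correctly absorbed. We must also check that the signs in Theorem \ref{thm.stringsdifferential} make "the coefficients sum to zero" exactly the right condition.
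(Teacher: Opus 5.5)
Your two element chases are the paper's argument unpacked. The paper writes $W_\bullet^\bc$ as the mapping cone of the map $f$ sending non-primitive generators to the primitive part of their image, identifies $\widetilde W_\bullet^\bc$ with $\coker f$, and applies Lemma~\ref{lemma:homological}. There, injectivity of $f_{i-2}$ gives surjectivity on $H_i$ (your first chase), and $\ker f_{i-1}\subseteq\partial^\bc_{i+1}(W^\bc_{i+1})$ gives injectivity (your second).

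\textbf{The gap.} It is the step you defer as bookkeeping. Both chases need every quasiprimitive generator $q$ of $W_{i-1}^\bc$ and of $W_i^\bc$ to be, up to sign, a splitting of a \emph{nonzero} primitive generator of $W^\bc$. This is the only place the hypothesis $\bb\in\{0,1\}^\cV$ is used, and your argument never uses it. It cannot be dropped. Take $\cK$ three isolated points and $\bb=(2,1,1)$ (Example~\ref{ex:211}). The remaining hypotheses hold, with (1) and (2) vacuous, and $\widetilde W_\bullet^{-\bb}=0$, yet $T^4_{-\bb}(\cK)\cong\KK$. With $f_1=x_1x_2$, $f_2=x_2x_3$, $f_3=x_1x_3$, the generator $[\{f_1f_2\}\{f_3\}]$ lies in $W_4^{-\bb}$ but has zero primitive part, since $\wt(\{f_1f_2f_3\})=(1,1,1)\notin\nabla_{-\bb}$.

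Your suggested fallback via hypothesis (1) also cannot repair the injectivity step. Hypothesis (1) concerns $W_{i-2}^\bc$. A quasiprimitive of $W_i^\bc$ with zero primitive part lies in $\ker f_{i-1}$ without being a difference of splittings, so nothing forces it to be a boundary.

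\textbf{The fix.} The paper asserts the missing fact in one line, and it is easy to prove. Let $q=[p_1\cdots p_k]$ be quasiprimitive with $\wt(q)=\sum_j\wt(\overline{p_j})\in\nabla_\bc$. The primitive part of $\partial^\bc q$ is $-\overline{q\git q}$: $q$ is its own only twig, deleting the bracket gives an element of $P^{\old}$, and deleting any $f$ gives a quasiprimitive or zero.

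First, $\overline{q\git q}$ lies in $W^\bc$. Its weight is the lcm of the weights of all $f$ occurring in $q$, which is $\le\wt(q)$ and has the same support. Because $\bb$ has entries in $\{0,1\}$, this gives $\wt(\overline{q\git q})+\bc\ge 0$. Also $x^{\wt(\overline{q\git q})+\bc}$ divides $x^{\wt(q)+\bc}\notin I$, so $\wt(\overline{q\git q})\in\nabla_\bc$.

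Second, $\overline{q\git q}$ is nonzero. Suppose some $f$ occurred in two different $p_j$. Then at every $v$ in the support of $\wt(f)$ we would have $\wt(q)_v+\bc_v\ge 2\wt(f)_v-1\ge\wt(f)_v$. So $x^{\wt(f)}\in I$ would divide $x^{\wt(q)+\bc}$, contradicting $\wt(q)\in\nabla_\bc$.

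Now normalize each quasiprimitive so that its primitive part is $+p$ for a unique primitive $p$. The primitive-part map is then injective on the quasiprimitives of $W_{i-1}^\bc$ by (1). On the quasiprimitives of $W_i^\bc$, its kernel is spanned by differences of splittings of a common $p$, and these are boundaries by (2). Your sign concern disappears, and both chases go through.
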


	\noindent The proofs of both the lemma and the proposition are delayed until \S\ref{sec:nonsplittingproof}.

	\subsection{A homological algebra intermezzo}
	To prove Proposition \ref{prop:nonsplitting} we take a brief detour into homological algebra.
	Let $(A_\bullet,d_A)$ and $(B_\bullet,d_B)$ be complexes of $\KK$-vector spaces (or abelian groups), and $f \colon A_\bullet\to B_\bullet$ be a map of complexes.
	We let $\Cone(f)_\bullet$ be the mapping cone of $f$. We thus have, for every $i\in\ZZ$, that $\Cone(f)_i= A_{i-1} \oplus B_i$ with differential given by
	\begin{equation*} 
		d_{\Cone(f)} \colon (a,b) \mapsto (-d_A a, f a + d_B b).  
	\end{equation*}
	Of course there is a short exact sequence of complexes
	\[
	0 \to B_\bullet \to \Cone(f)_\bullet \to A_\bullet[-1] \to 0
	\]
	with maps $b \mapsto (0,b)$ and $(a,b) \mapsto -b$.
	Note that only in this subsection and the very next subsection, we will use $a,b$ to refer to elements of the complexes $A,B$, as opposed to referring to the support of the weights $\ba,\bb$.

	We also recall that $\coker(f)_\bullet$ has a natural differential making 
	\begin{equation*} 
		A_\bullet \overset{f}\to B_\bullet \overset{g} \to \coker(f)_\bullet \to 0
	\end{equation*}
	into an exact sequence of complexes.
	Moreover, although the projection $\Cone(f)_\bullet\to B_\bullet$ sending $(a,b)\in A_{i-1}\oplus B_i$ to $b$ is in general \emph{not} a map of complexes, it is straightforward to check that composing with $g$ does give a map of complexes $h:\Cone(f)_\bullet \to \coker(f)_\bullet$.

	\begin{lemma}\label{lemma:homological}
		Fix $i \in \ZZ$ and consider the map $H_i(h) \colon H_i(\Cone(f)_\bullet) \to H_i(\coker(f)_\bullet)$ induced by $h$.
		Then:
		\begin{enumerate}
			\item if $f_{i-2}$ is injective, then $H_i(h)$ is surjective;
			\item the kernel of $H_i(h)$ consists of homology classes of elements of the form $(a,0)$, where $a \in \ker(f_{i-1})$ is such that $d_A a = 0$;
			\item if $f_{i-1}$ is injective, then $H_i(h)$ is injective;
			\item if $f_{i-2}$ is injective and $\ker(f_{i-1})$ is contained in $d_{\Cone(f)}(\Cone(f)_{i+1})$, then $H_i(h)$ is bijective.
		\end{enumerate}
	\end{lemma}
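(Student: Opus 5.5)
We argue directly with explicit cycles in $\Cone(f)_\bullet$, writing $g\colon B_\bullet\to\coker(f)_\bullet$ for the quotient map, so that $h(a,b)=g(b)$.

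First record the cycle and boundary conditions. An element $(a,b)\in\Cone(f)_i=A_{i-1}\oplus B_i$ is a cycle exactly when $d_Aa=0$ and $f a+d_Bb=0$. Its image $h(a,b)=g(b)$ is a boundary in $\coker(f)_\bullet$ exactly when $b=d_Bb'+f(a')$ for some $b'\in B_{i+1}$ and $a'\in A_i$.

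\textbf{Item (1): surjectivity.} Take a cycle $g(b)$ in $\coker(f)_i$, with $b\in B_i$. Then $g(d_Bb)=0$, so $d_Bb=f(a)$ for some $a\in A_{i-1}$. We have $f(d_Aa)=d_Bf(a)=d_Bd_Bb=0$. Since $f_{i-2}$ is injective, $d_Aa=0$. Hence $(-a,b)$ is a cycle of $\Cone(f)$, and it maps to $g(b)$.

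\textbf{Item (2): the kernel.} Let $(a,b)$ be a cycle with $h(a,b)$ a boundary, so $b=d_Bb'+f(a')$. Subtract the cone boundary
\[
d_{\Cone(f)}(a',b')=(-d_Aa',\,f a'+d_Bb').
\]
This replaces $(a,b)$ by the homologous cycle $(a+d_Aa',\,0)$. For this new element, the cycle condition says $f(a+d_Aa')=0$ and $d_A(a+d_Aa')=0$, which is the claimed form. Conversely, for any such $(a,0)$ with $a\in\ker f_{i-1}$ and $d_Aa=0$, it is a cycle and $h(a,0)=0$. So the kernel is exactly the set of such classes.

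\textbf{Items (3) and (4).} These follow immediately from (2) and (1). If $f_{i-1}$ is injective, then $a=0$ in (2), so the kernel of $H_i(h)$ is trivial. If instead $\ker f_{i-1}$ lies in the cone boundaries, read the hypothesis as saying that the elements $(a,0)$ with $a\in\ker f_{i-1}$ are boundaries. Then every kernel class from (2) vanishes, and combining with (1) gives bijectivity.

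The only point needing care is in (4), namely interpreting "$\ker(f_{i-1})\subseteq d_{\Cone}(\Cone_{i+1})$" via the inclusion $a\mapsto(a,0)$. The sign conventions in the cone differential are harmless throughout.
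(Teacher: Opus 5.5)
Your proof is correct, and it takes a different route from the paper's.

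\textbf{The paper's route.} The paper argues structurally. It first replaces $\coker(f)_\bullet$ by the quasi-isomorphic complex $\Cone(\psi)_\bullet$, where $\psi\colon(\Image f)_\bullet\to B_\bullet$ is the inclusion; this step uses the five lemma. It then uses the snake lemma to produce a short exact sequence $0\to(\ker f)_\bullet[-1]\to\Cone(f)_\bullet\to\Cone(\psi)_\bullet\to 0$. All four items are read off from the resulting exact sequence
\[
H_{i-1}(\ker(f)_\bullet)\to H_i(\Cone(f)_\bullet)\to H_i(\coker(f)_\bullet)\to H_{i-2}(\ker(f)_\bullet).
\]

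\textbf{How your argument relates.} Your explicit chase proves exactly the two pieces of exactness in this sequence that are needed, with concrete representatives. Your item (1) shows that the connecting obstruction, represented by $d_Aa\in\ker(f_{i-2})$, vanishes. Your item (2) is exactness at $H_i(\Cone(f)_\bullet)$, where the incoming map is $a\mapsto(a,0)$. This is the same inclusion you use to read the hypothesis of (4), so your interpretation agrees with the paper's.

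\textbf{What each approach buys.} Yours is more elementary and self-contained: it needs no auxiliary cone, five lemma or snake lemma. The paper's version packages everything into one long exact sequence. That makes it visible that the hypotheses could be weakened to conditions on $H_\bullet(\ker(f)_\bullet)$; for example, surjectivity only requires the connecting map into $H_{i-2}(\ker(f)_\bullet)$ to vanish. Your chase adapts to that sharper statement as well, by correcting $a$ by a suitable element of $\ker(f_{i-1})$.
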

	
	\begin{proof}
		
		Let $\psi \colon (\Image f)_\bullet \to B_\bullet$ be the inclusion, so we have a short exact sequence of complexes
		\[
		0\to (\Image f)_\bullet \overset{\psi}\to B_\bullet \overset{g}\to \coker(f)_\bullet \to 0.
		\]
		By applying the $5$-lemma to the diagram coming from the long exact sequence of homology from this sequence, and the mapping cone of $\psi$, we obtain that the map $\Cone(\psi)_\bullet \to \coker(f)_\bullet$ given by $(b',b) \mapsto gb$, for $b' \in \Image f$, $b \in B$, is a quasi-isomorphism. Hence we can replace $\coker(f)_\bullet$ with $\Cone(\psi)_\bullet$.
		
		The commutative diagram
		\begin{equation*}
			\xymatrix{
				0 \ar[r] & B_\bullet \ar[r] \ar@{=}[d] & \Cone(f)_\bullet \ar[d] \ar[r] & A_\bullet[-1] \ar[r] \ar@{->>}[d]^f & 0 \\
				0 \ar[r] & B_\bullet \ar[r] & \Cone(\psi)_\bullet \ar[r] & (\Image f)_\bullet [-1] \ar[r] & 0
			}
		\end{equation*}
		has exact rows. Hence, by the snake lemma we obtain the short exact sequence of complexes
		\begin{equation*}
			0 \to (\ker f)_\bullet[-1] \to \Cone(f)_\bullet \to \Cone(\psi)_\bullet \to 0.
		\end{equation*}
		From the long exact sequence of homology, we thus have an exact sequence
		\[
		H_{i-1}(\ker(f)_\bullet)\to H_i(\Cone(f)_\bullet)\to H_i(\Cone(\psi)_\bullet)\to H_{i-2}(\ker (f)_\bullet).
		\]
		The claims (1)-(4) now easily follow from the exactness of this sequence.
	\end{proof}

	\subsection{Proof of Lemma~\ref{lemma:iscomplex} and Proposition~\ref{prop:nonsplitting}}\label{sec:nonsplittingproof}
	To prove Lemma \ref{lemma:iscomplex} and Proposition \ref{prop:nonsplitting}, we will realize the complex $W_\bullet^\bc$ as a mapping cone. Indeed, for each $i\in \ZZ$, let $B_i$ be the subspace of $W_i^\bc$ generated by primitive generators. Since $\partial^\bc$ takes $B_i$ to $B_{i-1}$, we obtain that $B_\bullet$ is a subcomplex of $W_\bullet^\bc$.
	
	We similarly let $A_{i-1}$ be the subspace of $W_i^\bc$ generated by generators that are not primitive. This also gives a complex $A_\bullet$, with differential $d_A$ induced by $-\partial^\bc$. The map $f_i:A_i\to B_i$ obtained by composing $\partial^\bc$ with the projection from $W_i^\bc$ to $B_i^\bc$ gives a map of complexes $f:A_\bullet\to B_\bullet$.
	Indeed, for $a\in A_i$, we have 
	\[0=\partial^\bc\circ\partial^\bc(a)=\partial^\bc (-d_A(a)+f(a))=-f(d_A(a))+d_B(f(a)).\]
	
	We now observe that $W_\bullet^\bc$ is precisely the mapping cone of $f:A_\bullet\to B_\bullet$.
	Moreover, the cokernel of $f_i$ is $\widetilde W_i^\bc$, and $\widetilde\partial^\bc$ agrees with the natural differential on $\coker f$. Lemma \ref{lemma:iscomplex} follows.
	
	For  Proposition \ref{prop:nonsplitting}, we will apply Lemma \ref{lemma:homological}.  Fix $i\geq 2$, and assume that $W_{i-1}^\bc$ is generated by primitive and quasiprimitive generators, and $\bb\in\{0,1\}^\cV$. Since $\bb\in\{0,1\}^\cV$ the image of any quasiprimitive generator $y\in A_{i-2}$ under $f_{i-2}$ is always non-zero, and in fact, it is precisely a primitive generator $f_{i-2}(y)$. Thus, we see that if every primitive string in $W_{i-2}^\bc$ has at most one splitting, $f_{i-2}$ must be injective.
	
	In a similar fashion, under the assumption that $W_{i}^\bc$ is generated by primitive and quasiprimitive strings, we see that the kernel of $f_{i-1}$ is generated by differences of splittings of primitive generators in $W_{i-1}^\bc$. Under the hypotheses of Proposition \ref{prop:nonsplitting}, it thus follows from Lemma \ref{lemma:homological} that the map on homology induced by $\widetilde \pi$ is an isomorphism.
	\qed
	\subsection{Applicable cases}
	When can we use the non-splitting complex introduced in \S\ref{sec:splitting} to compute cotangent cohomology? The following proposition gives sufficient conditions:
	\begin{proposition}\label{prop:applicable}
		Suppose that $\bb\in\{0,1\}^\cV$.
		The hypotheses of Proposition \ref{prop:nonsplitting} are satisfied in the following cases:
		\begin{enumerate}
			\item $i=1$, $i=2$, or $i=3$;
			\item $\bb$ has type $(1)$ or $(1,1)$; or
			\item $I$ is square-free, $\bb$ has type $(1,1,1)$, and $i=4$.
		\end{enumerate}
	\end{proposition}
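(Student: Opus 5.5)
We need to check the hypotheses of Proposition \ref{prop:nonsplitting}: (0) $W_i^\bc$ and $W_{i-1}^\bc$ are generated by primitive and quasiprimitive generators; (1) every primitive generator of $W_{i-2}^\bc$ has at most one splitting; (2) any two splittings of a primitive generator of $W_{i-1}^\bc$ differ by an element of $\partial^\bc_{i+1}(W^\bc_{i+1})$. Hypothesis (0) is automatic whenever $i\le 4$, since all generators of homological degree at most $4$ are primitive or quasiprimitive. It also holds for $\bb$ of type $(1)$ or $(1,1)$ in every degree. Indeed, a generator $y$ lies in $W^\bc$ with $\bc=-\bb$ only if $\wt(y)-\bb\ge 0$ and $x^{\wt(y)-\bb}\notin I$. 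A generator with nested brackets contains at least two disjoint ``products'' whose weight exceeds $\bb$ by a monomial in $I$. More concretely, if $y$ involves a subproduct of at least three generators, or two nested factors, then subtracting $|\bb|\le 2$ leaves some full generator $f$ dividing $x^{\wt(y)-\bb}$. I would make this precise by a small counting lemma: $x^{\wt(y)-\bb}\notin I$ forces the total number of $f$'s in $\petal(y)$ that survive after removing the support of $\bb$ to be zero. This bounds the structure of $y$ by $|\bb|$.

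Hypothesis (1) only concerns homological degree $i-2$. For $i\le 3$ this degree is at most $1$, where primitive generators have no splittings. For $i=4$ it is degree $2$, where $\{f_1f_2\}$ has at most the single splitting $[\{f_1\}\{f_2\}]$. For types $(1)$ and $(1,1)$ in higher degree, I would show that a primitive $y=\{f_1\cdots f_k\}$ with $k\ge 3$ has no splitting at all. The weight of a candidate splitting $[\cdots]$ is a sum of lcm's over a partition, hence exceeds $\wt(y)$ by the gcd-type overlap. Subtracting $\bb$ then still leaves a full generator, so the splitting does not lie in $W^\bc$. For $k=2$ there is at most one splitting.

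Hypothesis (2) is the main work. For $i=1,2$ it is vacuous or trivial, since degree $\le 1$ primitives have no splittings. For $i=3$, a primitive generator of degree $2$ has at most one splitting. For types $(1),(1,1)$ the argument above shows that primitive generators of degree $\ge 3$ have no splittings, so (2) is again vacuous. The remaining case is square-free $I$, type $(1,1,1)$, $i=4$: here we must handle the splittings of $\{f_1f_2f_3\}$, namely $[\{f_1\}\{f_2f_3\}]$, $[\{f_1f_2\}\{f_3\}]$, $[\{f_3f_1\}\{f_2\}]$ and $[\{f_1\}\{f_2\}\{f_3\}]$. I would compute via Theorem \ref{thm.stringsdifferential}. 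The differential of $[\{f_1\}\{f_2\}\{f_3\}]$, when it lies in $W_5^\bc$, equals the alternating sum of the three two-block splittings (collapsing $\{f_j\}\{f_k\}$), up to quasiprimitive terms dropping a petal. Similarly, elements like $[\{f_1f_2\}\{f_3\}\cdots]$ or $[\{f_1f_2f_3\}\{f_j\}]$ in degree $5$ relate pairs of splittings. The casework is about which of these degree-$5$ elements lie in $W^{-\bb}_5$, using square-freeness to control $x^{\wt-\bb}\notin I$. I expect this case analysis, ensuring that an appropriate connecting degree-$5$ element always exists whenever two splittings coexist, to be the main obstacle.
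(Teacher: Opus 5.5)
Your treatment of $i\le 3$, of type $(1)$, and of hypothesis (1) at $i=4$ is fine. The type $(1,1)$ case, however, rests on a false claim: for $\bb$ of type $(1,1)$, primitive generators of degree $\ge 3$ frequently \emph{do} have splittings. Here is a counterexample. Let $\cK$ be a filled triangle $\{w,w',z\}$ together with two isolated vertices $v_1,v_2$. Take $b=\{v_1,v_2\}$, $f_1=x_{v_1}x_w$, $f_2=x_{v_1}x_{w'}$, $f_3=x_{v_2}x_z$. Then $y=\{f_1f_2f_3\}\in W_3^{-\bb}$. Moreover $[\{f_1f_2\}\{f_3\}]$ has the same weight as $y$, so it lies in $W_4^{-\bb}$ and is (up to sign) a splitting of $y$. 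This matches Remark~\ref{rem:splittings} and \S\ref{sec:b2}, where the non-split generators are exactly those with $b$ an edge of $G(y)$. Your weight heuristic fails because $\bb$ can remove $v_1$ from one block and $v_2$ from the other, killing both blocks.

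What is true, and suffices, is uniqueness. Suppose $[p_1p_2]\in W^{-\bb}$ (three or more blocks are impossible). Then $x_{v_1}x_{v_2}$ divides neither $x^{\wt(p_1)}$ nor $x^{\wt(p_2)}$, since otherwise the other block survives intact. So after relabeling, every $f$ in $p_1$ contains $v_1$ but not $v_2$, and every $f$ in $p_2$ contains $v_2$ but not $v_1$. Hence the blocks of any splitting of $\{f_1\cdots f_k\}$ are determined by which element of $b$ each $f_j$ contains. Each primitive generator therefore has at most one splitting, and hypotheses (1) and (2) follow.

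In the type $(1,1,1)$, $i=4$ case, the mechanism you sketch does not work, and the case analysis is left open. The generator $[\{f_1\}\{f_2\}\{f_3\}]$ has homological degree $4$; it is one of the four candidate splittings you list. Its differential lands in degree $3$, so it cannot be the degree-$5$ element relating the two-block splittings. Your other candidates $[\{f_1f_2f_3\}\{f_j\}]$ never lie in $W_5^{-\bb}$ for square-free $I$ and $\bb\in\{0,1\}^\cV$: every variable of $f_j$ appears with exponent $2$ in the weight, so $f_j$ still divides $x^{\wt-\bb}$.

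Two ingredients are missing.
(a) You need the nested generators $q=[[\{f_j\}\{f_k\}]\{f_l\}]$. These have the same weight as $[\{f_1\}\{f_2\}\{f_3\}]$, and by Theorem~\ref{thm.stringsdifferential} they satisfy $\partial^\bc(q)=[\{f_jf_k\}\{f_l\}]-[\{f_j\}\{f_k\}\{f_l\}]$. So whenever $[\{f_1\}\{f_2\}\{f_3\}]\in W_4^\bc$, all four potential splittings lie in $W_4^\bc$, and any two differ by an element of $\partial^\bc_5(W_5^\bc)$.
(b) You need the complementary case: if $[\{f_1\}\{f_2\}\{f_3\}]\notin W_4^\bc$, there is at most one splitting. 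Indeed, two distinct two-block splittings in $W_4^\bc$ force each $f_j$ to contain an element of $b$ lying in neither of the other two. Since $I$ is square-free, membership in $I$ depends only on support, and this puts $[\{f_1\}\{f_2\}\{f_3\}]$ into $W_4^\bc$, a contradiction.
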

	\begin{proof}
		For $j\leq 2$, $W_j^\bc$ is generated by primitive generators. For $j=3,4$, $W_j^\bc$ is generated by primitive and quasiprimitive generators. 
		The hypotheses for the cases $i=1,2$ follow trivially. 
		
		Quasiprimitive generators in $W_3^\bc$ have the form $y=[\{f_1\}\{f_2\}]$ and are thus determined by their image $\{f_1f_2\}\in W_2^\bc$. The hypotheses for the case $i=3$ follow. 
		
		If $\bb$ has type $(1)$, then for any $j\in\ZZ_{\geq 0}$, all generators in $W_j^\bc$ are primitive, hence the type $(1)$ case is immediate. Suppose instead $\bb$ has type $(1,1)$. 
		Then $b$ has the form $\{v_1,v_2\}$ for $v_1,v_2\in\cV$. Any quasiprimitive generator $q\in W_j^\bc$ has the form $q=[p_1 p_2]$ where $p_1,p_2$ are themselves primitive strings, and for $i=1,2$
		and any $f\in \cG$ appearing in $p_i$, the support of $\wt(f)$ contains $v_i$ but not the other element of $b$. Hence, up to sign, $q$ is determined by the set of elements $f\in\cG$ appearing in $p_1$ and $p_2$. It follows that any primitive generator has at most one splitting as desired.
		
		Finally, we consider the case when $I=I_\cK$ is square-free, $\bb$ has type $(1,1,1)$, and $i=4$. We just need to show that for any primitive generator $y=\{f_1f_2f_3\}\in W_3^\bc$, the difference of any two splittings is a boundary. 
		Potential splittings for $y$ are $[\{f_1f_2\}\{f_3\}]$, $[\{f_1\}\{f_2f_3\}]$, $[\{f_3f_1\}\{f_2\}]$ and $[\{f_1\}\{f_2\}\{f_3\}]$. If $[\{f_1\}\{f_2\}\{f_3\}]\in W_4^\bc$, then the three other potential splittings belong to $W_4^\bc$, as do generators such as $q=[[\{f_1\}\{f_2\}]\{f_3\}]\in W_5^\bc$. But 
		\[
		\partial^\bc(q)=-[\{f_1\}\{f_2\}\{f_3\}]+[\{f_1f_2\}\{f_3\}]
		\]
		and we similarly see that the difference of any two of the splittings of $y$ is a boundary.
		
		To finish, we claim that if $[\{f_1\}\{f_2\}\{f_3\}]\notin W_4^\bc$, then there is at most one splitting. For a contradiction, assume without loss of generality that both $[\{f_1f_2\}\{f_3\}]$ and $[\{f_1\}\{f_2f_3\}]$ belong to $W_4^\bc$. Then $b$ must contain elements $v_1,v_3$ such that $v_1$ is in the support of $\wt(f_1)$ but not $\wt(f_2),\wt(f_3)$ and $v_3$ is in the support of $\wt(f_3)$ but not $\wt(f_1),\wt(f_2)$. The remaining element $v_2$ of $b$ necessarily in the support of $\wt(f_2)$, but cannot be in the support of $\wt(f_1),\wt(f_3)$. But then $[\{f_1\}\{f_2\}\{f_3\}]\in W_4^\bc$, contradiction.
	\end{proof}
	
	\begin{remark}
		When $I=I_\cK$ is square-free, we know from Theorem \ref{thm:bound} that $T^3_\bc(\cK)=0$ if $\bb\notin \{0,1,2\}^{\cV}$. In fact, one can show that $T^3_\bc(\cK)=0$ if $\bb\notin \{0,1\}^{\cV}$; we leave this to the reader. Coupled with Proposition \ref{prop:applicable}, we see that we can use the non-splitting complex to compute $T^3_\bc(\cK)$ for any weight $\bc$ with $T^3_\bc(\cK)\neq 0$.
	\end{remark}

	\section{Weight bounds for $T^3$ and $T^4$}
	In this section, we will further improve on the bounds from \S\ref{sec:degree} on the weights in which $T^3$ and $T^4$ are supported in the special case of flag complexes.
	We first consider the case of $T^3$.
	
	\begin{proposition}\label{prop:t3bound}
		Let $\cK$ be a flag complex. Given $\ba,\bb\in\ZZ_{\geq0}^\cV$ with disjoint supports, if $|\bb|>3$ then $T_{\ba-\bb}^3(\cK)=0$.
	\end{proposition}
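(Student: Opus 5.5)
By Theorem~\ref{thm:localizationformula} it suffices to treat the case $\ba=0$: if $a\notin\cK$ the group vanishes, and otherwise $T^3_{\ba-\bb}(\cK)\cong T^3_{-\bb}(\link(a,\cK))$. Here $\link(a,\cK)$ is again a flag complex and $\bb$ is unchanged. So I would fix $\bc=-\bb$ with $|\bb|\geq 4$ and first cut down the possible $\bb$ using the earlier weight bounds.
\begin{itemize}
\item Corollary~\ref{cor:koszul} gives $T^3_{-\bb}(\cK)=0$ whenever $|\bb|>4$.
\item Theorem~\ref{thm:bound}(3), with $i=3$, gives vanishing unless $\bb\in\{0,1\}^\cV$.
\end{itemize}
Hence the only remaining case is $\bb$ of type $(1,1,1,1)$.

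In this case I would pass to the non-splitting complex. By Proposition~\ref{prop:applicable}(1), the hypotheses of Proposition~\ref{prop:nonsplitting} hold for $i=3$. Therefore
\[
H_3(W_\bullet^{\bc})\cong H_3(\widetilde W_\bullet^{\bc}).
\]
By Theorem~\ref{thm.computingTviahomology} it then suffices to show that $\widetilde W_3^{\bc}=0$, that is, that every primitive generator $y=\{f_1f_2f_3\}\in W_3^{\bc}$ admits a splitting.

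This is the content of Remark~\ref{rem:splittings}(2): since $|\bb|=4\geq i+1$ with $i=3$, every such $y$ splits. I expect this to be the only real step to check, so I would verify it directly as follows.
\begin{itemize}
\item By Remark~\ref{rem:graphweight}, $G(y)$ has three edges, every edge meets $b$, $b\subseteq\cV(G(y))$, and the non-$b$ vertices of $G(y)$ span a face of $\cK$.
\item Three edges covering the four vertices of $b$ cannot all pairwise share a $b$-vertex. I would make this precise by counting: the edges have at most six endpoints, four of which lie in $b$.
\item Hence some edge $f_j$ shares no vertex of $b$ with the other two. Take $E_1=\{f_j\}$ and $E_2$ the other two edges.
\item The vertices common to $E_1$ and $E_2$ then lie outside $b$. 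Together with the remaining non-$b$ vertices they form a subset of the non-$b$ vertices of $G(y)$, which span a face.
\item So the criterion of Remark~\ref{rem:splittings} holds, and the corresponding quasiprimitive generator (for instance $[\{f_j\}\{f_kf_l\}]$) lies in $W_4^{\bc}$ and splits $y$.
\end{itemize}
The case where the pairwise-sharing argument is delicate is a path or star shape of $G(y)$. I would handle it by enumerating the few possible shapes of a three-edge graph whose vertex set contains the four points of $b$.

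Once $\widetilde W_3^{\bc}=0$, we get $H_3(\widetilde W_\bullet^{\bc})=0$, hence $T^3_{-\bb}(\cK)=0$, and the proposition follows.
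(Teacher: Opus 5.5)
Your proposal is correct and is essentially the paper's proof. Like the paper, you localize to $\ba=0$, use Theorem~\ref{thm:bound}(3) to force $\bb\in\{0,1\}^\cV$, pass to the non-splitting complex via Propositions~\ref{prop:nonsplitting} and~\ref{prop:applicable}, and conclude $\widetilde W_3^{\bc}=0$ from Remark~\ref{rem:splittings}(2). The appeal to Corollary~\ref{cor:koszul} is harmless but unnecessary, since the remark already covers every $|\bb|\geq 4$.

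One caution about your optional direct check. The claim that some edge shares no $b$-vertex with the other two fails exactly when $G(y)$ is connected, which is the path and star you flagged. The fix is easy: a connected $G(y)$ is a tree whose vertex set is exactly $b$, so it has no non-$b$ vertices. Splitting off a leaf edge then leaves a single shared vertex, and a single vertex is a face of $\cK$.
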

	\begin{proof}
		Let $\bc=\ba-\bb$. By the localization formula (Theorem~\ref{thm:localizationformula}), we may assume without loss of generality that $\bc=-\bb$. We use Theorem~\ref{thm.computingTviahomology} to compute $T^3$.
		By Theorem~\ref{thm:bound} we may assume that $\bb\in\{0,1\}^\cV$.
		By Propositions~\ref{prop:nonsplitting} and~\ref{prop:applicable}, $H_3(W_\bullet^\bc)=H_3(\widetilde W_\bullet^\bc)$.
		
		If $|\bb|>3$, then $\widetilde{W}_3^\bc=0$. Indeed, every primitive string $s$ in $W_3^\bc$ has a splitting by Remark \ref{rem:splittings}.
		It follows that $T_\bc^3(\cK)=H_3(W_\bullet^\bc)=H_3(\widetilde W_\bullet^\bc)=0$.
	\end{proof}
	
	We now move on to $T^4$.
	
	\begin{lemma}\label{lemma:t4splitting}
		Let $\cK$ be a flag complex, $\bb\in\ZZ_{\geq 0}^\cV$.
		For each of the following statements, let $f_1,f_2$ or $f_1,f_2,f_3$ be distinct elements from the set of minimal generators of $I_\cK$.
		\begin{enumerate}
			\item\label{i1}	If $\bb$ has type $(2,2)$ or $(2)$, then for any generator $[\{f_1f_2\}\{f_1\}]\in W_4^{-\bb}$, we have $y=[\{f_1f_2\}\{f_1f_2\}]\in W_5^{-\bb}$. Moreover,
			\[ \overline \partial(y)={2[\{f_1f_2\}\{f_2\}]-}2\underline{[\{f_1f_2\}\{f_1\}]}. \]
			\item \label{i0} For any generator $[\{f_1\}\{f_2\}\{f_3\}]\in W_4^{-\bb}$, we have $y=[[\{f_1\}\{f_2\}]\{f_3\}]\in W_5^{-\bb}$. Moreover, 
			\[
			\overline\partial(y)=[\{f_1f_2\}\{f_3\}]-\underline{[\{f_1\}\{f_2\}\{f_3\}]}.
			\]
			\item\label{i2}	If $\bb$ has type $(2,2)$ or $(2)$, then for any generator $[\{f_1f_2\}\{f_3\}]\in W_4^{-\bb}$, we have $y=[\{f_1f_2f_3\}\{f_3\}]\in W_5^{-\bb}$.
			Moreover,
			\[
			\overline \partial(y)={[\{f_2f_3\}\{f_3\}]-[\{f_1f_3\}\{f_3\}]+}\underline{[\{f_1f_2\}\{f_3\}]}.
			\]
			\item\label{i4} If $\bb$ has type $(2,1)$ then for any generator  $[\{f_1f_2\}\{f_3\}]\in W_4^{-\bb}$,
			we have either $[\{f_1f_2f_3\}\{f_3\}]\in W_5^{-\bb}$ or $[\{f_1f_2\}\{f_1f_3\}]\in W_5^{-\bb}$. If additionally $[\{f_1f_3\}\{f_2\}]\in W_4^{-\bb}$, then either $[\{f_1f_2f_3\}\{f_3\}]$ or $[\{f_1f_2f_3\}\{f_2\}]$ belong to $W_5^{-\bb}$.
			Moreover,
			\begin{align*}
				\overline\partial([\{f_1f_2f_3\}\{f_3\}])&={[\{f_2f_3\}\{f_3\}]-[\{f_1f_3\}\{f_3\}]+}\underline{[\{f_1f_2\}\{f_3\}]}\\
				\overline\partial([\{f_1f_2f_3\}\{f_2\}])&={[\{f_2f_3\}\{f_2\}]-\underline{[\{f_1f_3\}\{f_2\}]}+}{[\{f_1f_2\}\{f_2\}]}\\
				\overline\partial([\{f_1f_2\}\{f_1f_3\}])&=\underline{[\{f_1f_3\}\{f_2\}]}-[\{f_1f_3\}\{f_1\}]+\underline{[\{f_1f_2\}\{f_3\}]}-[\{f_1f_2\}\{f_1\}].
			\end{align*}

			\item\label{i5}  If $\bb\notin \{0,1\}^\cV$, $\bb\in \{0,1,2\}^\cV$, $|\bb|=5$, and \[[\{f_1f_2\}\{f_3\}],[\{f_2f_3\}\{f_1\}],[\{f_3f_1\}\{f_2\}]\in W_4^{-\bb},\]
			then we have $y=[\{f_1f_2f_3\}\{f_3\}]\in W_5^{-\bb}$. Moreover,
			\[
			\overline \partial(y)={[\{f_2f_3\}\{f_3\}]-[\{f_1f_3\}\{f_3\}]+}\underline{[\{f_1f_2\}\{f_3\}]}.
			\]
		\end{enumerate}
		The underlined generator will be used in the proof of Proposition \ref{prop:t4bound}.
	\end{lemma}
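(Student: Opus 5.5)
Each item amounts to a membership check followed by an application of the reduced differential formula from the remark after Theorem \ref{thm.computingTviahomology}. I would handle them one at a time with the same routine. Since $\cK$ is flag, every minimal generator $f_j$ is a square-free quadric $x_{u}x_{v}$ with $\{u,v\}\notin\cK$. Hence the weight of a quasiprimitive generator $[p_1\cdots p_k]$ is the sum of the supports (as $0/1$-vectors) of the edge sets of the $p_j$. Membership of a generator $y$ in $W^{-\bb}_5$ means two things: $\wt(y)-\bb\ge 0$ coordinatewise, and the support of $\wt(y)-\bb$ is a face of $\cK$, i.e.\ it contains no edge of $G(I_\cK)$. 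So for each item I would compare $\wt(y)$ with $\wt(z)$ of the given generator $z\in W_4^{-\bb}$.

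For \eqref{i1}, \eqref{i0}, \eqref{i2}, \eqref{i5} the comparison should be direct. In \eqref{i0}, $\wt([[\{f_1\}\{f_2\}]\{f_3\}])=\wt([\{f_1\}\{f_2\}\{f_3\}])$, so membership is immediate. In \eqref{i1} and \eqref{i2}, the new generator's weight exceeds the old one by the vertices of one extra edge. I would use the type of $\bb$ (entries $2$ only, or a single $2$) to show the following: the support of $\wt(z)-\bb$ is already a face, the entries of $\bb$ equal to $2$ must sit at vertices shared by edges, and the added vertices either lie in the (already nonnegative) region or enlarge the support only by vertices whose pairwise edges are among $f_1,f_2$ (resp.\ $f_3$). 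The latter are edges of $G(I_\cK)$, so I must check that such vertices are actually still covered by $\bb$ after subtracting. This will need a small case analysis on how $f_1,f_2$ (and $f_3$) share vertices, forced by $\bb$ having $2$'s. Item \eqref{i5} is similar: the three hypotheses together pin down the vertex configuration given $|\bb|=5$ with some entry $2$.

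With membership established, the displayed formulas come from Theorem \ref{thm.stringsdifferential}. I list the petal deletions and twig collapses, discard those that are zero or leave $\nabla_{-\bb}$, and read off signs. For instance, for $y=[\{f_1f_2f_3\}\{f_3\}]$ the twig term gives a primitive generator $\{f_1f_2f_3f_3\}=0$. The bracket deletion gives a product, which is not in $P^\new$ and so is projected away. The remaining terms delete $f_1,f_2,f_3$ in turn, and the last $f_3$ deletion yields $[\{f_1f_2f_3\}]$, which is invalid. For $[\{f_1f_2\}\{f_1f_2\}]$ the graded symmetry doubles the terms and produces the factor $2$.

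The main obstacle is item \eqref{i4}, type $(2,1)$, where a dichotomy must be proved. Let $v$ be the vertex with $\bb_v=2$ and $w$ the one with $\bb_w=1$. Since $z=[\{f_1f_2\}\{f_3\}]\in W_4^{-\bb}$, $v$ lies in $f_3$ and in $f_1\cup f_2$. I would split on whether $w$ lies in $f_3$ or only in $f_1\cup f_2$. In one case adding $f_3$'s vertices to the first block, giving $[\{f_1f_2f_3\}\{f_3\}]$, keeps the leftover support a face. In the other case $v\in f_1$ (after relabeling) and $[\{f_1f_2\}\{f_1f_3\}]$ works. For the additional claim, I would run the same case split using the extra hypothesis $[\{f_1f_3\}\{f_2\}]\in W_4^{-\bb}$, which forces $v\in f_2$ as well. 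Either $w\notin f_3$ gives the first option, or symmetrically $w\notin f_2$ gives the second, using flagness to rule out the configuration where the leftover support contains an edge. I expect this bookkeeping, checking that no edge of $G(I_\cK)$ appears among the leftover vertices, to be the delicate part.
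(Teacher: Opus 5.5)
Your proposal is correct and follows essentially the same route as the paper. For \eqref{i1}, \eqref{i0}, \eqref{i2} the configuration check shows that the support of $\wt(y)-\bb$ is literally the same as for the given generator, and in \eqref{i5} the hypotheses force a star with $\wt(z)=\bb$. Your split in \eqref{i4} on whether $w\in f_3$ isolates exactly the exceptional configuration of Figure~\ref{fig:graphsnew2}, your symmetric argument for the additional claim is valid, and the differentials are read off from Theorem~\ref{thm.stringsdifferential}.

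Two small corrections:
\begin{enumerate}
\item No relabeling is needed in \eqref{i4}. When $w\in f_3$, the support of $\wt(z)-\bb$ is $(f_1\cup f_2)\setminus\{v\}$, so the face condition already forces $v\in f_1\cap f_2$, and $[\{f_1f_2\}\{f_1f_3\}]$ works with the given labeling.
\item The displayed formulas are for $\overline\partial$, so you should drop only terms that vanish or land in $P^\old$. Do not drop terms that leave $\nabla_{-\bb}$.
\end{enumerate}
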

	\begin{proof}
		In each of the claims, the images under $\overline\partial$ follow immediately from Theorem~\ref{thm.stringsdifferential}. In each case, we now show that the given hypotheses imply the existence of certain elements of $W_5^{-\bb}$.
		
		For the claims \eqref{i1}, \eqref{i0}, \eqref{i2}, we always have that the support of  $\wt(y)-\bb$ is the same as the support of $\wt(z)-\bb$, where $z\in W_4^{-\bb}$ was the given generator. Indeed, for \eqref{i0} this is immediate since $\wt(y)=\wt(z)$. For the other two cases,  in Figures \ref{fig:graphs1} and \ref{fig:graphs2}, we picture the possible configurations of $\bb$, $f_1$, $f_2$, (and $f_3$) satisfying the hypotheses in item \eqref{i1} and \eqref{i2}. We depict the subgraph of $G(I_\cK)$ corresponding to $f_1$, $f_2$ (and $f_3$). The support of $\bb$ is marked in gray, and the coordinate entries are labeled in the figure.
		The claims on the support of the weights now follows.
		But since $\wt(y)-\bb$ and $\wt(z)-\bb$ have the same support and $z\in W_4^{-\bb}$, we have $y\in W_5^{-\bb}$ as desired.
		
		We now consider claims \eqref{i4} and \eqref{i5}:
		\begin{enumerate}\setcounter{enumi}{3}
			\item Let $v,w\in \cV$ be respectively the vertices with $\bb_v=2$, $\bb_w=1$. Hence the only possible configuration for which $[\{f_1f_2f_3\}\{f_3\}]\notin W_5^{-\bb}$ requires $x_v$ dividing $f_1,f_2,f_3$ and $x_w$ dividing $f_3$, i.e. we are dealing with the case depicted in Figure~\ref{fig:graphsnew2}. In this case, it is straightforward to see that $[\{f_1f_2\}\{f_1f_3\}]\in W_5^{-\bb}$. If additionally we assume $[\{f_1f_3\}\{f_2\}]\in W_4^{-\bb}$ then again restricting to the case depicted in Figure~\ref{fig:graphsnew2}, it is straightforward to see that $[\{f_1f_2f_3\}\{f_2\}]\in W_5^{-\bb}$.
			\item Let $v\in \cV$ be a vertex with $\bb_v=2$. Then by hypothesis $x_v$ divides $f_1$, $f_2$, and $f_3$. In particular, we deduce that $\bb$ is of type $(2,1,1,1)$, and the only possible configuration is pictured in Figure~\ref{fig:graphs4}. Notice that in this case
			\[ \bb=\wt([\{f_1f_2\}\{f_3\}])=\wt([\{f_2f_3\}\{f_1\}])=\wt([\{f_3f_1\}\{f_2\}])\; , \]
			and $|\wt([\{f_1f_2f_3\}\{f_3\}])-\bb|=1$, whence the statement.\qedhere
		\end{enumerate}
	\end{proof}
	
	\begin{figure}
		\begin{subfigure}{.4\textwidth}
			\begin{tikzpicture}
				\draw (0,0) -- (1,0) -- (2,0);
				\draw[white,fill] (-1.5,0) circle [radius=0.05]; 
				\draw[fill] (0,0) circle [radius=0.05]; 
				\draw[fill,gray] (1,0) circle [radius=0.05] node[above right,gray] {$2$};
				\draw[fill,gray] (2,0) circle [radius=0.05] node[above right,gray] {$2$};
				\draw[] (1.5,0) node[below] {$f_1$}; 
				\draw[] (.5,0) node[below] {$f_2$}; 
			\end{tikzpicture}
			\caption{$\bb$ of type $(2,2)$}
		\end{subfigure}
		\begin{subfigure}{.4\textwidth}
			\begin{tikzpicture}
				\draw (0,0) -- (1,0) -- (2,0);
				\draw[white,fill] (-1.5,0) circle [radius=0.05]; 
				\draw[fill] (0,0) circle [radius=0.05]; 
				\draw[fill] (2,0) circle [radius=0.05]; 
				\draw[fill,gray] (1,0) circle [radius=0.05] node[above right,gray] {$2$};
				\draw[] (1.5,0) node[below] {$f_1$}; 
				\draw[] (.5,0) node[below] {$f_2$}; 
			\end{tikzpicture}
			\caption{$\bb$ of type $(2)$}
		\end{subfigure}
		\caption{Cases for Lemma \ref{lemma:t4splitting}\eqref{i1}}\label{fig:graphs1}
	\end{figure}
	
	\begin{figure}
		\begin{subfigure}{.4\textwidth}
			\begin{tikzpicture}
				\draw (0,0) -- (1,0) -- (0,1) -- (0,0);
				\draw[fill] (0,0) circle [radius=0.05]; 
				\draw[fill,gray] (1,0) circle [radius=0.05] node[right,gray] {$2$};
				\draw[fill,gray] (0,1) circle [radius=0.05] node[above,gray] {$2$};
				\draw[] (.5,.5) node[above right] {$f_3$}; 
				\draw[] (0,.5) node[left] {$f_1$}; 
				\draw[] (.5,0) node[below] {$f_2$}; 
			\end{tikzpicture}
			\hspace{.3cm}
			\begin{tikzpicture}
				\draw (0,0) -- (1,0) -- (1,1) -- (0,1);
				\draw[fill] (0,0) circle [radius=0.05]; 
				\draw[fill] (0,1) circle [radius=0.05]; 
				\draw[fill,gray] (1,0) circle [radius=0.05] node[below right,gray] {$2$};
				\draw[fill,gray] (1,1) circle [radius=0.05] node[above right,gray] {$2$};
				\draw[] (1,.5) node[right] {$f_3$}; 
				\draw[] (.5,1) node[above] {$f_1$}; 
				\draw[] (.5,0) node[below] {$f_2$}; 
			\end{tikzpicture}
			\caption{$\bb$ of type $(2,2)$}
		\end{subfigure}
		\begin{subfigure}{.28\textwidth}
			\begin{tikzpicture}
				\draw (0,0) -- (1,0);
				\draw (0,1) -- (0,0);
				\draw (-.7,-.7) -- (0,0);
				\draw[white,fill] (-1.5,0) circle [radius=0.05]; 
				\draw[fill] (1,0) circle [radius=0.05]; 
				\draw[fill] (0,1) circle [radius=0.05]; 
				\draw[fill] (-.7,-.7) circle [radius=0.05]; 
				\draw[fill,gray] (0,0) circle [radius=0.05] node[above right,gray] {$2$};
				\draw[] (-.5,-.5) node[below right] {$f_3$}; 
				\draw[] (0,.5) node[left] {$f_1$}; 
				\draw[] (.5,0) node[below] {$f_2$}; 
			\end{tikzpicture}
			\caption{$\bb$ of type $(2)$}
		\end{subfigure}
		\caption{Cases for Lemma \ref{lemma:t4splitting} \eqref{i2}}\label{fig:graphs2}
	\end{figure}
	
	%\draw[fill] (0,1) circle [radius=0.05]; 
	%\draw[fill,gray] (1,0) circle [radius=0.05] node[below right,gray] {$2$}
	\begin{figure}
		\begin{subfigure}{.4\textwidth}
			\begin{tikzpicture}
				\draw (0,0) -- (1,0);
				\draw (0,1) -- (0,0);
				\draw (-.7,-.7) -- (0,0);
				\draw[white,fill] (-1.5,0) circle [radius=0.05]; 
				\draw[fill] (1,0) circle [radius=0.05] ; 
				\draw[fill] (0,1) circle [radius=0.05]; 
				\draw[fill,gray] (-.7,-.7) circle [radius=0.05] node [left,gray] {$1$}; 
				\draw[fill,gray] (0,0) circle [radius=0.05] node[above right,gray] {$2$};
				\draw[] (-.5,-.5) node[below right] {$f_3$}; 
				\draw[] (0,.5) node[left] {$f_1$}; 
				\draw[] (.5,0) node[below] {$f_2$}; 
			\end{tikzpicture}
			\caption{Case \eqref{i4}}\label{fig:graphsnew2}
		\end{subfigure}
		\begin{subfigure}{.4\textwidth}
			\begin{tikzpicture}
				\draw (0,0) -- (1,0);
				\draw (0,1) -- (0,0);
				\draw (-.7,-.7) -- (0,0);
				\draw[white,fill] (-1.5,0) circle [radius=0.05]; 
				\draw[fill,gray] (1,0) circle [radius=0.05] node[above right,gray] {$1$}; 
				\draw[fill,gray] (0,1) circle [radius=0.05] node[above right,gray] {$1$}; 
				\draw[fill,gray] (-.7,-.7) circle [radius=0.05] node [left,gray] {$1$}; 
				\draw[fill,gray] (0,0) circle [radius=0.05] node[above right,gray] {$2$};
				\draw[] (-.5,-.5) node[below right] {$f_3$}; 
				\draw[] (0,.5) node[left] {$f_1$}; 
				\draw[] (.5,0) node[below] {$f_2$}; 
			\end{tikzpicture}
			\caption{Case \eqref{i5}}\label{fig:graphs4}
		\end{subfigure}
		\caption{More cases for Lemma \ref{lemma:t4splitting}}
	\end{figure}
	
	\begin{figure}
		\begin{subfigure}{.4\textwidth}
			\begin{tikzpicture}
				\draw (0,0) -- (1,0) -- (2,0);
				\draw[white,fill] (-1.5,0) circle [radius=0.05]; 
				\draw[fill] (0,0) circle [radius=0.05]; 
				\draw[fill,gray] (1,0) circle [radius=0.05] node[above right,gray] {$2$};
				\draw[fill,gray] (2,0) circle [radius=0.05] node[above right,gray] {$1$};
				\draw[] (1.5,0) node[below] {$f_1$}; 
				\draw[] (.5,0) node[below] {$f_2$}; 
			\end{tikzpicture}
			\caption{$\bb$ of type $(2,1)$}
		\end{subfigure}
		\begin{subfigure}{.4\textwidth}
			\begin{tikzpicture}
				\draw (0,0) -- (1,0) -- (2,0);
				\draw[white,fill] (-1.5,0) circle [radius=0.05]; 
				\draw[fill,gray] (0,0) circle [radius=0.05] node[above left,gray] {$1$}; 
				\draw[fill,gray] (1,0) circle [radius=0.05] node[above right,gray] {$2$};
				\draw[fill] (2,0) circle [radius=0.05];
				\draw[] (1.5,0) node[below] {$f_1$}; 
				\draw[] (.5,0) node[below] {$f_2$}; 
			\end{tikzpicture}
			\caption{$\bb$ of type $(2,1)$}
		\end{subfigure}
		\begin{subfigure}{.4\textwidth}
			\begin{tikzpicture}
				\draw (0,0) -- (1,0) -- (2,0);
				\draw[white,fill] (-1.5,0) circle [radius=0.05]; 
				\draw[fill,gray] (0,0) circle [radius=0.05] node[above left,gray] {$1$}; 
				\draw[fill] (1,0) circle [radius=0.05] ;
				\draw[fill,gray] (2,0) circle [radius=0.05] node[above right,gray] {$2$};
				\draw[] (1.5,0) node[below] {$f_1$}; 
				\draw[] (.5,0) node[below] {$f_2$}; 
			\end{tikzpicture}
			\caption{$\bb$ of type $(2,1)$}
		\end{subfigure}
		\begin{subfigure}{.4\textwidth}
			\begin{tikzpicture}
				\draw (0.5,0) -- (1.5,0)  (2,0) -- (3,0);
				\draw[white,fill] (-1,0) circle [radius=0.05]; 
				\draw[fill,gray] (0.5,0) circle [radius=0.05] node[above left,gray] {$1$}; 
				\draw[fill] (1.5,0) circle [radius=0.05]; 
				\draw[fill] (2,0) circle [radius=0.05] ;
				\draw[fill,gray] (3,0) circle [radius=0.05] node[above right,gray] {$2$};
				\draw[] (2.5,0) node[below] {$f_1$}; 
				\draw[] (1,0) node[below] {$f_2$}; 
			\end{tikzpicture}
			\caption{$\bb$ of type $(2,1)$}
		\end{subfigure}
		\caption{Additional cases for Proposition~\ref{prop:t4bound}}\label{fig:graphsnew}
	\end{figure}

	\begin{table}
		\begin{tabular}{l l}
			Case & Generators for $U\subseteq W_4^{-\bb}$\\
			\hline
			$\bb$ of type $(2,2)$ & 0\\
			\\
			$|\bb|=5$, $\bb\notin\{0,1\}^\cV$ & 0\\
			\\
			$\bb$ of type $(2)$  & $[\{f_1f_2\}\{f_1\}]\in W_4^{-\bb}$ such that $[\{f_1f_2\}\{f_2\}]\in W_4^{-\bb}$\\
			& \hspace{3cm} and $\wt(f_1)>_\textrm{lex} \wt(f_2)$\\
			\\
			$\bb$ of type $(2,1)$  & $[\{f_1f_2\}\{f_1\}]\in W_4^{-\bb}$ such that $[\{f_1f_2\}\{f_2\}]\in W_4^{-\bb}$\\
			& \hspace{3cm} and $\wt(f_1)>_\textrm{lex} \wt(f_2)$; and\\
			& $[\{f_1f_2\}\{f_1\}]\in W_4^{-\bb}$ such that $[\{f_1f_2\}\{f_2\}]\notin W_4^{-\bb}$\\
			& \hspace{3cm} and $[\{f_1\}\{f_2\}]\in W_3^{-\bb}$\\
			\\
			$|\bb|=5$, $\bb\in\{0,1\}^\cV$ & $[\{f_1\}\{f_2\}\{f_3\}]\in W_4^{-\bb}$\\
		\end{tabular}
		
		\caption{Subspaces $U\subseteq W_4^{-\bb}$ in the proof of Proposition \ref{prop:t4bound}}\label{table:U}
	\end{table}
	
	We will apply the above lemma to show the following.
	
	\begin{proposition}\label{prop:t4bound}
		Let $\cK$ be a flag complex. Let  $\ba,\bb\in\ZZ_{\geq0}^\cV$ with disjoint supports. Assume that one of the following conditions holds:
		\[ |\bb|>4 \quad \text{ or } \quad \bb \text{ is of type } (2,2)\, ,\, (2,1)\,, \text{or } (2) \; . \]
		Then $T^4_{\ba-\bb}(\cK)=0$.
	\end{proposition}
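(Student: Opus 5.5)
By the localization formula (Theorem~\ref{thm:localizationformula}) we may assume $\bc=-\bb$. We then compute $T^4_{-\bb}(\cK)$ as the dual of $H_4(W_\bullet^{-\bb})$ via Theorem~\ref{thm.computingTviahomology}, and show that every cycle in $W_4^{-\bb}$ is a boundary. The first reductions come from the general bounds. By Corollary~\ref{cor:koszul} we may assume $|\bb|\le 5$. By Theorem~\ref{thm:bound}(3) with $i=4$ we may assume $\bb\in\{0,1,2\}^\cV$. The remaining cases are exactly those listed in Table~\ref{table:U}: types $(2,2)$, $(2)$, $(2,1)$, and $|\bb|=5$ with $\bb\notin\{0,1\}^\cV$ or $\bb\in\{0,1\}^\cV$.

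In each case the strategy is the same. Let $U\subseteq W_4^{-\bb}$ be the subspace spanned by the generators listed in Table~\ref{table:U}. We show that $W_4^{-\bb}=U+\partial^{-\bb}(W_5^{-\bb})$ and that $\partial^{-\bb}$ restricted to $U$ is injective. This gives $H_4=0$: if a cycle $z$ is written as $u+\partial(w)$ with $u\in U$, then $\partial u=0$, so $u=0$.

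For the spanning statement we list the possible generators of $W_4^{-\bb}$: the primitive $\{f_1f_2f_3f_4\}$ and the quasiprimitive $[\{f_1f_2\}\{f_3\}]$ and $[\{f_1\}\{f_2\}\{f_3\}]$. Primitive ones occur only if $\bb\in\{0,1\}^\cV$. We then reduce each generator modulo boundaries using Lemma~\ref{lemma:t4splitting}. Item~\eqref{i0} eliminates the $[\{f_1\}\{f_2\}\{f_3\}]$ in favour of $[\{f_1f_2\}\{f_3\}]$. Items~\eqref{i2}, \eqref{i4} and \eqref{i5} express the underlined generator $[\{f_1f_2\}\{f_3\}]$ with distinct $f_3$ as a combination of generators of the form $[\{f_if_j\}\{f_j\}]$. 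Item~\eqref{i1} relates $[\{f_1f_2\}\{f_1\}]$ to $[\{f_1f_2\}\{f_2\}]$, so we may keep only the lexicographically larger one, or kill both in type $(2,2)$.

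We need some care to set up a well-founded order on generators, for example "repeated-generator" forms being smaller. The order must ensure that the boundary relations triangularly reduce everything to $U$. In type $(2,1)$ we additionally use the alternative $[\{f_1f_2\}\{f_1f_3\}]$ from \eqref{i4}. That boundary contains two underlined terms, so it must be combined with the \eqref{i4} statement about $[\{f_1f_2f_3\}\{f_2\}]$ when both terms lie in $W_4^{-\bb}$. In the $|\bb|=5$, $\bb\in\{0,1\}^\cV$ case, primitive generators of $W_4$ have splittings by Remark~\ref{rem:splittings} since $|\bb|\ge i+1$. So modulo boundaries we can replace them by quasiprimitive generators, working as in the non-splitting analysis. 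Here I would check directly that $[\{f_1f_2\}\{f_3\}]$ reduces to $[\{f_1\}\{f_2\}\{f_3\}]$ via \eqref{i0}.

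For injectivity on $U$ we compute $\overline\partial$ on the listed generators using Theorem~\ref{thm.stringsdifferential}. For type $(2)$ and $(2,1)$, $\partial[\{f_1f_2\}\{f_1\}]$ has a component $\pm[\{f_1\}\{f_2\}]$ in $W_3$; this component is present precisely under the listed conditions. The generators in $U$ are indexed by distinct unordered pairs, so their images are linearly independent. For $|\bb|=5$, $\partial[\{f_1\}\{f_2\}\{f_3\}]$ contains the terms $[\{f_i\}\{f_j\}]$ together with the product terms. Distinct triples give independent images because the triple is recoverable from these terms; alternatively one can use the product part in $P^\old_3$ under $\pi_R$.

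I expect the main obstacle to be the bookkeeping in type $(2,1)$. There the membership of each generator in $W_4^{-\bb}$ and $W_5^{-\bb}$ depends on the configurations in Figure~\ref{fig:graphsnew}, and the reductions must be arranged to terminate exactly at the second family in Table~\ref{table:U}. I would handle it by going through those four configurations case by case.
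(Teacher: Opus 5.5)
Your outline is the same as the paper's proof. You localize to $\ba=0$, cut down with Corollary~\ref{cor:koszul} and Theorem~\ref{thm:bound}, and take $U$ from Table~\ref{table:U}. You then show that $\partial^{-\bb}$ is injective on $U$ and that $W_4^{-\bb}=U+\partial^{-\bb}(W_5^{-\bb})$ using Lemma~\ref{lemma:t4splitting}. However, two steps fail as written.

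\textbf{Injectivity in type $(1,1,1,1,1)$.} Both witnesses you propose are zero in $W_3^{-\bb}$.
\begin{itemize}
\item The generators $[\{f_i\}\{f_j\}]$ have weight of total degree $4<|\bb|=5$, so they do not lie in $W_3^{-\bb}$.
\item The product $\{f_1\}\{f_2\}\{f_3\}$ lies in $P_3^\old$. This is exactly what $\pi_R$ discards, since $\pi_R$ projects onto $P_3^\new=F_3\oplus P_2^\old$.
\end{itemize}
What survives is the term you omitted: the twig contribution $-\{f_1f_2f_3\}$. Its weight is square-free, is $\geq\bb$, and exceeds $\bb$ in at most one coordinate. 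Hence $\{f_1f_2f_3\}\in W_3^{-\bb}$, and it determines the triple. That is the correct injectivity argument, and it is the one the paper uses.

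\textbf{Spanning for $|\bb|=5$ with $\bb\notin\{0,1\}^\cV$, and for configurations (C), (D) of Figure~\ref{fig:graphsnew}.} Your spanning step has no tool for the repeated forms $[\{f_1f_2\}\{f_1\}]$ in these cases. For $|\bb|=5$ with a $2$-entry we have $U=0$, so all of them must be boundaries.
\begin{itemize}
\item Item~\eqref{i1} is only available for types $(2,2)$ and $(2)$, and $[\{f_1f_2\}\{f_1f_2\}]$ need not lie in $W_5^{-\bb}$. For example, take $\bb=2e_v+e_{w_1}+e_{w_2}+e_{w_3}$, $f_1=x_vx_{w_1}$, $f_2=x_{w_2}x_{w_3}$. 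Then $[\{f_1f_2\}\{f_1\}]\in W_4^{-\bb}$, but $[\{f_1f_2\}\{f_1f_2\}]\notin W_5^{-\bb}$ because $x_{w_2}x_{w_3}\in I_\cK$.
\item Item~\eqref{i5} only treats $[\{f_1f_2\}\{f_3\}]$ when all three rotations $[\{f_if_j\}\{f_k\}]$ lie in $W_4^{-\bb}$. So your claim that \eqref{i2}, \eqref{i4}, \eqref{i5} reduce every such generator to repeated forms is false when $|\bb|=5$.
\end{itemize}

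The missing ingredient is the generator $y=[[\{f_1\}\{f_2\}]\{f_3\}]$, with $f_3$ allowed to equal $f_1$. When $|\bb|=5$, its weight has total degree $6=|\bb|+1$. So $y\in W_5^{-\bb}$ whenever $[\{f_1f_2\}\{f_3\}]\in W_4^{-\bb}$, and $\overline\partial(y)=[\{f_1f_2\}\{f_3\}]-[\{f_1\}\{f_2\}\{f_3\}]$.
\begin{itemize}
\item For $f_3=f_1$ the second term vanishes, so $[\{f_1f_2\}\{f_1\}]$ is a boundary outright. The same generator is what disposes of configurations (C) and (D) in type $(2,1)$, which you deferred.
\item For distinct $f_i$ it lets you trade $[\{f_1f_2\}\{f_3\}]$ for $[\{f_1\}\{f_2\}\{f_3\}]$. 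The triple is itself a boundary, via a rotated $y$, unless all three rotations lie in $W_4^{-\bb}$. Only in that case do you need item~\eqref{i5}.
\end{itemize}
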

	\begin{proof}
		By the localization formula (Theorem \ref{thm:localizationformula}) we may assume without loss of generality that $\ba=0$ in $\ZZ^\cV$. Moreover, by Corollary~\ref{cor:koszul} we can restrict our attention to $\vert\bb\vert\leq5$, and by Theorem~\ref{thm:bound} we can also assume $\bb\in\{0,1,2\}^\cV$.
		
		Let us first describe the general strategy to obtain the desired vanishing.
		In each case, we consider the subspace $U\subseteq W_4^{-\bb}$ with generators as listed in Table~\ref{table:U}. The condition $\wt(f_1)>_\textrm{lex} \wt(f_2)$ means that $\wt(f_1)$ is larger than $\wt(f_2)$ in lexicographic order after having fixed some order of $\cV$.
		
		Notice that the restriction of $\partial_4^{-\bb}$ to $U$ is injective. Indeed,
		for $\bb$ of type $(2)$ or $(2,1)$, 
		if $[\{f_1f_2\}\{f_1\}]$ and $[\{f_1f_2\}\{f_2\}]$ are both in $W_4^{-\bb}$, then we are in the situation depicted in 
		Figure~\ref{fig:graphs1}(B), Figure~\ref{fig:graphsnew}(A), or Figure~\ref{fig:graphsnew}(B). In all cases, $[\{f_2\}\{f_1\}]\in W^{-\bb}_3$ and it follows that $\partial_4^{-\bb}([\{f_1f_2\}\{f_1\}])=[\{f_2\}\{f_1\}]$.
		If instead  $\vert\bb\vert=5$ and $\bb\in\{0,1\}^\cV$ we have 
		that $\wt([\{f_1\}\{f_2\}\{f_3\}])-\bb$ and $\wt(\{f_1f_2f_3\})-\bb$ have the same support
		so that $\{f_1f_2f_3\}\in W^{-\bb}_3$ and $\partial_4^{-\bb}\vert_U$ is injective.
		
		In the remainder of the proof, we will show that in each case
		$W_4^{-\bb} = U+\mathsf{im}(\partial_5^{-\bb})$. This will imply the vanishing of $H_4(W_\bullet^{-\bb})$, and hence $T^4_{-\bb}(\cK)=0$ by Theorem~\ref{thm.computingTviahomology}.
		
		\subsubsection*{Case I: $\bb$ of type $(2,2)$, $(2,1)$, or $(2)$}
		In this case, the only possible generators in $W_4^{-\bb}$ are of the form $[\{f_1\}\{f_2\}\{f_3\}]$ and $[\{f_1f_2\}\{f_3\}]$, with $f_1,f_2,f_3$ not necessarily distinct.
		Using Lemma~\ref{lemma:t4splitting}, we replace generators modulo boundaries: first those of the type $[\{f_1\}\{f_2\}\{f_3\}]$ (item \eqref{i0}), then of the type $[\{f_1f_2\}\{f_3\}]$ with $f_1,f_2,f_3$ distinct (items \eqref{i2} or \eqref{i4}). (In Lemma~\ref{lemma:t4splitting}, the terms being replaced have been underlined.) This reduces us to only deal with generators of type $[\{f_1f_2\}\{f_1\}]$.
		
		If $\bb$ is of type $(2,2)$ or $(2)$, then Lemma~\ref{lemma:t4splitting}\eqref{i1} allows us to replace $[\{f_1f_2\}\{f_1\}]$ by $[\{f_1f_2\}\{f_2\}]$ modulo boundaries. Hence $W_4^{-\bb} = U+\mathsf{im}(\partial_5^{-\bb})$  (see Figure~\ref{fig:graphs1}). 
		
		If instead $\bb$ is of type $(2,1)$, then we are in one of the situations depicted in Figure~\ref{fig:graphsnew}.
		In case (A) similar to Lemma~\ref{lemma:t4splitting}\eqref{i1} we have
		$[\{f_1f_2\}\{f_1f_2\}]\in W_5^{-\bb}$ so may replace $[\{f_1f_2\}\{f_1\}]$ by $[\{f_1f_2\}\{f_2\}]$ modulo boundaries and arrive at an element of $U$.
		In case (B), $[\{f_1f_2\}\{f_1\}]\in U$ since $[\{f_1f_2\}\{f_2\}]\notin W_4^{-\bb}$ but $[\{f_1\}\{f_2\}]\in W_3^{-\bb}$.  
		Finally, in cases (C) and (D) we have  $y=[[\{f_1\}\{f_2\}]\{f_1\}]\in W_5^{-\bb}$
		and   $\overline\partial(y)=[\{f_1f_2\}\{f_1\}]$.
		
		\subsubsection*{Case II: $|\bb|=5$}
		We now turn our attention to the case $\vert\bb\vert=5$.
		Suppose first that $\bb\notin \{0,1\}^\cV$, so that primitive strings cannot belong to $W_4^{-\bb}$. Hence the only possible strings in $W_4^{-\bb}$ are of the form $[\{f_1\}\{f_2\}\{f_3\}]$ and $[\{f_1f_2\}\{f_3\}]$ for $f_1,f_2,f_3$ not necessarily distinct.
		Since $|\wt [[\{f_1\}\{f_2\}]\{f_3\}]|=6$ and $|\bb|=5$, we have $y=[[\{f_1\}\{f_2\}]\{f_3\}]\in W_5^{-\bb}$ whenever $[\{f_1f_2\}\{f_3\}]\in W_4^{-\bb}$. Since 
		\[\overline\partial(y)=[\{f_1f_2\}\{f_3\}]-[\{f_1\}\{f_2\}\{f_3\}] \]
		we may replace any generator of the form $[\{f_1f_2\}\{f_3\}]\in W_4^{-\bb}$ with $[\{f_1\}\{f_2\}\{f_3\}]]$ (or $0$).
		
		But for essentially the same reason,
		$[\{f_1\}\{f_2\}\{f_3\}]\in W_4^{-\bb}$ is a boundary unless
		\[ [\{f_1f_2\}\{f_3\}],[\{f_2f_3\}\{f_1\}],[\{f_3f_1\}\{f_2\}]\in W_4^{-\bb} \;,\]
		and we may use Lemma~\ref{lemma:t4splitting}\eqref{i5} to restrict to the case of strings of the form $[\{f_1f_2\}\{f_1\}]\in W_4^{-\bb}$. But we have already seen that these are boundaries.
		
		Finally, suppose that $\vert\bb\vert=5$ and $\bb\in\{0,1\}^\cV$; i.e. $\bb$ is of type $(1,1,1,1,1)$. 
		By Remark~\ref{rem:splittings} any primitive generator has a splitting, so we only need to consider quasiprimitive generators.
		Since $\bb$ is of type $(1,1,1,1,1)$, these must be of the form $[\{f_1f_2\}\{f_3\}]$ or $[\{f_1\}\{f_2\}\{f_3\}]$ for distinct $f_1,f_2,f_3$.
		Now we apply Lemma~\ref{lemma:t4splitting}(\ref{i0}) to replace generators of the form $[\{f_1f_2\}\{f_3\}]$ with elements in $U$.
	\end{proof}

	\section{Topological interpretations}\label{sec:topological}
	\subsection{Preliminaries}
	
	Consider an arbitrary simplicial complex $\cK$.
	Fix some subset $b$ of $\cV(\cK)$. We introduce and recall some notation connected to $b$.
	
	Recall from \S\ref{sec:overview} that we defined a subcomplex of $\cK$
	\[
	\cK_b=\left\{f\in \cK \ \Big|\ f\cap \big(b \cup \cV(\bigcap_{v\in b}\link (v,\cK))\big)=\emptyset\right\}.
	\]
	We will also need the related complex:
	\[
	\cK_b'=\left\{\coprod_{v\in b} f_v\ \Big|\ f_v\subseteq \cV(\cK)\setminus \big(b \cup \cV(\link (v,\cK))\big)\ \forall v\in b,\ \textrm{and}\ \bigcup_{v\in b}f_v\in \cK\right\}.
	\]
	Here $\coprod$ denotes the disjoint union. This is a simplicial complex whose vertex set is contained in the disjoint union of $\#b$ copies of $\cV(\cK)$. 
	There is a natural simplicial map $\phi_b:\cK_b'\to \cK_b$ via $\coprod_{v\in b} f_v\mapsto \bigcup_{v\in b}f_v$.
	In particular, for any vertex $w$ of $\cK_b'$, we obtain a corresponding vertex $\phi_b(w)$ of $\cK_b$. For any vertex $w$ of $\cK_b'$, we will denote by $\nu(w)$ the element of $b$ corresponding to the copy of $\cV(\cK)$ to which $w$ belongs. Thus, any vertex $w$ of $\cK_b'$ is determined by $\phi_b(w)$ and $\nu(w)$.
	
	Suppose now that $\cK$ is a flag complex, and $b\subseteq \cV(\cK)$ with $|b|\geq 2$. We let $\cE_b$ be the set of all edges of $G(I_\cK)$ contained in $b$, or equivalently, the set of all two-element subsets of $b$ not contained in $\cK$.
	
	\begin{lemma}\label{lemma:homotopic}
		The map $\phi_b$ is a homotopy equivalence.
	\end{lemma}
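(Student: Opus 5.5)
The plan is to build an explicit simplicial section $s\colon \cK_b\to\cK_b'$ of $\phi_b$. We then show that $\phi_b\circ s$ is the identity and that $s\circ\phi_b$ is contiguous to the identity of $\cK_b'$. Contiguous simplicial maps have homotopic realizations, so this makes $\phi_b$ a homotopy equivalence with homotopy inverse $s$. A fallback would be Quillen's fiber lemma, using that $\phi_b^{-1}(\Delta(g))$ is a full simplex for every face $g$; the section argument is more direct.

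First we understand the vertices. For $v\in b$ put $A_v=\cV(\cK)\setminus\big(b\cup\cV(\link(v,\cK))\big)$. A vertex of $\cK_b'$ is a pair $(w,v)$ with $w\in A_v$ and $\{w\}\in\cK$, where $\phi_b(w,v)=w$ and $\nu(w,v)=v$.

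Let $w$ be a vertex of $\cK_b$. Then $w\notin b$, and $w$ is not a vertex of $\bigcap_{v\in b}\link(v,\cK)$. I need to check that this yields some $v\in b$ with $w\notin\cV(\link(v,\cK))$. If $w$ were a vertex of every $\link(v,\cK)$, then $\{w\}$ would lie in the intersection, making $w$ a vertex of it, a contradiction. Hence $w\in A_v$ for some $v\in b$.

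So we may choose, for each vertex $w$ of $\cK_b$, an element $\sigma(w)\in b$ with $w\in A_{\sigma(w)}$. For a face $g\in\cK_b$ define
\[
s(g)=\coprod_{v\in b} f_v, \qquad f_v=\{w\in g : \sigma(w)=v\}.
\]
Each $f_v$ lies in $A_v$, and $\bigcup_v f_v=g\in\cK$, so $s(g)\in\cK_b'$. Thus $s$ is simplicial, determined by $w\mapsto(w,\sigma(w))$, and clearly $\phi_b\circ s=\id$.

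For contiguity, take a face $\tau=\coprod_v f_v$ of $\cK_b'$ and set $g=\phi_b(\tau)=\bigcup_v f_v$. I claim $\tau\cup s(g)$ is a face of $\cK_b'$. Its $v$-th component is $f_v\cup\{w\in g:\sigma(w)=v\}$, which is contained in $A_v$. The union of all components is $g\in\cK$. So $\tau\cup s(\phi_b(\tau))$ is a face, meaning $s\circ\phi_b$ and $\id_{\cK_b'}$ are contiguous, and the lemma follows.

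The main point needing care is the membership conditions in the definition of $\cK_b'$: components are disjoint from $b$ and from the corresponding vertex links, and the union is a face of $\cK$. This requires checking that the faces of $\cK_b$ (hence $\phi_b(\tau)$) avoid $b$ and that overlapping components are allowed because they live in different copies of $\cV(\cK)$. It also relies on the nonemptiness of the choice of $\sigma(w)$ shown above. Everything else is formal.
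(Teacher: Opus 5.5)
Your proof is correct, but it takes a different route from the paper's.

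The paper only observes that the preimage under $\phi_b$ of each closed simplex $\Delta(g)$ of $\cK_b$ is itself a full simplex, with vertex set the disjoint union over $v\in b$ of $g\setminus\cV(\link(v,\cK))$. It then invokes a general result saying that a simplicial map with contractible simplex-preimages is a homotopy equivalence, citing an exercise in Hatcher. This is exactly the Quillen-fiber-lemma fallback you mention.

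You instead build an explicit homotopy inverse: the section $s$ defined by the choices $\sigma(w)$, satisfying $\phi_b\circ s=\id$, with $s\circ\phi_b$ contiguous to the identity. The two arguments rest on the same fact. Your contiguity check amounts to saying that $\tau$ and $s(\phi_b(\tau))$ both lie in the simplex $\phi_b^{-1}(\Delta(\phi_b(\tau)))$. Your verification that every vertex of $\cK_b$ avoids $\cV(\link(v,\cK))$ for some $v\in b$ uses $\cV(\bigcap_{v}\link(v,\cK))=\bigcap_v\cV(\link(v,\cK))$. This makes explicit a step the paper uses silently when it asserts that the $f_v$ cover $f$, i.e.\ that the preimage is nonempty.

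Your route is self-contained, avoids the external theorem, and gives an explicit inverse and homotopy. The paper's is shorter, and it is reused verbatim for the maps $\widehat\phi_b$ and $\widehat\phi_b^{\std}$ later in the paper. Your section also extends to those maps if you let it act as the identity on the vertices of $\Delta(\cE_b)$, since $\phi_b(s(g))=g$ keeps faces inside the relevant pieces.
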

	\begin{proof}
		The preimage under $\phi_b$ of any simplex is a simplex.
		Indeed, consider an arbitrary simplex $f$ of $\cK_b$. For $v\in b$, let $f_v$ be the set of all elements of $f$ disjoint from $b$ and $\link(v,\cK)$.
		The set of vertices in the preimage of $f$ is precisely the disjoint union of the $f_v$. But the disjoint union of the $f_v$ belongs to $\cK_b'$ since $\bigcup_{v\in b} f_v=f\in\cK$.
		
		The claim now follows from e.g.~\cite[Exercise 4.L.2]{hatcher}.
	\end{proof}

	We also need some notation related to simplicial homology. See e.g.~\cite{hatcher} for details.
	Let $X$ be a finite simplicial complex.
	Recall that for $k\in\ZZ_{\geq 0}$, an \emph{oriented $k$-simplex} of $X$ is a sequence $(u_0,\ldots,u_k)$ with the set $\{u_0,\ldots,u_k\}$ a face of $X$.
	A $k$-chain with coefficients in $\KK$ is a formal (finite) $\KK$-linear combination of oriented $k$-simplices, where we make the identification that 
	for any permutation $\tau$ of $\{0,\ldots,k\}$, 
	\[
	(u_{\tau(0)},\ldots,u_{\tau(k)})=\sign(\tau) (u_0,\ldots,u_k).
	\]
	
	We let ${C}_\bullet(X)$ denote the complex of simplicial chains in $X$ with coefficients in $\KK$;
	the differential is defined via
	\[
	(u_0,\ldots,u_k)\mapsto \sum_{j=0}^k(-1)^j (u_0,\ldots,u_{j-1},u_{j+1},\ldots,u_k).
	\]
	Let $\widetilde C_\bullet(X)$ denote the augmented version of this complex, that is, $C(X)_{-1}\cong \KK$ with the obvious map $C(X)_0\to C(X)_{-1}$ taking any oriented $0$-simplex to $1\in\KK$.
	Likewise, for a subcomplex $X'$ of $X$, we let ${C}_\bullet(X,X')$ denote the complex of relative simplicial chains for $X'$ in $X$ with coefficients in $\KK$, that is, the quotient of $C_\bullet(X)$ by $C_\bullet(X')$.

	\subsection{Type $(1)$}\label{sec:b1}
	We now prove Theorem~\ref{thm:b1}. Since $\bb$ is of type $(1)$, the only generators in $W_\bullet^{-\bb}$ are primitive. 
	Let 
	\begin{align*}
		Y=\{f\in\cK\ |\ f\cup b\notin \cK\}=\cK\setminus(\Delta(b)*\link(b,\cK)).
	\end{align*}
	Following \cite[\S2]{ac1}, for $j\geq 0$ set 
	\[
	Y^{(j)}=\{(f_0,\ldots,f_j)\in Y^{j+1}\ |\ f_0\cup\ldots\cup f_j\in Y\}.
	\]
	From loc.~cit., $Y$ determines a cochain complex $K^\bullet(Y)$: $K^j(Y)$ is the set of alternating maps $Y^{(j)}\to \KK$, and the differential is the obvious Koszul-like differential. 
	
	There is a bijection between strings $s$ representing elements of $W_{j+1}^{-\bb}$ and elements of $Y^{(j)}$ sending $s=\{g_0\ldots g_j\}$ with $\overline s\in W_{j+1}^{-\bb}$ to $(\wt(g_0),\ldots,\wt(g_j))$.
	It follows that the complex $\Hom(W_\bullet^{-\bb},\KK)$ in cohomological degrees $\geq 1$ is isomorphic to the complex $K^\bullet(Y)[-1]$.
	
	Following the notation of \cite[\S3]{ac1}, $\langle Y\rangle $ is
	the complement of  $(\Delta(b)*\link(b,\cK))$ in the topological realization of $\cK$.
	For $i\geq 2$ we then have 
	\[
	T^i_{-\bb}(\cK)\cong H^i(\Hom(W_\bullet^{-\bb},\KK) \cong H^{i-1}(K^\bullet(Y))\cong H^{i-1}(\langle Y \rangle ,\KK)
	\]
	with the first isomorphism following from Theorem \ref{thm.computingTviahomology}, the second from the above discussion, and the third isomorphism following from \cite[Lemmas 7 and 8]{ac1}.
	For $i=1$, one also has $T^1_{-\bb}(\cK)\cong \widetilde H^{0}(\langle Y \rangle ,\KK)$ by \cite[Theorem 9]{ac1} (in the notation of loc.~cit. $\langle N_{a-b}\rangle=\langle Y \rangle$ and $\langle \widetilde N_{a-b}\rangle=\emptyset$).
	
	The claim of the theorem  now follows by observing that (the topological realization of) $\cK_b$ is a deformation retract of
	$\langle Y\rangle$. Indeed, in this instance
	\[
	\cK_b=\{f\in \cK\ |\ f\cap \cV(\Delta(b)*\link(b,\cK))=\emptyset\}
	\]
	and for any simplex $f\in \cK$, we may retract the complement $f^\circ$ of $\Delta(v)*\link(v,\cK)$ in the topological realization of $f$ to the topological realization of $f\setminus \cV(\Delta(v)*\link(v,\cK))$. This gives a retraction of $\langle Y\rangle$ to $\cK_b$.
	
	\qed
	
	\subsection{Type $(1,1)$}\label{sec:b2}
	We now assume that $\cK$ is a flag complex and we prove Theorem~\ref{thm:b2}.
	We first note by Proposition~\ref{prop:applicable} that we may apply Proposition~\ref{prop:nonsplitting} to conclude that $W_\bullet^{-\bb}$ and $\widetilde W_\bullet^{-\bb}$ are quasiisomorphic. Thus, by Theorem~\ref{thm.computingTviahomology} it will suffice to show that $H_i(\widetilde W_\bullet^{-\bb})$ is $0$ (if $b\in\cK)$ or isomorphic to $\widetilde{H}_{i-2}(\cK_b,\KK)$ (if $b\notin\cK$).
	
	Next, for any primitive generator $y\in W_j^{-\bb}$, let $G(y)$ be the subgraph of $G(I_\cK)$ as in \S\ref{sec:flag} and Remark~\ref{rem:splittings}.
	Then every edge of $G(y)$ must be adjacent to an element of $b$, every element of $b$ must be a vertex of $G(y)$, and the vertices of $G(y)$ outside of $b$ must form a simplex in $\cK$. 
	Moreover, again by Remark~\ref{rem:splittings}, any primitive generator $y$ is non-split if and only if $b$ is an edge of $G(y)$.
	
	If $b\in \cK$, then from the previous paragraph it follows that $\widetilde{W}_\bullet^{-\bb}=0$ and we are done. We thus instead assume that $b\notin\cK$.
	By the previous paragraph, there is an isomorphism $\widetilde{C}_\bullet(\cK_b',\KK)[-2]\cong \widetilde W_\bullet^{-\bb}$
	sending the oriented simplex 
	$(u_2,\ldots,u_j)$
	to 
	$\{f_2\ldots f_jf_{j+1}\}$
	where
	\[f_{j+1}=x^{\bb} \quad \text{ and } \quad f_k=x_{\phi_b(u_k)}x_{\nu(u_k)},\qquad \text{for } k=2,\ldots,j.\]
	Note that this is an isomorphism because $u_k$ is uniquely determined by the pair of vertices $(\phi_b(u_k),\nu(u_k))$.
	By Lemma~\ref{lemma:homotopic} we then have for $i\geq 1$ that 
	\[
	H_i(\widetilde W_\bullet^{-\bb}) \cong \widetilde{H}_{i-2}(\cK_b',\KK)\cong \widetilde{H}_{i-2}(\cK_b,\KK)
	\]
	as desired.
	\qed
	
	\begin{remark}\label{rem:nonflagb2}
		As in Remark \ref{rem:nonflag}, we may extend Theorem~\ref{thm:b2} slightly beyond the case of flag complexes. Suppose that $\cK$ is a simplicial complex with $\cV=\cV(\cK)$, $\bb\in \ZZ_{\geq 0}^\cV$ is of type $(1,1)$, and every minimal non-face of $\cK$ of dimension at least $2$ is disjoint from $b$. Then by Remark \ref{rem:nonflag}, the above proof goes through verbatim, and the conclusions of Theorem~\ref{thm:b2} hold in this case as well.
	\end{remark}
	\subsection{Type $(1,1,1)$}\label{sec:b3}
	Again assuming $\cK$ is a flag complex, we now prove Theorem~\ref{thm:b3}.
	By Proposition~\ref{prop:applicable} we may apply Proposition~\ref{prop:nonsplitting} to conclude that $H_i(W_\bullet^{-\bb})\cong H_i(\widetilde W_\bullet^{-\bb})$ for $i=3,4$. Thus, by Theorem \ref{thm.computingTviahomology} it will suffice to show that $H_i(\widetilde W_\bullet^{-\bb})$ 
	has the desired form for $i=3,4$.
	
	As in \S\ref{sec:b2}, for any primitive generator $y\in W_j^{-\bb}$, let $G(y)$ be the subgraph of $G(I_\cK)$ as in \S\ref{sec:flag} and Remark \ref{rem:splittings}.
	Then every edge of $G(y)$ must be adjacent to an element of $b$, every element of $b$ must be a vertex of $G(y)$, and the vertices of $G(y)$ outside of $b$ must form a simplex in $\cK$. Again by Remark~\ref{rem:splittings}, any primitive generator $y$ admits a splitting unless the restriction of $G(y)$ to $b$ is connected.
	It immediately follows that if $G(I_\cK)$ restricted to $b$ contains fewer than two edges, i.e. it is disconnected, then $\widetilde W_\bullet^{-\bb}=0$, and hence $T^i_{-\bb}(\cK)=0$. But this is equivalent to $\cK$ having more than one edge that is a subset of $b$.
	Therefore, for the rest of the proof, we may thus assume that $G(I_\cK)$ restricted to $b$ contains at least two edges.
	
	We now describe more precisely when generators $y\in W_j^{-\bb}$ admit a splitting.
	By the discussion above, we can restrict our attention to generators with $G(y)$ containing at least two edges between the elements of $b$.
	By Remark~\ref{rem:splittings}, if $G(y)$ contains three edges between the elements of $b$, then it has no splitting. On the other hand, if $G(y)$ contains precisely two edges $e_1,e_2$ between the elements of $b$, then $y$ admits a splitting if and only if the complement of $b$ in the set of vertices of $G(y)$ belongs to $\link(w_{\{e_1,e_2\}},\cK)$, where $w_{\{e_1,e_2\}}\in b$ is the unique vertex belonging to both $e_1$ and $e_2$.
	Indeed, a partition $E_1,E_2$ as in Remark~\ref{rem:splittings} cannot have both $e_1,e_2$ in e.g.~$E_1$, since then any edge in $E_2$ has a vertex in common with $E_1$ (since it has a vertex in $b$, see Remark~\ref{rem:graphweight}). But then without loss of generality, we have $e_1\in E_1$, $e_2\in E_2$, and so the union of $w_{\{e_1,e_2\}}$ with the complement of $b$ in the set of vertices of $G(y)$ must be a face in $\cK$.
	
	We now define a surjective map of complexes 
	\begin{equation}\label{eqn:psi}
		\psi:C_\bullet(\Delta(\cE_b)*\cK_b')[-1]\to \widetilde W_\bullet^{-\bb}.\end{equation}
	Indeed, 
	for a vertex $u$ of $\Delta(\cE_b)*\cK_b'$, set
	\[
	g(u)=	\begin{cases}
		x_vx_w & u=\{v,w\}\in \cE_b\\
		x_{\phi_b(u_j)}x_{\nu(u)} & u\in \cV(\cK_b')\\
	\end{cases}.
	\]
	We then let $\psi$ be the map sending an oriented simplex $\sigma=(u_1,\ldots,u_j)\in C_{j-1}(\Delta(\cE_b)*\cK_b')$ to $(-1)^j$ times the class of the primitive generator $y=\{g(u_1)\cdots g(u_j)\}$ in $\widetilde W_j^{-\bb}$ if $y\in W_j^{-\bb}$, and to $0$ if $y\notin W_j^{-\bb}$.
	
	We now describe the kernel of $\psi$. There are two reasons that the image of an oriented simplex $\sigma=(u_1,\ldots,u_j)$ might vanish:
	the resulting generator $y=\{g(u_1)\cdots g(u_j)\}$ might not belong to $W_j^{-\bb}$, or it might have a splitting. By the above discussion on $G(y)$, we know that one of these will certainly happen if $\sigma$ is a simplex for the subcomplex $\Delta(\cE_b)^{(0)}*\cK_b'$, where $\Delta(\cE_b)^{(0)}$ is the $0$-skeleton of $\Delta(\cE_b)$.
	On the other hand, if $\sigma$ is not a simplex for the subcomplex $\Delta(\cE_b)^{(0)}*\cK_b'$, then it is guaranteed that the generator $y$ belongs to $W_j^{-\bb}$, although it might still have a splitting.
	
	Consider then an oriented simplex $\sigma=(u_1,\ldots,u_j)$ not supported on the complex $\Delta(\cE_b)^{(0)}*\cK_b'$. By our above characterization of the generators admitting splittings, we see that $\sigma$ lies in the kernel of $\psi$ if and only if
	$\sigma$ is a chain supported on a subcomplex of the form \[\Delta(f)*\phi^{-1}_b(\link(w_f,\cK)\cap \cK_b)\subseteq \Delta(\cE_b)*\cK_b',\] where $f$ is an edge of $\Delta(\cE_b)$ and $w_f\in b$ is as above the unique vertex belonging to the two edges corresponding to the vertices of $f$.
	
	Letting $\widehat \cK_b'$ be the union of these complexes $\Delta(f)*\phi^{-1}_b(\link(w_f,\cK)\cap\cK_b)$ with $\Delta(\cE_b)^{(0)}*\cK_b'$, we thus obtain that $\psi$ induces an isomorphism of complexes
	\[
	C_\bullet(\Delta(\cE_b)*\cK_b',\widehat\cK_b')[-1]\cong \widetilde W_\bullet^{-\bb}. 
	\]
	Since $\Delta(\cE_b)*\cK_b'$ is contractible, from the long exact sequence of homology we obtain isomorphisms
	\[
	H_{i-2}(\widehat \cK_b',\KK) \cong H_i(\widetilde W_\bullet^{-\bb}).
	\]
	Finally, the map $\phi_b$ induces a surjective simplicial map $\widehat\phi_b:\widehat \cK_b'\to \widehat \cK_b$. As in Lemma \ref{lemma:homotopic}, this again induces a homotopy equivalence of $\widehat \cK_b'$  and $\widehat \cK_b$ since the preimage of any simplex under $\widehat\phi_b$ is again a simplex. The claim of Theorem~\ref{thm:b3} now follows.
	\qed
	
	\vspace{.5cm}
	
	\begin{remark}\label{rem:nonflagb3}
		As in Remark \ref{rem:nonflag}, we may extend Theorem~\ref{thm:b3} slightly beyond the case of flag complexes. Suppose that $\cK$ is a simplicial complex with $\cV=\cV(\cK)$, $\bb\in \ZZ_{\geq 0}^\cV$ is of type $(1,1,1)$, and every minimal non-face of $\cK$ of dimension at least $2$ is disjoint from $b$. Then by Remark \ref{rem:nonflag}, the above proof goes through verbatim in the case $i=3$, and the conclusions of Theorem~\ref{thm:b3} for $T^3$ hold in this case as well.
	\end{remark}
	
	The complex $\widehat\cK_b$ is not as nice to work with as $\cK_b$ since it is not a subcomplex of $\cK$. However, we have the following result.
	Recall that for any topological space,
	\[
	\widetilde H^{-1}(X,\KK):=\begin{cases}
		0 & X\neq \emptyset;\\
		\KK & X=\emptyset.
	\end{cases}
	\]
	\begin{cor}\label{cor:b3simplyconnected}
		Let $\cK$ be a flag complex, and $\bb\in\ZZ_{\geq 0}^\cV$ be of type $(1,1,1)$. Suppose that at most one edge of $\cK$ is a subset of $b$. 
		There there are isomorphisms
		\[
		T^i_{-\bb}(\cK)\cong  \bigoplus_{\substack{f\in\Delta(\cE_b)\\ \dim f=1}} \widetilde H^{i-4} (\link(\{w_f\},\cK)\cap \cK_b,\KK)
		\]
		for $i=3$ if $\cK_b$ is connected, and for $i=4$ if $\cK_b$ is simply connected.
		%\nathan{Unified statement of corollary}
	\end{cor}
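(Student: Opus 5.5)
By Theorem~\ref{thm:b3}, we have $T^i_{-\bb}(\cK)\cong H^{i-2}(\widehat\cK_b,\KK)$ for $i=3,4$. The task is therefore to compute $H^1(\widehat\cK_b)$ and $H^2(\widehat\cK_b)$ using a Mayer--Vietoris argument. Write $L_f=\link(\{w_f\},\cK)\cap\cK_b$ for an edge $f$ of $\Delta(\cE_b)$. Then $\widehat\cK_b=A\cup\bigcup_f B_f$, where
\[
A=\Delta(\cE_b)^{(0)}*\cK_b,\qquad B_f=\Delta(f)*L_f .
\]
Each $B_f$ is a cone, hence contractible. The intersection $B_f\cap A$ consists of the two points of $f$ joined to $L_f$, namely $f^{(0)}*L_f=\Sigma L_f$, the suspension of $L_f$. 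For distinct edges $f\neq f'$, the intersection $B_f\cap B_{f'}$ lies inside $A$, since the interiors of distinct edges of $\Delta(\cE_b)$ are disjoint. All attachments of the $B_f$ therefore happen along $A$. So $\widehat\cK_b$ is obtained from $A$ by coning off the subcomplexes $\Sigma L_f\subseteq A$, which are contained in pairwise disjoint pieces away from $A$.

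Since at most one edge of $\cK$ lies in $b$, we have $|\cE_b|\in\{2,3\}$, and $A$ is the join of $2$ or $3$ points with $\cK_b$. First I compute $\widetilde H^\ast(A)$. A join of $m$ points with $X$ has $\widetilde H^k$ equal to $\widetilde H^{k-1}(X)^{m-1}$, because $\widetilde H^\ast(X*Y)\cong\bigoplus\widetilde H^{p}(X)\otimes\widetilde H^{q}(Y)$ with degree shift $1$. If $\cK_b$ is connected, then $\widetilde H^0(\cK_b)=0$, so $H^1(A)=0$. If $\cK_b$ is simply connected, then $H^1(\cK_b)=0$ as well, so also $H^2(A)=0$. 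The case $\cK_b=\emptyset$ needs checking separately, since then $A$ is a discrete set of points; I expect the conventions for $\widetilde H^{-1}$ to handle it.

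Next I apply the long exact sequence of the pair $(\widehat\cK_b,A)$. Excision together with contractibility of the cones gives
\[
H^k(\widehat\cK_b,A)\cong\bigoplus_f H^k(B_f,\Sigma L_f)\cong\bigoplus_f\widetilde H^{k-1}(\Sigma L_f)\cong\bigoplus_f\widetilde H^{k-2}(L_f).
\]
This is the source of the index shift $i-4$ in the statement.

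The exact sequence around $H^{i-2}(\widehat\cK_b)$ is
\[
H^{i-3}(A)\to H^{i-2}(\widehat\cK_b,A)\to H^{i-2}(\widehat\cK_b)\to H^{i-2}(A).
\]
For $i=3$, connectivity of $\cK_b$ gives $H^1(A)=0$. The leftmost map $H^0(A)\to H^1(\widehat\cK_b,A)$ must also vanish: $\widehat\cK_b$ is connected and $A$ is nonempty, so $H^0(\widehat\cK_b)\to H^0(A)$ is an isomorphism. This yields $H^1(\widehat\cK_b)\cong\bigoplus_f\widetilde H^{-1}(L_f)$. For $i=4$, simple connectivity gives $H^2(A)=0$ and $H^1(A)=0$, hence $H^2(\widehat\cK_b)\cong\bigoplus_f\widetilde H^{0}(L_f)$. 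The edges $f$ of $\Delta(\cE_b)$ are exactly the dimension-one faces in the sum, which matches the statement.

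The main obstacle is the bookkeeping at low degrees. When $L_f=\emptyset$, $\Sigma L_f$ is just the two endpoints of $f$, so relative cohomology must be reduced correctly to produce $\widetilde H^{-1}$ contributions. One must also verify that $B_f\cap B_{f'}\subseteq A$ holds as subcomplexes, so that excision splits as a direct sum. I would carry out these checks on the simplicial chain level, working directly with $C_\bullet(\widehat\cK_b,A)\cong\bigoplus_f C_\bullet(B_f,B_f\cap A)$.
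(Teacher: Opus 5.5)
Your proposal follows the paper's proof. The paper (Lemma~\ref{lemma:b3seq}) also uses the long exact sequence of the pair $(\widehat\cK_b,\Delta(\cE_b)^{(0)}*\cK_b)$. It identifies the relative chains with $\bigoplus_f\widetilde C_\bullet(\link(\{w_f\},\cK)\cap\cK_b)[-2]$ via $(u_0,\ldots,u_j)\mapsto(u_0,\ldots,u_j,e_\ell,e_m)$, which is exactly your chain-level excision check. The only difference is that it kills $H^1$ and $H^2$ of the join using connectivity of joins (tom Dieck), where you use the K\"unneth-type join formula.

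One step is mis-justified, and your expectation about the empty case is wrong.

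\begin{itemize}
\item \textbf{The $H^0$ step.} $H^0(\widehat\cK_b)\to H^0(A)$ being an isomorphism needs $A$ to be \emph{connected}, not merely nonempty. Here $A=\Delta(\cE_b)^{(0)}*\cK_b$ is connected exactly when $\cK_b\neq\emptyset$.
\item \textbf{The empty case.} The convention $\widetilde H^{-1}(\emptyset)=\KK$ does not rescue $\cK_b=\emptyset$; the claimed formula is false there. Take $\cK$ to be an isolated vertex $u$ together with an edge $\{v,w\}$, and $b=\{u,v,w\}$. Then $\cK_b=\emptyset$, and $\widehat\cK_b$ is a single segment, so $T^3_{-\bb}(\cK)\cong H^1(\widehat\cK_b,\KK)=0$. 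This agrees with Example~\ref{ex:2gens}, since $I_\cK=(x_ux_v,x_ux_w)$. The right-hand side, however, is $\widetilde H^{-1}(\emptyset)=\KK$.
\end{itemize}

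So you must read ``connected'' and ``simply connected'' as including nonemptiness, the standard convention. Under that reading $A$ is connected and your argument goes through as written. The paper's lemma tacitly uses the same fact for the injectivity of its first map.
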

	We first prove the following lemma:
	\begin{lemma}\label{lemma:b3seq}
		Let $\cK$ be a flag complex, and $\bb\in\ZZ_{\geq 0}^\cV$ be of type $(1,1,1)$. Suppose that at most one edge of $\cK$ is a subset of $b$. Then there is an exact sequence
		\[
		\begin{tikzcd}[ampersand replacement=\&]
			\displaystyle \bigoplus_{\substack{f\in\Delta(\cE_b)\\ \dim f=1}} \widetilde H^{-1} (\link(\{w_f\},\cK)\cap \cK_b,\KK)  \& {T^3_{-\bb}(\cK)}
			\arrow[d, phantom, ""{coordinate, name=Z}]
			\& {H^1(\Delta(\cE_b)^{(0)}*\cK_b,\KK)} 
			\arrow[dll,
			rounded corners,
			to path={ -- ([xshift=3ex]\tikztostart.east)
				|- (Z) [near end]\tikztonodes
				-| ([xshift=-3ex]\tikztotarget.west)
				-- (\tikztotarget)}]
			\& \\
			\displaystyle \bigoplus_{\substack{f\in\Delta(\cE_b)\\ \dim f=1}} \widetilde H^{0} (\link(\{w_f\},\cK)\cap \cK_b,\KK)  \& {T^4_{-\bb}(\cK)}
			\arrow[d, phantom, ""{coordinate, name=Y}]
			\& {H^2(\Delta(\cE_b)^{(0)}*\cK_b,\KK)} 
			\arrow[dll,
			rounded corners,
			to path={ -- ([xshift=3ex]\tikztostart.east)
				|- (Y) [near end]\tikztonodes
				-| ([xshift=-3ex]\tikztotarget.west)
				-- (\tikztotarget)}]
			\& {} \\
			\displaystyle \bigoplus_{\substack{f\in\Delta(\cE_b)\\ \dim f=1}} \widetilde H^{1} (\link(\{w_f\},\cK)\cap \cK_b,\KK) \&
			\arrow[from=1-1, to=1-2,hook]
			\arrow[from=1-2, to=1-3]
			\arrow[from=2-1, to=2-2]
			\arrow[from=2-2, to=2-3]
		\end{tikzcd}
		\]
	\end{lemma}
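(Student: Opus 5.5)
The plan is to use Theorem~\ref{thm:b3}, which gives $T^i_{-\bb}(\cK)\cong H^{i-2}(\widehat\cK_b,\KK)$ for $i=3,4$, together with a Mayer--Vietoris sequence for the decomposition of $\widehat\cK_b$ given in its definition. Set $A=\Delta(\cE_b)^{(0)}*\cK_b$ and, for each edge $f$ of $\Delta(\cE_b)$, set
\[
B_f=\Delta(f)*L_f, \qquad L_f=\link(\{w_f\},\cK)\cap\cK_b .
\]
Let $B=\bigcup_f B_f$, so that $\widehat\cK_b=A\cup B$. We will apply Mayer--Vietoris to the pair $A,B$.

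First we analyse $B$ and $A\cap B$. Two distinct edges $f,f'$ of $\Delta(\cE_b)$ share at most a vertex $e$ of $\Delta(\cE_b)$. Hence $B_f\cap B_{f'}$ lies in $\{e\}*\cK_b\subseteq A$ (or in $\cK_b\subseteq A$ if they share no vertex). We use this to replace the pair $(A,B)$ by the pairs $(A,B_f)$ one edge at a time. The cleanest way to organise this is through relative cohomology: by excision,
\[
H^\bullet(\widehat\cK_b,A)\cong\bigoplus_f H^\bullet(B_f,\,B_f\cap A),
\]
since the $B_f\setminus A$ are pairwise disjoint. Next, $B_f\cap A=(\partial\Delta(f))*L_f$, i.e. two cone points over $L_f$, which is the suspension of $L_f$. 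Meanwhile $B_f$ is a cone, hence contractible. Therefore
\[
H^k(B_f,B_f\cap A)\cong \widetilde H^{k-1}(\Sigma L_f)\cong\widetilde H^{k-2}(L_f,\KK).
\]
This holds with the reduced-cohomology conventions, including the case $L_f=\emptyset$: then $\Sigma L_f=S^0$ and $\widetilde H^{-1}(\emptyset)=\KK$ contributes in the right degree.

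Then we take the long exact sequence of the pair $(\widehat\cK_b,A)$:
\[
\cdots\to H^{k}(\widehat\cK_b,A)\to H^k(\widehat\cK_b)\to H^k(A)\to H^{k+1}(\widehat\cK_b,A)\to\cdots
\]
With $k=i-2$ and the substitutions above, this becomes
\[
\bigoplus_f\widetilde H^{i-4}(L_f)\to T^i_{-\bb}(\cK)\to H^{i-2}(A)\to\bigoplus_f\widetilde H^{i-3}(L_f)\to\cdots .
\]
For $i=3,4$ this is exactly the displayed sequence. It remains to see that the first map is injective. The term preceding it is $H^0(\widehat\cK_b)\to H^0(A)$, and we need this restriction to be surjective. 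Since $H^0$ is just locally constant functions, it suffices that every component of $A$ meets $\widehat\cK_b$ in a single component, i.e. that distinct components of $A$ are never joined inside $\widehat\cK_b$. This holds because $B_f$ is attached along $\Sigma L_f$, and when $L_f=\emptyset$ the attachment joins the two cone points $e,e'$. So I instead argue directly that $H^0(\widehat\cK_b,A)=0$: relative $H^0$ vanishes because every vertex of $\widehat\cK_b$ already lies in $A$.

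The step I expect to be the main obstacle is the excision and the identification of the boundary maps with the correct reduced degrees. This is especially delicate in the case $L_f=\emptyset$ and in the case where $\cE_b$ has only two or three vertices. There I would check the excision simplicially, using relative simplicial chains $C_\bullet(\widehat\cK_b,A)=\bigoplus_f C_\bullet(B_f,B_f\cap A)$. This decomposition holds directly, because each simplex not in $A$ contains a unique edge $f$ of $\Delta(\cE_b)$.
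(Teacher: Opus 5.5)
Your overall route is the paper's: the long exact sequence of the pair $(\widehat\cK_b,A)$ with $A=\Delta(\cE_b)^{(0)}*\cK_b$, Theorem~\ref{thm:b3}, and the identification $H^k(\widehat\cK_b,A)\cong\bigoplus_f\widetilde H^{k-2}(L_f,\KK)$. The paper gets that identification from the explicit chain isomorphism $\widetilde C_\bullet(L_f)[-2]\to C_\bullet(B_f,B_f\cap A)$, $(u_0,\ldots,u_j)\mapsto(u_0,\ldots,u_j,e_\ell,e_m)$. Your simplicial excision plus the cone $B_f$ relative to the suspension $\Sigma L_f$ is an equivalent argument and is fine, including when $L_f=\emptyset$.

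\textbf{The gap is the injectivity of the first arrow.} Injectivity of $H^1(\widehat\cK_b,A)\to H^1(\widehat\cK_b)$ is equivalent to the vanishing of the connecting map $H^0(A)\to H^1(\widehat\cK_b,A)$. That in turn is equivalent to surjectivity of the restriction $H^0(\widehat\cK_b)\to H^0(A)$, as you first say. But the fact you end up proving, $H^0(\widehat\cK_b,A)=0$, only gives \emph{injectivity} of that restriction, which is irrelevant here. Moreover, your remark that an edge $f$ with $L_f=\emptyset$ joins the two cone points is exactly how surjectivity can fail, not a reason for it to hold.

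The correct argument needs $\cK_b\neq\emptyset$. Then $A$ is a join of two nonempty complexes (recall $\#\cE_b\geq 2$), hence connected. So $H^0(A)=\KK$ consists of constants, which extend to $\widehat\cK_b$.

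When $\cK_b=\emptyset$ the claimed injectivity is actually false, so no argument can close this step in general. Take $\cK$ to be the edge $\{v,w\}$ together with an isolated vertex $u$, and $b=\{u,v,w\}$; this is flag and $b$ contains one edge of $\cK$. Then $\Delta(\cE_b)$ is a single edge $f$ with $w_f=u$ and $L_f=\emptyset$, so the first term is $\widetilde H^{-1}(\emptyset,\KK)=\KK$. On the other hand $T^3(\cK)=0$, because $I_\cK=\langle x_ux_v,x_ux_w\rangle$ has two generators (Example~\ref{ex:2gens}). Consistently, $\widehat\cK_b$ is a segment. Similarly, for $\cK$ three isolated points and $b=\cV$ one gets a map $\KK^3\to T^3_{-\bb}(\cK)\cong H^1(S^1,\KK)\cong\KK$.

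So the hooked arrow requires the extra hypothesis $\cK_b\neq\emptyset$. The paper's proof, which only invokes the long exact sequence of the pair, passes over this point as well. Correspondingly, ``connected'' in the $i=3$ case of Corollary~\ref{cor:b3simplyconnected} must be read as nonempty and connected. The remaining exactness in the displayed sequence holds in all cases.
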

	\begin{proof}
		Fix an order of the elements $e_1,\ldots,e_k$ of $\cE_b$.
		The map 
		\[
		\bigoplus_{\substack{f\in\Delta(\cE_b)\\ \dim f=1}} \widetilde C_\bullet (\link(\{w_f\},\cK)\cap \cK_b)[-2]  \to C_\bullet (\widehat\cK_b,\Delta(\cE_b)^{(0)}*\cK_b)
		\]
		taking an oriented simplex $\sigma=(u_0,\ldots,u_j)$ in the $f=\{e_\ell,e_m\}$-summand (with $\ell<m$)  to 
		$(u_0,\ldots,u_j,e_\ell,e_m)$ is an isomorphism of complexes.
		
		The claim now follows from the long exact sequence of relative cohomology for 
		$\Delta(\cE_b)^{(0)}*\cK_b$ in $\widehat\cK_b$ coupled with the dual of the above isomorphism and Theorem~\ref{thm:b3}.
	\end{proof}
	
	We return to the proof of the corollary.
	\begin{proof}[Proof of Corollary \ref{cor:b3simplyconnected}]
		We consider the exact sequence of Lemma \ref{lemma:b3seq}. From \cite[6.10.9]{tomdieck}, the assumption that $\cK_b$ is connected or simply connected
		implies the vanishing of $H^i(\Delta(\cE_b)^{(0)}*\cK_b,\KK)$ for respectively $i=1,2$, and the claim follows.
	\end{proof}
	
	\subsection{Type $(2,1,1)$}\label{sec:b211}
	For the case $\bb$ of type $(2,1,1)$, we need to slightly modify the complex $\cK_b$.
	Let $\cK$ be a simplicial complex on $\cV$, $b\subseteq \cV$, and $u\in \cV$. We set
	\[
	\cK_{u,b}=\left\{f\in \cK \ \Big|\ f\cap \left(\{u\} \cup \cV(\bigcap_{v\in b}\link (v,\cK))\right)=\emptyset\right\}.
	\]

	\begin{theorem}\label{thm:211}
		Let $\cK$ be a flag simplicial complex. Suppose $\bb=2e_u+e_v+e_w$ for distinct $u,v,w\in \cV$. 
		If $\{u,v\}\notin\cK$ and $\{u,w\}\notin\cK$, 
		\[T^4_{-\bb}(\cK)=\widetilde H^0(\cK_{u,b},\KK).\]
		Otherwise, $T^4_{-\bb}(\cK)=0$.
	\end{theorem}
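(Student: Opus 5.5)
By Theorem~\ref{thm.computingTviahomology} it suffices to compute $H_4(W_\bullet^{-\bb})$ for $\bb=2e_u+e_v+e_w$. The first step is to enumerate the generators of $W_j^{-\bb}$ for $j=3,4,5$. Since $\bb_u=2$, no primitive generator lies in any $W_j^{-\bb}$, so only quasiprimitive (and, in degree $5$, non-quasiprimitive) generators occur. A generator of degree $4$ in $W_4^{-\bb}$ has the form $[\{f_1\}\{f_2\}\{f_3\}]$ or $[\{f_1f_2\}\{f_3\}]$, and $x_u^2$ must divide its weight. So $u$ must appear in at least two of the quadrics $f_k=x_ux_{t_k}$, which are edges of $G(I_\cK)$ at $u$.

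The second step is to reduce modulo boundaries, exactly as in Case I of Proposition~\ref{prop:t4bound}.
\begin{itemize}
\item Lemma~\ref{lemma:t4splitting}\eqref{i0} lets us replace each $[\{f_1\}\{f_2\}\{f_3\}]$ by $[\{f_1f_2\}\{f_3\}]$.
\item For distinct $f_1,f_2,f_3$ we then use the generator $[\{f_1f_2f_3\}\{f_3\}]$, or a suitable analogue. Its membership in $W_5^{-\bb}$ is checked by comparing supports, since $|\wt|-|\bb|$ is small.
\end{itemize}
This should reduce $W_4^{-\bb}$ modulo boundaries to the span of generators $[\{f_1f_2\}\{f_1\}]$ together with a few $[\{f_1f_2\}\{f_3\}]$ whose configuration is forced. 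Concretely, I expect the surviving generators to be $[\{x_ux_vf\}\{x_ux_w\}]$-type classes, or more generally $[\{f_1f_2\}\{f_1\}]$ with $f_1=x_ux_{t}$ and $f_2=x_ux_{t'}$, and $\{t,t'\}$ related to $v,w$. The case where $\{u,v\}\in\cK$ or $\{u,w\}\in\cK$ should kill everything. In that case $x_v$ (respectively $x_w$) can only enter the weight through an edge not at $u$, and then the Figure~\ref{fig:graphsnew}(C)/(D)-type argument, using $y=[[\{f_1\}\{f_2\}]\{f_1\}]$, or the $(2,1)$ argument, shows that every generator is a boundary.

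The third step treats the main case $\{u,v\},\{u,w\}\notin\cK$. Here I would identify the reduced complex in degrees $5,4,3$ with a small simplicial chain complex. The candidate $4$-cycles are built from generators of the shape $[\{x_ux_v\, x_ux_t\}\{x_ux_w\}]$ and similar ones, indexed by vertices $t$ with $\{u,t\}\notin\cK$. The weight condition for such a generator to lie in $W_4^{-\bb}$ is $x^{\wt-\bb}\notin I_\cK$, which amounts to $t$ lying outside $\bigcap_{v'\in b}\link(v',\cK)$ and $t\neq u$. This matches the vertex set of $\cK_{u,b}$. Relations from $W_5^{-\bb}$ involving two such vertices $t,t'$ should appear exactly when $\{t,t'\}\in\cK$, mirroring edges of $\cK_{u,b}$. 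The map to $W_3^{-\bb}$ should send each class to a common element such as $[\{x_ux_v\}\{x_ux_w\}]$, producing the augmentation. Thus $H_4$ becomes $\widetilde H_0(\cK_{u,b})$, and dualizing gives $\widetilde H^0(\cK_{u,b},\KK)$.

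I expect the main obstacle to be the bookkeeping in the reduction step. One must show that all the other generators (for example those with $f_3$ not at $u$, or with $f_1=f_2$ repeated) either cancel against boundaries or collapse onto these vertex-indexed classes. It must also be checked that no additional relations appear beyond edges, for instance from non-quasiprimitive degree-$5$ generators. I would handle this by organizing generators by the graph $G(y)$, as in Remark~\ref{rem:splittings}, and checking the finitely many configurations of edges at $u$, $v$, $w$ case by case.
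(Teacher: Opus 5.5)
Your architecture is the paper's: reduce $W_4^{-\bb}$ modulo $\partial_5^{-\bb}(W_5^{-\bb})$ to a space $U$ of classes indexed by vertices of $\cK_{u,b}$, read off the relations coming from $W_5^{-\bb}$ as edges, and note that $\partial_4^{-\bb}$ sends every element of $U$ to $\pm[\{x_ux_v\}\{x_ux_w\}]$, giving the augmentation. However, the two steps that carry the proof are not right as stated.

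First, the reduction does not go ``exactly as in Case~I'' of Proposition~\ref{prop:t4bound}. Take $f_1=x_ux_v$ and $f_3=x_ux_w$. Then $[\{f_1f_2f_3\}\{f_3\}]$ does not kill $[\{f_1f_2\}\{f_3\}]$. When it lies in $W_5^{-\bb}$, its image is, modulo other boundaries, $[\{f_1f_2\}\{f_3\}]-[\{f_1f_3\}\{f_3\}]$. This is a relation between two surviving classes, and something must survive (cf.\ Example~\ref{ex:211}). The case split you actually need is as follows.
\begin{enumerate}
\item In $[\{f_1\}\{f_2\}\{f_3\}]$ the vertex $u$ lies on exactly two of the edges. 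Listing those two first, Lemma~\ref{lemma:t4splitting}\eqref{i0} kills the generator outright, because $[\{f_1f_2\}\{f_3\}]$ then has $u$-coordinate $1$.
\item A generator $[\{f_1f_2\}\{f_3\}]$ necessarily has $u\in f_3$. It is killed by $[\{f_1f_2f_3\}\{f_3\}]$ when the endpoint of $f_3$ other than $u$ lies outside $b$.
\item It is killed by $[[\{f_1\}\{f_2\}]\{f_3\}]$ when $f_1\cap f_2$ is empty or not contained in $b$, or when $f_2=f_3$ and $u\notin f_1$.
\end{enumerate}
What survives is spanned by $[\{x_ux_v\,g\}\{x_ux_w\}]$ and $[\{x_ux_v\}\{g\,x_ux_w\}]$, with $g$ meeting $x_ux_v$, respectively $x_ux_w$. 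This settles the ``otherwise'' case immediately: if $x_ux_v$ or $x_ux_w$ is not in $I_\cK$, nothing survives. So your appeal to the type-$(2,1)$ and (C)/(D) arguments is unnecessary, and you have not carried it out for this type anyway.

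Second, your indexing of the survivors is wrong. For $[\{x_ux_v\,x_ux_t\}\{x_ux_w\}]$ one has $\wt-\bb=e_t$, so the weight condition is vacuous. The generator exists iff $\{u,t\}\notin\cK$, and these $t$ are in general a proper subset of the vertices of $\cK_{u,b}$. A vertex $t\in\link(u,\cK)$ with $t\notin\link(v,\cK)\cap\link(w,\cK)$ is represented only by $[\{x_ux_v\,x_vx_t\}\{x_ux_w\}]$ or $[\{x_ux_v\}\{x_wx_t\,x_ux_w\}]$. For example, let $\cK$ on $\{u,v,w,t\}$ have the single edge $\{u,t\}$. Then $\cK_{u,b}$ is the three points $v,w,t$ and the theorem gives $\KK^2$, while your vertex set $\{v,w\}$ gives $\KK$.

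The correct map is $\phi\colon U\to\widetilde C_0(\cK_{u,b})$, sending a generator to the endpoint of $g$ not on $x_ux_v$ (respectively $x_ux_w$). Several generators have the same image, so you must also show they agree modulo boundaries. The elements $[\{x_ux_v\,g\,g'\}\{x_ux_w\}]$, its mirror $[\{x_ux_v\}\{g\,g'\,x_ux_w\}]$, and $[\{x_ux_v\,g\}\{g'\,x_ux_w\}]$ lie in $W_5^{-\bb}$ exactly when the outer endpoints of $g,g'$ span a face of $\cK$, including the case where the endpoints coincide. Modulo the boundaries from the first step, these give both the identifications and the edge relations. Only then is $U\cap\partial_5^{-\bb}(W_5^{-\bb})$ the preimage under $\phi$ of the $0$-boundaries, so that $H_4(W_\bullet^{-\bb})\cong\widetilde H_0(\cK_{u,b},\KK)$.
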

	\begin{proof}
		As always, 
		we make use of Theorem \ref{thm.computingTviahomology} throughout to compute $T^4$. 	
		The only generators we need to consider in homological degree $4$ will have the form $[\{f_1\}\{f_2\}\{f_3\}]$ for distinct quadrics $f_1,f_2,f_3\in I_\cK$, and $[\{f_1f_2\}\{f_3\}]$ for not necessarily distinct quadrics $f_1,f_2,f_3\in I_\cK$.
		Throughout, we will view elements $f_i$ as edges in $G(I_\cK)$.
		
		We focus first on the former type. Assume $y=[\{f_1\}\{f_2\}\{f_3\}]\in W_4^{-\bb}$.
		Without loss of generality, we may assume that $u$ belongs to the edges $f_1,f_2$, but not $f_3$. 
		Then $[[\{f_1\}\{f_2\}]\{f_3\}]\in W_5^{-\bb}$ because its weight is the same of $-y$; moreover its image under the differential is $y$ because $[\{f_1f_2\}\{f_3\}]$ does not satisfy the weight condition. Hence $y$ is a boundary.
		
		We may thus focus on generators of the form $y=[\{f_1f_2\}\{f_3\}]\in W_4^{-\bb}$. Many of these are also boundaries. Notice that assuming that $y\in W_4^{-\bb}$ ensures that $u$ is in $f_3$.
		Now, if the other vertex of $f_3$ is not contained in $b$, then $[\{f_1f_2f_3\}\{f_3\}]\in W_5^{-\bb}$, and this maps to $y$ under the differential because $\wt[\{f_if_3\}\{f_3\}]-\bb\notin\mathbb{Z}_{\geq0}^{\cV}$ for $i=1,2$.
		
		Likewise, if the edges $f_1,f_2$ are disjoint, or their intersection is not contained in $b$, then $[[\{f_1\}\{f_2\}]\{f_3\}]\in W_5^{-\bb}$, and its image (modulo previously established boundaries) is $y$. %\nathan{with sign change, should be $-y$.}\francesco{NO!}
		In a similar way, if $f_2=f_3$ and  $u$ is not in $f_1$, then again $[[\{f_1\}\{f_2\}]\{f_3\}]\in W_5^{-\bb}$ so that $y$ is a boundary.
		
		Let $U\subseteq W_4^{-\bb}$ be the subspace generated by strings of the form $[\{f_1f_2\}\{f_3\}]$ and $[\{f_1\}\{f_2f_3\}]$, where
		$f_1=uv$ and $f_3=uw$, and $f_2$ intersects $f_1$ or $f_3$, respectively. By the previous paragraphs, we have that $W_4^{-\bb}=U\oplus{\partial_5^{-\bb}}(W_5^{-\bb})$. This implies in particular that if $\{u,v\}$ or $\{u,w\}$ in $\cK$, then $T^4_{-\bb}(\cK)=0$.
		
		In the remainder of this proof, we will always suppose that $f_1=uv$ and $f_3=uw$.
		For distinct $f_1,f_2,f_2'$,  
		$[\{f_1f_2f_2'\}\{f_3\}]\in W_5^{-\bb}$ 
		if and only if  $f_2$ and $f_2'$ intersect $f_1$, and the vertices of $f_2,f_2'$ not in $f_1$ form a simplex in $\cK$.
		In this case, we see that modulo boundaries, $[\{f_1f_2\}\{f_3\}]$ is equivalent to $[\{f_1f_2'\}\{f_3\}]$.
		
		Likewise, for $f_2\neq f_1$ and $f_2'\neq f_3$ distinct, then 
		$[\{f_1f_2\}\{f_2'f_3\}]\in W_5^{-\bb}$ 
		if and only if $f_2$ intersects $f_1$, $f_2'$ intersects $f_3$, and the vertices of $f_2,f_2'$ respectively not in $f_1,f_3$ form a simplex in $\cK$.
		In this case, we see that $[\{f_1f_2\}\{f_3\}]$ is equivalent to $[\{f_1\}\{f_2'f_3\}]$ modulo boundaries.
		
		The relations of the previous two paragraphs are the only relations among the elements of $U$ induced by boundaries.
		
		We define a surjective map $\phi \colon U\to \widetilde{C}_0(\cK_{u,b})$ sending $[\{f_1f_2\}\{f_3\}]$ in $U$ (respectively, $[\{f_1\}\{f_2f_3\}]$) to the vertex of $f_2$ which is not in $f_1$ (respectively, not in $f_3$). By the above discussion, $U\cap \partial_5^{-\bb}(W_5^{-\bb})$ consists exactly of the preimage under $\phi$ of the $0$-boundaries in  $C_0(\cK_{u,b})$. Moreover, any string $[\{f_1f_2\}\{f_3\}]$ or $[\{f_1\}\{f_2f_3\}]$ in $U$ maps under the differential to $[\{f_1\}\{f_3\}]\neq 0$, so the kernel of $\partial_4^{-\bb}$ may be identified with the kernel of $\widetilde{C}_0(\cK_{u,b})\to \widetilde{C}_{-1}(\cK_{u,b})$.
		Hence, $H_4(W_\bullet^{-\bb})\cong \widetilde H_0(\cK_{u,b},\KK)$, so dually we obtain
		$T_{-\bb}^4\cong \widetilde H^0(\cK_{u,b},\KK)$ as desired.
	\end{proof}
	
	\begin{example}\label{ex:211}
		Consider the simplicial complex $\cK$ consisting of three isolated vertices, see Figure \ref{fig:3points}.
		The corresponding ideal $I_\cK$ is generated by $x_1x_2$, $x_1x_3$, and $x_2x_3$.
		This is the ideal of the three coordinate axes in $\AA^3$.
		
		There are three possible choices of degrees $\bb$ of type $(2,1,1)$. For such $\bb$, let $u$ be the vertex with $\bb_u=2$. The complex $\cK_{u,b}$ then consists of the other two vertices of $\cK$, hence $T^4_{-\bb}(\cK)\cong \KK$ by Theorem \ref{thm:211}.
		
		In fact, these degrees $-(2,1,1),-(1,2,1),-(1,1,2)$ are the only degrees $\bc$ contributing to $T^4$, and thus $T^4(\cK)\cong \KK^3$. Indeed, $\cK$ has no non-trivial links, so we may assume $\bc=-\bb$. The only other types of $\bb$ to consider are $(1)$, $(1,1)$, $(1,1,1)$, and $(1,1,1,1)$. Computing $T_\bc^4(\cK)$ in the first two types involve $H^3$ or $H^2$ of $0$-dimensional simplicial complexes, and we hence obtain zero. In the third type, the relevant complex $\cK_b$ is empty, and the fourth type cannot occur since there are only three variables. 
	\end{example}
	
	\begin{example}\label{ex:hex211}
		We return to Example \ref{ex:hex} with $\bb$ of type $(2,1,1)$.
		Up to symmetry, the only choice of $\bb=2e_u+e_v+e_w$ with $\{u,v\}$ and $\{u,w\}$ not in $\cK$ have $b=\{1,2,4\}$, $u=4$ or $b=\{1,3,5\}$, $u=5$. The complexes $\cK_{b,u}$ are pictured in Figure \ref{fig:hex211}. In both cases, $\cK_{b,u}$ is connected, so we conclude that for any $\bb$ of type $(2,1,1)$, $T^4_{-\bb}(\cK)=0$.
	\end{example}
	\begin{figure}
		\begin{tikzpicture}
			\draw[fill] (0,0) circle [radius=0.05] node[below] {$1$};
			\draw[fill] (1,0) circle [radius=0.05] node[below] {$2$};
			\draw[fill] (.5,1) circle [radius=0.05] node[below] {$3$};
		\end{tikzpicture}
		\caption{The simplicial complex of Example \ref{ex:211}}\label{fig:3points}
	\end{figure}
	
	\begin{figure}
		\begin{subfigure}{.3\textwidth}
			\begin{tikzpicture}
				\draw[fill,lightgray] (0,0) circle [radius=0.05] node[below left] {$1$};
				\draw[fill,lightgray] (2,0) circle [radius=0.05] node[below right] {$4$};
				\draw[fill,lightgray] (.5,1) circle [radius=0.05] node[above] {$6$};
				\draw[fill,lightgray] (1.5,1) circle [radius=0.05] node[above] {$5$};
				\draw[fill,lightgray] (.5,-1) circle [radius=0.05] node[below] {$2$};
				\draw[fill,lightgray] (1.5,-1) circle [radius=0.05] node[below] {$3$};
				\draw[lightgray] (0,0) -- (.5,-1) -- (1.5,-1) -- (2,0) -- (1.5,1) -- (.5,1) -- (0,0);
				\draw[fill] (.5,-1) circle [radius=0.05] node[below] {$2$};
				\draw[fill] (0,0) circle [radius=0.05] node[below left] {$1$};
				\draw[fill] (.5,1) circle [radius=0.05] node[above] {$6$};
				\draw[fill] (1.5,1) circle [radius=0.05] node[above] {$5$};
				\draw[fill] (1.5,-1) circle [radius=0.05] node[below] {$3$};
				\draw (1.5,1) -- (.5,1) -- (0,0) -- (.5,-1) -- (1.5,-1);
			\end{tikzpicture}
			\caption{$\cK_{\{1,2,4\},4}$}
		\end{subfigure}
		\begin{subfigure}{.3\textwidth}
			\begin{tikzpicture}
				\draw[fill,lightgray] (0,0) circle [radius=0.05] node[below left] {$1$};
				\draw[fill,lightgray] (2,0) circle [radius=0.05] node[below right] {$4$};
				\draw[fill,lightgray] (.5,1) circle [radius=0.05] node[above] {$6$};
				\draw[fill,lightgray] (1.5,1) circle [radius=0.05] node[above] {$5$};
				\draw[fill,lightgray] (.5,-1) circle [radius=0.05] node[below] {$2$};
				\draw[fill,lightgray] (1.5,-1) circle [radius=0.05] node[below] {$3$};
				\draw[lightgray] (0,0) -- (.5,-1) -- (1.5,-1) -- (2,0) -- (1.5,1) -- (.5,1) -- (0,0);
				\draw[fill] (.5,-1) circle [radius=0.05] node[below] {$2$};
				\draw[fill] (0,0) circle [radius=0.05] node[below left] {$1$};
				\draw[fill] (.5,1) circle [radius=0.05] node[above] {$6$};
				\draw[fill] (2,0) circle [radius=0.05] node[below right] {$4$};
				\draw[fill] (1.5,-1) circle [radius=0.05] node[below] {$3$};
				\draw (.5,1) -- (0,0) -- (.5,-1) -- (1.5,-1) -- (2,0);
			\end{tikzpicture}
			\caption{$\cK_{\{1,3,5\},5}$}
		\end{subfigure}\\
		\caption{Complexes $\cK_{b,u}$ for Example \ref{ex:hex211}}\label{fig:hex211}
	\end{figure}
	
	\subsection{Type $(1,1,1,1)$}\label{sec:b1111}
	For this case, we need some extra notation. Fix $\bb\in\ZZ_{\geq 0}^\cV$ of type $(1,1,1,1)$. We will fix an order on the elements of $b$.
	This induces an order on the edges of the subgraph of $G(I_\cK)$ obtained by restricting to the vertex set $b$ (e.g. by taking lex order).
	For $i=3,4,5,6$, let $\Gamma_i(b)$ be the set of all connected graphs with vertex set $b$ and $i$ edges, none of which belong to $\cK$. These are thus subgraphs of $G(I_\cK)$. The potential combinatorial types of elements of $\Gamma_i(b)$ for $i=3,4,5,6$ are pictured in Figure~\ref{fig:gamma}. 
	We may view each element of $\Gamma_i(b)$ as a face of $\Delta(\cE_b)$.
	Given $\gamma\in\Gamma_i(b)$, we let $e_1(\gamma),\ldots,e_i(\gamma)$ be the edges of $\gamma$, written in order from least to greatest, and we let $\gamma_j$ be the graph obtained by deleting the edge $j$.

	\begin{figure}
		\begin{tikzpicture}[scale=.7]
			\draw[fill] (0,0) circle [radius=0.05];
			\draw[fill] (1,0) circle [radius=0.05];
			\draw[fill] (0,1) circle [radius=0.05];
			\draw[fill] (1,1) circle [radius=0.05];
			\draw[dashed] (0,1)-- (0,0) -- (1,0) -- (1,1) ;
		\end{tikzpicture}
		\qquad\begin{tikzpicture}[scale=.7]
			\draw[fill] (0,0) circle [radius=0.05];
			\draw[fill] (1,0) circle [radius=0.05];
			\draw[fill] (0,1) circle [radius=0.05];
			\draw[fill] (1,1) circle [radius=0.05];
			\draw[dashed] (0,1) -- (0,0) -- (1,0);
			\draw[dashed] (1,1) -- (0,0);
		\end{tikzpicture}
		\qquad\begin{tikzpicture}[scale=.7]
			\draw[fill] (0,0) circle [radius=0.05];
			\draw[fill] (1,0) circle [radius=0.05];
			\draw[fill] (0,1) circle [radius=0.05];
			\draw[fill] (1,1) circle [radius=0.05];
			\draw[dashed] (0,0) -- (1,0) -- (1,1) -- (0,1) -- (0,0);
		\end{tikzpicture}
		\qquad\begin{tikzpicture}[scale=.7]
			\draw[fill] (0,0) circle [radius=0.05];
			\draw[fill] (1,0) circle [radius=0.05];
			\draw[fill] (0,1) circle [radius=0.05];
			\draw[fill] (1,1) circle [radius=0.05];
			\draw[dashed] (0,1) -- (0,0) -- (1,0);
			\draw[dashed] (1,0)-- (1,1) -- (0,0);
		\end{tikzpicture}
		\qquad\begin{tikzpicture}[scale=.7]
			\draw[fill] (0,0) circle [radius=0.05];
			\draw[fill] (1,0) circle [radius=0.05];
			\draw[fill] (0,1) circle [radius=0.05];
			\draw[fill] (1,1) circle [radius=0.05];
			\draw[dashed] (0,0) -- (1,0) -- (1,1) -- (0,1) -- (0,0) -- (1,1);
		\end{tikzpicture}
		\qquad\begin{tikzpicture}[scale=.7]
			\draw[fill] (0,0) circle [radius=0.05];
			\draw[fill] (1,0) circle [radius=0.05];
			\draw[fill] (.5,1) circle [radius=0.05];
			\draw[fill] (.5,.5) circle [radius=0.05];
			\draw[dashed] (0,0) -- (1,0) -- (.5,.5) -- (.5,1) -- (0,0) -- (.5,.5);
			\draw[dashed] (1,0) -- (.5,1);
		\end{tikzpicture}
		\caption{Combinatorial types of elements of $\Gamma_i(b)$}\label{fig:gamma}
	\end{figure}
	
	\begin{definition}
		A \emph{splitting} $\Split$ of $\gamma\in\Gamma_i(b)$ consists of two connected subgraphs of $\gamma$, each containing at least one edge, such that the intersection $\cap\Split$ of these two subgraphs is a subset of $b$ (i.e. only contains vertices), and moreover $\cap\Split \in \cK$.
	\end{definition}
	
	If $\gamma'\in \Gamma_{i-1}(b)$ is a subgraph of $\gamma\in \Gamma_i(b)$, we may consider the intersection of the partition $\Split$ with $\gamma'$ which we will denote by $\Split_{|\gamma'}$. Note that this might fail to be a splitting of $\gamma'$.
	
	Graphs $\gamma$ in $\Gamma_3(b)$ have one, two, or three splittings. 
	For $\gamma$ a path of length three, the splitting with $\cap  \Split$ maximal is \emph{non-standard}. The other splitting, and all three splittings for the other combinatorial type in $\Gamma_3(v)$, are \emph{standard}.
	Graphs in $\Gamma_4(b)$ have zero, one, or two splittings. A splitting of $\gamma\in \Gamma_4(b)$ is \emph{standard} if for all subgraphs $\gamma'\in \Gamma_3(b)$ of $\gamma$, either $\Split_{|\gamma'}$ is standard or fails to be a splitting.\footnote{Graphs in $\Gamma_5(b)$ and $\Gamma_6(b)$ never have splittings.}
	
	\begin{definition}\label{def.splittingsign}
		For any splitting $\Split$ of $\gamma\in\Gamma_i(b)$, we define its \emph{sign} as follows. Let \[e_1(\Split,1),\ldots,e_\ell(\Split,1),e_1(\Split,2),\ldots,e_m(\Split,2)\] be the edges of $\gamma$, labeled such that 
		\[ \Split=\{\{e_1(\Split,1),\ldots,e_\ell(\Split,1)\},\{e_1(\Split,2),\ldots,e_m(\Split,2)\}\}, \]
		the elements of the two sets above in $\Split$ are written in increasing order, and $e_1(\Split,1)=e_1(\gamma)$. Then $\sign(\Split)$ is the sign of the permutation taking $e_1(\gamma),\ldots,e_i(\gamma)$ to $e_1(\Split,1),\ldots,e_\ell(\Split,1),e_1(\Split,2),\ldots,e_m(\Split,2)$. 
	\end{definition}
	
	We now define a subcomplex $\widehat\cK_b^\std$ of $\Delta(\cE_b)*\cK_b$. 
	Indeed, let $\widehat\cK_b^\std$ consist of the union of the complexes:
	\begin{align*}
		\left(\Delta(\cE_b)\setminus \bigcup_{i=3}^6 \Gamma_i(b)\right)*\cK_b;\ \textrm{and}\\
		\Delta(\gamma)*\left(\link(\cap \Split,\cK)\cap \cK_b\right)
	\end{align*}
	as $\gamma,\Split$ range over all $\gamma\in \bigcup_{i=3,4} \Gamma_i(b)$ and standard splittings $\Split$ of $\gamma$.

	Consider the map $\eta:K_1\to K_0$ arising from the sum of $\eta_1,\ldots,\eta_4$ in the following diagram:
	\[
	\begin{tikzcd}[ampersand replacement=\&,column sep=tiny]
		K_1:\&
		\begin{array}{c} \displaystyle\bigoplus_{\substack{\gamma\in \Gamma_4(b)\\ \Split\ \textrm{non-std.}}} \KK\cdot (\gamma,\Split)  \end{array} \& \oplus \&\begin{array}{c} \displaystyle\bigoplus_{\substack{\gamma\in \Gamma_3(b)\\ \Split\ \textrm{non-std.}}} C_0(\link(\cap \Split,\cK)\cap \cK_b)\cdot (\gamma,\Split) \end{array}\\
		K_0:  \&{\displaystyle\bigoplus_{\substack{\gamma\in\Gamma_3(b)\\ \Split\ \textrm{non-std.}}}\KK\cdot (\gamma,\Split)} \&\oplus \&	H_2(\widehat\cK_b^\std,\KK)
		\arrow["\eta_1",from=1-2, to=2-2]
		\arrow["\eta_3"{pos=0.7},from=1-2, to=2-4]
		\arrow["\eta_2",from=1-4, to=2-2]
		\arrow["\eta_4",from=1-4, to=2-4]
	\end{tikzcd}
	\]
	where the maps $\eta_i$ are defined as follows. We have  
	\[
	\eta_1((\gamma,\Split))=\sum_j (-1)^j\cdot \sign(\Split)\cdot \sign(\Split_{|\gamma_j})\cdot(\gamma_j,\Split_{|\gamma_j})
	\]
	where we treat $(\gamma_j,\Split_{|\gamma_j})$ as zero if $\gamma_j\notin \Gamma_3(b)$ or $\Split_{|\gamma_j}$ is not a non-standard splitting.
	Likewise,
	\[
	\eta_2((v)\cdot (\gamma,\Split))=(\gamma,\Split).
	\]

	Under $\eta_3$, $(\gamma,\Split)$ maps to the class of the boundary of the simplicial chain $\sign(\Split)\cdot (e_1(\gamma),e_2(\gamma),e_3(\gamma),e_4(\gamma))$.
	Likewise, under $\eta_4$, $(v)\cdot (\gamma,\Split)$ maps to the class of the boundary of the simplicial chain $\sign(\Split)\cdot (e_1(\gamma),e_2(\gamma),e_3(\gamma),v)$.

	\begin{theorem}\label{thm:1111}
		Consider $\bb\in \ZZ_{\geq 0}^\cV$ of type $(1,1,1,1)$. Suppose that $\cK$ is a flag complex. 
		If more than three edges of $\cK$ are subsets of $b$, then $T^4_{-\bb}(\cK)=0$. Otherwise,
		\[
		T^4_{-\bb}(\cK)\cong \Hom_\KK(\coker\eta,\KK).
		\]
	\end{theorem}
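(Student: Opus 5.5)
Throughout, $T^4_{-\bb}(\cK)$ is computed via Theorem~\ref{thm.computingTviahomology} as the dual of $H_4(W_\bullet^{-\bb})$. The model is the proof of Theorem~\ref{thm:b3}, with extra bookkeeping in one place: primitive generators of $W_3^{-\bb}$ may now have several splittings, so Proposition~\ref{prop:nonsplitting} no longer applies directly.

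\textbf{Step 1 (generators and the degenerate case).} Here $\bb$ is of type $(1,1,1,1)$, so $W_4^{-\bb}$ and $W_3^{-\bb}$ are spanned by primitive and quasiprimitive generators. By Remark~\ref{rem:splittings}, a primitive $y$ with $G(y)$ disconnected on $b$ splits. The edges of $G(y)$ inside $b$ must therefore form a connected spanning graph on $b$, which needs at least three edges. If more than three edges of $\cK$ lie in $b$, then $G(I_\cK)$ has at most two edges on $b$. Every primitive generator of $W_3^{-\bb}$ and $W_4^{-\bb}$ then splits, and I will show directly that quasiprimitive cycles are boundaries, as in Lemma~\ref{lemma:t4splitting}(\ref{i0}). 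This gives $T^4_{-\bb}(\cK)=0$.

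\textbf{Step 2 (mapping cone setup).} Otherwise I will write $W_\bullet^{-\bb}=\Cone(f\colon A_\bullet\to B_\bullet)$ as in \S\ref{sec:nonsplittingproof}. I will use the exact sequence from the proof of Lemma~\ref{lemma:homological},
\[
H_3(\ker f)\to H_4(W_\bullet^{-\bb})\to H_4(\widetilde W_\bullet^{-\bb})\to H_2(\ker f).
\]
Exactly as in \S\ref{sec:b3}, I will identify $\widetilde W_\bullet^{-\bb}$ with a relative chain complex $C_\bullet(\Delta(\cE_b)*\cK_b',\widehat\cK_b'^{\,\mathrm{std}})[-1]$. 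The subcomplex is cut out by the \emph{standard} splittings, which is where the definition of $\widehat\cK_b^\std$ comes from. Via $\phi_b$ and contractibility of the join, this gives $H_4(\widetilde W_\bullet^{-\bb})\cong H_2(\widehat\cK_b^\std,\KK)$.

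The kernel of $f$ in low degrees is spanned by differences of splittings, modulo the boundaries supplied by generators such as $[[\{f_1\}\{f_2\}]\{f_3\}]$. My plan is to choose the quasiprimitive generator of each standard splitting as the canonical representative. Differences involving non-standard splittings then produce the free modules $\KK\cdot(\gamma,\Split)$ for $\gamma\in\Gamma_3(b),\Gamma_4(b)$. In the $\Gamma_3$ case they are tensored with $C_0(\link(\cap\Split,\cK)\cap\cK_b)$, which records an extra vertex $v$ outside $b$ compatible with the splitting.

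\textbf{Step 3 (identifying the map).} I will show that $H_4(W_\bullet^{-\bb})$ is the cokernel of $\eta\colon K_1\to K_0$. The map $\eta_1$ is the differential inside $\ker f$, coming from $\partial$ applied to $[\{\gamma\text{-part}\}\{\dots\}]$ and restricted to the $\gamma_j$. The map $\eta_2$ comes from deleting $v$. The map $\eta_3$ is the connecting map into $H_2(\widehat\cK_b^\std)$: it sends a non-standard splitting to the class of the boundary of the corresponding simplex $(e_1,\dots,e_4)$, or $(e_1,e_2,e_3,v)$ for $\eta_4$. These boundaries are cycles in $\widehat\cK_b^\std$ that are not boundaries there, because the simplex itself is excluded.

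The main obstacle is the sign bookkeeping: checking that the connecting homomorphism really is given by $\sign(\Split)$ and $\sign(\Split_{|\gamma_j})$ as in Definition~\ref{def.splittingsign}. I would first verify this on the two combinatorial types of $\Gamma_3(b)$, using Theorem~\ref{thm.stringsdifferential} on generators such as $[\{f_1\}\{f_2f_3\}]$ versus $[\{f_1f_2\}\{f_3\}]$, and then extend to $\Gamma_4(b)$. Dualizing the resulting cokernel description yields $T^4_{-\bb}(\cK)\cong\Hom_\KK(\coker\eta,\KK)$.
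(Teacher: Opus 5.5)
There is a genuine gap in Step~2, and Step~3 inherits it.

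\textbf{The identification of the non-splitting complex is wrong.} The non-splitting complex $\widetilde W_\bullet^{-\bb}=\coker f$ of the \emph{full} complex kills a primitive generator as soon as it admits \emph{any} splitting, standard or not. Take $\gamma\in\Gamma_3(b)$ a path, with non-standard splitting $\Split$ and standard splitting $\Split'$, and take $v\in\link(\cap\Split,\cK)\cap\cK_b$. Choose $u\in b$ with $g=x_ux_v\in I_\cK$ and insert $g$ into the part of $y_\Split$ containing $u$. This gives a quasiprimitive generator of $W_5^{-\bb}$ whose primitive part is the generator attached to $(e_1(\gamma),e_2(\gamma),e_3(\gamma),v)$. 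So that simplex dies in $\widetilde W_\bullet^{-\bb}$ even when $v\notin\link(\cap\Split',\cK)$.

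Hence $\widetilde W_\bullet^{-\bb}$ is the relative chain complex of a larger subcomplex $\widehat\cK_b^{\mathrm{all}}$, the union over \emph{all} splittings, not of $\widehat\cK_b^\std$. For $b=\{1,2,3,4\}$ in Example~\ref{ex:hex1111}, $\widehat\cK_b^{\mathrm{all}}$ is the full boundary of a $4$-simplex rather than the punctured one.

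\textbf{What you want from $\ker f$ is not there.} The generators just described have nonzero image under $f$. So do the $y_\Split$ for non-standard splittings of $\gamma\in\Gamma_4(b)$, whose primitive part is $\pm y_\gamma\neq 0$. Indeed $\eta_3((\gamma,\Split))$ is exactly the class produced by that primitive part, which is invisible from inside $\ker f$.

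\textbf{The exact sequence has the wrong shape.} Your sequence continues to the left as
$H_5(\widetilde W_\bullet^{-\bb})\to H_3(\ker f)\to H_4(W_\bullet^{-\bb})\to H_4(\widetilde W_\bullet^{-\bb})\to 0$, using $H_2(\ker f)=0$. So $H_4(\widetilde W_\bullet^{-\bb})$ is a quotient of $H_4(W_\bullet^{-\bb})$. Nothing in this sequence plays the role of $\eta_3,\eta_4$ mapping \emph{into} $H_2(\widehat\cK_b^\std,\KK)$, and you omit the connecting map from $H_5$ altogether. Done correctly, your route gives $H_4(W_\bullet^{-\bb})$ as an extension of $H_2(\widehat\cK_b^{\mathrm{all}},\KK)$ by $\coker\bigl(H_5(\widetilde W_\bullet^{-\bb})\to H_3(\ker f)\bigr)$. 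You would still have to compare this with $\coker\eta$ via the pair $(\widehat\cK_b^{\mathrm{all}},\widehat\cK_b^\std)$.

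\textbf{The missing idea is the paper's subcomplex $W_\bullet^\std\subseteq W_\bullet^{-\bb}$.} It is defined by $W_i^\std=W_i^{-\bb}$ for $i\le 3$, $W_4^\std$ spanned by primitive generators and \emph{standard} splittings, and $W_i^\std$ for $i\ge5$ spanned by generators mapping into $W_{i-1}^\std$.
\begin{enumerate}
\item Every primitive generator of $W_3^{-\bb}$ has a unique standard splitting up to boundaries, since differences of splittings are boundaries when $G(y)$ is disconnected or a star. So Lemma~\ref{lemma:homological} applies to $W_\bullet^\std$.
\item The map $\psi$ then gives $H_4(W_\bullet^\std)\cong H_2(\widehat\cK_b^\std,\KK)$. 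This is the only place $\widehat\cK_b^\std$ legitimately enters.
\item The non-standard splittings are handled by hand: $\ker\partial_4^{-\bb}=\langle\kappa_\Split\rangle\oplus\ker\partial_4^\std$.
\item The generators of $W_5^{-\bb}$ outside $W_5^\std$ are the $y_\Split$ for non-standard splittings of $\gamma\in\Gamma_4(b)$, together with the spaces $W_\gamma$. The latter are parametrized by $C_0(\link(\cap\Split,\cK)\cap\cK_b)$.
\end{enumerate}
Thus $H_4(W_\bullet^{-\bb})$ is the cokernel of these generators mapping into $\langle\kappa_\Split\rangle\oplus H_4(W_\bullet^\std)$, and this map is $\eta$. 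The degenerate case of your Step~1 also falls out of this framework: all $\Gamma_i(b)$ are empty, so every term vanishes.
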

	\noindent We delay the proof until the end of the section, first providing an example illustrating the theorem.
	\begin{example}\label{ex:hex1111}
		We return to Example \ref{ex:hex} with $\bb$ of type $(1,1,1,1)$.
		We will depict $\cK_b$ for various cases in Figure \ref{fig:1111}.
		The complex $\cK_b$ is depicted in black, and the edges in $\cE_b$ are dashed gray lines. The sets $\cap \Split$ for various splittings will be depicted in blue, red, or orange, as we describe below.

		First consider the case $b=\{1,2,3,4\}$. 
		The complex $\cK_b$ consists of the vertices $5$ and $6$ with an edge between them. $\cE_b$ consists of the edges $e_1=\{1,3\}$, $e_2=\{1,4\}$, and $e_3=\{2,4\}$. The set $\Gamma_3(b)$ has a single element $\gamma$, with both a standard and a non-standard splitting. The sets $\Gamma_4(b)$, $\Gamma_5(b)$, and $\Gamma_6(b)$ are empty. 
		For the standard splitting $\Split$ of $\gamma$,
		$\cap \Split$ is depicted in blue, and 
		\[\link(\cap\Split,\cK)\cap \cK_b=\{6\}.\]
		The complex $\widehat\cK_b^\std$ has maximal faces consisting of any $4$-element subset of $\{5,6,e_1,e_2,e_3\}$ except for $\{5,e_1,e_2,e_3\}$. If we include the face $\{5,e_1,e_2,e_3\}$, the complex is the boundary of a $4$-simplex, that is, a $3$-sphere. Removing the open face $\{5,e_1,e_2,e_3\}$ punctures the sphere, and we obtain something contractible. In particular, $H_2(\widehat\cK_b^\std,\KK)=0$.
		
		For the nonstandard splitting  $\Split'$ of $\gamma$, $\cap \Split'$ is depicted in red, and
		\[\link(\cap\Split',\cK)\cap \cK_b=\{5\}.\] We thus obtain that $\eta$ has the form
		\[
		\begin{tikzcd}[ampersand replacement=\&,column sep=tiny]
			K_1:\& 0 \&\oplus\& \KK\\
			K_0:\&\KK \&\oplus\& 0\\
			\arrow[from=1-2, to=2-2]
			\arrow[from=1-2, to=2-4]
			\arrow[from=1-4, to=2-2]
			\arrow[from=1-4, to=2-4]
		\end{tikzcd}
		\]
		with the map $\KK\to \KK$ the identity. Thus, $T^4_{-\bb}(\cK)=0$ for this $\bb$.
		
		We next consider the case $b=\{1,3,4,5\}$. The complex $\cK_b$ consists of the isolated vertices $2$ and $6$.
		$\cE_b$ consists of the edges $e_1=\{1,3\}$, $e_2=\{1,4\}$, $e_3=\{1,5\}$, and $e_4=\{3,5\}$. The set $\Gamma_4(b)$ has a single element $\gamma$ with a single splitting $\Split=\{\{e_1,e_3,e_4\},\{e_2\}\}$, which is standard. The set $\Gamma_3(b)$ contains the elements $\gamma_1$, $\gamma_3$, and $\gamma_4$. The induced splittings $\Split_{\gamma_i}$ for $i=1,3,4$ are all standard; there are also non-standard splittings for $\gamma_1$ and $\gamma_3$. For $\Split$ (and the splittings induced by it), $\cap \Split$ is depicted in blue and 
		\[\link(\cap\Split,\cK)\cap \cK_b=\cK_b.\]
		We thus obtain that $\widehat\cK_b^\std=\Delta(\cE_b)*\cK_b$, hence $\widehat\cK_b^\std$ is contractible and $H_2(\widehat\cK_b^\std,\KK)=0$.
		
		For the non-standard splittings $\Split'$ of $\gamma_1$ and $\gamma_3$, 
		$\cap \Split'$ is depicted in red or orange, and in both cases
		\[\link(\cap\Split',\cK)\cap \cK_b\]
		consists of a single vertex ($\{6\}$ in the case of $\gamma_1$ and $\{2\}$ in the case of $\gamma_3$). 
		We thus obtain that $\eta$ has the form
		\[
		\begin{tikzcd}[ampersand replacement=\&,column sep=tiny]
			K_1:\& 0 \&\oplus\& \KK^2\\
			K_0:\&\KK^2 \&\oplus\& 0\\
			\arrow[from=1-2, to=2-2]
			\arrow[from=1-2, to=2-4]
			\arrow[from=1-4, to=2-2]
			\arrow[from=1-4, to=2-4]
		\end{tikzcd}
		\]
		with the map $\KK^2\to \KK^2$ the identity. Thus, $T^4_{-\bb}(\cK)=0$ for this $\bb$.
		
		Finally, consider the case $b=\{1,2,4,5\}$. The complex $\cK_b$ consists of the isolated vertices $3$ and $6$.
		$\cE_b$ consists of the edges $e_1=\{1,4\}$, $e_2=\{1,5\}$, $e_3=\{2,4\}$, and $e_4=\{2,5\}$. The set $\Gamma_4(b)$ has a single element $\gamma$ with both a standard splitting $\Split$ and a non-standard splitting $\Split'$.
		The sets $\cap \Split$ and $\cap \Split'$ are depicted in blue and red, respectively.
		The set $\Gamma_3(b)$ contains four elements $\gamma_1,\gamma_2,\gamma_3,\gamma_4$, each with a standard splitting $\Split_{|\gamma_i}$ and a non-standard splitting $\Split'_{|\gamma_i}$.
		We have 
		\[\link(\cap\Split_{|\gamma_i},\cK)\cap \cK_b=\begin{cases}
			\{3\}&i=1,2\\
			\{6\}&i=3,4\end{cases}
		\]
		and
		\[\link(\cap\Split'_{|\gamma_i},\cK)\cap \cK_b=\begin{cases}
			\{3\}&i=2,4\\
			\{6\}&i=1,3\end{cases}.
		\]

		It follows that the complex $\widehat\cK_b^\std$ 
		is the complex $X$ from Example \ref{ex:shelling}, and hence
		$H_2(\widehat\cK_b^\std,\KK)\cong \KK^2$, with basis given by
		the classes of the boundaries of $\{6,e_2,e_3,e_4\}$ and $\{3,e_1,e_2,e_3\}$.

		We thus obtain that $\eta$ has the form
		\[
		\begin{tikzcd}[ampersand replacement=\&,column sep=tiny]
			K_1:\& \KK \&\oplus\& \KK^4\\
			K_0:\&\KK^4 \&\oplus\& \KK^2\\
			\arrow[from=1-2, to=2-2]
			\arrow[from=1-2, to=2-4]
			\arrow[from=1-4, to=2-2]
			\arrow[from=1-4, to=2-4]
		\end{tikzcd}.
		\]
		The summands of the $\KK^4$ term in $K_1$ corresponding to $\gamma_1$ and $\gamma_4$ map under $\eta_4$ to the boundaries of $\{3,e_1,e_2,e_3\}$ and $\{6,e_2,e_3,e_4\}$, and thus surject onto the $\KK^2$ term in $K_0$. 
		The remaining summands of the $\KK^4$ in $K_1$ map surjectively under $\eta_2$ to the $\gamma_2$ and $\gamma_3$ terms in the $\KK^4$ term of $K_0$. Likewise, the image under $\eta_1$ of the $\KK$ term in $K_1$  is a sum involving all $(\gamma_i,\Split'_{|\gamma_i})$. Hence, the map $\eta$ is injective, and we conclude $T^4_{-\bb}(\cK)\cong \KK$ for this $\bb$.
		
		All weights $\bb$ of type $(1,1,1,1)$ are equivalent up to symmetry to one of the above three cases. Moreover, we have seen in Example \ref{ex:hex} and \ref{ex:hex211} that no other weights contribute to $T^4(\cK)$. Hence, $T^4(\cK)\cong \KK^3$, with generators in weights $-(1,1,0,1,1,0)$, $-(0,1,1,0,1,1)$, and $-(1,0,1,1,0,1)$.
		
		\begin{figure}
			\begin{subfigure}{.3\textwidth}
				\begin{tikzpicture}
					\draw[fill,lightgray] (0,0) circle [radius=0.05] node[below left] {$1$};
					\draw[fill,lightgray] (2,0) circle [radius=0.05] node[below right] {$4$};
					\draw[fill,lightgray] (.5,1) circle [radius=0.05] node[above] {$6$};
					\draw[fill,lightgray] (1.5,1) circle [radius=0.05] node[above] {$5$};
					\draw[fill,lightgray] (.5,-1) circle [radius=0.05] node[below] {$2$};
					\draw[fill,lightgray] (1.5,-1) circle [radius=0.05] node[below] {$3$};
					\draw[gray] (1,0) node[above] {\scriptsize$e_2$};
					\draw[gray] (.5,-.5) node {\scriptsize$e_1$};
					\draw[gray] (1.5,-.5) node {\scriptsize $e_3$};
					\draw[lightgray] (0,0) -- (.5,-1) -- (1.5,-1) -- (2,0) -- (1.5,1) -- (.5,1) -- (0,0);
					\draw[lightgray,dashed] (1.5,-1) -- (0,0) -- (2,0) -- (.5,-1);
					\draw[fill] (.5,1) circle [radius=0.05] node[above] {$6$};
					\draw[fill] (1.5,1) circle [radius=0.05] node[above] {$5$};
					\draw (1.5,1) -- (.5,1);
					\draw[fill,blue] (0,0) circle [radius=0.07];
					\draw[fill,red] (2,0) circle [radius=0.07];
				\end{tikzpicture}
				\caption{${\cK}_{\{1,2,3,4\}}$ }\label{fig:1234}
			\end{subfigure}
			\begin{subfigure}{.3\textwidth}
				\begin{tikzpicture}
					\draw[fill,lightgray] (0,0) circle [radius=0.05] node[below left] {$1$};
					\draw[fill,lightgray] (2,0) circle [radius=0.05] node[below right] {$4$};
					\draw[fill,lightgray] (.5,1) circle [radius=0.05] node[above] {$6$};
					\draw[fill,lightgray] (1.5,1) circle [radius=0.05] node[above] {$5$};
					\draw[fill,lightgray] (.5,-1) circle [radius=0.05] node[below] {$2$};
					\draw[fill,lightgray] (1.5,-1) circle [radius=0.05] node[below] {$3$};
					\draw[lightgray] (0,0) -- (.5,-1) -- (1.5,-1) -- (2,0) -- (1.5,1) -- (.5,1) -- (0,0);
					\draw[lightgray,dashed] (1.5,-1) -- (0,0) -- (2,0);
					\draw[lightgray,dashed]	(0,0) -- (1.5,1) -- (1.5,-1);
					\draw[fill] (.5,-1) circle [radius=0.05] node[below] {$2$};
					\draw[fill] (.5,1) circle [radius=0.05] node[above] {$6$};
					\draw[fill,blue] (0,0) circle [radius=0.07];
					\draw[fill,red] (1.5,1) circle [radius=0.07];
					\draw[fill,orange] (1.5,-1) circle [radius=0.07];
					\draw[gray] (1,0) node[above] {\scriptsize$e_2$};
					\draw[gray] (.5,-.5) node {\scriptsize$e_1$};
					\draw[gray] (.5,.5) node {\scriptsize $e_3$};
					\draw[gray] (1.65,.4) node {\scriptsize $e_4$};
				\end{tikzpicture}
				\caption{${\cK}_{\{1,3,4,5\}}$ }\label{fig:1345}
			\end{subfigure}
			\begin{subfigure}{.3\textwidth}
				\begin{tikzpicture}
					\draw[fill,lightgray] (0,0) circle [radius=0.05] node[below left] {$1$};
					\draw[fill,lightgray] (2,0) circle [radius=0.05] node[below right] {$4$};
					\draw[fill,lightgray] (.5,1) circle [radius=0.05] node[above] {$6$};
					\draw[fill,lightgray] (1.5,1) circle [radius=0.05] node[above] {$5$};
					\draw[fill,lightgray] (.5,-1) circle [radius=0.05] node[below] {$2$};
					\draw[fill,lightgray] (1.5,-1) circle [radius=0.05] node[below] {$3$};
					\draw[lightgray] (0,0) -- (.5,-1) -- (1.5,-1) -- (2,0) -- (1.5,1) -- (.5,1) -- (0,0);
					\draw[lightgray,dashed] (1.5,1) -- (0,0) -- (2,0) -- (.5,-1) -- (1.5,1);
					\draw[fill] (.5,1) circle [radius=0.05] node[above] {$6$};
					\draw[fill] (1.5,-1) circle [radius=0.05] node[below] {$3$};
					\draw[fill,blue] (0,0) circle [radius=0.07];
					\draw[fill,red] (2,0) circle [radius=0.07];
					\draw[fill,blue] (.5,-1) circle [radius=0.07];
					\draw[fill,red] (1.5,1) circle [radius=0.07];
					\draw[gray] (1.5,.15) node {\scriptsize$e_1$};
					\draw[gray] (.5,.5) node {\scriptsize$e_2$};
					\draw[gray] (1.5,-.5) node {\scriptsize $e_3$};
					\draw[gray] (.65,-.3) node {\scriptsize $e_4$};
				\end{tikzpicture}
				\caption{${\cK}_{\{1,2,4,5\}}$ }\label{fig:1245}
			\end{subfigure}
			\caption{Complexes $\cK_b$ and $\cE_b$ in Example \ref{ex:hex1111}}\label{fig:1111}
		\end{figure}
	\end{example}
	
	\begin{proof}[Proof of Theorem \ref{thm:1111}]
		As usual, by Theorem \ref{thm.computingTviahomology} we will compute $T_{-\bb}^4(\cK)$ as the dual of $H_4(W_\bullet^{-\bb})$.  
		
		Given $\gamma\in \Gamma_i(b)$ with $i=3,4$, we let $y_\gamma$ be the generator
		\[
		y_\gamma=\{f_1(\gamma)\cdots f_i(\gamma)\}
		\]
		where $f_j(\gamma)$ corresponds to the edge $e_j(\gamma)$.
		Likewise, given a splitting $\Split$ of $\gamma$, we let $y_\Split$ be the generator
		\[
		y_\Split=-[\{f_1(\Split,1)\cdots f_\ell(\Split,1)\}\{f_1(\Split,2)\cdots f_m(\Split,2)\}]
		\]
		%\nathanin{To account for change in differential, should replace by
			%\[
			%y_\Split=-[\{f_1(\Split,1)\cdots f_\ell(\Split,1)\}\{f_1(\Split,2)\cdots f_m(\Split,2)\}]
			%\]}
		where $f_j(\Split,k)$ corresponds to the edge $e_j(\Split,k)$ of $\gamma$, and $e_j(\Split,k)$ is as in Definition~\ref{def.splittingsign}.
		By definition of $\Gamma_i(b)$, both types of generators belong to $W_\bullet^{-\bb}$. This follows from Remark~\ref{rem:graphweight} for $y_\gamma$ and from Remark~\ref{rem:splittings} for $y_\Split$. Moreover, $y_\Split$ is a splitting of $\sign(\Split)y_\gamma$.
		
		We next observe that any generator $y=\{f_1f_2f_3\}\in W_3^{-\bb}$ has a splitting by Remark~\ref{rem:splittings}. 
		Now, if $G(y)$ is disconnected, then any two splittings differ by an element of $\partial_4^{-\bb}(W_4^{-\bb})$. Indeed, if there is more than one splitting, then $[\{f_1\}\{f_2\}\{f_3\}]\in W_4^{-\bb}$ and 
		\[
		[[\{f_1\}\{f_2\}]\{f_3\}], [\{f_1\}[\{f_2\}\{f_3\}]], [[\{f_1\}\{f_3\}]\{f_3\}]
		\]
		all belong to $W_5^{-\bb}$, and their images under $\partial_5^{-\bb}$ generate all possible differences of splittings. For any $y$ with $G(y)$ disconnected, we call any splitting standard.
		
		If instead $G(y)$ is connected, then $\gamma=G(y)$ is an element of $\Gamma_3(b)$, and $y_\gamma$ agrees with $y$ up to sign. Thus, for any splitting $\Split$ of $\gamma$, $y_\Split$ is a splitting (up to sign) of $y$. We thus obtain a bijection between splittings of $\gamma$ and splittings of $y$.
		We call a splitting of $y$ standard (respectively non-standard) if the corresponding splitting of $G(y)$ is standard (respectively non-standard).
		
		If $y=\{f_1f_2f_3\}$ has $G(y)$ a tree $\gamma$ with trivalent vertex, then similar
		to above $[\{f_1\}\{f_2\}\{f_3\}]\in W_4^{-\bb}$, and any two splittings of $y$ differ by a boundary. It follows that \emph{any} generator $y=\{f_1f_2f_3\}\in W_3^{-\bb}$ has a unique standard splitting, up to the image of $\partial_4^{-\bb}$.
		
		We now describe a subcomplex $W_\bullet^\std$ of $W_\bullet^{-\bb}$. For $i\leq 3$, $W_i^\std=W_i^{-\bb}$. For $i=4$ we take $W_4^\std$ to be generated by all primitive generators as well as all standard splittings. For $i\geq5$ we inductively take $W_i^\std$ to be the subspace of $W_i^{-\bb}$ generated by all generators mapping under $\partial^{-\bb}_i$ to $W_{i-1}^\std$.
		We denote the differential of $W_\bullet^\std$ by $\partial^\std$.
		
		For any non-standard splitting $\Split$ of some $\gamma\in \Gamma_3(b)$, we set 
		\[
		\kappa_\Split=y_{\Split}-\sign(\Split)\sign(\Split')y_{\Split'},
		\]
		where $\Split'$ is the standard splitting of $\gamma$.
		It is straightforward to check that $\partial_4^{-\bb}(\kappa_\gamma)=0$. Moreover, the kernel of $\partial_4^{-\bb}:W_4^{-\bb}\to W_3^{-\bb}$ decomposes as a direct sum
		\[
		\ker \partial_4^{-\bb}=\langle \kappa_\Split \rangle_{\{\Split \textrm{ non-std}\}} \oplus \ker \partial_4^\std.
		\]
		Indeed, the only generators of $W_4^{-\bb}$ not in $W_4^\std$ are those of the form $y_\Split$ for $\Split$ non-standard, so any element of $W_4^{-\bb}$ may be written as a sum of something in $\langle \kappa_\Split\rangle $ and something in $W_4^\std$. The directness of the decomposition follows from the fact that the generators $y_\Split$ for $\Split$ non-standard are linearly independent from a basis of $W_4^\std$.
		
		We next consider those generators $y$ of $W_5^{-\bb}$ that do not belong to $W_5^\std$. By definition, $y$ cannot be primitive. Moreover, a direct computation shows that $y$ cannot be of the form $[[\{f_1\}\{f_2\}]\{f_3\}]$, because all the summands appearing in the image of such a generator correspond to a standard splitting. Hence we can assume $y$ to be quasiprimitive, and that the restriction of $G(y)$ to $b$ must belong to $\Gamma_3(b)$ or $\Gamma_4(b)$.
		If $G(y)$ restricted to $b$ belongs to $\Gamma_4(b)$, then in fact $G(y)$ already belongs to $\Gamma_4(b)$, and there is a splitting $\Split$ of $G(y)$ so that $y_\Split=\pm y$. It follows that $\Split$ must be a non-standard splitting. 
		On the other hand, for $\gamma\in \Gamma_3(b)$, we let $W_\gamma$ be the span of all generators $y$ in $W_5^{-\bb}$ that do not belong to $W_5^\std$ and such that $G(y)$ restricted to $b$ is $\gamma$. 
		
		With the notation from above, we have shown that $H_4(W_\bullet^{-\bb})$ is the cokernel of 
		\begin{equation}\label{eqn:proofdiagram}
			\begin{tikzcd}[ampersand replacement=\&,column sep=tiny]
				\&
				\begin{array}{c} \displaystyle\bigoplus_{\substack{\gamma\in \Gamma_4(b)\\ \Split\ \textrm{non-std.}}} \KK\cdot y_\Split  \end{array} \& \oplus \&\begin{array}{c} \displaystyle\bigoplus_{\substack{\gamma\in \Gamma_3(b)\\ \phantom{\Split\ \textrm{non-std.}}}} W_\gamma \end{array}\\
				\&{\displaystyle\bigoplus_{\substack{\gamma\in\Gamma_3(b)\\ \Split\ \textrm{non-std.}}}\KK\cdot \kappa_\Split} \&\oplus \&	H_4(W_\bullet^\std)
				\arrow[from=1-2, to=2-2]
				\arrow[from=1-2, to=2-4]
				\arrow[from=1-4, to=2-2]
				\arrow[from=1-4, to=2-4]
			\end{tikzcd}
		\end{equation}
		with all arrows induced by $\partial^{-\bb}$. It remains to relate this two-term complex to the complex $K_\bullet$ in the statement of the theorem.
		
		Let $B_i$ be the subspace of $W_i^\std$ generated by primitive generators, and $A_{i-1}$ the subspace of $W_i^\std$ generated by generators that are not primitive. Let $f_i:A_i\to B_i$ be the map induced by $\partial^\std$ composed with the projection $W_i^\std \to B_i$. We notice that $f_2$ is injective (see the $i=3$ case of Proposition \ref{prop:applicable}) and the kernel of $f_3$ is contained in the image of $\partial_5^\std$, by construction of $W_i^\std$.
		Let $\widetilde W_i^\std=\coker f_i$. Similar to \S\ref{sec:nonsplittingproof}, $\widetilde W_\bullet^\std$ is a complex, and Lemma~\ref{lemma:homological} implies that the natural map
		\[ H_4(W_\bullet^\std)\to H_4(\widetilde W_\bullet^\std) \]
		is an isomorphism. Thus, we may replace $H_4(W_\bullet^\std)$ by 
		$H_4(\widetilde{W}_\bullet^\std)$
		in \eqref{eqn:proofdiagram}.
		
		For any primitive generator $y\in W_j^{-\bb}$,
		every edge of $G(y)$ must be adjacent to an element of $b$, every element of $b$ must be a vertex of $G(y)$, and the vertices of $G(y)$ outside of $b$ must form a simplex in $\cK$. 
		The map of complexes
		\[
		\psi:C_\bullet(\Delta(\cE_b)*\cK_b')[-1]\to \widetilde W_\bullet^\std
		\]
		defined as in \eqref{eqn:psi} in \S\ref{sec:b3} is thus surjective.

		As in \S\ref{sec:b3}, we consider the kernel of $\psi$.
		For $y$ a primitive generator with $G(y)$ restricted to $b$ not in some $\Gamma_i(b)$, either $y\notin W_\bullet^{-\bb}$, or the image of $y$ in $\widetilde W_\bullet^\std$ vanishes. Hence, any oriented simplex $\sigma$ of 
		\begin{equation}\label{eqn:1111proof1}
			\left(\Delta(\cE_b)\setminus \bigcup_{i=3}^6 \Gamma_i(b)\right)*\cK_b'\end{equation}
		lies in the kernel of $\Psi$.
		
		Any oriented simplex $\sigma$ of $\Delta(\cE_b)*\cK_b'$ not belonging to \eqref{eqn:1111proof1} corresponds to a primitive generator $y$ that necessarily belongs to $W_\bullet^{-\bb}$. It remains to see when the image of such a generator vanishes in $\widetilde W_\bullet^\std$. This is exactly the case when $\sigma$ belongs to 
		\[\Delta(\gamma)*\left(\link(\cap \Split,\cK)\cap \cK_b\right)
		\]
		for some $\gamma\in \Gamma_i(b)$ ($i=3,4$) and $\Split$ a standard splitting of $\gamma$.
		
		Letting ${\widehat\cK_b^{\std,\prime}}$ be the union of \eqref{eqn:1111proof1} with all the $\Delta(\gamma)*\left(\link(\cap \Split,\cK)\cap \cK_b\right)$ above, it follows that $\psi$ induces an isomorphism
		\[
		C_\bullet(\Delta(\cE_b)*\cK_b',{\widehat\cK_b^{\std,\prime}})[-1]\to \widetilde W_\bullet^\std.
		\]
		
		If more than three edges of $\cK$ are subsets of $b$, then all $\Gamma_i$ are empty, and ${\widehat\cK_b^{\std,\prime}}=\Delta(\cE_b)*\cK_b'$. It follows that all terms in \eqref{eqn:proofdiagram} are zero, so $T^4_{-\bb}(\cK)=0$.
		We thus suppose going forward that at most three edges of $\cK$ are subsets of $b$, in which case $\Gamma_3$ is necessarily non-empty.
		
		Since $\Delta(\cE_b)*\cK_b'$ is contractible, by the long exact sequence of cohomology we may identify
		$H_4(\widetilde W_\bullet^\std)$  with $H_2({\widehat\cK_b^{\std,\prime}},\KK)$. 
		The map $\phi_b$ induces a surjective simplicial map $\widehat\phi_b^\std:\widehat \cK_b^{\std,\prime}\to \widehat \cK_b$. As in Lemma \ref{lemma:homotopic}, this again induces a homotopy equivalence of $\widehat \cK_b^{\std,\prime}$ and $\widehat \cK_b^\std$, because the preimage of any simplex under $\widehat\phi_b$ is again a simplex. 
		
		Thus, we may replace \eqref{eqn:proofdiagram} by a diagram in which the $H_4(W_\bullet^\std)$ has been replaced by $H_2(\widehat \cK_b^\std,\KK)$. The $y_\Split$ and $\kappa_\Split$ from this diagram we may rename simply by $(\gamma,\Split)$ as in the description of $K_1,K_0$, and it is straightforward to check that the maps $\eta_1,\eta_3$ agree with the induced maps in \eqref{eqn:proofdiagram}.
		
		It remains to deal with the $W_\gamma$ summands of \eqref{eqn:proofdiagram}. Consider any generator $y\in W_\gamma$, represented by a string $s$. Then there is an element $g\in I_\cK$ and a non-standard splitting $\Split$ of $\gamma$ such that the string $s\setminus g$ represents $y_\Split$, up to sign. Moreover, the vertices $\{u,v\}$ of the edge of $G(y)$ corresponding to $g$ must satisfy (after permuting $u,v$) that $u\in b$ and $v\in \link((\cap \Split,\cK)\cap\cK_b)$. The image of $y$ in both 
		\[\bigoplus_{\substack{\gamma\in\Gamma_3(b)\\ \Split\ \textrm{non-std.}}}\KK\cdot (\gamma,\Split)\qquad\textrm{and}\qquad H_2(\widehat\cK_b^\std,\KK)
		\]
		is independent of $u$, and depends on the position of $g$ in $s$ only up to sign; in fact, it is straightforward to check that the images of $y$ agree up to sign with $\eta_2((v)\cdot (\gamma,\Split))$ and $\eta_4((v)\cdot (\gamma,\Split))$.
		Conversely, for any $v \in \link((\cap \Split,\cK)\cap\cK_b)$, there is $u\in b$ such that $g=x_ux_v\in I_\cK$, and we obtain a generator of $W_\gamma$ by inserting $g$ into any primitive string contained in a string representing $y_\Split$.
		Hence, we may replace $W_\gamma$ in \eqref{eqn:proofdiagram} by $C_0(\link(\cap \Split,\cK)\cap\cK_b)\cdot (\gamma,\Split)$ and the claim of the theorem follows.
	\end{proof}

	We also obtain the following result guaranteeing vanishing of $T^4_{-\bb}(\cK)$.
	\begin{cor}\label{cor:b4vanishing}
		Let $\cK_b$ be a flag complex, and $\bb\in \ZZ_{\geq 0}^\cV$ be of type $(1,1,1,1)$. Suppose that 
		\[H^2\left(\left(\Delta(\cE_b)\setminus\bigcup_{i=3}^6 \Gamma_i(b)\right)*\cK_b,\KK\right)=0\]
		and for every $\gamma\in\Gamma_3(b)$ and splitting $\Split$ of $\gamma$,
		\begin{equation}\label{eqn:t4links}
			\link(\cap\Split,\cK)\cap\cK_b\neq \emptyset.
		\end{equation}
		Then $T^4_{-\bb}(\cK)=0$. In particular, $T^4_{-\bb}(\cK)=0$ if $\cK_b$ is connected and \eqref{eqn:t4links} holds.
	\end{cor}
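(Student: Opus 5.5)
We will deduce the corollary from Theorem~\ref{thm:1111} by showing that $\eta\colon K_1\to K_0$ is surjective, so that $\coker\eta=0$. If more than three edges of $\cK$ lie in $b$, Theorem~\ref{thm:1111} already gives vanishing. So we assume otherwise, and we must show that both summands of $K_0$ lie in the image of $\eta$.

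\emph{The first summand.} For every $\gamma\in\Gamma_3(b)$ and every non-standard splitting $\Split$, hypothesis \eqref{eqn:t4links} lets us choose a vertex $v\in\link(\cap\Split,\cK)\cap\cK_b$. By definition, $\eta_2((v)\cdot(\gamma,\Split))=(\gamma,\Split)$. Hence the projection of $\eta$ onto $\bigoplus\KK\cdot(\gamma,\Split)$ is already surjective using only the second summand of $K_1$.

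\emph{The second summand.} It therefore suffices to show that $H_2(\widehat\cK_b^\std,\KK)$ is spanned by images under $\eta$ of elements whose first-summand component can be corrected. The plan is a Mayer--Vietoris or long-exact-sequence argument for the pair
\[
\bigl(\widehat\cK_b^\std,\ L\bigr),\qquad L:=\Bigl(\Delta(\cE_b)\setminus\bigcup_{i=3}^6\Gamma_i(b)\Bigr)*\cK_b .
\]
One passes from $L$ to $\widehat\cK_b^\std$ by attaching the pieces $\Delta(\gamma)*(\link(\cap\Split,\cK)\cap\cK_b)$ for standard splittings.

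\begin{enumerate}
\item Show that $H_2(\widehat\cK_b^\std)$ is generated by the image of $H_2(L)=0$ together with classes coming from the relative group. The relative chains live on the simplices $\gamma\cup\tau$ with $\tau$ a face of $\link(\cap\Split,\cK)\cap\cK_b$, and on faces $\gamma\in\Gamma_i(b)$ that lack a standard splitting.
\item Identify the relative degree-$3$ cycles. Their boundaries in $H_2$ are, up to sign, exactly the boundaries of $(e_1,e_2,e_3,e_4)$ for $\gamma\in\Gamma_4(b)$, which arise via $\eta_3$, and of $(e_1,e_2,e_3,v)$ for $\gamma\in\Gamma_3(b)$, which arise via $\eta_4$.
\end{enumerate}

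The key point concerns a $\gamma\in\Gamma_3(b)$ and a vertex $v$ attached through a \emph{standard} splitting. In that case the tetrahedron $\{e_1,e_2,e_3,v\}$ lies in $\widehat\cK_b^\std$, so its boundary is zero in homology. Consequently, only the non-standard data, which index $K_1$, contribute. Once the second summand lies in the image of $\eta$, any leftover component in the first summand is corrected using the surjectivity already established via $\eta_2$, together with the triangular structure of $\eta$.

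\emph{Main obstacle.} The main difficulty is step~1: controlling faces of $\Gamma_5(b)$ and $\Gamma_6(b)$, and of those $\Gamma_{3}(b)$, $\Gamma_4(b)$ without standard splittings, which are not covered by any attached piece. I would first check that these faces contribute relative cells only in degrees $\geq 3$ whose boundaries are already generated as in step~2. If that fails, I would instead apply the snake-lemma argument of Lemma~\ref{lemma:homological} to the filtration by the number of $\Gamma$-edges.

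\emph{The ``in particular'' clause.} Since $L$ is a join with a simplex-like complex, if $\cK_b$ is connected then $\Delta(\cE_b)\setminus\bigcup\Gamma_i(b)$ is nonempty: it contains all vertices and all edges. Then \cite[6.10.9]{tomdieck}, as used in the proof of Corollary~\ref{cor:b3simplyconnected}, shows that the join with a connected $\cK_b$ has vanishing $H^2$. I would verify this by the Künneth formula for joins, $\widetilde H_{n+1}(X*Y)\cong\bigoplus\widetilde H_p(X)\otimes\widetilde H_q(Y)$ over $p+q=n$. With $\widetilde H_0(\cK_b)=0$, the only possible contribution is $\widetilde H_1(\Delta(\cE_b)\setminus\bigcup\Gamma_i)\otimes\widetilde H_{-1}(\cK_b)$, which vanishes because $\cK_b\neq\emptyset$. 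Finally, $\widetilde H_2(X)\otimes \widetilde H_{-1}$ is likewise zero.
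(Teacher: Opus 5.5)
Your treatment of the first summand of $K_0$ is correct and matches the paper: $\eta_2$ together with \eqref{eqn:t4links} for the non-standard splittings hits every $(\gamma,\Split)$. The treatment of the summand $H_2(\widehat\cK_b^\std,\KK)$, however, is a plan rather than a proof, and it misreads the long exact sequence of the pair $(\widehat\cK_b^\std,L)$. Since $H_2(L)=0$, exactness only gives an injection $H_2(\widehat\cK_b^\std)\hookrightarrow H_2(\widehat\cK_b^\std,L)$, so what you must control is this relative $H_2$. Boundaries of relative $3$-cycles land in $H_2(L)=0$ and do not produce generators of $H_2(\widehat\cK_b^\std)$, so step~2 does not do what you want.

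Your ``main obstacle'' also does not arise. Faces in $\Gamma_5(b)$ or $\Gamma_6(b)$, and faces in $\Gamma_4(b)$ without a standard splitting, do not belong to $\widehat\cK_b^\std$ at all. Finally, the ``triangular'' correction at the end is not justified as stated, because $\eta_2$-preimages also carry $\eta_4$-components. The missing idea is that \eqref{eqn:t4links} must also be applied to the \emph{standard} splittings; then the second summand of $K_0$ is simply zero.

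Here is why. A simplex $\gamma'\cup\tau$ of $\Delta(\cE_b)*\cK_b$ lies outside $L$ only if $\gamma'$ is a connected spanning graph on the four vertices of $b$, which forces $|\gamma'|\geq 3$. Hence $C_\bullet(\widehat\cK_b^\std,L)$ has no $1$-chains, and its $2$-chains are spanned by the $\gamma\in\Gamma_3(b)$. Every such $\gamma$ (a path or a star) has a standard splitting $\Split$. Pick $v\in\link(\cap\Split,\cK)\cap\cK_b$ using \eqref{eqn:t4links}. Then the simplex $\gamma\cup\{v\}$ lies in $\widehat\cK_b^\std$, and its relative boundary is $\pm\gamma$, since the other three faces lie in $L$.

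Thus $H_2(\widehat\cK_b^\std,L)=0$, hence $H_2(\widehat\cK_b^\std,\KK)=0$. So $K_0$ reduces to its first summand, and your $\eta_2$ argument finishes the proof. This is exactly the paper's argument. It packages the relative chains as shifted augmented chains $\widetilde C_\bullet(\link(\cap\Split,\cK)\cap\cK_b)[-3]$ over standard splittings, so that the relative $H_2$ is a sum of groups $\widetilde H_{-1}$ of nonempty complexes.

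For the ``in particular'' clause your conclusion is right, but the K\"unneth bookkeeping is off. $\widetilde H_2(X*Y)$ involves $p+q=1$, i.e.\ the pairs $(2,-1),(1,0),(0,1),(-1,2)$. The term $\widetilde H_1\otimes\widetilde H_{-1}$ you single out belongs to $\widetilde H_1(X*Y)$. Killing all four terms requires both factors to be nonempty and connected, not merely nonempty. For $\Delta(\cE_b)\setminus\bigcup_i\Gamma_i(b)$ this holds because it contains the full $1$-skeleton of $\Delta(\cE_b)$, and $\cE_b\neq\emptyset$ in the relevant case.
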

	\begin{proof}
		We consider the subcomplex 
		\[
		X=(\Delta(\cE_b)\setminus\bigcup_{i=3}^6 \Gamma_i(b))*\cK_b
		\]
		of $\widehat\cK_b^\std$.
		Similar to the proof of Lemma 
		\ref{lemma:b3seq}
		we have an isomorphism
		\[
		\bigoplus_{\substack{\gamma\in \Gamma_3(b)\\ \Split\ \textrm{std.}}} \widetilde C_\bullet (\link(\cap\Split,\cK)\cap \cK_b)[-3]  \to C_\bullet (\widehat\cK_b^\std,
		X
		).
		\]
		Thus by the long exact sequence of homology and (the dual of) this isomorphism, under the hypotheses of the first claim we obtain
		\[
		H_2(\widehat\cK_b^\std,\KK)=0.
		\]
		But then \eqref{eqn:t4links} for the non-standard splittings implies that $\eta$ is surjective, so that $T^4_{-\bb}(\cK)=0$ by Theorem~\ref{thm:1111}.
		
		The last part of the statement follows from the first claim. Indeed, assume that $\cK_b$ is connected. So is $\Delta(\cE_b)\setminus\bigcup_{i=3}^6 \Gamma_i(b))$, so then by 
		\cite[6.10.9]{tomdieck}
		we obtain the vanishing of the required second cohomology group.
	\end{proof}
	
	\section{$T^3$ vanishing and the associahedron}
	\subsection{Criteria for $T^3$ vanishing}\label{sec:vanishing}
	Putting our topological results together, we obtain the following sufficient criterion for $T^3$ vanishing.
	\begin{cor}\label{cor:vanishing}
		Let $\cK$ be a flag complex on vertex set $\cV$.
		Suppose that the following hold:
		\begin{enumerate}
			\item For every vertex $v$ of $\cK$, $T^3(\link(v,\cK))=0$;
			\item For every $b\subseteq \cV$ of cardinality $1$, $2$, or $3$ and $b$ containing at most $\#b-2$ edges of $\cK$,
			\[
			\widetilde H^{3-\#b}(\cK_b,\KK)=0;\ \textrm{and}
			\]
			\item For every $u,v,w\in \cV$ with $\{u,v\},\{u,w\}\notin\cK$, $\link(u,\cK)\cap \cK_b\neq \emptyset$.
		\end{enumerate}
		Then $T^3(\cK)=0$.
	\end{cor}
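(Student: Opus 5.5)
We must show $T^3_\bc(\cK)=0$ for every weight $\bc=\ba-\bb$. The plan is to split according to whether $\ba\neq 0$ or $\ba=0$, and in the latter case according to the type of $\bb$.

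\emph{Case $\ba\neq 0$.} By Theorem~\ref{thm:localizationformula}, $T^3_\bc(\cK)=0$ if $a\notin\cK$. If $a\in\cK$, then $T^3_\bc(\cK)\cong T^3_{-\bb}(\link(a,\cK))$. Pick a vertex $v\in a$. Then $\link(a,\cK)=\link(a\setminus\{v\},\link(v,\cK))$. Applying the localization formula again inside the flag complex $\link(v,\cK)$, with weight $\ba-\be_v-\bb$, identifies $T^3_{-\bb}(\link(a,\cK))$ with a graded piece of $T^3(\link(v,\cK))$. One should check that the positive part $\ba-\be_v$ has support $a\setminus\{v\}$, which is a face of the link; if $\ba_v>1$ the support still contains $v$, and we instead use $\ba'$ with $v$ removed, since $\nabla$ only depends on supports. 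This piece vanishes by hypothesis (1). Hence we may assume $\ba=0$.

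\emph{Case $\ba=0$: reducing the possible $\bb$.} By Theorem~\ref{thm:bound}(3) with $i=3$, we may assume $\bb\in\{0,1\}^\cV$. By Proposition~\ref{prop:t3bound}, we may assume $|\bb|\leq 3$. The weight $\bb=0$ gives $T^3_0(\cK)$; here Theorem~\ref{thm:bound} does not apply directly. However, with $\bc=0$ every generator of $W^0_\bullet$ has a splitting: for a primitive generator with $|\bb|=0$, the criterion of Remark~\ref{rem:splittings} holds trivially. So the non-splitting complex vanishes and $T^3_0=0$ via Propositions~\ref{prop:nonsplitting} and~\ref{prop:applicable}. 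This leaves $\bb$ of type $(1)$, $(1,1)$ or $(1,1,1)$.

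\emph{Types $(1)$ and $(1,1)$.} For type $(1)$, Theorem~\ref{thm:b1} gives $T^3_{-\bb}\cong\widetilde H^{2}(\cK_b,\KK)$, which vanishes by hypothesis (2) with $\#b=1$; the edge condition is vacuous since $b$ contains at most $-1$, i.e. zero, edges. For type $(1,1)$, Theorem~\ref{thm:b2} gives vanishing if $b\in\cK$. Otherwise $b$ contains $0=\#b-2$ edges, and $T^3_{-\bb}\cong\widetilde H^{1}(\cK_b,\KK)=0$ by hypothesis (2).

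\emph{Type $(1,1,1)$: the main obstacle.} If more than one edge of $\cK$ lies in $b$, Theorem~\ref{thm:b3} gives vanishing. Otherwise at most $1=\#b-2$ edges lie in $b$, so hypothesis (2) gives $\widetilde H^0(\cK_b,\KK)=0$, i.e. $\cK_b$ is nonempty and connected. We then use Lemma~\ref{lemma:b3seq}. Its first row shows that $T^3_{-\bb}$ sits between $\bigoplus_f\widetilde H^{-1}(\link(\{w_f\},\cK)\cap\cK_b,\KK)$ and $H^1(\Delta(\cE_b)^{(0)}*\cK_b,\KK)$.

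The right-hand term vanishes because $\cK_b$ is connected: the join with a nonempty discrete set is simply connected, as in the proof of Corollary~\ref{cor:b3simplyconnected}, or directly by \cite[6.10.9]{tomdieck}.

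For the left-hand term, each edge $f$ of $\Delta(\cE_b)$ consists of two non-edges $\{u,v\},\{u,w\}$ of $\cK$ sharing $w_f=u$. Hypothesis (3) says exactly that $\link(u,\cK)\cap\cK_b\neq\emptyset$, so each $\widetilde H^{-1}$ vanishes. Exactness then gives $T^3_{-\bb}(\cK)=0$.

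The delicate points are the bookkeeping in the localization step when $\ba$ has entries larger than $1$, and matching the edge/non-edge conventions in hypotheses (2) and (3) with those of Theorem~\ref{thm:b3}.
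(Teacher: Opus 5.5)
Your proof is correct and follows the paper's route. Localization together with hypothesis (1) reduces to $\ba=0$. Theorem~\ref{thm:bound} and Proposition~\ref{prop:t3bound} then leave types $(1)$, $(1,1)$, $(1,1,1)$, which you handle with Theorems~\ref{thm:b1} and~\ref{thm:b2} and hypothesis (2). For type $(1,1,1)$ you use the exact sequence of Lemma~\ref{lemma:b3seq}, with hypothesis (3) killing the $\widetilde H^{-1}$ terms. This is exactly how the paper proves Corollary~\ref{cor:b3simplyconnected}, which its own proof invokes, so that step differs only in being unpacked.

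The one faulty step is the justification for $\bb=0$ (a case the paper's proof passes over silently). The splitting criterion of Remark~\ref{rem:splittings} does \emph{not} hold trivially when $b=\emptyset$. It fails, because it would require all vertices of $G(y)$ to form a face, and these include both endpoints of an edge of $G(I_\cK)$. The correct reason is simpler: $W^0_\bullet=0$. Every generator $y$ of positive homological degree has $x^{\wt(y)}$ divisible by a minimal generator of $I_\cK$, so $\wt(y)\notin\nabla_0$. Also $W^0_0=0$ since $|\bb|\neq 1$. Hence $T^3_0(\cK)=0$ directly by Theorem~\ref{thm.computingTviahomology}, with no need for the non-splitting complex.
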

	\begin{proof}
		Since every proper link of $\cK$ arises by taking a link of $\link(v,\cK)$ for some vertex $v$, the localization formula (Theorem \ref{thm:localizationformula}) implies that it suffices to show that $T_{-\bb}^3(\cK)=0$ for every $\bb\in\ZZ_{\geq 0}^\cV$. Moreover, by Theorem \ref{thm:bound} and Proposition  \ref{prop:t3bound} it suffices to consider $\bb$ of type $(1)$, $(1,1)$, and $(1,1,1)$. The vanishing of $T_{-\bb}^3(\cK)=0$ then follows from Theorems \ref{thm:b1} and \ref{thm:b2} and Corollary \ref{cor:b3simplyconnected}.
	\end{proof}
	
	We now prove Corollary~\ref{cor:sphere} from \S\ref{sec:app}:
	\begin{proof}[Proof of Corollary \ref{cor:sphere}]
		To avoid confusion, in this proof (and this proof only) we will use $|X|$ to denote the topological realization of a simplicial complex $X$.
		We will show that the hypotheses of Corollary \ref{cor:vanishing} are fulfilled. The first and third hypotheses are fulfilled by direct assumption. For the second hypothesis, let $X$ be the intersection of $\cK$ with $\Delta(b)$ and let $Y=\bigcap_{v\in b} \link(v,\cK)$. Then $|\cK_b|$ is a deformation retract of $|\cK|\setminus |X*Y|$.
		
		If $\#b=1$ then $X$ is a single vertex, $|X*Y|$ is contractible, and so $|\cK|\setminus |X*Y|$ is contractible since $|\cK|$ is a sphere.
		We now suppose $\#b\geq 2$. If $|Y|$ is contractible, then so is $|X*Y|$, and thus $|\cK|\setminus |X*Y|$ is contractible. 
		
		Suppose instead that $Y$ is empty. If $\#b=2$, $X$ is two disconnected vertices and  $|\cK|\setminus |X*Y|$ is then a sphere with two points removed. This has trivial $H^1$ unless $\dim \cK=2$.
		
		If  instead $\#b=3$, $X$ is either three disconnected vertices or an edge and a disconnected vertex. Then $|\cK|\setminus \vert X*Y\vert$ is a sphere with two or three contractible sets removed; this is connected unless $\dim \cK =1$.
		
		The statement follows.
	\end{proof}
	
	\subsection{Triangulations of $S^1$ with $T^3=0$.}
	%We consider triangulations of $S^1$ with $T^3=0$.
	A triangulation of $S^1$ with $n$ vertices is simply the boundary of an $n$-gon.
	
	\begin{example}\label{ex:pentagon}
		Let $\cK$ be the boundary of a $5$-gon, with vertices labeled consecutively $1,2,3,4,5$. 
		We claim that $T^3(\cK)=0$.
		Similar to Example \ref{ex:hex}, proper links  of $\cK$ have no $T^3$, so we only need to show $T^3_{-\bb}(\cK)=0$ with $\bb\in \ZZ_{\geq 0}^\cV$. Moreover, by Proposition~\ref{prop:t3bound} and Theorem~\ref{thm:bound} we can assume $\bb$ of type $(1)$, $(1,1)$, or $(1,1,1)$. In the first two cases, $\cK_b$ is contractible so that $T_{-\bb}^i(\cK)$ vanishes when $i\geq 2$ (Theorems \ref{thm:b1} and \ref{thm:b2}). For $\bb$ of type $(1,1,1)$ and not containing two edges of $\cK$, $\cE_b$ is a $1$-simplex, $\cK_b$ consists of $2$ disjoint points, and $\widehat \cK_b$ is all of $\cE_b*\cK_b$, and hence contractible. Thus, $T_{-\bb}^i(\cK)=0$ in this case for $i=3,4$ (Theorem \ref{thm:b3}). It follows that $T^3(\cK)=0$.
		
		We claim that $T^4(\cK)$ also vanishes. To this end, by Proposition~\ref{prop:t4bound} and Theorem~\ref{thm:bound} we deduce that the only remaining cases to consider are $\bb$ of type $(1,1,1,1)$ or $(2,1,1)$.
		
		In the first case, the analysis is fairly similar to the case $b=\{1,2,3,4\}$ of Example~\ref{ex:hex1111}, except that $\cK_b^\std$ is just the join of $\Delta(\cE_b)$ with $\cK_b$, the latter of which just consists of a single vertex. In any case $\cK_b^\std$ is contractible, and $\eta:K_1\to K_0$ has the same form as in the $b=\{1,2,3,4\}$ case of Example \ref{ex:hex1111}. Hence, by Theorem~\ref{thm:1111}, $T_{-\bb}^4(\cK)=0$.
		
		Lastly, assume $\bb$ is of type $(2,1,1)$. By Theorem~\ref{thm:211}, without loss of generality we may take $\bb=(2,0,1,1,0)$ up to rotation. The complex $\cK_{1,\{1,3,4\}}$ is the simple path from $1$ to $4$, which is connected. Hence $T_{-\bb}^4(\cK)=0$ by Theorem \ref{thm:211}.
		It follows that $T^4(\cK)=0$.
	\end{example}
	
	\begin{proposition}\label{prop:S1}
		A triangulation $\cK$ of $S^1$ with $n$ vertices has $T^3(\cK)=0$ if and only if $n\leq 5$.
	\end{proposition}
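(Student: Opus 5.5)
We handle the two directions separately. For $n\le 5$: when $n=3$ the ring is a hypersurface, so $T^3=0$ by Example~\ref{ex.onegenerator}. When $n=4$ we have $S_\cK\cong \KK[x_1,x_3]/(x_1x_3)\otimes \KK[x_2,x_4]/(x_2x_4)$, which is a complete intersection, so $T^i=0$ for $i\ge 2$ (alternatively, $\cK=\cA_4$). The case $n=5$ is Example~\ref{ex:pentagon}. For $n\ge 6$ we want to exhibit a weight with $T^3_{-\bb}(\cK)\neq0$; note that $\cK$ is flag, since $n\ge 4$.

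For $n\ge 6$, label the vertices cyclically $1,\dots,n$. First try $\bb$ of type $(1,1)$. Take $b=\{1,4\}$, which is a non-edge. Then $\link(1)\cap\link(4)=\emptyset$ if $n\ge 6$, since $\link(1)=\{2,n\}$ and $\link(4)=\{3,5\}$. Hence $\cK_b$ is $\cK$ with the vertices $1,4$ removed, i.e. two disjoint paths $2$--$3$ and $5$--$\cdots$--$n$. This gives $\widetilde H^0\ne0$ but $\widetilde H^1=0$, so Theorem~\ref{thm:b2} yields $T^3_{-\bb}=\widetilde H^{1}(\cK_b)=0$. Type $(1,1)$ therefore only contributes to $T^2$. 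Type $(1)$ contributes $\widetilde H^2=0$ for a 1-dimensional complex. The nonvanishing must therefore come from type $(1,1,1)$, via Theorem~\ref{thm:b3} or Corollary~\ref{cor:b3simplyconnected}.

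Mimic Example~\ref{ex:hex}: choose $b=\{1,3,5\}$. For $n\ge 6$, the pairs $\{1,3\},\{3,5\}$ are non-edges, and $\{1,5\}$ is a non-edge as well when $n\ge 6$. So $\cE_b$ has three elements and $\Delta(\cE_b)$ is a $2$-simplex. Moreover $\bigcap_{v\in b}\link(v)=\emptyset$ for $n\ge 6$, so $\cK_b$ is $\cK$ minus $\{1,3,5\}$: the isolated vertices $2,4$ together with the path $6$--$\cdots$--$n$ (a single vertex $6$ when $n=6$). For each edge $f$ of $\Delta(\cE_b)$, with $w_f\in\{1,3,5\}$, the set $\link(w_f)\cap\cK_b$ consists of the two neighbours of $w_f$ lying in $\cK_b$: $\{2,n\}$ for $w_f=1$, $\{2,4\}$ for $w_f=3$, $\{4,6\}$ for $w_f=5$.

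We then compute $H^1(\widehat\cK_b)$ through the exact sequence of Lemma~\ref{lemma:b3seq}. Here $\widetilde H^{-1}$ of the links is $0$ since they are nonempty. The middle term $H^1(\Delta(\cE_b)^{(0)}*\cK_b)$ needs care, because $\cK_b$ has three components, so the join of three points with it is not simply connected. Instead of going through the sequence, I would argue directly that $\widehat\cK_b$ deformation retracts onto a $1$-cycle, or at least has $H^1\neq0$. Each component $C$ of $\cK_b$, being contractible, coned by the three vertices $e_1,e_2,e_3$ gives the suspension-like piece $\{e_1,e_2,e_3\}*C\simeq$ three points joined to a point, i.e.\ a tree; gluing along $\{e_i\}$ gives a graph with $3\cdot 3=9$ edges on $3+3$ nodes, hence first Betti number $4$. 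Each triangle piece $\Delta(f)*\{x,y\}$ adds the edge $f$ and two 2-simplices. These fill cycles $e_ie_jx$, where $x$ lies in a component adjacent to $w_f$. A careful count, checking that the three fillings kill exactly three independent classes of the four, would leave $H^1\cong\KK$ coming from the boundary of $\Delta(\cE_b)$, as in Figure~\ref{fig:tkb2}. This gives $T^3_{-\bb}\cong\KK\neq0$. The main obstacle is this Betti number bookkeeping, in particular verifying that the three fillings are independent and do not also kill the $\partial\Delta(\cE_b)$ class. I would handle it with a Mayer--Vietoris or Euler characteristic computation, uniformly in $n\ge6$ (the component structure only lengthens the path $6$--$\cdots$--$n$, which is contractible).
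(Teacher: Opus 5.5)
Your proof is correct, but for $n\geq 7$ it takes a different route from the paper. The paper handles $n\le 5$ as you do. For $n\ge 6$ it computes only the hexagon (Example~\ref{ex:hex}, with the same $b=\{1,3,5\}$ you use). It then transfers nonvanishing to every larger $n$: each $n$-gon arises from the hexagon by iterated edge subdivisions, so $S_{\cK'}$ deforms to $S_{\cK}$ by \cite[Proposition 6.1]{ic2}, and semicontinuity of cotangent cohomology does the rest.

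You instead run the Theorem~\ref{thm:b3} computation uniformly in $n$. The only change from the hexagon is that the third component of $\cK_b$ is the contractible path $6$--$\cdots$--$n$ rather than the point $6$. Your route stays inside the paper's own topological results and gives $T^3_{-\bb}(\cK)\cong\KK$ in weight $-(e_1+e_3+e_5)$ for every $n\ge 6$. The paper's route needs only one explicit computation, but relies on the external degeneration result.

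The step you flag as the main obstacle is not needed for the proposition, which only asks for nonvanishing. The exact sequence of Lemma~\ref{lemma:b3seq} that you set aside gives this immediately. The $\widetilde H^{-1}$ terms vanish because the links are nonempty. So $T^3_{-\bb}(\cK)$ is the kernel of a map
\[
H^1(\Delta(\cE_b)^{(0)}*\cK_b,\KK)\cong\KK^4\longrightarrow\bigoplus_f\widetilde H^0(\link(\{w_f\},\cK)\cap\cK_b,\KK)\cong\KK^3,
\]
and hence has dimension at least $1$. Equivalently, the Euler characteristic argument you mention settles it: $\widehat\cK_b$ is connected with Euler characteristic $n-(4n-12)+(3n-12)=0$, so $b_1=1+b_2\ge 1$.

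Independence of the three attached disks is needed only for the exact value $\KK$. It does hold. After contracting each component of $\cK_b$ to a point you get the graph $K_{3,3}$, and each of the three $4$-cycles being filled uses an edge of $K_{3,3}$ that the other two do not.
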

	\begin{proof}
		We have shown in Example \ref{ex:pentagon} above that the triangulation $\cK$ with $5$ vertices has $T^3(\cK)=0$. Triangulations $\cK$ with $3$ or $4$ vertices also have $T^3(\cK)=0$, since the corresponding rings $S_\cK$ are complete intersection rings (either a single cubic, or two quadrics).
		
		On the other hand, we saw in Example \ref{ex:hex} that the triangulation $\cK$ with $6$ vertices has $T^3(\cK)\neq 0$. Since any triangulation $\cK'$ of $S^1$ arise from $\cK$ through iterated edge subdivision, $S_{\cK'}$ deforms to $S_{\cK}$ (\cite[Proposition 6.1]{ic2}) so semicontinuity of cotangent cohomology (cf.~\cite[\S1.1.7]{behnkechristophersen}) implies $T^3(\cK)\neq 0$ as well.
	\end{proof}

	\subsection{The associahedron}\label{sec:assoc}
	Fix $n\geq 3$. Consider an  $n$-gon $\cR_n$, with vertices labeled $1,\ldots,n$ in counterclockwise order. Let $\cV$ be the set of ${n\choose 2}-n$ diagonals in this polygon. For $1\leq i<j\leq n$ we will refer to the diagonal with endpoints $i,j$ simply as $ij$.
	Two diagonals are said to cross if they intersect in their relative interiors. We denote by $\cA_n$ the simplicial complex on the vertex set $\cV$ consisting of sets of diagonals that do not pairwise cross; it follows immediately from the definition that $\cA_n$ is a flag complex. Maximal faces of $\cA_n$ thus correspond to triangulations of $\cR_n$. The complex $\cA_3$ is the empty set, $\cA_4$ consists of two isolated vertices, $\cA_5$ is (coincidentally) the boundary of a pentagon, and $\cA_6$ is complex (i) from Figure \ref{fig:triangulations} in \S\ref{sec:spheres}. In general $\cA_n$ is an $(n-4)$-dimensional sphere, and is the boundary of the polytope dual to the associahedron on $n-1$ elements, see e.g.~\cite{assoc}.
	
	We now prove Corollary \ref{cor:assoc}.
	\begin{proof}[Proof of Corollary \ref{cor:assoc}]
		We show that for $n\geq 4$, $T^3(\cA_n)=0$ by induction on $n$. For $n=4$, $S_{\cA_4}$ is a hypersurface ring, so $T^i(\cA_4)$ vanishes for all $i\geq2$ (see Example~\ref{ex.onegenerator}). For $n=5$, the claim follows from Example~\ref{ex:pentagon}. We now assume that $n\geq 6$.
		
		We will show that the hypotheses of Corollary \ref{cor:sphere} are satisfied, adapting arguments from \cite[\S5]{ic1}.
		First, consider any vertex $v$ of $\cA_n$; this corresponds to a diagonal $ij$ of $\cR_n$. In fact, $ij$ cuts $\cR_n$ into two smaller polygons, say an $n_1$-gon and an $n_2$-gon, and diagonals in $\link(v,\cA_n)$ are exactly the diagonals in either one of these two smaller polygons. Thus, 
		\[
		\link(v,\cA_n)=\link(v,\cA_{n_1})*\link(v,\cA_{n_2})
		\]
		cf.~\cite[Lemma 5.4]{ic1}.
		It follows by induction and \cite[Proposition 2.3]{ic1} that 
		\[
		T^3(\link(v,\cA_n))=0.
		\]
		
		Consider now vertices $v,w$ of $\cA_n$ with $\{v,w\}\notin \cA_n$. This corresponds to a pair of crossing diagonals of $\cR_n$. The convex hull of these diagonals is a quadrilateral inscribed in $\cR_n$; since $n\geq 6$ at least one of its edges must be a diagonal $u$ of $\cR_n$. The diagonal $u$ belongs to every maximal face of $\link(v,\cA_n)\cap \link(w,\cA_n)$, so this intersection of links is contractible.
		
		Consider now vertices $u,v,w$ of $\cA_n$ with $\{u,v\}\notin \cA_n$ and $\{u,w\}\notin \cA_n$. This corresponds to three diagonals of $\cR_n$, at least of one of which crosses the other two (namely, the diagonal corresponding to the vertex $u$). The convex hull $Q$ of these diagonals is either a $5$-gon or $6$-gon inscribed in $\cR_n$. If $Q$ is all of $\cR_n$, then the intersection
		\[\link(u,\cA_n)\cap \link(v,\cA_n)\cap \link(w,\cA_n)\]
		is empty.
		Otherwise, $Q$ has an edge $t$ which is also a diagonal of $\cR_n$, and every maximal face of the above intersection of links contains $t$; it follows that this intersection is contractible. Either way, the hypothesis \eqref{hyp:a} of the corollary is satisfied, since $\dim\cA_n\geq 2$ for $n\geq 6$.
		
		To find a vertex $u'\in \link(u,\cA_n)$ as needed for hypothesis \eqref{hyp:b}, choose one end $x$ of the diagonal $u$. Between $x$ and the endpoints of $v$ and $w$, there must be a diagonal $u'$ that crosses $v$ or $w$.
		
		We see that the hypotheses for Corollary \ref{cor:sphere} are satisfied, and so we conclude that $T^3(\cA_n)=0$.
	\end{proof}
	
	\begin{remark}\label{rem:assoct4}
		By Example \ref{ex:pentagon}, we see that $T^4(\cA_5)=0$. However, for $n>5$, $T^4(\cA_n)\neq 0$. It suffices to show this for $n=6$, since $\cA_6$ appears as a link in $\cA_n$ with $n\geq 6$.
		
		Let $\cK=\cA_6$, and consider $\bb$ of type $(1,1,1)$ with $b$ containing the three unique vertices of $\cK$ of valency $4$; the elements of $b$ are exactly the diagonals $14$, $25$, and $36$. The complex $\cK_b$ looks like the boundary of a prism over a triangle with the three quadrilateral faces removed, see Figure \ref{fig:assoct4}. The set $\cE_b$ consists of three elements, and for each edge $f$ of $\Delta(\cE_b)$, $\link(w_f,\cK)\cap\cK_b$ is the boundary of a quadrilateral.
		
		The complex $\widehat \cK_b$ is not shellable, so to show that $T_{-\bb}^4(\cK)\cong H_2(\widehat\cK_b,\KK)\neq 0$, we will count the number of faces of $\widehat\cK_b$ in each dimension.
		For any simplicial complex $X$, consider the polynomial $p_X(t)=1+\sum_j q_j t^{j+1}$ where $q_j$ is the number of $j$-dimensional faces. Then 
		\begin{align*}
			p_{\cK_b}(t)=1+6t+9t^2+2t^3\\
			p_{\Delta(\cE_b)^{(0)}}(t)=1+3t
		\end{align*}
		and
		\begin{align*}
			p_{\link(w_f,\cK)\cap\cK_b}(t)=1+4t+4t^2
		\end{align*}
		for any edge $f$ of $\Delta(\cE_b)$.
		It follows by construction of $\widehat\cK_b$ that
		\begin{align*}
			p_{\widehat\cK_b}(t)=p_{\cK_b}(t)\cdot p_{\Delta(\cE_b)^{(0)}}(t)+3t^2p_{\link(f,\cK)\cap\cK_b}(t)\\
			=1+9t+30t^2+41t^3+18t^4.
		\end{align*}
		In other words, $\widehat\cK_b$ has $9$ vertices, $30$ edges, $41$ $2$-faces, and $18$ $3$-faces.
		Now, we know that $\widehat\cK_b$ is connected, and $T^3_{-\bb}(\cK)=0$ by Corollary~\ref{cor:assoc}, so $\widetilde H^0(\widehat\cK_b,\KK)=H^1(\widehat\cK_b,\KK)=0$ by Theorem~\ref{thm:b3}. It follows that 
		\[
		\dim H^2(\widehat\cK_b,\KK)\geq 41-18-(30-9+1)=1 \;
		\]
		and thus $T_{-\bb}^4(\cK)\neq 0$ by Theorem~\ref{thm:b3}. More precisely, it is straightforward to check that the differential $C_3(\widehat\cK_b)\to C_2(\widehat\cK_b)$ is injective, so $T_{-\bb}^4(\cK)\cong \KK$.
	\end{remark}
	
	\begin{figure}
		\begin{tikzpicture}[scale=1]
			\draw[fill,lightgray] (0,0)--(1,0)--(0,1)--(0,0);
			\draw[fill,lightgray] (1.5,1)--(2.5,1)--(1.5,2)--(1.5,1);
			\draw (0,0)--(1,0)--(0,1)--(0,0);
			\draw (1.5,1)--(2.5,1)--(1.5,2)--(1.5,1);
			\draw (0,0) -- (1.5,1);
			\draw (1,0) -- (2.5,1);
			\draw (0,1) -- (1.5,2);
			\draw[fill] (0,0) circle [radius=0.05] node[below left] {$13$};
			\draw[fill] (1,0) circle [radius=0.05] node[below right] {$15$};
			\draw[fill] (0,1) circle [radius=0.05] node[above left] {$35$};
			\draw[fill] (1.5,1) circle [radius=0.05] node[above left] {$46$};
			\draw[fill] (2.5,1) circle [radius=0.05] node[right] {$24$};
			\draw[fill] (1.5,2) circle [radius=0.05] node[above right] {$26$};
		\end{tikzpicture}
		\caption{The complex $\cK_b$ from Remark \ref{rem:assoct4}}\label{fig:assoct4}
	\end{figure}

	\subsection{Triangulations of $S^2$ with $T^3=0$.}\label{sec:spheres}
	\begin{figure}
		\begin{subfigure}{.2\textwidth}
			\centering
			\begin{tikzpicture}[scale=.8]
				\draw[fill] (-.5,1) circle [radius=0.05]; 
				\draw[fill] (.5,1) circle [radius=0.05]; 
				\draw[fill] (0,0) circle [radius=0.05]; 
				\draw (-.5,1) -- (.5,1) -- (0,0) -- (-.5,1);
				\draw (-.5,1) -- (-1,1.5);
				\draw (.5,1) -- (1,1.5);
				\draw (0,0) -- (0,-.5);
			\end{tikzpicture}
			\caption{}
		\end{subfigure}
		\begin{subfigure}{.2\textwidth}
			\centering
			\begin{tikzpicture}[scale=.8]
				\draw[fill] (0,.5) circle [radius=0.05]; 
				\draw[fill] (0,-.5) circle [radius=0.05]; 
				\draw[fill] (.5,0) circle [radius=0.05]; 
				\draw[fill] (-.5,0) circle [radius=0.05]; 
				\draw (-.5,0) -- (0,.5) -- (.5,0) -- (0,-.5)--(-.5,0);
				\draw (-1,0) -- (1,0);
				\draw (0,.5) -- (0,1);
				\draw (0,-.5) -- (0,-1);
			\end{tikzpicture}
			\caption{}
		\end{subfigure}
		\begin{subfigure}{.2\textwidth}
			\centering
			\begin{tikzpicture}[scale=.8]
				\draw[fill] (0,.5) circle [radius=0.05]; 
				\draw[fill] (0,-.5) circle [radius=0.05]; 
				\draw[fill] (.5,0) circle [radius=0.05]; 
				\draw[fill] (-.5,0) circle [radius=0.05]; 
				\draw[fill] (0,0) circle [radius=0.05]; 
				\draw (-.5,0) -- (0,.5) -- (.5,0) -- (0,-.5)--(-.5,0);
				\draw (-1,0) -- (1,0);
				\draw (0,1) -- (0,-1);
			\end{tikzpicture}
			
			\caption{}
		\end{subfigure}
		\begin{subfigure}{.2\textwidth}
			\centering
			\begin{tikzpicture}[scale=.8]
				\draw[fill] (-.5,1) circle [radius=0.05]; 
				\draw[fill] (.5,1) circle [radius=0.05]; 
				\draw[fill] (0,.5) circle [radius=0.05]; 
				\draw[fill] (-1,0) circle [radius=0.05]; 
				\draw[fill] (1,0) circle [radius=0.05]; 
				\draw (-.5,1) -- (.5,1) -- (0,.5) -- (-.5,1);
				\draw (-.5,1) -- (-1,0) -- (0,.5) -- (1,0) -- (.5,1);
				\draw (-.5,1) -- (-1,1.5);
				\draw (.5,1) -- (1,1.5);
				\draw (0,.5) -- (0,-.5);
				\draw (-1,0) -- (-1.5,-.6);
				\draw (1,0) -- (1.5,-.6);
			\end{tikzpicture}
			\caption{}
		\end{subfigure}
		\begin{subfigure}{.2\textwidth}
			\centering
			\begin{tikzpicture}[scale=.8]
				\draw[fill] (-.5,1) circle [radius=0.05]; 
				\draw[fill] (.5,1) circle [radius=0.05]; 
				\draw[fill] (0,.5) circle [radius=0.05]; 
				\draw[fill] (-1,0) circle [radius=0.05]; 
				\draw[fill] (1,0) circle [radius=0.05]; 
				\draw[fill] (0,-.5) circle [radius=0.05]; 
				\draw (-.5,1) -- (.5,1) -- (0,.5) -- (-.5,1);
				\draw (-.5,1) -- (-1,0) -- (0,.5) -- (1,0) -- (.5,1);
				\draw (-.5,1) -- (-1,1.5);
				\draw (.5,1) -- (1,1.5);
				\draw (0,.5) -- (0,-.5);
				\draw (-1,0) -- (-1.5,-.6);
				\draw (1,0) -- (1.5,-.6);
				\draw (-1,0) -- (0,-.5) -- (1,0);
				\draw (0,-.5) -- (0,-1);
			\end{tikzpicture}
			\caption{}
		\end{subfigure}
		\begin{subfigure}{.2\textwidth}
			\centering
			\begin{tikzpicture}[scale=.8]
				\draw[fill] (-.5,1) circle [radius=0.05]; 
				\draw[fill] (.5,1) circle [radius=0.05]; 
				\draw[fill] (0,.5) circle [radius=0.05]; 
				\draw[fill] (-1,0) circle [radius=0.05]; 
				\draw[fill] (1,0) circle [radius=0.05]; 
				\draw[fill] (0,-.5) circle [radius=0.05]; 
				\draw (-.5,1) -- (.5,1) -- (0,.5) -- (-.5,1);
				\draw (-.5,1) -- (-1,0) -- (0,.5) -- (1,0) -- (.5,1);
				\draw (-.5,1) -- (-1,1.5);
				\draw (.5,1) -- (1,1.5);
				\draw (-1,0) -- (1,0);
				\draw (-1,0) -- (-1.5,-.6);
				\draw (1,0) -- (1.5,-.6);
				\draw (-1,0) -- (0,-.5) -- (1,0);
				\draw (0,-.5) -- (0,-1);
			\end{tikzpicture}
			\caption{}
		\end{subfigure}
		\begin{subfigure}{.2\textwidth}
			\centering
			\begin{tikzpicture}[scale=.8]
				\draw (-.5,1) -- (-1,1.5);
				\draw (.5,1) -- (1,1.5);
				\draw (-1,0) -- (1,0);
				\draw (0,.5) -- (0,-.5);
				\draw (-1,0) -- (-1.5,-.6);
				\draw (1,0) -- (1.5,-.6);
				\draw (-1,0) -- (0,-.5) -- (1,0);
				\draw (0,-.5) -- (0,-1);
				\draw (-.5,1) -- (.5,1) -- (0,.5) -- (-.5,1);
				\draw (-.5,1) -- (-1,0) -- (0,.5) -- (1,0) -- (.5,1);
				\draw[fill] (-.5,1) circle [radius=0.05]; 
				\draw[fill] (.5,1) circle [radius=0.05]; 
				\draw[fill] (0,.5) circle [radius=0.05]; 
				\draw[fill] (-1,0) circle [radius=0.05]; 
				\draw[fill] (1,0) circle [radius=0.05]; 
				\draw[fill] (0,-.5) circle [radius=0.05]; 
				\draw[fill] (0,0) circle [radius=0.05]; 
			\end{tikzpicture}
			\caption{}
		\end{subfigure}
		\begin{subfigure}{.2\textwidth}
			\centering
			\begin{tikzpicture}[scale=.8]
				\draw[fill] (-.5,1) circle [radius=0.05]; 
				\draw[fill] (.5,1) circle [radius=0.05]; 
				\draw[fill] (0,.4) circle [radius=0.05]; 
				\draw[fill] (-1,0) circle [radius=0.05]; 
				\draw[fill] (1,0) circle [radius=0.05]; 
				\draw (-.5,1) -- (.5,1) -- (0,.4) -- (-.5,1);
				\draw (-.5,1) -- (-1,0) -- (0,.4) -- (1,0) -- (.5,1);
				\draw (-.5,1) -- (-1,1.5);
				\draw (.5,1) -- (1,1.5);
				\draw (-1,0) -- (1,0);
				\draw (-1,0) -- (-1.5,-.6);
				\draw (1,0) -- (1.5,-.6);
				\draw (-1,0) -- (0,-.5) -- (1,0);
				\draw (0,-.5) -- (0,-1);
				\draw (0,.75) -- (-.5,1);
				\draw (0,.75) -- (.5,1);
				\draw (0,.75) -- (0,.4);
				\draw[fill,red] (0,-.5) circle [radius=0.05]; 
				\draw[fill,red] (0,.75) circle [radius=0.05]; 
			\end{tikzpicture}
			\caption{}
		\end{subfigure}
		\begin{subfigure}{.2\textwidth}
			\centering
			\begin{tikzpicture}[scale=.8]
				\draw[fill] (-.5,1) circle [radius=0.05]; 
				\draw[fill] (.5,1) circle [radius=0.05]; 
				\draw[fill] (0,.6) circle [radius=0.05]; 
				\draw[fill] (-.4,.3) circle [radius=0.05]; 
				\draw[fill] (.4,.3) circle [radius=0.05]; 
				\draw[fill] (-1,0) circle [radius=0.05]; 
				\draw[fill] (1,0) circle [radius=0.05]; 
				\draw[fill] (0,-.5) circle [radius=0.05]; 
				\draw (-.5,1) -- (.5,1) -- (0,.6) -- (-.5,1);
				\draw (-.5,1) -- (-1,0);
				\draw (1,0) -- (.5,1);
				\draw (-.5,1) -- (-1,1.5);
				\draw (.5,1) -- (1,1.5);
				\draw (-1,0) -- (-1.5,-.6);
				\draw (1,0) -- (1.5,-.6);
				\draw (-1,0) -- (0,-.5) -- (1,0);
				\draw (0,-.5) -- (0,-1);
				\draw (-.5,1) -- (-.4,.3) -- (0,.6) -- (.4,.3) -- (.5,1);
				\draw (-1,0) -- (-.4,.3) -- (.4,.3) -- (1,0);
				\draw (-.4,.3) -- (0,-.5) -- (.4,.3);
			\end{tikzpicture}
			\caption{}
		\end{subfigure}
		\begin{subfigure}{.2\textwidth}
			\centering
			\begin{tikzpicture}[scale=.8]
				\draw[fill] (-.5,1) circle [radius=0.05]; 
				\draw[fill] (.5,1) circle [radius=0.05]; 
				\draw[fill] (-.4,.3) circle [radius=0.05]; 
				\draw[fill] (.4,.3) circle [radius=0.05]; 
				\draw[fill] (-1,0) circle [radius=0.05]; 
				\draw[fill] (1,0) circle [radius=0.05]; 
				\draw[fill] (0,-.5) circle [radius=0.05]; 
				\draw (-.5,1) -- (.5,1) -- (0,.6) -- (-.5,1);
				\draw (-.5,1) -- (-1,0);
				\draw (1,0) -- (.5,1);
				\draw (-.5,1) -- (-1,1.5);
				\draw (.5,1) -- (1,1.5);
				\draw (-1,0) -- (-1.5,-.6);
				\draw (1,0) -- (1.5,-.6);
				\draw (-1,0) -- (0,-.5) -- (1,0);
				\draw (0,-.5) -- (0,-1.5);
				\draw (-.5,1) -- (-.4,.3) -- (0,.6) -- (.4,.3) -- (.5,1);
				\draw (-1,0) -- (-.4,.3) -- (.4,.3) -- (1,0);
				\draw (-.4,.3) -- (0,-.5) -- (.4,.3);
				\draw (-1,0)--(0,-1)--(1,0);
				\draw[fill,red] (0,-1) circle [radius=0.05]; 
				\draw[fill,red] (0,.6) circle [radius=0.05]; 
			\end{tikzpicture}
			\caption{}
		\end{subfigure}
		\begin{subfigure}{.2\textwidth}
			\centering
			\begin{tikzpicture}[scale=.8]
				\draw[fill] (-.5,1) circle [radius=0.05]; 
				\draw[fill] (.5,1) circle [radius=0.05]; 
				\draw[fill] (1,0) circle [radius=0.05]; 
				\draw[fill] (0,.2) circle [radius=0.05]; 
				\draw[fill] (.3,-.2) circle [radius=0.05]; 
				\draw[fill] (-1,0) circle [radius=0.05]; 
				\draw[fill] (-.3,-.2) circle [radius=0.05]; 
				\draw[fill] (-.4,.3) circle [radius=0.05]; 
				\draw[fill] (.4,.3) circle [radius=0.05]; 
				\draw (-.5,1) -- (.5,1); 
				\draw (-.5,1) -- (-1,0);
				\draw (1,0) -- (.5,1);
				\draw (-.5,1) -- (-1,1.5);
				\draw (.5,1) -- (1,1.5);
				\draw (-1,0) -- (-1.5,-.6);
				\draw (1,0) -- (1.5,-.6);
				\draw (-1,0) -- (0,-1) -- (1,0);
				\draw (0,-1.5) -- (0,-1);
				\draw (0,.6) -- (.4,.3) -- (.3,-.2)-- (-.3,-.2)--(-.4,.3)--(0,.6);
				\draw (-.5,1) --(0,.6) --(.5,1);
				\draw (-.5,1) -- (-.4,.3) -- (-1,0) -- (-.3,-.2) -- (0,-1);
				\draw (.5,1) -- (.4,.3) -- (1,0) -- (.3,-.2) -- (0,-1);
				\draw (-.3,-.2) -- (0,.2) -- (-.4,.3);
				\draw (.3,-.2) -- (0,.2) -- (.4,.3);
				\draw (0,.2) -- (0,.6);
				\draw[fill,red] (0,-1) circle [radius=0.05]; 
				\draw[fill,red] (0,.6) circle [radius=0.05]; 
			\end{tikzpicture}
			\caption{}
		\end{subfigure}
		
		\caption{Triangulations of $S^2$ with vertex valencies at most $5$ (point at infinity not pictured). $T^3=0$ for (a)--(g) and (i).}\label{fig:triangulations}
	\end{figure}

	We now prove Corollary \ref{cor:spheret3}. By Proposition \ref{prop:S1} and Theorem \ref{thm:localizationformula}, we only need to consider those triangulations of $S^2$ where each vertex has valency at most $5$. These are pictured in Figure \ref{fig:triangulations}; see \cite[Figure 1]{ishida}. There is a vertex at infinity not pictured.
	
	The triangulation (i) is just the complex $\cA_6$, so has $T^3=0$ by Corollary \ref{cor:assoc}. Starting with any of the triangulations (a)--(g), one may obtain (i) via a sequence of edge subdivisions, so the $T^3$ vanishing for (i) implies $T^3$ vanishing for these complexes as well by \cite[Proposition 6.1]{ic2} and semicontinuity of cotangent cohomology (cf.~\cite[\S1.1.7]{behnkechristophersen}).
	
	It remains to see that $T^3$ is non-zero for (h), (j), and (k). For each of these complexes $\cK$, we consider $T^3_{-\bb}(\cK)$ with $\bb$ of type $(1,1)$ and $b$ given by the two red vertices in Figure \ref{fig:triangulations}. In each case, these two vertices have disjoint links, so $\cK_b$ is homotopy equivalent to $S^2$ with $2$ points removed. It follows by Theorem \ref{thm:b2} that $T^3_{-\bb}(\cK)\cong H^1(\cK_b,\KK)\cong \KK$, so in particular $T^3(\cK)\neq 0$. Note that Theorem \ref{thm:b2} applies on the nose in cases (j) and (k), as these are flag complexes. Case (h) is not a flag complex, but by Remark \ref{rem:nonflagb2} we may still apply the conclusion of Theorem \ref{thm:b2}. 
	
	We know by Remark \ref{rem:assoct4} that $T^4\neq 0$ for the triangulation (i). 
	We will see below in Example \ref{ex:G} that for (G), $T^4=0$. The vanishing of $T^4$ for the triangulations (a)--(f) then again follows by \cite[Proposition 6.1]{ic2} and semicontinuity of cotangent cohomology (cf.~\cite[\S1.1.7]{behnkechristophersen}).
	\qed
	
	\begin{remark}\label{rem:spheret4}
		Let $\cK$ be any flag complex that is a triangulation of $S^2$. Then for any $\bb\in\ZZ_{\geq 0}^\cV$ of type $(1)$, $(1,1)$, or $(2,1,1)$, $T^4_{-\bb}(\cK)=0$. Indeed, $H^i(\cK_b,\KK)=0$ for $i\geq 2$ since $\cK_b$ will be a proper subcomplex of $\cK$. This shows the claim if $\bb$ is of type $(1)$ or $(1,1)$ by Theorems \ref{thm:b1} and \ref{thm:b2}. For $\bb$ of type $(2,1,1)$, the complex $\cK_{u,b}$ in Theorem \ref{thm:211} is a retract of the complement in $\cK\cong S^2$ of $\{u\}*\bigcap_{v\in b} \link(v,\cK)$. This latter complex is contractible, so its complement is as well, and we obtain $T^4_{-\bb}(\cK)=0$ if $\bb$ is of type $(2,1,1)$ by Theorem \ref{thm:211}.
	\end{remark}
	
	\begin{example}\label{ex:G}
		We consider the simplicial complex $\cK$ given by the triangulation (g) of $S^2$ from Figure \ref{fig:triangulations}, and show that $T^4(\cK)=0$. 
		By Example \ref{ex:pentagon}, we know that for any proper link of $\cK$ we have $T^4=0$, so by Theorem \ref{thm:localizationformula} it suffices to show that $T^4_{-\bb}(\cK)=0$ for $\bb\in\ZZ_{\geq 0}^\cV$. By Proposition \ref{prop:t4bound} coupled with Remark \ref{rem:spheret4} above, we only need to additionally consider $\bb$ of type $(1,1,1)$ and $(1,1,1,1)$. 
		Throughout, we will let
		\[L_b=\bigcap_{v\in b} \link(v,\cK)\]
		and 
		\[
		\cK_{|b}=\cK\cap \Delta(b).
		\]
		
		Consider $\bb$ of type $(1,1,1)$. By Theorem \ref{thm:b3}, we may assume that at most one edge of $\cK$ is a subset of $b=\{u,v,w\}$. 
		Since each vertex has valency $4$ or $5$ and $\cK$ has $8$ vertices, two of the elements of $b$ must form an edge in $\cK$.
		Thus, without loss of generality $\{v,w\}\in\cK$.
		We next claim that $L_b$  must have either one or two vertices. Indeed, since $v,w$ are connected by an edge, the intersection of their links consists of two vertices, say $z_1,z_2$. The link $\link(u,\cK)$ cannot contain $u$, $v$, or $w$, and since it has at least four elements, must also contain $z_1$ or $z_2$. If $L$ is a single vertex, then $\cK_{|b}*L_b$ is contractible, and it follows that $\cK_b$ is contractible since it is a retract of the complement of $\cK_{|b}*L_b$ in $\cK$. If instead $L_b$ has two vertices, then $\cK_b$ has three vertices and at most two connected components, each of which is contractible. It follows that $\Delta(\cE_b)^{(0)}*\cK_b$ is either contractible or homotopy equivalent to $S^1$. In all cases, we have $H^2(\cE_b)^{(0)}*\cK_b,\KK)=0$. Moreover, in all cases, for any $z\in b$, $H^1(\link(z,\cK_b),\cK)=0$. It now follows by Lemma \ref{lemma:b3seq} that $T^4_{-\bb}=0$. 
		
		We now consider instead $\bb$ of type $(1,1,1,1)$. By Theorem \ref{thm:1111} we may assume that at most three edges of $\cK$ are subsets of $b$. Similar to in type $(1,1,1)$, we see that $L_b$ can have at most two vertices (although it might now also be empty). We now note that for any $\gamma\in \Gamma_3(b)$ and splitting $\Split$, 
		\[
		\link(\cap \Split,\cK)\cap \cK_b
		\]
		will be non-empty. Indeed, 
		$\link(\cap \Split,\cK)$ contains at least four vertices, at least $3$ of which must be outside of $b$. Since $\cK$ has eight vertices and $L_b$ has at most two, we see that $\link(\cap \Split,\cK)$ must contain a vertex of $\cK_b$. 
		
		Suppose that $L_b$ is non-empty. 
		If $L_b$ has a single vertex, then $L_b$ is clearly contractible. Suppose instead that  $L_b$ has two vertices, say $u,v$. Since the link of $u$ is a $4$- or $5$-cycle, $\cK_b$ must be either a simple path of length $4$ or a $4$-cycle. But if it were the latter, $\cK_{b}*L_b$ would be the boundary of an octahedron, which cannot be contained in $\cK$. Hence, $\cK_{|b}$ must be contractible.
		Either way, as long as $L_b$ is non-empty, we may conclude that $\cK_{|b}*L_b$ is contractible, which similar to above, implies that $\cK_b$ is also contractible. Applying Corollary \ref{cor:b4vanishing} we obtain $T^4_{-\bb}(\cK)=0$.
		
		It remains to consider the case that $L_b=\emptyset$. Note that now  $\cK_b$ has $4$ vertices. Then $\cK_b$ cannot be disconnected, since this would imply that $\cK_{|b}$ is a $4$-cycle, but we assume that at most three edges of $\cK$ are subsets of $b$.
		Hence $\cK_b$ is connected, and applying Corollary \ref{cor:b4vanishing} we obtain that $T^4_{-\bb}(\cK)=0$.
	\end{example}
	\subsection{The dual quotient bundle}\label{sec:grass}
	We now give a sample geometric application of the vanishing of higher cotangent cohomology.
	For any $n\in \ZZ$, $n\geq 3$, let $G(2,n)$ be the Grassmannian parametrizing $2$-dimensional subspaces of $\KK^n$. The \emph{quotient bundle} $\cQ$ is the rank $n-2$ vector bundle whose fiber at $x\in G(2,n)$ is $\KK^n/L_x$, where $L_x\subseteq \KK^n$ is the linear space corresponding to the point $x$. We thus have a surjection of vector bundles
	\[
	G(2,n)\times \KK^n \to \cQ.
	\]
	Let $\cQ^*$ be the dual vector bundle, i.e.~ the \emph{dual quotient bundle}. The above surjection dualizes to an inclusion
	\[
	\cQ^*\hookrightarrow G(2,n)\times (\KK^n)^*.
	\]
	
	The Grassmannian $G(2,n)$ has a natural embedding in $\PP^{ {n\choose 2}-1}$ sending a point $x$ to the $2\times 2$ minors of any matrix whose rowspan is $L_x$; this is the \emph{Pl\"ucker embedding}.  This induces an embedding of $\cQ^*$ in $\PP^{ {n\choose 2}-1}\times \AA^n$. Taking the projectivization\footnote{We use the convention that the projectivization of a vector bundle means taking all lines in each fiber (as opposed to codimension-one quotients).} 
	$X$ of $\cQ^*$, we obtain an embedding of $X$ in $\PP^{ {n\choose 2}-1}\times \PP^{n-1}$.
	
	In \cite[\S2.2]{iltenturo}, Turo and the first author describe a certain simplicial complex $\cK_n$ for $n\geq 5$ obtained by an edge subdivision of $\cA_n*\cA_2$. The complex $\cK_n$ is related to the dual quotient bundle $\cQ^*$ by the following:
	
	\begin{theorem}[{\cite[Proposition 2.5 \& Theorem 3.1]{iltenturo}}]\label{thm:Qdegen}
		The 
		Stanley-Reisner ideal of the join of $\cK_n$ with a $(2n-4)$-dimensional simplex is an initial ideal of the ideal of $\cQ^*$ in $\PP^{ {n\choose 2}-1}\times \AA^n$.
	\end{theorem}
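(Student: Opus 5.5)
This result is quoted from \cite[Proposition 2.5 \& Theorem 3.1]{iltenturo}. The natural way to prove it is with an explicit Gröbner basis computation combined with a Hilbert function comparison. First we write down generators of the ideal $J$ of $\cQ^*$ in $\PP^{{n\choose 2}-1}\times\AA^n$. Use Plücker coordinates $p_{ij}$, $1\le i<j\le n$, and coordinates $y_1,\dots,y_n$ on $(\KK^n)^*$. Then $J$ contains:
\begin{itemize}
\item the three-term Plücker relations $p_{ij}p_{kl}-p_{ik}p_{jl}+p_{il}p_{jk}$, for $i<j<k<l$;
\item the bilinear incidence relations $\sum_{j} \pm p_{ij}y_j$, one for each $i$, which express that the functional $y$ vanishes on $L_x$.
\end{itemize}
One checks, say on the standard affine charts, that these cut out $\cQ^*$ scheme-theoretically and generate a prime ideal. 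Alternatively, one can simply work with the ideal they generate and verify saturation at the end.

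Next, we fix a term order. We choose it so that, on the Plücker relations, the leading term of each relation is the product $p_{ik}p_{jl}$ of crossing diagonals (or crossing chords). This is the classical non-crossing Gröbner basis, whose initial ideal is the Stanley-Reisner ideal of $\cA_n$ joined with the simplex on the $n$ sides of the polygon. We then extend the order to the $y_j$ so that the leading terms of the incidence relations become the quadratic monomials $p_{ij}y_j$ prescribed by the edge subdivision of $\cA_n*\cA_2$ defining $\cK_n$ in \cite[\S2.2]{iltenturo}. The subdivision vertex should correspond to the extra variable introduced there.

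At this point we know $\mathrm{in}_<(J)\supseteq I$, where $I$ denotes the Stanley-Reisner ideal of $\cK_n$ joined with a $(2n-4)$-simplex. The vertices of the simplex are the variables that appear in no minimal non-face, namely the polygon sides and the remaining $y$'s. To obtain equality it suffices to show that $R/I$ and $R/J$ have the same $\ZZ^2$-graded Hilbert function. We do this as follows.
\begin{itemize}
\item Both quotients have the same dimension: $\dim\cQ^*+1 = 2(n-2)+(n-2)+1$, which should match the number of vertices of a facet of the join.
\item $S_{\cK_n}$ is Cohen--Macaulay, since $\cK_n$ is a subdivided sphere. Hence it is enough to compare bidegrees, i.e.\ leading coefficients: on the simplicial side, count facets of $\cK_n$ sorted by how many $y$-vertices they contain; on the geometric side, compute the multidegree of $X\subseteq\PP\times\PP$ via Schubert calculus, using $c(\cQ^*)$ and the degree of $G(2,n)$ (a Catalan number, which is the number of triangulations, i.e.\ facets of $\cA_n$).
\end{itemize}
Equal multidegrees together with $\mathrm{in}(J)\supseteq I$ and $I$ unmixed of the right dimension then forces $\mathrm{in}(J)=I$.

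The main obstacle is this multidegree match: the Schubert calculus count must be organized to agree term by term with the facet count of the edge subdivision. If that becomes unwieldy, the fallback is to verify Buchberger's criterion directly, which means reducing S-pairs between Plücker relations and incidence relations. These mixed S-pairs are cubic and of the form $p\,p\,y$, and the plan is to reduce them using the straightening law for $2\times2$ minors, which is the classical argument behind the non-crossing Gröbner basis.
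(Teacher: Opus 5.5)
The paper does not prove this statement; it is quoted from Ilten--Turo, so there is no in-paper argument to compare yours with. Taken on its own, your strategy is sound. The leading terms of the Pl\"ucker relations and of the incidence relations $r_i=\sum_j p_{ij}y_j$ give $I\subseteq \mathrm{in}(J)$. Equal dimension and equal degree then force equality, because $I$ is the Stanley--Reisner ideal of a pure complex and hence unmixed.

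Two of your side steps are unnecessary:
\begin{itemize}
\item You never need the relations to generate a prime or saturated ideal. For any $J'\subseteq J$ one has $\mathrm{in}(J)\supseteq\mathrm{in}(J')\supseteq I$, and equality afterwards even yields $J'=J$.
\item The final step uses purity, not Cohen--Macaulayness, and the total degree alone suffices.
\end{itemize}

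What you leave unverified is the core matching, and your description of it is slightly off.

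\emph{Which variables are which.} Since $J$ contains no linear forms, every variable is a vertex. Counting ($\binom n2+n$ variables, $2n-3$ on the simplex, $\binom n2-n$ diagonals) shows $\cK_n$ has exactly three non-diagonal vertices. The subdivision vertex is therefore one of the $y_j$, not an extra variable. Moreover, $J$ has nothing in bidegree $(0,2)$, and its bidegree-$(2,0)$ part is the Pl\"ucker ideal, whose initial terms are only crossings. So for your route the mixed minimal non-faces must be leading terms of the $r_i$. Matching these forces, up to dihedral symmetry:
\begin{itemize}
\item the $\cA_2$ factor is $\{y_1,p_{12}\}$;
\item the subdivided edge is $\{y_1,p_{13}\}$;
\item the new vertex is $y_2$.
\end{itemize}
In particular $p_{12}$ is not a cone point, contrary to your description of the simplex. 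You need $\mathrm{in}(r_1)=p_{12}y_2$, $\mathrm{in}(r_2)=p_{12}y_1$, $\mathrm{in}(r_3)=p_{13}y_1$, and $\mathrm{in}(r_k)=p_{2k}y_2$ for $k\geq 4$.

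\emph{The term order.} No block-type extension of the crossing order achieves these leading terms, because $r_3$ and $r_k$ require comparing $y_1$ and $y_2$ in opposite directions. You need a weight with $w(p_{13})-w(p_{23})>w(y_2)-w(y_1)>w(p_{1k})-w(p_{2k})$ for $k\geq 4$ that still favours crossings. The weight $w(p_{ij})=\log\sin(\pi(j-i)/n)$, with $w(y_j)$ very negative for $j\geq 3$, does both. Ptolemy's theorem handles the crossings, and $\sin((m+1)\theta)/\sin(m\theta)$ is decreasing in $m$, which opens the required interval.

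\emph{The degree comparison.} The step you flag as the main obstacle is short. Let $\mathcal{S}$ be the tautological subbundle and $C_m$ the Catalan numbers. The Segre class is $s(\cQ^*)=c(\mathcal{S}^*)=1+\sigma_1+\sigma_{1,1}$. Hence the three nonzero multidegree coefficients of $X$ are $C_{n-2}$, $C_{n-2}$ and $C_{n-3}$; the last one is $\deg G(2,n-1)$. On the other side, $\cK_n$ has $2C_{n-2}+C_{n-3}$ facets: the join contributes $2C_{n-2}$, and the subdivided edge lies in the $C_{n-3}$ facets whose triangulation uses the diagonal $13$. Even the split of facets by number of $y$-vertices matches. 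No Buchberger computation is needed.
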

	
	\begin{lemma}\label{lemma:t3k}
		For $n\geq 5$, the simplicial complex $\cK_n$ of \cite[\S2.2]{iltenturo} satisfies $T^3(\cK_n)=0$.
	\end{lemma}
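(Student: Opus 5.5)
The plan is to deduce $T^3(\cK_n)=0$ from the vanishing $T^3(\cA_n)=0$ of Corollary~\ref{cor:assoc}, via the same deformation/semicontinuity argument used for the triangulations of $S^2$ and for Proposition~\ref{prop:S1}. Since $\cK_n$ is obtained from $\cA_n*\cA_2$ by an edge subdivision, \cite[Proposition 6.1]{ic2} gives that $S_{\cK_n}$ deforms to $S_{\cA_n*\cA_2}$. By semicontinuity of cotangent cohomology (cf.~\cite[\S1.1.7]{behnkechristophersen}), it therefore suffices to prove $T^3(\cA_n*\cA_2)=0$.

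For the join, note that $S_{\cA_n*\cA_2}\cong S_{\cA_n}\otimes_\KK S_{\cA_2}$. Here $\cA_2$ is the complex on no diagonals, so this factor is a polynomial ring on $0$ variables, or possibly a simplex depending on the convention of \cite{iltenturo}. In either case the relevant factor is a polynomial ring or a complete intersection. I would invoke \cite[Proposition 2.3]{ic1}, as was done in the proof of Corollary~\ref{cor:assoc} for links that are joins: $T^3$ of a join vanishes whenever $T^3$ (and the relevant lower $T^i$) of the factors behave appropriately. In the case at hand the second factor is a polynomial ring, so its cotangent cohomology vanishes in positive degree. The Künneth-type decomposition of $T^i$ of a tensor product then gives $T^3(\cA_n*\cA_2)$ as a sum of terms of the form $T^3(\cA_n)\otimes S_{\cA_2}$ together with terms involving $T^i(\cA_2)=0$. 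Hence $T^3(\cA_n*\cA_2)=0$ by Corollary~\ref{cor:assoc}, valid since $n\ge 5\ge 4$.

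The main obstacle is to make sure that the edge subdivision relation in \cite{iltenturo} points in the direction required by \cite[Proposition 6.1]{ic2}. The ring of the subdivided complex $\cK_n$ must deform to the ring of the unsubdivided complex $\cA_n*\cA_2$, so that vanishing on the general fibre propagates to the special fibre in the correct direction. This is the same orientation that was used for the triangulations (a)--(g) relative to (i).

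If the direction instead turned out to be wrong, the fallback is to verify the hypotheses of Corollary~\ref{cor:vanishing} directly for $\cK_n$. This would use the link description (links of $\cK_n$ are joins of links in $\cA_n$, $\cA_2$, possibly subdivided) and the same polygon-diagonal arguments as in the proof of Corollary~\ref{cor:assoc}. The case analysis for $b$ of type $(1,1)$ and $(1,1,1)$ involving the subdivision vertex would then be the laborious step.
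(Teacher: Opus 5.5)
There is a genuine gap, in your very first step: you apply semicontinuity in the wrong direction. In the deformation of \cite[Proposition 6.1]{ic2}, the Stanley--Reisner ring of the \emph{subdivided} complex is the special fibre, and the ring of the \emph{unsubdivided} complex is the general fibre. Concretely, if $w$ subdivides $\{u,v\}$, replace the generator $x_ux_v$ by $x_ux_v-tx_w$; for $t\neq 0$ one eliminates $x_w$. Upper semicontinuity means $\dim T^i$ can only jump up at the special fibre. So vanishing passes from the subdivided complex to the unsubdivided one, never the other way.

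This is exactly how the paper uses it. $T^3=0$ for the subdivision (i) gives $T^3=0$ for (a)--(g). In Proposition~\ref{prop:S1}, non-vanishing for the hexagon gives non-vanishing for its subdivisions. Your sentence ``vanishing on the general fibre propagates to the special fibre'' reverses semicontinuity.

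The paper itself contains a counterexample to the inference you need. The pentagon $\cA_5$ has $T^3=0$ (Example~\ref{ex:pentagon}), while its edge subdivision, the hexagon, has $T^3\cong\KK^2$ (Example~\ref{ex:hex}). Your join/K\"unneth argument does correctly give $T^3(\cA_n*\cA_2)=0$. (For the dimensions to match, $\cA_2$ must be the $0$-sphere, whose ring $\KK[y_1,y_2]/(y_1y_2)$ is a hypersurface.) But that vanishing says nothing about $T^3(\cK_n)$.

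The missing idea is to go up rather than down: you need a complex obtained \emph{from} $\cK_n$ by edge subdivisions that is known to have $T^3=0$. The paper uses \cite[Theorem 6.2]{ic2}: a sequence of edge subdivisions turns $\cK_n$ into $\cA_{n+1}$, which has the same dimension $n-3$. Hence $S_{\cA_{n+1}}$ deforms to $S_{\cK_n}$, and Corollary~\ref{cor:assoc} together with semicontinuity gives $T^3(\cK_n)=0$.

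Your fallback, checking the hypotheses of Corollary~\ref{cor:vanishing} directly for $\cK_n$, might be made to work. As written, though, it is only an announced case analysis, so it does not close the gap.
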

	\begin{proof}
		By \cite[Theorem 6.2]{ic2}
		a sequence of edge subdivisions transforms $\cK_n$ into $\cA_{n+1}$. By \cite[Theorem 6.1]{ic2} and semicontinuity of cotangent cohomology (cf.~\cite[\S1.1.7]{behnkechristophersen}), the vanishing of $T^3(\cA_{n+2})$ from Corollary \ref{cor:assoc} implies the vanishing of $T^3(\cK_n)$.
	\end{proof}
	
	We may now extend \cite[Theorem 6.5]{iltenturo} from the case of hypersurfaces to codimension-two complete intersections:
	\begin{cor}\label{cor:codim2}For $n\geq 6$, let $X\subsetneq \PP^{{n \choose 2}-1}\times \PP^{n-1}$ be the projectivization of the  dual of the quotient bundle on the Grassmannian $G(2,n)$, with embedding induced by the Pl\"ucker embedding. Let $Y$ be a codimension-two complete intersection in $X$ of hypersurfaces in  $\PP^{{n \choose 2}-1}\times \PP^{n-1}$.
		Then any abstract small deformation of $Y$ is again a complete intersection in $X$.
	\end{cor}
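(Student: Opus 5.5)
We will follow the strategy of \cite[Theorem 6.5]{iltenturo}, where the hypersurface case is handled using $T^2$-vanishing; for complete intersections of codimension two we additionally need vanishing of $T^3$. The general principle, cf.~\cite{behnkechristophersen,iltenchristophersen}, is this. Suppose $X$ is a scheme whose cotangent cohomology $T^i_X$ vanishes in suitable degrees (twisted by the relevant line bundles), and $Y\subseteq X$ is cut out by a regular sequence of length $k$. Then the comparison between deformations of $Y$ and embedded deformations of $Y$ in $X$ is controlled by $T^i_X$ for $i\le k+1$, restricted to $Y$ or twisted by the normal bundle. If these groups vanish, every abstract small deformation of $Y$ lifts to a deformation of the pair $(X,Y)$ with $X$ rigid; equivalently, it is induced by a deformation of the defining equations. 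For $k=2$ we need $T^2$ and $T^3$ (graded pieces in the relevant negative multidegrees) of the multihomogeneous coordinate ring of $X$ to vanish.

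The first step is to reduce to the affine cone. The cone over $X$ in the bigraded sense is $\cQ^*$ inside $\PP^{{n\choose 2}-1}\times\AA^n$, or more precisely its affine multicone. We will express the relevant sheaf-level cotangent cohomology of $X$, and of $Y$ relative to $X$, in terms of graded pieces of $T^i$ of this multihomogeneous coordinate ring $S$. This step is essentially the same as in \cite[\S6]{iltenturo}, and we would reuse their identifications verbatim, with one extra degree.

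The second step is to prove the vanishing $T^2_S=T^3_S=0$ in the needed degrees via degeneration. By Theorem~\ref{thm:Qdegen}, $S$ degenerates flatly to the Stanley-Reisner ring of $\cK_n*\Delta$, where $\Delta$ is a $(2n-4)$-simplex. Joining with a simplex only adjoins free polynomial variables, so $T^i$ of the join is $T^i(\cK_n)$ tensored with a polynomial ring (\cite[Proposition 2.3]{ic1}). We have $T^3(\cK_n)=0$ by Lemma~\ref{lemma:t3k}, and $T^2(\cK_n)=0$ is available from \cite{ic1,iltenturo}. Semicontinuity of cotangent cohomology (\cite[\S1.1.7]{behnkechristophersen}) then transfers the vanishing to $S$; this works compatibly with the grading because the Gr\"obner degeneration preserves it.

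The third step is to conclude with the standard argument. Given an abstract small deformation $Y_A$ over an Artinian base, we lift the two equations step by step. Obstructions to lifting the embedding and to matching the deformation lie in groups built from $T^2_X$ and $T^3_X$ restricted to $Y$ via the Koszul complex of the regular sequence, so they vanish by the second step. Hence $Y_A$ is again a complete intersection in $X$ (which stays undeformed because its relevant $T^1$, $T^2$ vanish as in \cite{iltenturo}). The requirement $n\ge 6$ presumably enters through the dimension or degree bounds used there.

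The main obstacle is the bookkeeping in the first and third steps: we must identify precisely which graded pieces of $T^2_S$ and $T^3_S$ govern codimension-two complete intersections, and check that the vanishing of all of $T^3(\cK_n)$ covers them. We would first try to mimic the long exact sequences of \cite[\S6]{iltenturo}, extended by one term through the Koszul resolution of $\mathcal O_Y$.
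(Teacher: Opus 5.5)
Your proposal is correct and follows the paper's proof. The paper shows that $T^2$ and $T^3$ vanish for the Stanley--Reisner ring of $\cK_n$ joined with a simplex, using Lemma~\ref{lemma:t3k}, \cite[Proposition 2.3]{ic1} and \cite[Theorem 2.4]{iltenturo}, transfers this to the coordinate ring of $X$ via Theorem~\ref{thm:Qdegen} and semicontinuity, and takes $T^1$-vanishing from \cite{iltenturo}; the one difference is that it does not redo the bookkeeping of your first and third steps, but invokes \cite[Lemma 6.6]{iltenturo} directly, with its hypotheses checked as in the proof of \cite[Theorem 6.5]{iltenturo}.
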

	\begin{proof}
		By Lemma \ref{lemma:t3k} and \cite[Proposition 2.3]{ic1}, $T^3$ vanishes for the Stanley-Reisner ring of the join of $\cK_n$ and a $(2n-4)$-dimensional simplex. The same is also true for $T^2$ (cf.~\cite[Theorem 2.4]{iltenturo}).
		Let $J$ be the ideal of $\cQ^*$ in $\PP^{ {n\choose 2}-1}\times \AA^n$. By Theorem \ref{thm:Qdegen} and semicontinuity of cotangent cohomology (cf.~\cite[\S1.1.7]{behnkechristophersen}), $T^2$ and $T^3$ vanishes for the homogeneous coordinate ring of $X$.
		Moreover, $T^1$ vanishes as well by \cite[Theorem 5.1]{iltenturo}.
		The claim of our corollary now follows from  \cite[Lemma 6.6]{iltenturo}; the hypotheses  (1) and (2) from loc.~cit. hold by the same arguments used in the proof of \cite[Theorem 6.5]{iltenturo}. 
	\end{proof}
	\begin{remark}
		As was pointed out to us by Enrico Fatighenti, the dual quotient bundle $\cQ^*$ is isomorphic to the partial flag variety $\Flag(1,n-2,n)$ parametrizing partial flags consisting of nested $1$- and $(n-2)$-dimensional subspaces of $\KK^n$. We believe that the Borel-Bott-Weil theorem can be used to give an alternate proof of Corollary~\ref{cor:codim2} and extend beyond the codimension two case.
	\end{remark}
	\bibliographystyle{alpha}
	\bibliography{paper}
	
\end{document}